\numberwithin{equation}{section}
\newtheorem{theorem}{Theorem}[section]
\newtheorem{lemma}[theorem]{Lemma}
\newtheorem{example}[theorem]{Example}
\newtheorem{corollary}[theorem]{Corollary}
\newtheorem{proposition}[theorem]{Proposition}
\newtheorem{definition}[theorem]{Definition}
\newtheorem{remark}[theorem]{Remark}
\newcommand{\RNum}[1]{\uppercase\expandafter{\romannumeral #1\relax}}
\DeclareMathOperator{\disc}{disc}
\DeclareMathOperator{\Gal}{Gal}
\DeclareMathOperator{\Hom}{Hom}
\DeclareMathOperator{\Sur}{Sur}
\DeclareMathOperator{\Stab}{Stab}
\DeclareMathOperator{\Disc}{Disc}
\DeclareMathOperator{\Nm}{Nm}
\DeclareMathOperator{\Aut}{Aut}
\DeclareMathOperator{\Ker}{Ker}
\DeclareMathOperator{\Cl}{Cl}
\DeclareMathOperator{\Rg}{Rg}
\DeclareMathOperator{\rk}{rk}
\DeclareMathOperator{\Res}{Res}
\DeclareMathOperator{\Vol}{Vol}
\DeclareMathOperator{\ACl}{\widetilde{\mathrm{Cl}}}
\DeclareMathOperator{\Div}{\mathrm{Div}}
\DeclareMathOperator{\fp}{\mathfrak{p}}
\newcommand{\rd}{\,\mathrm{d}}
\renewcommand{\pmod}[1]{\left(\text{mod}\, #1\right)}
\newcommand{\mw}[1]{}
\begin{document}
	\title{The average size of $3$-torsion in class groups of $2$-extensions}
	\author{Robert J. Lemke Oliver, Jiuya Wang, Melanie Matchett Wood}
	\newcommand{\Addresses}{{
			\bigskip
			\footnotesize	
			Robert J.Lemke Oliver, \textsc{Department of Mathematics, Tufts University, 503 Boston Ave, Medford, MA 02155, USA
			}\par\nopagebreak
			\textit{E-mail address}: \texttt{Robert.Lemke\_Oliver@tufts.edu}\\	
			
			\bigskip
			\footnotesize		
			Jiuya Wang, \textsc{Department of Mathematics, Duke University, 120 Science Drive 117 Physics Building Durham, NC 27708, USA
			}\par\nopagebreak
			\textit{E-mail address}: \texttt{wangjiuy@math.duke.edu}\\
			
			\bigskip
			\footnotesize		
			Melanie Matchett Wood, \textsc{Department of Mathematics, Department of Mathematics,
Harvard University,
1 Oxford Street,
Cambridge, MA 02138, USA
			}\par\nopagebreak
			\textit{E-mail address}: \texttt{mmwood@math.harvard.edu}\\		
		}}
		\maketitle	
		\begin{abstract}	
		We determine the average size of the 3-torsion in class groups of $G$-extensions of a number field when $G$ is any transitive $2$-group containing a transposition, for example $D_4$.  
It follows from the Cohen--Lenstra--Martinet heuristics that the 
average size of the $p$-torsion in class groups of $G$-extensions of a number field is
conjecturally finite for any $G$ and most $p$ (including $p\nmid|G|$). Previously this conjecture had only been proven in the cases of  $G=S_2$ with $p=3$ 
and $G=S_3$ with $p=2$.  
We also show that the average  $3$-torsion in a certain relative class group for these $G$-extensions is as predicted by Cohen and Martinet, proving new cases of the Cohen--Lenstra--Martinet heuristics.  Our new method also works for many other permutation groups $G$ that are not $2$-groups.

		\end{abstract}
		
		\pagenumbering{arabic}

\section{Introduction}

Cohen, Lenstra, and Martinet \cite{Cohen1984, Cohen1990} have given heuristics on the distribution of class groups of number fields that lead to the following conjecture.  For a number field $k$, a transitive permutation group $G$, and a prime $p\nmid |G|$, there conjecturally exists a constant $c_{k,G,p}>0$ such that
\begin{equation}\label{E:conj}
\lim_{X\ra\infty} \frac{1}{|E_k(G,X)|}\sum_{K\in E_k(G,X) } |\Cl_K[p]| \stackrel{\textrm{Conj.}}{=}c_{k,G,p},
\end{equation}
where $E_k(G,X)$ is the set of extensions $K/k$ with Galois closure group $G$ (that is, the Galois group of the normal closure; see Section \ref{sec:notations}) 
and $\Disc(K)\leq X$.
The limiting average in \eqref{E:conj} was previously known for only two cases of $G$ and $p$: 
for $G=S_2$ and $p=3$
by
Davenport and Heilbronn \cite{DH71} when $k=\Q$ and  Datskovsky and Wright \cite{DW88} for general $k$, and for $G=S_3$ and $p=2$
 by  Bhargava \cite{Bha05} when $k=\Q$ (see also the work of Bhargava, Shankar, and Wang \cite{BSW15} for the case of general $k$).  
This conjecture is particularly notable in light of the fact that the best general upper bound on $|\Cl(K)[p]|$ is $\Disc(K)^{1/2+\epsilon}$, and there has  been a lot of recent breakthrough work (e.g. \cite{Heath-Brown2017, PTBW}) 
even  to show that the average in \eqref{E:conj} is bounded by $X^{1/2-\delta}$ for some $\delta>0$, for certain $G$, despite the fact that it is conjectured to have constant limit.

Our first main result is that we find the average and prove the conjecture in \eqref{E:conj} when $G$ is any transitive $2$-group containing a transposition  and $p=3$.

\begin{theorem}\label{T:G}
For any $m$, let $G\sub S_{2^m}$ be a transitive permutation $2$-group containing a transposition.
For any number field $k$, there exists a constant $c_{k,G,3}$ (given explicitly in Section~\ref{sec:main-theorem}) such that
$$
\lim_{X\ra\infty} \frac{1}{|E_k(G,X)|}\sum_{K\in E_k(G,X) } |\Cl_K[3]| =c_{k,G,3}.
$$
\end{theorem}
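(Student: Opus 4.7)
The plan starts from the class-field-theoretic identity
\[
|\Cl_K[3]| \;=\; 1 + 2\cdot \#\{L/K : L/K \text{ unramified cyclic cubic}\},
\]
which reduces the average in question to estimating the number of pairs $(K,L)$ with $K\in E_k(G,X)$ and $L/K$ an unramified cyclic cubic extension. The contribution of the $1$ gives exactly $|E_k(G,X)|$, whose asymptotics are known for any nilpotent $G$ by Kl\"uners--Malle, so all of the analytic work concerns the pair count.

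Because $G\leq S_{2^m}$ is transitive and contains a transposition $\tau$, writing $H$ for a point stabilizer and $H' = \langle H,\tau\rangle$, one has $[H':H] = 2$ and $[G:H'] = 2^{m-1}$. Inside the Galois closure $\tilde K/k$, the fixed field $F := \tilde K^{H'}$ satisfies $[F:k] = 2^{m-1}$ and $K/F$ is quadratic. Since $3$ is coprime to $2$, the non-trivial involution of $\Gal(K/F)$ decomposes
\[
\Cl_K[3]\;\cong\;\Cl_F[3]\;\oplus\;\Cl_K^-[3],
\]
and class field theory identifies $\Cl_K^-[3]$ with a group parametrizing non-Galois cubic extensions $M/F$ for which $MK/K$ is unramified---equivalently, $S_3$-extensions of $F$ containing $K$ whose cubic resolvent has relative discriminant supported on primes ramifying in $K/F$. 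Reorganizing the sum over $K\in E_k(G,X)$ by the intermediate field $F$ thus writes it as $\sum_F |\Cl_F[3]|\cdot \sum_{K/F} |\Cl_K^-[3]|$, where the outer sum runs over fields $F/k$ of degree $2^{m-1}$ with Galois closure group $\bar G = G/\mathrm{core}_G(H')$.

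Within each $F$, the inner sum pairs quadratic extensions $K/F$ with compatible cubic extensions $M/F$. A second interchange fixes $M/F$ first and counts quadratic $K/F$ of bounded relative discriminant such that $MK/K$ is unramified and the resulting $K/k$ has Galois closure group exactly $G$. These conditions cut out quadratic $K/F$ by local conditions at each prime of $F$, and the count is handled by a relative Datskovsky--Wright theorem. The outer sum over $F$ is then a count of $\bar G$-extensions of $k$ weighted by $|\Cl_F[3]|$ and by a tally of cubic extensions $M/F$ with prescribed ramification; the latter is supplied by a relative Davenport--Heilbronn theorem applied uniformly as $F$ varies.

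The main obstacle is securing sufficient uniformity in $F$: the Davenport--Heilbronn asymptotic for cubic extensions of $F$ must come with error terms that remain controlled after summation over $F$, and the local factors at primes dividing $\Disc(F/k)$ or $|G|$ must be matched against the explicit shape of $c_{k,G,3}$. I expect to address this using tail estimates for cubic fields in the style of Bhargava--Shankar--Tsimerman, together with a sieve indexed by the lattice of overgroups of $H$ in $G$ to enforce that the Galois closure group is exactly $G$, while simultaneously separating the good-prime Euler factors (which build the constant $c_{k,G,3}$) from the bad-prime contributions that must be shown to be negligible on average.
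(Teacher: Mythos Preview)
Your high-level strategy coincides with the paper's: write $G = C_2 \wr H$, pass to the unique index-two subfield $F$, factor $|\Cl_K[3]| = h_3(F)\cdot h_3(K/F)$, and use Datskovsky--Wright to evaluate the inner average over quadratic $K/F$ for each fixed $F$.  You have also correctly identified the obstruction: the error terms in the Datskovsky--Wright asymptotic for a single $F$ do not sum to something acceptable when $F$ ranges over all $H$-extensions of $k$, and one needs genuine control on the tail.

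The gap is in your proposed remedy.  Invoking ``tail estimates for cubic fields in the style of Bhargava--Shankar--Tsimerman'' is not enough.  Those results give excellent error terms for cubics over $\mathbb{Q}$, but the problem here is quite different: one needs bounds on $\sum_{K/F} h_3(K/F)$ that are \emph{explicitly polynomial in} $\Disc(F)$, and one needs these bounds to be strong enough to beat the trivial $\Disc(K/F)^{1/2+\epsilon}$ in a regime where $\Disc(K)$ and $\Disc(F)$ are comparable in a specific way.  The paper shows that the worst case---which it calls the \emph{critical range}---is when $\Disc(K) \approx \Disc(F)^3$.  In this range neither the Shintani zeta function (which gives the optimal answer for $\Disc(K) \gg \Disc(F)^5$) nor class field theory plus the trivial bound on $h_3(K/F)$ (which is good for $\Disc(K) \ll \Disc(F)^{3-\delta}$) is sufficient, and BST-type improvements to the error term for a fixed base field do not address the critical range at all.

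The paper closes this gap with several genuinely new ingredients that your proposal does not anticipate.  First, a ``propagation of orders'' argument (Section~\ref{sec:mid-order}) that exploits the fact that the Shintani zeta function counts all cubic orders, not just maximal ones, to bootstrap the Shintani bound into the intermediate range.  Second, a \emph{relative} Ellenberg--Venkatesh lemma (Lemma~\ref{lem:relative-E-V}) that gives a nontrivial bound on $h_3(K/F)$, not just $h_3(K)$, conditional on the existence of small split primes; the relative version is essential because one needs the saving to come from the relative factor.  Third, zero-density estimates for Hecke $L$-functions (Section~\ref{sec:zero-density}) to show that most quadratic $K/F$ have enough small split primes.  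Finally, the whole argument is structured as an induction on $m$ (Section~\ref{sec:tail}), so that at each step the optimal average bound is available for $h_3(F/k)$.  Without these, the critical range remains untreated and the sum over $F$ diverges.
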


The simplest new example that Theorem \ref{T:G} provides is that the average size of $3$-torsion in the class groups of $D_4$-quartic fields over $k$ tends to a constant; when $k=\mathbb{Q}$, this constant works out to be around $1.42$. 
It applies also to three permutation groups $G$ of degree $8$, to $26$ groups of degree $16$, and to at least as many groups in degree $2^m$ as in degree $2^{m-1}$. 

The restriction that $G$ has a transposition is necessary for our proof, and is a natural condition in this setting.  In particular, it is expected (for example, according to Malle's conjecture \cite{Mal04}) that the groups $G$ arising with positive density as Galois groups when fields of degree $n$ are ordered by discriminant are precisely those with a transposition in their degree $n$ permutation representation. This is known for $n=4$ where there are $\sim c_1 X$ quartic $D_4$ fields \cite{CDyDO02} and $\sim c_2X$ quartic $S_4$ fields \cite{Bha05} of absolute discriminant bounded by $X$ for certain constants $c_1,c_2 > 0$, and $O_\epsilon(X^{1/2+\epsilon})$ quartic fields with any other Galois closure group.  
Thus, the $2$-groups to which Theorem \ref{T:G} applies---or, indeed, the groups susceptible to our method as a whole---are  those expected to have  positive density within the set of all fields of a given degree, which you might think of as ``generic'' Galois groups.

When $G$ is a transitive permutation $2$-group with a transposition and $K\in E_k(G,X)$, then $K$ has a unique index two subfield $F$, and we can also ask about the relative class group of $K/F$. Indeed for $G=D_4$ Cohen and Martinet \cite{Cohen1987} have said this part of $\Cl_K$ is of particular interest.  
Our next main result gives the averages of $|\Cl_{K/F}[3]|=|\Cl_K[3]|/|\Cl_F[3]|$.

\begin{theorem}\label{T:rel}
For any $m$, let $G\sub S_{2^m}$ be a transitive permutation $2$-group containing a transposition.
Let $k$ be a number field, $u$ be an integer, and  $E^u_k(G,X)$ be those extensions $K\in E_k(G,X)$ such that $\rk \O_K^*-\rk \O_{F_K}^*=u$, where $F_K$ is the unique extension of $k$ such that $[K:F_K]=2$. Then, if $E^u_k(G,\infty)$ is non-empty,
$$
\lim_{X\ra\infty} \frac{1}{|E^u_k(G,X)|}\sum_{K\in E^u_k(G,X) } |\Cl_{K/F_K}[3]| =1+3^{-u},
$$
and this particular average  value is predicted by the Cohen--Lenstra--Martinet heuristics.
\end{theorem}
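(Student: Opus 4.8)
The plan is to reduce the study of $\Cl_{K/F_K}[3]$ for $G$-extensions to a problem about $\Cl_{K/F}[3]$ for \emph{quadratic} extensions $K/F$, where $F$ itself ranges over $H$-extensions of $k$ for $H = G \cap S_{2^{m-1}}$ the stabilizer of the transposition's support, and then to average over $F$ last. The key structural input is that a transitive $2$-group $G$ containing a transposition $\tau$ has an index-two subgroup $H$ (the stabilizer of the pair moved by $\tau$, intersected appropriately) so that every $K \in E_k(G,X)$ is a quadratic extension of its unique index-two subfield $F = F_K$, with $F \in E_k(H, \cdot)$; crucially, $\tau$ being a transposition forces this quadratic extension to be governed by a \emph{single} quadratic character, and the relative class group $\Cl_{K/F}$ is then exactly the kind of object controlled by Davenport--Heilbronn/Datskovsky--Wright over the base field $F$. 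So the first step is to set up this bijection $K \leftrightarrow (F, \text{quadratic character of } F)$ carefully, track discriminants ($\Disc(K) = \Disc(F)^2 \Nm_{F/\Q}(\fd_{K/F})$ up to bounded factors), and verify it is compatible with the rank condition defining $E^u_k(G,X)$.

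Next I would invoke the theorem of Datskovsky--Wright (the general-base-field form of Davenport--Heilbronn) applied over each fixed $F$: it gives, for quadratic $K/F$ ordered by $\Nm_{F/\Q}(\fd_{K/F})$, that the average of $|\Cl_K[3]|/|\Cl_F[3]| = |\Cl_{K/F}[3]|$ equals a constant depending only on the splitting behavior of $F$ at the infinite places — specifically, the ``extra'' $3$-torsion on average over quadratic extensions is $1 + \frac{1}{3}\cdot(\text{number of real embeddings related factor})$, and more precisely, by the reflection-type argument underlying these results, the relevant average over quadratic extensions of a field with unit rank data captured by $u$ is $1 + 3^{-u}$. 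The cleanest route is to use the version of Davenport--Heilbronn over $F$ that computes $\sum |\mathrm{Cl}_{K/F}[3]|$ via counting cubic rings/fields over $F$: the count of cubic fields $L/F$ with $L$ related to $K$ by class field theory gives the $3$-torsion, and the main term constant, after accounting for archimedean conditions (which is exactly where $u$ enters, via the unit rank $\rk\O_K^* - \rk\O_{F}^*$ controlling the size of the relevant local condition at infinity), comes out to $1 + 3^{-u}$.

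The subtle point — and I expect the main obstacle — is the \textbf{interchange of the two limits/averages}: one must average over quadratic $K/F$ for fixed $F$ and simultaneously over $F \in E_k(H,X^{1/2})$, with the ordering by $\Disc(K)$ not by $\Disc(F)$, and show the ``bad'' $F$ (those where the inner Davenport--Heilbronn average has not yet stabilized, or where error terms dominate) contribute negligibly. This requires a uniform-in-$F$ version of Datskovsky--Wright with an error term that can be summed against the count of $F$'s, i.e.\ a power-saving or at least a sufficiently strong error term uniform in the base field $F$, together with a tail estimate ensuring that quadratic extensions $K/F$ of small relative discriminant (where $F$ is large) don't skew the average. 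Handling this likely needs the analytic input from the companion results on counting extensions (the $\sim cX$ asymptotics cited for $D_4$-quartics, suitably generalized), plus care that the rank invariant $u$ is genuinely constant on each $E^u_k(G,X)$ and matches the archimedean local factor. Finally, the last step is the bookkeeping check against Cohen--Lenstra--Martinet: one identifies the relative class group $\Cl_{K/F}$ with the Galois-module piece of $\Cl_K$ on which the nontrivial element of $\Gal(K/F)$ acts by $-1$, computes the Cohen--Martinet prediction for the average size of the $3$-part of this module (a product of local factors $\prod (1 - 3^{-i})^{-1}$ type expressions truncated by the unit-rank constraint), and verifies it collapses to $1 + 3^{-u}$ — this is a finite computation with the Cohen--Martinet measure once the correct $u$-dependence of the ``$\infty$-type'' is pinned down.
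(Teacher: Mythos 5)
Your overall decomposition is the same one the paper uses: write $G=C_2\wr H$, identify $K$ with a quadratic extension of its unique index-two subfield $F$ (an $H$-extension), apply Datskovsky--Wright over each fixed $F$ to get the relative average $1+3^{-u}$, and sum over $F$; and your final bookkeeping against Cohen--Martinet is in the right spirit (the paper does it via the irreducible induced representation $\Ind_{S_F}^{G}U$ and the results of \cite{Wang2021}). But there is a genuine gap exactly at the point you flag as "the main obstacle" and then defer: you ask for a uniform-in-$F$ version of Datskovsky--Wright with a power-saving error term that can be summed against the count of base fields $F$. No such estimate is available --- it would amount to a discriminant-aspect subconvexity bound for Shintani zeta functions over $F$, which the paper explicitly notes its methods fall short of --- and summing the known error terms over $F$ produces an error larger than the main term. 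The paper's substitute is the tail estimate of Theorem~\ref{thm:tail}, proved by an induction over towers of quadratic extensions and by combining three different mechanisms in different ranges of $\Disc(F)$ relative to $\Disc(K)$: heavily optimized upper bounds on $N_F(S_3,X)$ and on $\sum h_3$ with explicit base-field dependence (class field theory for small $X$, the Shintani zeta function with its functional equation for large $X$, and the new "propagation of orders" argument in between, Section~\ref{sec:uniform-cubic}), plus, in the critical range $\Disc(K)\approx\Disc(F)^3$ where none of these suffice, the relative Ellenberg--Venkatesh bound of Lemma~\ref{lem:relative-E-V} combined with zero-density estimates (Section~\ref{sec:zero-density}) showing that most $K/F$ have enough small split primes. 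None of these ingredients, nor any workable replacement for them, appears in your proposal, so the interchange of the average over $K/F$ with the sum over $F$ is not justified.

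Two further points you would need to repair. First, your description of $H$ is garbled: $H$ is not an index-two subgroup of $G$ nor $G\cap S_{2^{m-1}}$; it is the quotient of $G=C_2\wr H$ acting on the $2^{m-1}$ blocks (Lemma~\ref{lem:2-grp-with-transposition-is-wreath-product}), and one must also check (Lemma~\ref{lem:KFbij}) how many $G$-extension structures a given pair $(F,K/F)$ carries. Second, when you run the count over all quadratic extensions $K/F$ of a fixed $F$, not all of them have Galois closure group $G$; you must show the contribution of the quadratic extensions whose closure group is a proper transposition-free subgroup is negligible, which is Theorem~\ref{thm:3-torsion-thin-2-ext} (itself relying on Lemma~\ref{lem:relative-E-V} and the zero-density input). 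Note also that $F$ is itself a $2$-extension of $k$ of degree $2^{m-1}$, so the whole argument must be structured as an induction on $m$ rather than a single application of quadratic-base-change; your proposal treats only the last quadratic layer.
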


Cohen, Lenstra, and Martinet actually gave a conjecture that predicts the average of $f(\Cl_K)$ over $G$-extensions $K/k$ for any function $f$ not depending on certain ``bad'' Sylow subgroups of the class group. 
The only non-trivial cases of these conjectures previously proven were for averages of the two $f$ of the form $f(\Cl_K)=|\Cl_K[p]|$ mentioned above.
Theorem \ref{T:rel} thus gives the first proof the Cohen-Lenstra-Martinet conjecture for a function not of the form $|\Cl_K[p]|$ (or trivial variations), and does this for infinitely many groups $G$.
(When $G=C_4$ and $f$ is bounded and only depends on the class group of the quadratic subfield of the cyclic quartic,
Bartel and Lenstra \cite{Bartel2020}  showed that the average is computable in finite time to arbitrary
precision, though the example they compute does not agree with the conjecture---see Section~\ref{sec:CM} for further discussion. 
There have
also been some averages proven on the ``bad'' part  of the class group including work of Fouvry and Kl\"uners \cite{Fouvry2006a} and Klys \cite{Klys2016a}, and the recent groundbreaking work of Smith \cite{Smith2017} determining the entire distribution of $\Cl_K[2^\infty]$ when $G=S_2$. Other work in this direction includes that of Gerth \cite{Gerth1984, GerthIII1987} and Koymans and Pagano \cite{KP18}.)

\subsection{Methods}

The condition that $G$ contains a transposition implies that $G=C_2\wr H$ for some $H$.
So, for the extensions $K/k$ we consider, $K$ has a unique index two subfield $F$, and the overarching strategy of the paper is to 
average and then sum over each $F$.  
Accordingly, the work of Datskovsky and Wright on the average of $|\Cl_K[3]|$ over quadratic extensions of a fixed $F$ is a key input into our work.  
If we could use just the main term of the average in place of the average, the argument would be essentially straightforward.    
However, summing the error terms over $F$ in a straightforward way leads to error that is larger than the main term. 

The major obstacle we overcome in this paper is proving a sufficiently good tail bound so that we can sum  over $F$.  In particular, the challenge is to bound the contribution when the  average 
is taken over a relatively small interval, given $\Disc(F)$.
Our first tool to do this is a heavily optimized upper bound on the average $3$-class number of quadratic extensions of a number field $F$, and the closely related
number of $S_3$ extensions of $F$, which we develop in Section~\ref{sec:uniform-cubic}.
  For very small or very large intervals, compared to $\Disc(F)$, we use class field theory and the theory of the Shintani zeta function counting cubic orders due to Datskovsky and Wright \cite{DW88}, respectively.
While cubic extensions of a general number field are counted in \cite{DW88}, in order to sum over different base fields, we must determine the explicit dependence of the count on the base field, which has not been done in previous work.

For intermediate intervals, however --- summing $|\Cl(K)[3]|$ for roughly $\Disc (K)\in[\Disc(F)^{3.1}, \Disc(F)^6]$ --- 
 we require a new technique, which we call propagation of orders.
In this range, the bound on cubic extensions given by the Shintani zeta function is too large.  However, the Shintani zeta function we use counts non-maximal orders as well as maximal orders.  Given a maximal order, one can prove a lower bound on the number of non-maximal orders inside it, up to a certain discriminant bound.  
Then, to count maximal orders of discriminant up to $X$, we count all orders of discriminant up to $Z$ for some $Z>X$, and then account for the known overcounting.
As $Z$ gets larger the total count goes up but the known overcounting also improves, and it turns out in the intermediate range these trade-offs work out to give us an improved and sufficiently good bound.

With these optimized bounds on $S_3$ extensions of a single $F$, especially in shorter intervals, we begin to take the sum over $F$, and we can bound the sum as required except in one problematic range, which we call the critical range.  In this range, $\Disc(K)\approx \Disc(F)^3$. If $\Disc(K)$ is a power smaller or larger than this, then the methods above are sufficient. In the critical range, the best individual bounds we can prove on $S_3$ extensions of $F$
use class field theory and a trivial bound for $|\Cl_{K/F}[3]|$, and are too large when summed.
Ellenberg and Venkatesh \cite{EV} have shown that one can improve the trivial bound when $K$ has enough small split primes.  
Ellenberg, Pierce, and the third author \cite{Ellen16}  introduced the idea that in many situations, one can prove that most fields in a family have enough 
small split primes to apply  \cite{EV}.  We use that approach, though there is one significant new aspect.  
As we sum $|\Cl_{K}[3]|=|\Cl_{F}[3]||\Cl_{K/F}[3]|$ over $F$, we require a significant saving over the trivial bound.
We structure our proof as an induction, which allows us to assume an optimal (constant average) bound for the $|\Cl_{F}[3]|$ factor.
We then further need a non-trivial bound on the relative class number 
$|\Cl_{K/F}[3]|= |\Cl_{K}[3]|/|\Cl_{F}[3]|.$  
For this, in Section~\ref{sec:arakelov} we develop a relative version of the widely-used lemma of Ellenberg and Venkatesh \cite{EV}, using the relative Arakelov  class group to show that one obtains a non-trivial bound on $|\Cl_{K/F}[p]|$ when there are sufficiently many small split primes in $K/F$.  
To find that most fields have small split primes, in Section~\ref{sec:zero-density} we use a uniform zero density estimate for Hecke $L$-functions as in \cite[Proposition A.2]{Pasten} or \cite[Theorem 1.2]{ThornerZaman-LargeSieve} to show that most of the Dedekind zeta functions do not have zeros in the relevant region.  
In Section~\ref{sec:tail}, we put these results together to prove the desired tail bound, and in Section~\ref{sec:main-theorem} we prove the main theorems.

Our methods apply more broadly than just to $2$-groups.  We determine in Section \ref{sec:other-groups} the average size of $3$-torsion in the class group of $(C_2 \wr G)$-extensions for many general classes of group $G$, where $C_2 \wr G$ denotes the wreath product.

\subsection{Previous Work}

There is a large body of previous work in arithmetic statistics and analytic number theory on the questions that arise in the this paper. There have been many remarkable achievements on counting cubic extensions, and relatedly averaging three torsion in class groups over quadratic extensions, after the work of Davenport and Heilbronn \cite{DH71} and Datskovsky and Wright \cite{DW88} discussed above. Taniguchi and Thorne \cite{TT13} and Bhargava, Shankar, and Tsimerman \cite{BST} improved the error term in Davenport and Heilbronn's count so far as to uncover a secondary term, and recent work by Bhargava, Taniguchi, and Thorne \cite{Bhargava2021} has improved the error term even further.  Bhargava, Shankar, and Wang \cite{BSW15} have given a geometry of numbers  proof of the counting results of Datskovsky and Wright, opening the door to further applications.  See also Thorne's exposition \cite{Th12} of the different approaches that have been used to count cubic fields.

The problem of proving non-trivial bounds for $|\Cl_K[p]|$ has also been the topic of many important works.  Pierce, Helfgott, and Venkatesh \cite{Pie05,HV06,Pie06} proved the first non-trivial bounds (at a prime $p$ not dividing the order of the Galois group), breaking the trivial bound in the case that $K$ is quadratic field and $p=3$.  This bound was improved by Ellenberg and Venkatesh \cite{EV} to $|\Cl_K[3]|=O_\epsilon(\Disc K^{1/3+\epsilon})$ for quadratic fields $K$.
 Bhargava, Shankar, Taniguchi, Thorne, Tsimerman, and Zhao \cite{BSTTTZ} gave a non-trivial bound on $|\Cl_K[2]|$  for all $K$ using geometry of numbers.   
  Ellenberg and Venkatesh \cite{EV} also proved a non-trivial bound on $|\Cl_K[3]|$ whenever $[K:\Q]\leq 4$.
The second author gave a non-trivial bound on $|\Cl_K[p]|$ when $K$ is Galois with the Galois group a non-cyclic and non-quaternion $\ell$-group or a nilpotent group where every Sylow-$\ell$ subgroup is non-cyclic and non-quaternion \cite{Wang2021a,Wang2020}.  In the setting when $p$ divides the order of the Galois closure group of $K$, Kl\"{u}ners and the second author \cite{KluWan} proved $|\Cl_K[p]|=O_\epsilon(\Disc K^{\epsilon})$ when $K$ has Galois closure group a $p$-group, generalizing the classical result from Gauss's genus theory for $K$ quadratic and $p=2$.  

 Soundararajan \cite[pg.690]{Sound-Divisibility} was the first to observe that non-trivial bounds for $|\Cl_K[p]|$ could be proved for almost all imaginary quadratic $K$, or equivalently, on average over $K$.
 Heath-Brown and Pierce \cite{Heath-Brown2017} reinvigorated the study of such average bounds for $|\Cl_K[p]|$, as well as bounds on higher moments, by using the large sieve.
As mentioned above, Ellenberg, Pierce, and the third author \cite{Ellen16} proved non-trivial bounds for most fields in low degree families using 
a non-trivial bound of Ellenberg and Venkatesh \cite{EV} conditional on the existence of small split primes, and precise field counting results with local conditions to prove such existence.  
Pierce, Turnage-Butterbaugh, and the third author \cite{PTBW} found ways to prove non-trivial bounds for many more families using
zero density estimates for $L$-functions to prove the required existence of small primes, which is a strategy we use in this paper.  
This strategy has been improved and extended to further cases by An \cite{An2020}, Frei and Widmer \cite{Frei2018,Frei2021}, Widmer \cite{Widmer2018}, Thorner and Zaman \cite{Thorner2019}, and the first author, Thorner and Zaman \cite{Oliver2021}.

\subsection{Further questions}

We expect the  methods in this paper can be used in other settings.  We have forthcoming work with Alberts which uses a similar inductive approach to prove many new cases of Malle's conjecture \cite{Mal04} on the asymptotics of $|E_k(G,X)|$.  
It would be natural to apply our approach to the $2$-part of class groups of fields that can be obtained as cubic extensions of other extensions,
using \cite{Bha05} as a base case.

It was noted by Bhargava and Varma \cite{BhargavaVarma} that the average $3$-torsion in class group of quadratic extensions does not change when the fields are subject to finitely many local conditions at finite primes (see also \cite{Wood2018}).  We expect that the average in Theorem \ref{T:rel} behaves similarly and is insensitive to local conditions, while the average in Theorem \ref{T:G} behaves differently and is quite sensitive to local conditions, and it would be interesting to investigate whether this is the case.  A subtlety in this question is in defining  a notion of local conditions that works well in towers of fields.  

The contrast between the constants in Theorems~\ref{T:G} and \ref{T:rel} naturally leads to the question: when ordering fields by a particular invariant, which averages does one expect to be as predicted by Cohen--Lenstra--Martinet?  For example, does the analogue of Theorem \ref{T:rel} hold when the fields $K$ are instead ordered by the 
Artin conductor of the representation of $\Gal(\bar{k}/k)$ induced from the sign representation of $\Gal(K/F_K)$?  (This is an irreducible representation, as we show in the proof of Theorem~\ref{P:asCM}.) In the case of quartic $D_4$ extensions of $k=\mathbb{Q}$, the fields themselves were recently counted by this Artin conductor in work of Alt\v{u}g, Shankar, Varma, and Wilson \cite{D4VS}. 

It would be interesting to extend our methods to as wide a range of groups $G$ as possible (e.g., by incorporating ideas from Sections \ref{sec:arakelov} and \ref{sec:zero-density} into an argument as in Section \ref{sec:other-groups}).  For example, does an analogue of Theorem \ref{T:G} hold for $(C_2 \wr G)$-extensions for arbitrary abelian groups $G$?  Arbitrary $p$-groups?

Finally, as noted above, Malle's conjecture predicts asymptotics for $|E_k(G,X)|$.  These predicted asymptotics are of the form $|E_k(G,X)| \sim c_{G,k} X^{1/a(G)} (\log X)^{b(k,G)-1}$ for specified integers $a(G)$ and $b(k,G)$, and an unspecified positive constant $c_{G,k}$, as $X$ tends to infinity.  Our work raises the question, what is the least exponent $\alpha = \alpha(G)$ such that $|E_k(G,X)| \ll_{[k:\mathbb{Q}],\epsilon} \Disc(k)^{\alpha+\epsilon} X^{1/a(G)} (\log X)^{b(k,G)-1}$ for all $k$, all $\epsilon>0$, and all $X \geq 1$?  Does such an exponent even exist?  When $G$ is abelian, this problem is closely related to bounding torsion subgroups of class groups.  Some of the key technical results of this paper (see Section \ref{sec:uniform-cubic}) provide the first such result for a non-abelian group $G$, namely $G=S_3$.  What can be said for other non-abelian groups $G$?  
This question when $G=S_3$ could naturally be approached by proving a discriminant-aspect subconvexity bound for the relevant Shintani zeta functions, which the methods of Section \ref{sec:uniform-cubic} fall a little short of proving.  Can these or other methods be adapted to prove such a subconvexity result?  We mention recent work of Hough and Lee \cite{HoughLee} that establishes a subconvexity bound in $t$-aspect for the Shintani zeta functions over $\mathbb{Q}$, though we expect these methods do not easily apply to discriminant-aspect.

\subsection{Technical remarks}

The conjectures of Cohen, Lenstra, and Martinet in fact fix some archimedean data and only consider families of fields with this fixed data.  In particular, for Galois $\Gamma$-extensions $L/k$, they fix the $\Gamma$-module structure of $\O_L^* \tensor_\Z \Q$, which is equivalent \cite[Th\'{e}or\`{e}me 6.7]{Cohen1990} to fixing the representation $\oplus_{v} \Ind_{\Gamma_v}^{\Gamma} \Q$, where the sum is over infinite places $v$ of $k$ and $\Gamma_v$ is the decomposition group of $L/k$ at $v$, and we  call this the \emph{enriched signature}.  The papers of Cohen, Lenstra, and Martinet are agnostic on the relative frequency of the various enriched signatures (and even whether such limiting frequencies exist). So, to be precise, to obtain \eqref{E:conj} from their conjectures one must also assume such frequencies exist (which is natural to conjecture given all known counting results, e.g. \cite{ DH71, Wright, CDyDO02, Bha05}, and is conjectured by Malle \cite{Mal04}), and the conjectural value would be given as a weighted average of particular values for each enriched signature as conjectured by Cohen, Lenstra, and Martinet, where the weights are these frequencies.  
When one fixes an enriched signature, one can use the results of \cite{Wang2021} to find the average value predicted by Cohen, Lenstra, and Martinet (see Section~\ref{sec:CM}).

We prove a version of Theorem~\ref{T:G} with a specified enriched signature (in fact with slightly more specification at infinity) in Theorem~\ref{thm:main}.  However, even when we do this, we do not apparently obtain the values conjectured by Cohen and Martinet in \cite{Cohen1990}.  We show this computationally is the case in Section~\ref{SS:wrongpred}.  This is because in this setting each $F$ appears as the index $2$ subfield for a positive proportion of $K$, and so the $3$-ranks of the particular $F$ that appear with significant frequency bias the statistics, as in the work of Bartel and Lenstra \cite{Bartel2020}.  When we only average $\Cl_{K/F}$ as in Theorem~\ref{T:rel}, this bias is removed.  See Section~\ref{sec:CM} for further discussion of this phenomenon.

\subsection{List of notations}\label{sec:notations}
\noindent
$k$: a number field, for us a subfield of a fixed algebraic closure  $\bar{\Q}$\\
$\O_k$: the ring of algebraic integers in $k$\\
$\disc(F/k)$: relative discriminant ideal in $\O_k$, for an extension $F/k$\\
$\Nm I$: norm $|\O_k/I|$ of an ideal $I$ in $\O_k$ and extend to any fractional ideals in $\O_k$ multiplicatively \\ 
$\Nm_{F/k} I$: an ideal in $\O_k$ that is $(I\cap \O_k)^{f(I| I\cap \O_k)}$ when $I$ is a prime ideal in $\O_F$ and $f$ is the inertia degree of $I$ over $I\cap \O_k$, and defined for any fractional ideals in $\O_F$ multiplicatively \\
$\Disc(F/k)$: $\Nm(\disc(F/k))$ (When $k$ omitted, $k=\Q$) \\
$\Gal(F/k)$: the \emph{Galois closure group} of $F/k$, defined as the Galois group of $\tilde{F}/k$ as a \emph{permutation group}, where $\tilde{F}$ is the Galois closure of $F$ over $k$ (in $\bar{\Q}$) acting on the embeddings of $F\ra \tilde{F}$ that fix $k$ pointwise\\  
$G$-extension: for a permutation group $G$, an extension $L/k$ of number fields (in $\bar{\Q}$), \emph{and a choice of isomorphism} $\phi:\Gal(L/k)\isom G$ \emph{as permutation groups}
(see Definition~\ref{D:permgroup})
\\
$2$-extension: a $G$-extension for some $2$-group $G$\\
$E_k(G,X)$: the set of $G$-extensions $F/k$ with $\Disc(F)\le X$ 
\\
$N_k(G,X)$: the number of $G$-extensions $F/k$ with $\Disc(F/k)\le X$ (note relative discriminant, versus absolute above)\\
$\Cl_F$:  class group of $F$ \\
$\Cl_{F/k}$: relative class group of $F/k$, when $k=\mathbb{Q}$ it is the usual class group of $F$ (see Section~\ref{sec:arakelov})
\\
$A[\ell]$: $\{ [\alpha]\in A \mid \ell [\alpha] = 0\in A \}$ for an abelian group $A$\\
$h(F/k)$, $h(F)$: the size of $\Cl_{F/k}$, $\Cl_{F}$\\
$h_{\ell}(F/k)$, $h_{\ell}(F)$: the size of $\Cl_{F/k}[\ell]$, $\Cl_{F}[\ell]$\\
$\ACl_L$: Arakelov class group\\
$\pi_k(X)$: the number of prime ideals $\mathfrak{p}$ in $k$ with $\Nm(\fp)<X$\\
$\pi_k(Y; F, \mathcal{C})$: the number of unramified prime ideals $\mathfrak{p}$ in $k$ with $\Nm_{k/\Q}(\fp)<Y$ and with Frobenius at $\fp$ in the conjugacy class of $\mathcal{C}$ in $\Gal(F/k)$\\
$r_1(k)$: the number of real embeddings of $k$\\
$r_2(k)$: the number of pairs of complex embeddings of $k$\\
$\rk \O_K^*$: the unit rank, i.e. $\dim_{\Q}  (\O_k^*\tensor \Q$)\\
$\zeta_k(s)$: Dedekind zeta function of $k$\\
$\kappa_k$: the residue $\mathrm{Res}_{s=1}\zeta_k(s)$\\
$\Rg_k$: the regulator of $k$\\
$f(X) \sim g(X)$: $\lim_{X\to \infty} f(X)/g(X) = 1$ or $\lim_{X\ra\infty} f(X)=\lim_{X\ra\infty}g(X)=0$\\ 
$f(X) =O_{p_1,\dots}(g(X))$: there exists a constant $C$ depending only on $p_1,\dots$ such that for all values of all variables $|f(X)|\leq Cg(X)$\\
$f(X) \ll_{p_1,\dots}g(X)$: $f(X) =O_{p_1,\dots}(g(X))$\\
$f(X) \gg_{p_1,\dots}g(X)$: only used when both sides non-negative, and then means $g(X) \ll_{p_1,\dots}f(X)$

\section*{Acknowledgements}
We thank Arul Shankar, Frank Thorne, and Jesse Thorner for helpful conversations related to this project.  We also thank Brandon Alberts, Alex Bartel, Jordan Ellenberg, 
J\"{u}rgen Kl\"{u}ners,
Lillian Pierce, Frank Thorne, Jesse Thorner, and Asif Zaman for comments on an earlier version of this paper.
RJLO was partially supported by National Science Foundation grant DMS-1601398.  JW was partially supported by a Foerster-Bernstein Fellowship at Duke University.   MMW was partially supported by a Packard Fellowship for Science and Engineering, National Science Foundation grant DMS-1652116, and an NSF Waterman award.

\section{Nontrivial Bound for the Relative Class Group}
\label{sec:arakelov}
In this section, our main goal is to establish Lemma \ref{lem:relative-E-V}, which is a \textit{relative} version of a widely used lemma of Ellenberg--Venkatesh \cite[Lemma 2.3]{EV}.   
Let $L/K$ be an extension of number fields. The relative class group $\Cl_{L/K}\subset \Cl_L$ is defined to be the kernel of the map $\Nm_{L/K}: \Cl_L\to \Cl_K$  induced from the usual norm on fractional ideals of $L$. 

\begin{lemma}[Relative Ellenberg--Venkatesh]\label{lem:relative-E-V}
	Let $L/K$ be an extension of number fields with $[L:K]=d$, let $\ell\ge 1$ be an integer,  and let $\theta$ be a real number such that $0< \theta< \frac{1}{4\ell(d-1)}$. Suppose there exist $M$ pairs of distinct primes $(\wp_i, \bar{\wp}_i)$ in $\O_L$ and coprime integers $a_i,b_i$ such that 
	\begin{enumerate}
		\item
		For each $i$, we have $\wp_i \cap \O_K = \bar{\wp}_i\cap \O_K$ and $\Nm_{L/K}(\wp_i)^{a_i} = \Nm_{L/K}(\bar{\wp}_i)^{b_i}$;
		\item 
		For each $i$, the norms satisfy $ \Nm(\wp_i)^{a_i}=\Nm(\bar{\wp_i})^{b_i}<\Disc(L/K)^{\theta}$; and
		\item
		For each $i$, there is no intermediate field $H$ with $K\subset H \subsetneq L$ such that $\wp_i$ and $\bar{\wp_i}$ are both extended from $H$. (A prime $\wp\subset \O_L $ is \emph{extended} from $H$ if there exists a prime $\fp\subset \O_H$ such that $\wp = \fp \O_L$.)
	\end{enumerate} 
	Then 
	\begin{equation}\label{eqn:EV-relative}
		|\Cl_{L/K}[\ell]|= O_{[L:\Q],\theta,\epsilon}\Big(\frac{\Disc(L/K)^{1/2+\epsilon} \Disc(K)^{(d-1)/2+\epsilon}}{M}\Big).
	\end{equation}
\end{lemma}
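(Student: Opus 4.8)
The plan is to carry out the geometry-of-numbers argument behind the Ellenberg--Venkatesh lemma \cite[Lemma~2.3]{EV}, but inside the \emph{relative} Arakelov class group of $L/K$ in place of the absolute Arakelov class group of $L$. First, some bookkeeping: by the tower formula $\Disc(L)=\Disc(L/K)\Disc(K)^{d}$, so the desired bound reads $|\Cl_{L/K}[\ell]|\ll \Disc(L)^{1/2+\epsilon}/(M\,\Disc(K)^{1/2-\epsilon})$; that is, it is a factor of $\Disc(K)^{1/2}$ stronger than what applying \cite[Lemma~2.3]{EV} directly to $L$ would give. Producing that extra saving is the crux, and it cannot come from a Minkowski bound on ideal representatives --- those only see $\Disc(L)$. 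It must come from the fact that $\Cl_{L/K}$ is genuinely smaller than $\Cl_L$, essentially by a factor of $h_K$, and the relative Arakelov class group is the clean device for making that precise.

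So I would first record the structure and size of the relative Arakelov class group. Let $\ACl_{L/K}$ be the kernel of $\Nm_{L/K}\colon \ACl_L\to\ACl_K$ on the degree-zero Arakelov class groups; it is a compact group sitting in an exact sequence $1\to T\to \ACl_{L/K}\to \Cl_{L/K}\to 1$, where $T$ is a real torus of dimension $\rk\O_L^*-\rk\O_K^*=u$ (the same $u$ as in Theorem~\ref{T:rel}). The two quantitative inputs I need are: (a) \emph{Volume.} From the exact sequence $\ACl_{L/K}\to \ACl_L\to \ACl_K$ one gets $\Vol(\ACl_{L/K})\asymp \Vol(\ACl_L)\,|\Coker(\Nm_{L/K}\colon \Cl_L\to\Cl_K)|/\Vol(\ACl_K)$; inserting the elementary upper bound $h_L R_L\ll_{[L:\Q],\epsilon}\Disc(L)^{1/2+\epsilon}$ (class number formula together with $\kappa_L\ll\Disc(L)^{\epsilon}$), the ineffective Brauer--Siegel lower bound $h_K R_K\gg_{[K:\Q],\epsilon}\Disc(K)^{1/2-\epsilon}$, and the genus-theoretic estimate $|\Coker(\Nm_{L/K})|\ll_{[L:K],\epsilon}\Disc(L/K)^{\epsilon}$ (which is where the ineffective constant in the lemma ultimately comes from) yields $\Vol(\ACl_{L/K})\ll_{[L:\Q],\epsilon}\Disc(L/K)^{1/2+\epsilon}\Disc(K)^{(d-1)/2+\epsilon}$ --- precisely the target numerator. (b) \emph{Effective relative Minkowski.} Every degree-zero class of $\ACl_{L/K}$ has an effective representative of bounded degree; concretely, each class of $\Cl_{L/K}$ has an integral-ideal representative $\mathfrak{a}$, which we may take coprime to all the $\wp_i,\bar\wp_i$, with $\Nm_{L/\Q}(\mathfrak{a})$ under control.

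The split-prime hypotheses enter as follows. For each $i$ set $\mathfrak{c}_i:=\wp_i^{a_i}\bar\wp_i^{-b_i}$: condition~(1) gives $\Nm_{L/K}(\mathfrak{c}_i)=\O_K$, so $[\mathfrak{c}_i]\in\Cl_{L/K}$, while condition~(2) exhibits $\mathfrak{c}_i$ as a ratio of two integral ideals of equal norm $<\Disc(L/K)^{\theta}$, so $[\mathfrak{c}_i]$ lifts to a \emph{small} element of $\ACl_{L/K}$. Coprimality of $a_i,b_i$ and condition~(3) together guarantee these classes are non-degenerate --- not pulled back from any proper intermediate field, and no cheap relation collapses two of them. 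The counting step is then the relative transcription of \cite[Lemma~2.3]{EV}: using the $M$ small classes $[\mathfrak{c}_i]$ to translate each $c\in\Cl_{L/K}[\ell]$, one produces $|\Cl_{L/K}[\ell]|\cdot M$ pairs that map, with fibers of size $O_{[L:\Q],\theta}(1)$, into a set of integral ideals of $\O_L$ of size $\asymp \Vol(\ACl_{L/K})$; the fiber bound is where $0<\theta<\frac1{4\ell(d-1)}$ is used --- a collision forces $c/c'$ to be an $\ell$-torsion class of very small height, and the relative degree $d$ of $L/K$ together with the factor $4$ coming from the passage to pairs of primes makes the corresponding count of small algebraic integers collapse to the trivial solution. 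Combining (a) with this gives $|\Cl_{L/K}[\ell]|\cdot M\ll_{[L:\Q],\theta,\epsilon}\Disc(L/K)^{1/2+\epsilon}\Disc(K)^{(d-1)/2+\epsilon}$, which is \eqref{eqn:EV-relative}.

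The main obstacle is step~(a)--(b): building the relative Arakelov class group into a usable shape, and in particular pinning down the volume bound with exactly the right power of $\Disc(K)$. A naive application of \cite{EV} to $L$ throws away a factor $\Disc(K)^{1/2}$; the content of the relative formalism is that the ``$K$-part'' of everything --- the regulator contribution, the cokernel of the norm map, and the torus $T$ --- can be stripped off at negligible cost. The remaining work, the careful transcription of the EV counting with the pairs $(\wp_i,\bar\wp_i)$, conditions~(1)--(3), and the relative rather than absolute degree in the constraint on $\theta$, is then a direct if notation-heavy adaptation of the absolute argument.
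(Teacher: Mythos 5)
Your proposal follows essentially the same route as the paper: you define the relative Arakelov class group as the kernel of $\Nm_{L/K}$ on Arakelov class groups, bound its volume by $\Disc(L/K)^{1/2+\epsilon}\Disc(K)^{(d-1)/2+\epsilon}$ using the regulator ratio, Brauer--Siegel, and the cokernel of the norm on class groups, and then run the Ellenberg--Venkatesh packing argument with the $M$ small classes $\wp_i^{a_i}\bar{\wp}_i^{-b_i}$, invoking the height/subfield lemma together with condition (3) and the constraint $\theta<\tfrac{1}{4\ell(d-1)}$ exactly where the paper does. The only quibbles are cosmetic: the ineffectivity stems from the Brauer--Siegel lower bound rather than from the cokernel estimate (which is effective, being at most $[L:K]$ by class field theory), and your ``relative Minkowski'' step (b) is not actually needed, since the counting is carried out directly with volumes in the relative Arakelov class group.
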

\begin{remark}
	In our application, we will only use prime ideals that are split completely in $L/K$, therefore the first and third conditions will be automatically satisfied, and the second condition will be equivalent to $\Nm(\wp\cap \O_K)<\Disc(L/K)^{\theta}$. 
\end{remark}
The original lemma \cite{EV} gives a non-trivial bound 
\begin{equation}\label{eqn:EV-lemma}
	|\Cl_{L}[\ell]|= O_{[L:\Q],\theta,\epsilon}\Big(\frac{\Disc(L)^{1/2+\epsilon}}{M}\Big),
\end{equation}
for the absolute class group $\Cl_L[\ell]$ under the same assumptions. Our improvement is in proving a saving from the trivial bound on the size of the \emph{relative} class group $\Cl_{L/K}[\ell]$ instead of the \emph{absolute} class group. Such a refinement is crucial for our application since we will treat $\Cl_K$ and $\Cl_{L/K}$ separately.  

When $\ell$ is relatively prime to $[L:K]$, there is a short exact sequence 
\begin{center}\label{eqn:relative-class-group-size}
	\begin{tikzcd}				
		0\arrow{r} & \Cl_{L/K}[\ell] \arrow{r} & \Cl_L[\ell]   \arrow[r,"\Nm_{L/K}"] & \Cl_K[\ell] \arrow{r} & 0.
	\end{tikzcd}
\end{center}
Let $\iota$  be the natural map from $\Cl_K$ to $\Cl_L$ by extension of ideals. The composition $\Nm_{L/K}\circ \iota: \Cl_K[\ell] \to \Cl_K[\ell]$ is equal to multiplication by $[L:K]$ and thus is an isomorphism since $(\ell, [L:K]) =1$, so $\iota$ induces a splitting section of the above sequence. So we have
$$ \Cl_{L}[\ell] \simeq \Cl_{L/K}[\ell] \times \Cl_{K}[\ell], \quad\quad   h_{\ell}(L) = h_{\ell}(K)\cdot h_{\ell} (L/K).$$
This leads to a strategy we use in this paper to bound  $h_{\ell}(L)$, that is, to bound $h_{\ell}(K)$ and $h_{\ell}(L/K)$ separately. 

The organization of this section is as follows. In Section \ref{ssec:relative-arakelov-set-up}, we give a definition of a \emph{relative Arakelov class group}, compute the volume of it and derive the ``trivial'' bound for the relative class group in Lemma \ref{lem:relative-trivial}. In Section \ref{ssec:relative-EV}, we prove Lemma \ref{lem:relative-E-V}.

\subsection{The relative Arakelov class group}\label{ssec:relative-arakelov-set-up}

In this section we will define a relative version of the Arakelov class group.  First we recall the usual 
\emph{absolute} notion of the Arakelov class group.  A nice reference for the theory of the  absolute Arakelov class group is \cite{Schoof}. 

For any number field $L$ with degree $n$ and $r=r_1+r_2$ infinite places, we let $I_L$ be the group of fractional ideals and $L_\infty := \prod_{v|\infty} \R^{\times}_{>0}$. We define the degree $0$ divisors to be $\Div^0(L) := \{(x,\mathfrak{a}) \in L^{\times}_\infty \times I_L: |x|_{\infty} =\Nm(\mathfrak{a})\}$ where $\Nm(\mathfrak{a}):=|\O_L/\mathfrak{a}|$ and $|x|_{\infty} = \prod_{v|\infty} |x_v|$. There is a natural embedding $\pi: L^{\times} \to \Div^0(L)$ by sending $\alpha\in L^{\times}$ to $ ((|\alpha|_{v}), (\alpha)) \in \Div^0(L)$ from the product formula (here $|\cdot|_v$ is the usual norm in $\R$ at the $r_1$ real embeddings and the square of the usual norm in $\R$ at the $r_2$ complex embeddings).

The  absolute Arakelov class group $\ACl_L$ of $L$  is then defined as $\ACl_L:= \Div^0(L)/\pi(L^{\times})$, and it fits into the following short exact sequence
\begin{center}\label{fml:Arakelov-SES}
	\begin{tikzcd}	
		0\arrow{r} & (\oplus \R)^{(0)}/\mathcal{L}(L)  \arrow{r} &  \ACl_L \arrow{r} & \Cl_L \arrow{r} & 0,
	\end{tikzcd}
\end{center}
where $(\oplus_v \R)^{(0)}$ is isomorphic to $L_\infty^{(1)}$ (the elements of $L^{\times}_\infty$ of norm $1$) via the Minkowski embedding, i.e. the logarithm map $\log=\oplus_v \log$, and $\mathcal{L}(L) =\log( \pi(\O_L^{\times}))$ is a full rank lattice by Dirichlet's unit theorem. 

 The Arakelov class group is not only an abelian group but also a Lie group. 
We fix a left-invariant Riemannian metric on $\widetilde{\Cl}_{L}$
that on  $(\oplus_v \R)^{(0)}$ agrees with the restriction of the standard Riemannian metric from $\oplus_v \R$.
We will use this Riemannian metric on $\widetilde{\Cl}_{L}$, and the induced metrics on various sub-Lie groups, to compute all volumes in this section.
We compute $\Vol(\ACl_L) = h(L) \cdot \Rg(L)\cdot \sqrt{r}$.
Though the language of a Riemannian metric is convenient, we remark that in our case this metric is very concrete
(in particular, because unlike \cite{EV}, we are using the \emph{non-oriented Arakelov class group} as in \cite[Section $5$]{Schoof}). 
  All of our groups
are a disjoint union of finitely many copies of quotients of subspaces of $\R^n$, and the volumes we compute are always  the ones that come from  the usual metric on $\R^n$.

The absolute Arakelov class group associated to $L$ and $K$ can be related by a norm map  $\Nm_{L/K}: \ACl_L\ra \ACl_K$
where we use the relative norm map $\Nm_{L/K}:I_L\to I_K$ for fractional ideals and $\Nm_{\infty}: (\oplus_v \R)^{(0)} \ra (\oplus_w \R)^{(0)}$ that sends $(x_v)_{v\mid\infty}\mapsto 
(\sum_{v\mid w} x_v)_{w\mid\infty}$ for archimedean embeddings. This norm map $\Nm_{L/K}$ fits into the following commutative diagram: 
\begin{equation}\label{diag:Arakelov-relative}
	\begin{tikzcd}				
		0\arrow{r} & (\oplus_v \R)^{(0)}/\mathcal{L}(L) \arrow{r}\arrow[d,"\mathrm{Nm}_\infty"] &  \ACl_L \arrow{r}\arrow[d, "\mathrm{Nm}_{L/K}"] & \Cl_L \arrow{r}\arrow[d, "\mathrm{Nm}_{L/K}"] & 0 \\		
		0\arrow{r} & (\oplus_w \R)^{(0)}/\mathcal{L}(K)  \arrow{r} & \ACl_K \arrow{r} & \Cl_K \arrow{r} & 0.
	\end{tikzcd}
\end{equation}

We now can define the relative Arakelov class group in a manner analogous to the relative class group, as a subgroup of the absolute Arakelov class group.

\begin{definition}[Relative Arakelov Class Group]
	Given an extension $L/K$ of number fields, 
	we define the \emph{relative Arakelov class group}  $\widetilde{\mathrm{Cl}}_{L/K}$
	to be the kernel of $\Nm_{L/K}: \ACl_L\to \ACl_K.$
\end{definition}

We obtain a
short exact sequence for the relative Arakelov class group as an abelian group, similar to the sequence \ref{fml:Arakelov-SES}:
\begin{equation}\label{diag: Arakelov-relative}
	\begin{tikzcd}[node distance = 1.5cm]		
		0\rar & \mathrm{Ker}(\mathrm{Nm}_\infty) \arrow{r} &  \ACl_{L/K} \arrow{r} & \Cl_{L/K} \arrow{r} & 0,
	\end{tikzcd}
\end{equation}
from an application of the snake lemma on \eqref{diag:Arakelov-relative}, since $\Nm_{\infty}$ is surjective.

Our next goal is to show that $\Vol(\Ker(\Nm_{\infty})) = C\Rg_L/\Rg_K$ where $C$ is a constant only depending on the signatures of $L$ and $K$. Let $v_{i,j}$ be the standard coordinates in $\oplus_v \R$ and $w_i$ in $\oplus_w \R$, where $\Nm_{\infty}$ maps $w_i = \sum_{1\le j\le i_k} v_{i,j}$ and $i_k$ is the number of infinite places above $w_i$. We then choose
a new coordinate system for $\oplus_v \R$ to be $\tilde{v}_{i,j}$ where $\tilde{v}_{i,j} = v_{i,j}$ for $j>1$ and $\tilde{v}_{i,1} = \sum_{j} v_{i,j} = w_i$. Then we can compute the covolume of $(\oplus_v \R)^{(0)}/\mathcal{L}(L)$ using the coordinate system $v_{i,j}$ or $\tilde{v}_{i,j}$ (without $v_{1,1}$ and $\tilde{v}_{1,1}$ respectively):
$$\Rg_L =\int_{\tilde{v} \in \mathcal{F}_L} \rd \tilde{v} = \int_{w \in \mathcal{F}_K} \rd w \int_{\tilde{v}' \in \Nm_{\infty}^{-1}(w)} \rd \tilde{v}'= \Rg_K \cdot \int_{\tilde{v}' \in \Nm_{\infty}^{-1}(0)} \rd \tilde{v}'.$$
Here ${\mathcal{F}}_L$ is the projection of a fundamental domain for $(\oplus_v \R)^{(0)}/\mathcal{L}(L)$ onto the $v_{i,j}((i,j)\neq (1,1))$-coordinate plane, and we define $\mathcal{F}_K$ analogously. The first equality comes from the definition of regulator. The second equality is an iterated integral where we use $\tilde{v}'$ to denote the variables in $\tilde{v}$ aside from $\tilde{v}_{i,1}$, and $\Nm_{\infty}^{-1}(w)= \{ v\in \mathcal{{F}}_L\mid \Nm_{\infty}(v)\equiv w \mod \mathcal{L}(K)\}$. In the last equality $\int_{\tilde{v}' \in \Nm_{\infty}^{-1}(0)} \rd \tilde{v}' = \int_{\tilde{v}' \in \Nm_{\infty}^{-1}(w)} \rd \tilde{v}'$ for any $w\in \mathcal{F}_K$ since the two domains for $\tilde{v}'$ only differ by a translation. 
We have $ \Vol(\Ker(\Nm_{\infty})) = \prod_{i} \sqrt{i_k} \cdot \int_{\tilde{v}' \in \Nm_{\infty}^{-1}(0)} \rd \tilde{v}' =  \prod_{i} \sqrt{i_k} \frac{\Rg_L}{\Rg_K}$. 

In the following lemma we give the description of the cokernel of the map $\mathrm{Nm}_{L/K}: \Cl_L  \to \Cl_K$.
\begin{lemma}\label{lem:relative-Coker}
	Given an extension $L/K$ of number fields, 
	the cokernel of $\mathrm{Nm}_{L/K}: \Cl_L  \to \Cl_K$	 is isomorphic to $\Gal(M/K)$,
	where $M = H_K\cap L$ and $H_K$ is the Hilbert class field of $K$.
\end{lemma}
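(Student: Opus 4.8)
The plan is to compute the image of $\Nm_{L/K}$ inside $\Cl_K$ by transporting the question through the Artin reciprocity map, and then to read off the cokernel by Galois theory.

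First I would recall the Artin isomorphism $\psi_K\colon \Cl_K \xrightarrow{\ \sim\ } \Gal(H_K/K)$, which sends the class of an (unramified) prime $\fp$ to its Frobenius $\Frob_\fp$. Since $H_K/K$ is unramified at every place, finite and infinite, the base change $H_KL/L$ is an unramified abelian extension of $L$; hence the Artin map of $L$ is defined on all of $\Cl_L$ and gives a surjection $\psi'_L\colon \Cl_L \twoheadrightarrow \Gal(H_KL/L)$, again sending the class of a prime $\wp$ of $L$ to $\Frob_\wp$. Restriction of automorphisms gives an injection $\rho\colon \Gal(H_KL/L)\hookrightarrow \Gal(H_K/K)$ whose image is precisely $\Gal(H_K/M)$, where $M = H_K\cap L$ (this is the standard description of the Galois group of a compositum, using that $H_K/M$ is Galois).

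The heart of the argument is the commutativity of the square
\[
\begin{array}{ccc}
\Cl_L & \xrightarrow{\ \psi'_L\ } & \Gal(H_KL/L)\\[2pt]
{\scriptstyle \Nm_{L/K}}\big\downarrow & & \big\downarrow{\scriptstyle \rho}\\[2pt]
\Cl_K & \xrightarrow{\ \psi_K\ } & \Gal(H_K/K).
\end{array}
\]
Both composites agree on the class of any prime $\wp$ of $L$: if $\wp$ lies over $\fp$ with residue degree $f=f(\wp\mid\fp)$, then $\rho(\Frob_\wp)=\Frob_\fp^{\,f}=\psi_K(\fp^{\,f})=\psi_K(\Nm_{L/K}\wp)$, which is the standard norm-compatibility of the Artin symbol. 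As the classes of primes generate $\Cl_L$, the square commutes. Since $\psi'_L$ is surjective and $\psi_K$ is an isomorphism, $\psi_K$ carries the image of $\Nm_{L/K}$ exactly onto the image of $\rho$, namely $\Gal(H_K/M)$. Hence
\[
\Coker(\Nm_{L/K})\ \cong\ \Gal(H_K/K)\big/\Gal(H_K/M)\ \cong\ \Gal(M/K),
\]
the last isomorphism being the fundamental theorem of Galois theory, valid because $M/K$ is a subextension of the abelian extension $H_K/K$ and so is itself Galois over $K$.

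There is no serious obstacle here: beyond formal manipulation, the only input is the functoriality of the Artin map under the norm (equivalently, the compatibility of Frobenius with restriction together with the residue-degree bookkeeping), which is part of the standard package of global class field theory. The one point I would state with care is \emph{why} $H_KL/L$ is unramified — because $H_K/K$ is unramified at every place, finite and infinite, base change to $L$ introduces no ramification — since this is exactly what makes $\psi'_L$ defined on all of $\Cl_L$ and surjective onto $\Gal(H_KL/L)$.
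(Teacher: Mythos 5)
Your proof is correct and is essentially the paper's argument: both identify the image of $\Nm_{L/K}$ with $\Gal(H_K/M)$ via the compatibility of the Artin map with the norm/restriction, and then conclude by Galois theory since $M/K$ is Galois. The only cosmetic difference is that you map $\Cl_L$ directly onto $\Gal(H_KL/L)$ using that $H_KL/L$ is unramified, whereas the paper routes the same restriction map through $\Gal(H_L/L)\twoheadrightarrow\Gal(H_KL/L)$ with $H_L$ the Hilbert class field of $L$.
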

\begin{proof}
	By class field theory, the map $\Nm_{L/K}: \Cl_L  \to \Cl_K$ agrees with the restriction map on the Galois groups $\Gal(H_L/L) \to \Gal(H_K/K)$ after identifying class groups and Galois groups with the reciprocity map. Since $H_K/K$ is Galois, we  have $\Gal(H_L/L) \twoheadrightarrow \Gal(H_KL/L)\simeq \Gal(H_K/M) = \mathrm{Im}(\Nm_{L/K}: \Cl_L  \to \Cl_K)$, and the lemma follows. 
\end{proof}

As a consequence, we have an upper bound  $|\mathrm{Coker}(\mathrm{Nm}_{L/K}: \Cl_L  \to \Cl_K )|\leq [L:K]$, and thus we can obtain the following upper bound on $h(L/K)$, which we consider as the ``trivial bound'' on the $\ell$-torsion in the relative class group.
\begin{lemma}\label{lem:relative-trivial}
	Given a relative extension $L/K$ with $[L:K]=d$ and an arbitrary integer $\ell>0$, we have
	$$h_{\ell}(L/K) \ll_{[L:\Q],\epsilon}\mathrm{Disc}(L/K)^{1/2+\epsilon} \cdot \mathrm{Disc}(K)^{(d-1)/2+\epsilon}.$$
\end{lemma}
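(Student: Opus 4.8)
The plan is to bound $h_\ell(L/K)$ by the full order $h(L/K) = |\Cl_{L/K}|$ and then estimate $h(L/K)$ using the short exact sequence \eqref{diag: Arakelov-relative} together with the volume computations already established. First I would note that $h_\ell(L/K) \le h(L/K)$ trivially, so it suffices to prove the stated bound for $h(L/K)$ itself. From the sequence
\begin{center}
\begin{tikzcd}
0\rar & \mathrm{Ker}(\mathrm{Nm}_\infty) \arrow{r} & \ACl_{L/K} \arrow{r} & \Cl_{L/K} \arrow{r} & 0
\end{tikzcd}
\end{center}
we get $\Vol(\ACl_{L/K}) = h(L/K)\cdot \Vol(\Ker(\Nm_\infty))$, where the volume of the discrete group $\Cl_{L/K}$ is just its order. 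By the computation just above the lemma, $\Vol(\Ker(\Nm_\infty)) = \bigl(\prod_i \sqrt{i_k}\bigr)\, \Rg_L/\Rg_K$, a quantity that is bounded below by a positive constant depending only on the signatures of $L$ and $K$ (hence only on $[L:\Q]$) — here one uses that each $i_k \ge 1$, and a standard lower bound $\Rg_L/\Rg_K \gg_{[L:\Q]} 1$ coming from, e.g., the relative version of the unit-regulator lower bounds, or more simply from the fact that $\Rg_L \gg_{[L:\Q]} 1$ and $\Rg_K$ can be absorbed after we bound $\Vol(\ACl_{L/K})$ using $\ACl_K$.

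The key inputs are then: (i) $\Vol(\ACl_L) = h(L)\,\Rg_L\,\sqrt{r_L}$ and similarly for $K$; (ii) the diagram \eqref{diag:Arakelov-relative} gives a long exact (snake) sequence relating $\ACl_{L/K}$, $\ACl_L$, and $\ACl_K$, so that $\Vol(\ACl_{L/K}) = \Vol(\ACl_L)\,|\Coker(\Nm_{L/K}\colon \ACl_L \to \ACl_K)| / \Vol(\mathrm{Im})$, and since $\Nm_\infty$ is surjective the cokernel of the Arakelov norm is the cokernel of $\Nm_{L/K}\colon \Cl_L \to \Cl_K$, which by Lemma \ref{lem:relative-Coker} has order at most $[L:K]=d$; (iii) the Brauer–Siegel-type / Minkowski bound $h(L)\Rg_L \ll_{[L:\Q],\epsilon} \Disc(L)^{1/2+\epsilon}$ (the classical upper bound on the product of class number and regulator, valid unconditionally), and the lower bound $h(K)\Rg_K \gg_{[K:\Q]} 1$ together with $\Rg_L \gg_{[L:\Q]}1$. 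Combining, $h(L/K) = \Vol(\ACl_{L/K})/\Vol(\Ker(\Nm_\infty)) \ll_{[L:\Q]} \Vol(\ACl_L)/(\Vol(\ACl_K)\cdot \text{const}) \cdot d$, and plugging in (iii) along with the conductor–discriminant relation $\Disc(L) = \Disc(K)^d \Disc(L/K)$ yields
$$
h(L/K) \ll_{[L:\Q],\epsilon} \frac{\Disc(L)^{1/2+\epsilon}}{\Disc(K)^{1/2-\epsilon}} \ll_{[L:\Q],\epsilon} \Disc(L/K)^{1/2+\epsilon}\,\Disc(K)^{(d-1)/2+\epsilon},
$$
which is exactly the claimed bound once we note $h_\ell(L/K) \le h(L/K)$.

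The main obstacle I expect is bookkeeping the archimedean/regulator factors cleanly: one must make sure that the ratio $\Rg_L/\Rg_K$ (and the $\sqrt{r}$ factors) appearing in $\Vol(\Ker(\Nm_\infty))$ versus in $\Vol(\ACl_L)/\Vol(\ACl_K)$ cancel so that no regulator survives in the final bound, and that whatever does survive is bounded purely in terms of $[L:\Q]$. This is where the precise volume formula $\Vol(\Ker(\Nm_\infty)) = \prod_i \sqrt{i_k}\cdot \Rg_L/\Rg_K$ established above is essential — it is designed precisely so that the $\Rg_L$ in $\Vol(\ACl_L)$ cancels, leaving $h(L/K) \asymp h(L)/h(K) \cdot |\Coker|^{-1} \cdot (\text{signature constant})$, after which the classical $h\Rg$ upper bound and a trivial lower bound $h(K)\ge 1$, $\Rg_K \le \Rg_L$ do the rest. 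A secondary, entirely routine point is absorbing the factor $d = [L:K] = [L:\Q]/[K:\Q]$ and the signature constants into the implied constant, which is legitimate since these depend only on $[L:\Q]$.
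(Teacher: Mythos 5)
Your overall skeleton---reduce to $h(L/K)$, bound $h(L/K)\ll_{[L:\Q]} [L:K]\cdot h(L)/h(K)$ via the cokernel bound of Lemma \ref{lem:relative-Coker}, then convert $h(L)/h(K)$ into discriminants using the tower formula $\Disc(L)=\Disc(K)^d\Disc(L/K)$---is the same as the paper's. The paper, however, gets $h(L/K)\le [L:K]\,h(L)/h(K)$ directly from the four-term exact sequence of finite groups $0\to\Cl_{L/K}\to\Cl_L\to\Cl_K\to\Coker\to 0$; your detour through $\Vol(\ACl_{L/K})$, $\Vol(\Ker(\Nmi))$ and the Arakelov diagram is harmless but unnecessary, since (as you note) the regulator factors cancel and you land back on the same quotient $h(L)/h(K)$.

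The genuine gap is in your analytic input (iii). You propose to use only the upper bound $h(L)\Rg_L\ll_{[L:\Q],\epsilon}\Disc(L)^{1/2+\epsilon}$ together with the ``trivial'' lower bounds $h(K)\Rg_K\gg_{[K:\Q]}1$, $h(K)\ge 1$, $\Rg_L\gg 1$. These do not yield your displayed inequality $h(L/K)\ll \Disc(L)^{1/2+\epsilon}/\Disc(K)^{1/2-\epsilon}$: they only give $h(L)/h(K)\ll \Disc(L)^{1/2+\epsilon}=\Disc(L/K)^{1/2+\epsilon}\Disc(K)^{d/2+d\epsilon}$, which is weaker than the lemma by a factor of $\Disc(K)^{1/2}$ (exponent $d/2$ instead of $(d-1)/2$). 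To put $\Disc(K)^{1/2-\epsilon}$ in the denominator you need the lower bound $h(K)\Rg_K\gg_{[K:\Q],\epsilon}\Disc(K)^{1/2-\epsilon}$, equivalently $\Res_{s=1}\zeta_K(s)\gg_{[K:\Q],\epsilon}\Disc(K)^{-\epsilon}$; that is the Brauer--Siegel theorem in its hard, ineffective direction, not a Minkowski-type or ``unconditional classical'' bound, and it is exactly what the paper invokes (this is also why the remark following Lemma \ref{lem:relative-volume} discusses effectivity and exceptional zeros). You additionally need $\Rg_K/\Rg_L\ll_{[L:\Q]}1$ to pass between the products $h\Rg$ and the ratio $h(L)/h(K)$; your parenthetical $\Rg_K\le\Rg_L$ is not a trivial fact---the correct input is the bound $\Rg_L/\Rg_K\gg_{[L:\Q]}1$ of \cite{FrSko}, which you do cite in your first paragraph and which is what the paper uses. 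With the Brauer--Siegel lower bound for $K$ and \cite{FrSko} substituted for your trivial bounds, the rest of your argument (the cokernel bound and the discriminant tower formula) goes through and matches the paper's proof.
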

\begin{proof}
	We have 
	\begin{equation}
		h_{\ell}(L/K) \le h(L/K) \le [L:K] \cdot \frac{h(L)}{h(K)} \ll_{[L:\Q], \epsilon} \mathrm{Disc}(L/K)^{1/2+\epsilon} \mathrm{Disc}(K)^{(d-1)/2+\epsilon},
	\end{equation}
	where the first inequality is trivial, and the second inequality follows from Lemma \ref{lem:relative-Coker}. The third inequality comes from  the theorem of Brauer-Siegel,  an absolute lower bound  $\frac{\Rg(L)}{\Rg(K)} \gg_{[L:\Q]} 1$ on the ratio of regulator by \cite{FrSko}, and the expression of the relative discriminant in terms of absolute discriminants.
\end{proof}

Combining the discussion of $\Vol(\Ker(\Nm_{\infty}))$ and $\Cl_{L/K}$, we obtain an estimate of $\Vol(\ACl_{L/K})$.
\begin{lemma}\label{lem:relative-volume}
	Given an extension $L/K$ of number fields with $[L:K]=d$, with the measure above, we have
	$$ \Disc(L/K)^{1/2-\epsilon} \Disc(K)^{(d-1)/2-\epsilon}\ll_{[L:\Q],\epsilon} \Vol(\ACl_{L/K}) \ll_{[L:\Q],\epsilon} \Disc(L/K)^{1/2+\epsilon} \Disc(K)^{(d-1)/2+\epsilon}.$$
\end{lemma}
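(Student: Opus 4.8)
The plan is to compute $\Vol(\ACl_{L/K})$ exactly as $h(L/K)\cdot\Vol(\Ker(\Nm_{\infty}))$ and then estimate each factor. Indeed, in the short exact sequence \eqref{diag: Arakelov-relative} the quotient $\Cl_{L/K}$ is finite and $\Ker(\Nm_{\infty})$ is a compact connected subtorus of $(\oplus_v\R)^{(0)}/\mathcal{L}(L)$; with the chosen left-invariant Riemannian metric, $\ACl_{L/K}$ is a disjoint union of $h(L/K)$ translates of $\Ker(\Nm_{\infty})$, each isometric to it, so $\Vol(\ACl_{L/K}) = h(L/K)\cdot\Vol(\Ker(\Nm_{\infty}))$. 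We have already shown $\Vol(\Ker(\Nm_{\infty})) = \prod_i\sqrt{i_k}\cdot\Rg_L/\Rg_K$; since each $i_k$ is at most $[L:\Q]$ and the number of factors is at most $[L:\Q]$, we have $1\le\prod_i\sqrt{i_k}\ll_{[L:\Q]}1$. Hence it remains to show that $h(L/K)\,\Rg_L/\Rg_K$ lies between $\Disc(L/K)^{1/2-\epsilon}\Disc(K)^{(d-1)/2-\epsilon}$ and $\Disc(L/K)^{1/2+\epsilon}\Disc(K)^{(d-1)/2+\epsilon}$ up to constants depending on $[L:\Q]$ and $\epsilon$.

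Next I would replace $h(L/K)$ by $h(L)/h(K)$ at the cost of a bounded factor. By Lemma \ref{lem:relative-Coker}, $\Coker(\Nm_{L/K}\colon\Cl_L\to\Cl_K)$ has order at most $d$, and from $h(L) = h(L/K)\cdot|\mathrm{Im}(\Nm_{L/K}\colon\Cl_L\to\Cl_K)| = h(L/K)\cdot h(K)/|\Coker|$ we get $h(L)/h(K)\le h(L/K)\le d\cdot h(L)/h(K)$. Therefore $\Vol(\ACl_{L/K})$ is, up to constants depending only on $[L:\Q]$, equal to $\dfrac{h(L)\,\Rg_L}{h(K)\,\Rg_K}$.

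Finally I would invoke the Brauer--Siegel theorem in both directions: for number fields $F$ of degree at most $[L:\Q]$ one has $h(F)\Rg_F = \Disc(F)^{1/2+o(1)}$ as $\Disc(F)\to\infty$, with the $o(1)$ uniform over such $F$, so for each $\epsilon>0$ there is a constant depending on $[L:\Q]$ and $\epsilon$ with $\Disc(F)^{1/2-\epsilon}\ll h(F)\Rg_F\ll\Disc(F)^{1/2+\epsilon}$. Applying this with $F=L$ and $F=K$ and dividing, then using the tower formula $\Disc(L) = \Disc(L/K)\,\Disc(K)^{d}$ and absorbing the finitely many extra factors of $\epsilon$ (bounded in number since $d\ll_{[L:\Q]}1$) into a single renamed $\epsilon$, gives exactly the claimed two-sided bound. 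The computation is routine; the only two points needing a little care are the Riemannian-metric bookkeeping that makes $\Vol$ multiplicative across \eqref{diag: Arakelov-relative} --- which is unproblematic because all the groups in sight are finite disjoint unions of quotients of linear subspaces of $\R^n$ with their standard metric --- and the uniformity over bounded degree of the \emph{lower} bound in Brauer--Siegel, which relies on Siegel's ineffective estimate but is harmless here since all implied constants are allowed to depend on $[L:\Q]$. I do not anticipate any genuine obstacle beyond assembling these ingredients.
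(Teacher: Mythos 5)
Your proposal is correct and is essentially the proof the paper intends: it assembles the same ingredients — the coset decomposition $\Vol(\ACl_{L/K}) = h(L/K)\Vol(\Ker(\Nm_\infty))$, the computed value $\Vol(\Ker(\Nm_\infty)) = \prod_i \sqrt{i_k}\,\Rg_L/\Rg_K$, Lemma \ref{lem:relative-Coker} to replace $h(L/K)$ by $h(L)/h(K)$ up to a factor of at most $d$, and two-sided Brauer--Siegel together with $\Disc(L)=\Disc(L/K)\Disc(K)^d$. The only slip, which is harmless since you use only compactness and translation invariance, is calling $\Ker(\Nm_\infty)$ connected; in general it has component group $\mathcal{L}(K)/\Nm_\infty(\mathcal{L}(L))$, and the paper's volume computation already accounts for all components.
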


\begin{remark}
		{\em
		The upper bounds in Lemmas \ref{lem:relative-trivial} and \ref{lem:relative-volume} are both effective, despite the invocation of Brauer--Siegel to estimate the quotient of residues $\mathrm{Res}_{s=1} \zeta_L(s) / \mathrm{Res}_{s=1} \zeta_K(s)$, and the lower bound of Lemma \ref{lem:relative-volume} is effective if $L$ does not contain a nontrivial quadratic extension of the form $K(\sqrt{D})$ for $D \in \mathbb{Q}$.  
		In particular, there is always an effective upper bound on the residue $\mathrm{Res}_{s=1} \zeta_L(s)$ due to Landau (see also \cite[Lemma 3]{Brauer}), and there is an effective lower bound on $\mathrm{Res}_{s=1} \zeta_K(s)$ whenever $\zeta_K(s)$ does not have an exceptional Landau--Siegel zero due to Stark \cite{Stark}.   Thus, if $\zeta_K(s)$ does not have an exceptional zero, we simply combine these two results to get an effective upper bound.  When $\zeta_K(s)$ does have an exceptional zero, the quotient $\zeta_{\widetilde{L}}(s)/\zeta_K(s)$ is entire by the Aramata--Brauer theorem \cite[Theorem 1]{Brauer}, where $\widetilde{L}$ denotes the normal closure of $L$ over $K$, and does not have an exceptional zero by Stark \cite[Theorem 3]{Stark}.  Moreover, by \cite[Lemma on p.244]{Brauer}, this quotient may be expressed as a product over all $L$-functions associated to nontrivial $1$-dimensional characters of subextensions of $\widetilde{L}/K$, where each such $L$-function appears with a strictly positive rational exponent.  These $L$-functions thus also do not have an exceptional zero, and are subject to effective lower bounds \cite[Lemma 2]{Stark} in addition to the effective upper bounds that hold for any entire $L$-function \cite[Ch. 5]{IwaniecKowalski}.  The quotient $\zeta_L(s)/\zeta_K(s)$ may be expressed as a product and quotient of these $L$-functions, and is thus subject to an effective upper bound in the case that $\zeta_K(s)$ has an exceptional zero as well.  For the lower bound on $\mathrm{Res}_{s=1} \zeta_L(s) / \mathrm{Res}_{s=1} \zeta_K(s)$, if $\zeta_L(s)$ does not have an exceptional zero, we simply use the effective lower bound on its residue and the effective upper bound on that of $\zeta_K(s)$.  If $\zeta_L(s)$ has an exceptional zero, it is inherited either from $\zeta_K(s)$ or from the Dedekind zeta function of a subextension of $L/K$ of the form $K(\sqrt{D})$ with $D \in \mathbb{Q}$ by \cite[Theorem 3]{Stark}.  In the former case, we proceed as in the proof of the effective upper bound to also obtain an effective lower bound; in the latter, there does not appear to be a way to make the lower bound effective.
	}
\end{remark}

\subsection{Relative Ellenberg--Venkatesh Lemma}\label{ssec:relative-EV}
Now we are ready to prove Lemma \ref{lem:relative-E-V}. The idea of the proof is similar to the proof of Lemma $2.3$ in \cite{EV}, where the major difference is we use a different set $T$ of ideals that cut out the non-trivial saving in the proof. As a result, we can show a saving that is completely coming from the relative class group. 

\begin{proof}[Proof of Lemma \ref{lem:relative-E-V}]
We let $\psi: \Div^0(L) \to \ACl_L$ 
to be natural quotient map, and denote the following subgroups of $\Div^0(L)$ 
	$$G = \psi^{-1}(\ACl_{L/K}),\quad P = \pi(L^{\times}), \quad  P_{\ell} = \ell G + P.$$
	We can pull-back the Riemannian metric from $\ACl_L$ to a left-invariant Riemannian metric on $G$.
	The short exact sequence (\ref{diag: Arakelov-relative}) induces an isomorphism $\Cl_{L/K}/\ell\Cl_{L/K} \simeq \ACl_{L/K}/\ell\ACl_{L/K}$.
Therefore we obtain an upper bound on $|\Cl_{L/K}[\ell]| = |\Cl_{L/K}/\ell \Cl_{L/K}|$ by $\Vol( \ACl_{L/K})/ \Vol(\ell \ACl_{L/K})$ where $\ACl_{L/K} = G/P$ and $\ell \ACl_{L/K} = P_{\ell}/P$.
	We will give a lower bound on $\Vol(P_{\ell}/P)$ and derive an upper bound for $\Vol(G/P_{\ell})$ using the trivial upper bound for $\Vol(G/P) = \Vol(\ACl_{L/K})$ in Lemma \ref{lem:relative-volume}.
	
	Now we let $\Sigma_L$  be the set of infinite places of $L$, and for a constant $C>1$ we define
	$$T:=\{(x, J)\in G \mid J= \wp_i ^{a_i}\bar{\wp}^{-b_i}_i\text{ for some $i$, and }  \forall v_1, v_2\in \Sigma_L,  C^{-1/\ell}< |x|_{v_1}/|x|_{v_2} < C^{1/\ell} \} \subseteq \Div^0(L),$$
	where $(\wp_i, \bar{\wp_i})$ for $i = 1, \cdots, M$ are the pairs of prime ideals in $\O_L$ that are given in the statement of the lemma. We will show that
	$$\Vol (P_{\ell}/P) \ge \Vol((\ell T +P)/P)  \gg_{[L:\Q],C} M ,$$
	from which it follows that $|\ACl_{L/K}[\ell]| \ll_{[L:\Q],C} \frac{\Vol(\ACl_{L/K})}{M}.$
	The first inequality is clear since there is a natural embedding $\ell T+P \hookrightarrow P_{\ell} = \ell G+P$. 
	
	It suffices to prove the second inequality. For each $i$, we let $T_i\subset T$  be the subset of elements with $J=\wp_i ^{a_i} \bar{\wp}_i^{-b_i}$
	 and let $D_i$ be the corresponding disk $D_i := (\ell T_i+P)/P \subseteq \ACl_{L/K}$. 
	We now show that for each $i$, the volume of $D_i$ satisfies $\Vol(D_i)\gg_{[L:\Q],C} 1$. 
	We can compute that $\Vol(\ell T_i)$ in $G$ is a constant in terms of $C$ and $[L:\Q]$. If $t_1^{\ell}, t_2^{\ell}\in \ell T_i$ correspond to the same point in $D_i$, then let $u:=t_1^{\ell}/t_2^{\ell}$.  We see that $u\in \O_L^{\times}$ since $u$ has no non-archimedean valuations, and moreover it satisfies $C^{-2}< |u|_{v_1}/|u|_{v_2}< C^2$ for any archimedean valuations $v_1, v_2\in \Sigma_L$. Now all archimedean valuations of $u$ are of roughly the same size and their product is $1$, therefore the number of such $u$ can be uniformly bounded in terms of $[L:\Q]$ and $C$, since the defining polynomial of $u$ has bounded integral coefficients and bounded degree. This then implies that the disk $D_i = (\ell T_i+P)/P$ has $\Vol(D_i) \gg_{[L:\Q],C} 1$.
	
	On the other hand, we now show that for any $i\neq j$, the disks $D_i$ and $D_j$ do not overlap. Suppose $t_i\in T_i, t_j\in T_j$ with $i\neq j$ satisfy $u:=t_i^{\ell} t_j^{-\ell}\in L^{\times}$. 
	Then recall that for $u\in \Div^0_L$, we define the height of $u$ as in \cite{EV} to be 
	$$H(u):= \prod_{v|\infty} \max\{ u_v,1 \} \prod_{\wp} \max\{ |\wp|^{-\text{val}_{\wp}(u)}, 1 \}.$$
	Then for the given $u$, we have $H(u) \le C^{2[L:\Q]} \Nm(\wp_j)^{a_j\ell} \Nm(\bar{\wp}_i)^{b_i\ell } $, which is bounded by $C^{2[L:\Q]}\Disc(L/K)^{2\theta \ell}$ from our assumption on $(\wp, \bar{\wp})$. By \cite[Lemma 2.2]{EV}, there is a constant $D_0 = D_0(\theta, [L:\Q])$ such that when $\Disc(L/K)$ is larger than $D_0$, the height of $u$ is not large enough to generate $L$. Therefore $u$ generates a strict subfield $K(u)\subsetneq L$. This is a contradiction, since the non-archimedean part of $u$ has valuations from $\wp_i$ and $\bar{\wp}_i$, which are not extended from any strict intermediate subfield by assumption.  Thus, when $\Disc(L/K)$ is sufficiently large, we have shown that $D_i\cap D_j = \emptyset$ for any $i\neq j$.
	
	Therefore by choosing an arbitrary absolute constant $C>1$, we get for $\Disc(L/K)$ sufficiently larger $D_0 = D_0(\theta, [L:\Q])$ that
	$$\Vol((\ell T+P)/P) \ge \sum_i \Vol(D_i) \gg_{[L:\Q]} M.$$
\end{proof}

\section{Uniform bounds on cubic extensions and the average $3$-part of quadratic extensions}
\label{sec:uniform-cubic}
In this section we are going to bound the sum of $h_3(F/k)$ for quadratic extensions $F/k$ of a general number field $k$, and the closely related
 number of  $S_3$ cubic extensions of $k$.  This proof will employ different techniques and ideas for different regimes of $X$ versus $\Disc(k)$. 
Let $N_k(G,X)$ be the number of isomorphism classes of $G$-extensions $L/k$ with $\Disc(L/k)\le X$.
The main theorem is the following, which we expect will be useful for other applications as well.
\begin{theorem}\label{thm:cubic-bound}
	Let $k$ be a number field, let $h = h_2(k)$ denote the size of the $2$-torsion subgroup of the class group of $k$, and let $D_k = \Disc(k)$.  
For any $X \geq 1$ 
	we have
	\[
\sum_{\substack{[F:k]=2\\\Disc(F/k) \leq X}} h_3(F/k)
	 \ll_{[k:\Q],\epsilon}D_k^\epsilon \cdot
	\begin{cases}
		h X^{3/2} D_k^{1/2}, & \text{if } X \leq D_k h^{-2/3},  \\
		h^{1/3} X^{1/2} D_k^{3/2}, & \text{if } D_k h^{-2/3} \leq X \leq D_k^2 h^{2/3}, \\
		X D_k^{1/2}, & \text{if } D_k^2 h^{2/3} \leq X \leq D_k^{5/2} h^{2/3}, \\
		h^{2/3} D_k^3, & \text{if } D_k^{5/2}h^{2/3} \leq X \leq D_k^3 h^{2/3}, \text{and} \\
		X, & \text{if } X \geq D_k^3 h^{2/3}.
	\end{cases}
	\]
	The same upper bound holds for $N_k(S_3, X)$. (The sum is over $F\sub \bar{\Q}$.) 
\end{theorem}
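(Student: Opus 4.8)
The bound on $\sum h_3(F/k)$ and on $N_k(S_3,X)$ will be established together, since both quantities are controlled by counting cubic rings (orders in cubic étale $k$-algebras); indeed by the Davenport--Heilbronn/Datskovsky--Wright correspondence, $\sum_{\Disc(F/k)\le X} h_3(F/k)$ and $N_k(S_3,X)$ are both essentially counted by the same Shintani zeta function $\xi_k(s)$ enumerating cubic orders over $\co_k$ weighted by discriminant, up to the contribution of reducible orders and bounded local factors. So I would first reduce both statements to: bound the number of cubic orders $\mathcal{O}\subset A$ over $\co_k$ with $\Nm(\disc(\mathcal{O}/\co_k))\le X$, with explicit dependence on $D_k$. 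The five cases in the statement reflect five different tools, glued at the indicated thresholds, and the proof is a case analysis: in each regime I bound the count and check the bounds agree (up to $D_k^\epsilon$) at the boundary values of $X$.

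\textbf{Small $X$ (the first two cases, $X\le D_k^2 h^{2/3}$).} Here the Shintani count is too large, and instead I would use class field theory: a quadratic $F/k$ with $\Disc(F/k)\le X$ has $h_3(F/k)$ controlled by the number of index-$3$ subgroups of a ray class group of $k$, and summing over $F$ this reduces to counting pairs (quadratic $F/k$, cubic class field of $F$ unramified outside the allowed conductor). The key point is that such a count is governed by $h_2(k)$ (hence the factor $h$) together with the number of available primes of small norm, which costs a factor of $D_k^\epsilon$ only. The split at $X=D_k h^{-2/3}$ is where one optimizes between the ``few quadratic fields, each with small class number'' regime and the regime where one must allow more ramification. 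I would make this precise using the analytic class number formula / Brauer--Siegel for $F$ (as in Lemma~\ref{lem:relative-trivial}) to get $h_3(F/k)\le h(F/k)\ll \Disc(F/k)^{1/2+\epsilon} D_k^{1/2+\epsilon}$ as the trivial per-field bound, and then counting quadratic fields with a power-saving in the short intervals.

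\textbf{Large $X$ (the last case, $X\ge D_k^3 h^{2/3}$) and the upper-middle range.} For $X$ large compared to $D_k$, the Datskovsky--Wright Shintani zeta function gives $N_k(S_3,X) \sim c_k X$ with an error term; the real work, flagged in the introduction, is to make the dependence of $c_k$ and the error term on $D_k$ explicit, which has not been done before --- this requires tracking $D_k$ through the functional equation and the contour shift for $\xi_k(s)$, bounding the residue at $s=1$ (a constant times $\kappa_k$, hence $\ll D_k^\epsilon$ by the Landau/Brauer--Siegel bounds on $\kappa_k$) and the archimedean and finite-place factors. This yields $N_k(S_3,X)\ll D_k^\epsilon X$ once $X$ dominates a suitable power of $D_k$; the naive convexity-type error from shifting the contour is what forces $X\ge D_k^3 h^{2/3}$ and not something smaller. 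The intermediate cases (third and fourth, $D_k^2h^{2/3}\le X\le D_k^3 h^{2/3}$) are the genuinely new ingredient: here the Shintani bound $\ll D_k^{?} X$ is still too lossy, so I would use \emph{propagation of orders} --- every maximal cubic order contains at least $\gg Z/X$ non-maximal suborders of discriminant up to $Z$ (one proves a lower bound on the number of index-$f$ suborders), so counting all orders up to $Z$ for a well-chosen $Z>X$ and dividing by this multiplicity gives a better bound on maximal orders; optimizing $Z$ against $D_k$ produces the $h^{2/3}D_k^3$ plateau and the $XD_k^{1/2}$ term.

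\textbf{Main obstacle.} The hardest part is the large-$X$/Shintani analysis with explicit $D_k$-dependence: extracting from Datskovsky--Wright's work (or redoing it) the precise power of $D_k$ in the main term, in the analytic continuation, and in the error term, including the local density factors at ramified and archimedean places, and then the propagation-of-orders optimization that bridges the gap to the class-field-theory range. Getting the five pieces to match at the four thresholds --- i.e. verifying that at $X = D_k h^{-2/3}$, $D_k^2 h^{2/3}$, $D_k^{5/2}h^{2/3}$, and $D_k^3 h^{2/3}$ the two adjacent bounds agree up to $D_k^\epsilon$ --- is a bookkeeping check I would carry out last, and it is really what pins down the exponents appearing in the statement. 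Throughout, the passage between $h_3(F/k)$-sums and $S_3$-counts needs the relative trivial bound (Lemma~\ref{lem:relative-trivial}) to absorb the reducible/non-maximal discrepancy without loss beyond $D_k^\epsilon$.
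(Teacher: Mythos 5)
Your overall architecture is the same as the paper's (class field theory in the small range, the Shintani zeta function with explicit $D_k$-dependence via the functional equation and a contour shift in the large range, propagation of orders in between, and a gluing check at the thresholds), but two of your quantitative steps would fail as written. First, the multiplicity in your propagation-of-orders step is wrong: a maximal cubic order of discriminant about $X$ contains roughly $\kappa_k (Z/X)^{1/2}$ suborders of discriminant at most $Z$, not $\gg Z/X$, because an order of index $f$ has discriminant $Xf^2$ and the suborder-counting Dirichlet series of Lemma \ref{lem:datskovsky-wright} grows linearly in the index bound $(Z/X)^{1/2}$ (in the paper the lower bound is extracted from the $\zeta_k(2s)$ factor together with Lemma \ref{lem:ideal-lower-bound}). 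Your claimed multiplicity $Z/X$ would ``prove'' $N_k(S_3,X)\ll_{[k:\Q],\epsilon} D_k^{\epsilon}X$ in every range by taking $Z=D_k^3h^{2/3}$, which is far stronger than the theorem and not attainable by these methods; with the correct multiplicity the optimization yields $\kappa_k^{-1}(ZX)^{1/2}$, i.e.\ exactly the $h^{1/3}X^{1/2}D_k^{3/2}$ and $XD_k^{1/2}$ terms, with $Z$ constrained from below both by the Shintani threshold $D_k^3h^{2/3}$ and by the hypothesis needed to apply the ideal-count lower bound.

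Second, the case assignment: you attribute the second regime $D_kh^{-2/3}\le X\le D_k^2h^{2/3}$ to class field theory, but class field theory together with the trivial relative bound $h_3(F/k)\ll \Disc(F/k)^{1/2+\epsilon}D_k^{1/2+\epsilon}$ can only give the first-case bound $hX^{3/2}D_k^{1/2}$; summing a per-field bound over all $F$ with $\Disc(F/k)\le X$ cannot produce a quantity growing like $X^{1/2}$, and no ``short interval'' refinement is relevant since the sum is over the full range of conductors. In the paper the second case is exactly where propagation of orders (Proposition \ref{prop:order-bound-sf}) takes over, run for each quadratic resolvent $F$ with weight $h_3(F/k)$ (using the reducible algebra $k\times F$ and the cubic fields with resolvent $F$) so that it bounds $\sum h_3(F/k)$ directly and not only $N_k(S_3,X)$. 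Relatedly, the plateau $h^{2/3}D_k^3$ of the fourth case does not come from optimizing $Z$ (in that range the applicability constraint forces $Z\gg \kappa_k^{-2}XD_k$, which only returns $XD_k^{1/2}$); it is obtained by monotonicity, $N_k(S_3,X)\le N_k(S_3,D_k^3h^{2/3})$, together with the fifth case at the endpoint. These defects are repairable within your framework, but as stated the mechanism for the second regime is missing and the multiplicity exponent is off by a square.
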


We obtain the following convenient corollary for general number fields $k$.
\begin{corollary}\label{cor:general-bound}
	For any number field $k$ and any $X \geq 1$, we have 
	$$
		\sum_{\substack{[F:k]=2\\\Disc(F/k) \leq X}} h_3(F/k)
 = O_{[k:\Q], \epsilon}(\Disc(k)^{1+\epsilon} h_2(k)^{2/3} X).
	$$
	The same upper bound holds for $N_k(S_3,X)$ under the same hypotheses.
\end{corollary}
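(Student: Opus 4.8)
The plan is to deduce Corollary~\ref{cor:general-bound} directly from Theorem~\ref{thm:cubic-bound} with no new ingredients: I would simply verify that in each of the five ranges of $X$ occurring in that theorem, the stated piecewise bound is at most $\Disc(k)^{1+\epsilon}h_2(k)^{2/3}X$, up to the harmless $D_k^{\epsilon}$ prefactor (which is absorbed into $\Disc(k)^{\epsilon}$). Throughout I write $D_k=\Disc(k)$ and $h=h_2(k)$, and I use the trivial facts $D_k\ge 1$ and $h\ge 1$. Since Theorem~\ref{thm:cubic-bound} asserts that $N_k(S_3,X)$ satisfies the same fivefold bound, the identical computation yields the $S_3$-counting statement as well.

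First I would record the inequalities handed to us for free by the range conditions. On the range $X\le D_kh^{-2/3}$ we get $h^{2/3}X\le D_k$; on the complementary range $X\ge D_kh^{-2/3}$ we get $h^{2/3}X\ge D_k$; and on the last three ranges $X\ge D_k^2h^{2/3}\ge D_k^2$ and $X\ge D_k^{5/2}h^{2/3}\ge D_k^{5/2}$ as appropriate (using $h\ge1$ and then $D_k\ge1$). These are exactly the comparisons needed below, and together they also show that the five ranges do cover every $X\ge1$, since $D_kh^{-2/3}\le D_k^2h^{2/3}\le D_k^{5/2}h^{2/3}\le D_k^3h^{2/3}$.

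Then I would go regime by regime, in each case forming the ratio of the Theorem~\ref{thm:cubic-bound} bound to $D_kh^{2/3}X$. For $X\le D_kh^{-2/3}$ the ratio $hX^{3/2}D_k^{1/2}/(D_kh^{2/3}X)$ equals $(h^{2/3}X/D_k)^{1/2}\le1$; for $D_kh^{-2/3}\le X\le D_k^2h^{2/3}$ the ratio $h^{1/3}X^{1/2}D_k^{3/2}/(D_kh^{2/3}X)$ equals $(D_k/(h^{2/3}X))^{1/2}\le1$; for $D_k^2h^{2/3}\le X\le D_k^{5/2}h^{2/3}$ the ratio $XD_k^{1/2}/(D_kh^{2/3}X)$ equals $D_k^{-1/2}h^{-2/3}\le1$; for $D_k^{5/2}h^{2/3}\le X\le D_k^3h^{2/3}$ the ratio $h^{2/3}D_k^3/(D_kh^{2/3}X)$ equals $D_k^2/X\le1$; and for $X\ge D_k^3h^{2/3}$ the ratio $X/(D_kh^{2/3}X)$ equals $D_k^{-1}h^{-2/3}\le1$. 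In every case the Theorem bound is therefore $\le D_kh^{2/3}X$, and restoring the $D_k^{\epsilon}$ factor gives precisely $O_{[k:\Q],\epsilon}(\Disc(k)^{1+\epsilon}h_2(k)^{2/3}X)$.

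There is no genuine obstacle here; the entire content lies in Theorem~\ref{thm:cubic-bound}, and the corollary is merely the observation that the \emph{linear-in-$X$} bound $D_k^{1+\epsilon}h^{2/3}X$ is the largest of the five expressions over all of $X\ge1$. The only points requiring a moment's care are confirming that the breakpoints are correctly ordered so the ranges exhaust $X\ge1$ (which is where $D_k,h\ge1$ enter), and the routine bookkeeping of the $\epsilon$'s.
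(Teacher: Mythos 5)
Your proposal is correct and is essentially the paper's (implicit) argument: the corollary is stated as a direct consequence of Theorem~\ref{thm:cubic-bound}, and the intended deduction is exactly your range-by-range check that each of the five piecewise bounds is at most $\Disc(k)^{1+\epsilon}h_2(k)^{2/3}X$, with the same remark applying verbatim to $N_k(S_3,X)$.
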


Using the trivial bound on $h_2(k)$ in Corollary \ref{cor:general-bound}, the right-hand side becomes $O_{[k:\mathbb{Q}],\epsilon}(\Disc(k)^{4/3+\epsilon} X)$ for any $k$, and when $k$ is a $2$-extension, it follows from Lemma \ref{lem:two-part} that the right-hand side is $O_{[k:\mathbb{Q}],\epsilon}(\Disc(k)^{1+\epsilon} X)$. 

\begin{remark}
Theorem \ref{thm:cubic-bound} 
relies on an application of the Brauer--Siegel theorem to give a lower bound on the residue $\mathrm{Res}_{s=1} \zeta_k(s)$ of the Dedekind zeta function of $k$.  In particular, the implied constant
depends on $\epsilon$ ineffectively.  
However, 
the key results of Sections \ref{sec:small-cft}--\ref{sec:mid-order} from which Theorem \ref{thm:cubic-bound} is obtained, namely Propositions \ref{prop:trivial-cubic-bound}, \ref{prop:effective-shintani}, and \ref{prop:cubic-bound-order-general}, have effective constants and an explicit dependence on the residue, and could be used to prove an effective analogue of
Theorem \ref{thm:cubic-bound} with dependence on the residue. 
\end{remark}

In order to prove Theorem \ref{thm:cubic-bound}, we will use different approaches for three regimes of $X$ compared to $\Disc(k)$: the small range, the large range, and the intermediate range. In Section \ref{sec:small-cft}, for $X$ in the small range, we will use class field theory to give what we regard as a weak bound
 on the number of cubic extensions with a fixed discriminant. This will give a correspondingly weak bound on the number of cubic extensions with bounded discriminant. In Section \ref{sec:large-Shintani}, for $X$ in the large range, by using the functional equation of the {Shintani zeta function} for cubic rings, we can use the bound on the coefficients we obtained from class field theory and get improved bounds on the number of cubic fields with discriminants quite large compared to $\Disc(k)$. In Section \ref{sec:mid-order},  we will take advantage of the fact that the bound from the Shintani zeta function is actually also an upper bound for counting all cubic rings.  Since the number of cubic orders associated to each cubic field is large, we show it is impossible to get too many cubic fields with $X$ in the intermediate range.  Finally, in Section \ref{sec:effective}, we prove Theorem \ref{thm:cubic-bound} by combining the previous propositions for all ranges of $X$.

\subsection{Small range: class field theory}\label{sec:small-cft}
In this section we will use class field theory to give the following upper bound on the number of small degree extensions of $k$. 

\begin{proposition}\label{prop:trivial-cubic-bound}
	For any number field $k$ and any $X \geq 1$, we have 
	$$N_k(S_3, X)+ N_k(C_3, X)+ N_k(S_2, X) = O_{[k:\mathbb{Q}],\epsilon}(X^{3/2+\epsilon}\mathrm{Disc}(k)^{1/2+\epsilon} h_2(k)).$$
\end{proposition}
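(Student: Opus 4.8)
The plan is to bound each of the three counting functions $N_k(S_3,X)$, $N_k(C_3,X)$, and $N_k(S_2,X)$ separately by parametrizing the relevant extensions via class field theory and then counting the resulting abelian extensions weighted by the size of the relevant ray class groups. The easiest is $N_k(S_2,X)$: quadratic extensions $F/k$ of discriminant at most $X$ correspond to index-two subgroups of ray class groups of modulus dividing (roughly) the product of the infinite places and a finite conductor of norm at most $X$, so the count is dominated by $\sum_{\Nm(\mathfrak{f}) \le X} 2^{\omega(\mathfrak{f})} \cdot (\text{number of index-2 subgroups of } \Cl_\mathfrak{f}(k))$; the key point is that the $2$-rank of a ray class group of conductor $\mathfrak{f}$ is controlled by $h_2(k)$ times a factor depending only on $\omega(\mathfrak{f})$ and $[k:\Q]$, giving a bound $O_{[k:\Q],\epsilon}(X^{1+\epsilon} h_2(k))$, which is subsumed by the claimed bound.

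For $N_k(C_3,X)$, a cyclic cubic extension of $k$ corresponds (after possibly adjoining cube roots of unity, i.e. passing to $k(\zeta_3)$ which is an extension of degree at most $2$) to an index-three subgroup of a ray class group, and the relative discriminant of the $C_3$-extension is the square of its conductor. So $\Disc(F/k) \le X$ forces the conductor $\mathfrak{f}$ to have norm at most $X^{1/2}$, and the count is $\ll \sum_{\Nm(\mathfrak{f}) \le X^{1/2}} 3^{\rk_3 \Cl_\mathfrak{f}}$; again the $3$-rank of the ray class group is bounded in terms of $h_3$ of $k$ (or of $k(\zeta_3)$) and $\omega(\mathfrak{f})$, and the trivial bound $h_3(k) \le h(k) \ll \Disc(k)^{1/2+\epsilon}$ together with $\sum_{\Nm(\mathfrak{f})\le Y} 3^{\omega(\mathfrak{f})} \ll Y^{1+\epsilon}$ gives a bound of size $O(X^{1/2+\epsilon}\Disc(k)^{1/2+\epsilon}\cdot(\text{stuff}))$ — comfortably within the claimed $X^{3/2+\epsilon}\Disc(k)^{1/2+\epsilon}h_2(k)$.

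The main term — and the place I expect the real work to be — is $N_k(S_3,X)$. Here the standard move is: an $S_3$-cubic field $F/k$ has a unique cubic Galois closure $\tilde F$ with $\Gal(\tilde F/k) = S_3$, containing a quadratic resolvent field $E/k$, and $\tilde F/E$ is a cyclic cubic extension whose conductor $\mathfrak{c} \subset \O_E$ satisfies a conductor-discriminant relation relating $\Nm_{E/\Q}(\mathfrak{c})$ to $\Disc(F/k)$ and $\Disc(E/k)$ — concretely $\Disc(\tilde F/k) = \Disc(E/k)^3 \Nm_{E/k}(\mathfrak{c})^2$ and $\Disc(F/k)^2 = \Disc(E/k)\Nm_{E/k}(\mathfrak{c})^2$ (up to bounded factors at $3$), so $\Disc(F/k) \le X$ forces $\Disc(E/k) \le X$ and, for each such $E$, $\Nm_{E/\Q}(\mathfrak{c}) \le X \cdot \Disc(E)^{-1/2}/\Disc(k)^{[E:k]\cdot\text{something}}$ — I would extract the precise inequality. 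Then I sum over quadratic $E/k$ (bounded by the $N_k(S_2,\cdot)$ count above) and, for each $E$, count index-three subgroups of ray class groups of $E$ of modulus of bounded norm, bounding the $3$-rank of $\Cl_\mathfrak{c}(E)$ in terms of $h_3(E)$, hence in terms of $h(E) \ll \Disc(E)^{1/2+\epsilon} \ll (X\Disc(k))^{1/2+\epsilon}$, and $h_2(k)$ enters through the count of the quadratic resolvents $E$. Carefully tracking the dependence on $\Disc(k)$ through the conductor-discriminant relations and the Minkowski/class-number bounds for $E$ is the delicate bookkeeping step; the goal is to show everything multiplies out to at most $X^{3/2+\epsilon}\Disc(k)^{1/2+\epsilon}h_2(k)$, the $X^{3/2}$ coming from $\sum_E 1 \ll X^{1+\epsilon}\Disc(k)^{\epsilon}h_2(k)$ times a per-$E$ contribution of size roughly $X^{1/2+\epsilon}\Disc(k)^{\epsilon}$. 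The main obstacle is making the dependence on $\Disc(k)$ in the per-$E$ count (via $h_3(E)$ and the number of prime factors of the conductor) genuinely explicit and uniform, rather than absorbing an uncontrolled power of $\Disc(k)$ into an implied constant.
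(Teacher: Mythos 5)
Your overall strategy (class field theory, splitting the $S_3$ count through the quadratic resolvent, then counting cyclic cubic extensions of the resolvent) is the same as the paper's, which proves a pointwise bound in the discriminant $n$ (its Lemma \ref{lem:pointwise-bound}) and sums over $n\le X$. But there is a genuine gap precisely at the step you yourself flag as "the delicate bookkeeping": your per-resolvent bound does not deliver the exponent $\Disc(k)^{1/2+\epsilon}$. You propose to bound the number of relevant cyclic cubic extensions of the resolvent $E$ via $h_3(E)\le h(E)\ll \Disc(E)^{1/2+\epsilon}$ and then write $\Disc(E)\ll X\Disc(k)$; the correct relation is $\Disc(E)=\Disc(k)^2\,\Disc(E/k)\le \Disc(k)^2 X$, so this route gives $\Disc(k)^{1+\epsilon}\Disc(E/k)^{1/2+\epsilon}$ per resolvent and hence only $O_{[k:\Q],\epsilon}(X^{3/2+\epsilon}\Disc(k)^{1+\epsilon}h_2(k))$ in total --- a full factor $\Disc(k)^{1/2}$ worse than the statement. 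The missing idea is to use the \emph{relative} class number: the $S_3$-cubics with resolvent $E$ are governed by cubic characters in the minus part, so the count per conductor is controlled by $h_3(E/k)$ rather than $h_3(E)$, and the paper's trivial relative bound (Lemma \ref{lem:relative-trivial}), namely $h_3(E/k)\ll_{[k:\Q],\epsilon}\Disc(E/k)^{1/2+\epsilon}\Disc(k)^{1/2+\epsilon}$, is what produces the exponent $1/2$ on $\Disc(k)$. That relative bound is itself not free: it needs the cokernel description of $\Nm_{E/k}$ on class groups (Lemma \ref{lem:relative-Coker}), Brauer--Siegel, and a lower bound on the regulator ratio $\Rg_E/\Rg_k\gg_{[E:\Q]}1$. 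Nothing in your sketch supplies a substitute for this input, and your stated target of a per-$E$ contribution of size $X^{1/2+\epsilon}\Disc(k)^{\epsilon}$ is not attainable by any trivial class-number bound.

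Two smaller points. Your conductor--discriminant relation for the cubic field is off: with $\mathfrak{c}$ the conductor of $\widetilde F/E$ one has $\disc(F/k)=\disc(E/k)\cdot\Nm_{E/k}(\mathfrak{c})$ (equivalently $\disc(F/k)=\mathfrak{q}^2\disc(E/k)$ for a squarefree ideal $\mathfrak{q}$ of $k$, as used in the paper's Proposition \ref{prop:cubic-bound-order-general}), not $\Disc(F/k)^2=\Disc(E/k)\Nm_{E/k}(\mathfrak{c})^2$; this matters for the range of conductors you sum over. Also, for the $C_3$ and $S_2$ counts your bounds are fine (and the detour through $k(\zeta_3)$ is unnecessary --- class field theory over $k$ suffices, as in the paper), so the only substantive repair needed is the relative-class-group input in the $S_3$ case.
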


To prove this proposition, we apply class field theory and the trivial bound on torsion in relative class groups
to give the following pointwise  bound on the number of relative $S_3$ cubic extensions.

\begin{lemma}\label{lem:pointwise-bound}
	For any positive integer $n$ and any number field $k$, the number of extensions $K/k$ in $\bar{\Q}$ with $\mathrm{Disc}(K/k)=n$ and $[K:k]\leq 3$
	is 
	$O_{[k:\mathbb{Q}],\epsilon}(n^{1/2+\epsilon} h_2(k) \mathrm{Disc}(k)^{1/2+\epsilon}).$
\end{lemma}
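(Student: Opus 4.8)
The plan is to parametrize degree $\leq 3$ extensions $K/k$ of fixed relative discriminant $n$ by class field theory over their resolvent fields, and then bound the number of possibilities by counting class group characters. First I would dispose of the trivial cases: there is $O(1)$ choice for $[K:k]=1$, and the case $[K:k]=2$ is already handled by the fact that a quadratic extension $K/k$ of discriminant $n$ corresponds to an index-$2$ subgroup of the ray class group of $k$ of modulus dividing $n\cdot\infty$, of which there are $O_{[k:\Q],\epsilon}(n^{\epsilon})$ (indeed $K=k(\sqrt{\alpha})$ for $\alpha$ determined up to squares by the ramification), which is far smaller than the claimed bound. So the content is the case $[K:k]=3$.

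For a cubic extension $K/k$, let $\widetilde{K}$ be its Galois closure and let $F$ be the quadratic resolvent field: either $F=k$ (when $K/k$ is already Galois cyclic) or $F$ is the unique quadratic subextension of $\widetilde{K}/k$ with $\Gal(\widetilde{K}/F)\cong C_3$ (when $\Gal(K/k)=S_3$). By the conductor-discriminant formula, $\Disc(F/k)\mid \disc(K/k)$, and in fact $\Disc(F/k)$ divides a bounded power of $n$ while $\Nm_{F/k}(\disc(\widetilde{K}/F))$ is comparable to $n$; more precisely one has the classical relation $\disc(K/k)=\disc(F/k)\cdot\Nm_{F/k}(\fm)$ where $\fm$ is the conductor of the $C_3$-extension $\widetilde{K}/F$ inside the ray class group of $F$, up to controlled factors at primes above $3$. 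The step-by-step execution is then: (1) enumerate the possible resolvent fields $F$: these are quadratic extensions of $k$ with $\Disc(F/k)\mid n$ (up to bounded $3$-adic factors), and by the quadratic case just discussed there are $O_{[k:\Q],\epsilon}(n^{\epsilon})$ of them — this is where the factor $h_2(k)$ and $\Disc(k)^{1/2+\epsilon}$ will genuinely be needed only if one instead bounds quadratics crudely, but cleanly one gets $n^{\epsilon}$ choices; (2) for each such $F$, count $C_3$-extensions $\widetilde{K}/F$ with $\Nm_{F/k}$ of the conductor bounded in terms of $n$: by class field theory these correspond to order-$3$ quotients (equivalently, nontrivial cubic characters) of the ray class group $\Cl_{\fm}(F)$ for the relevant modulus $\fm$, hence their number is $O(|\Cl_{\fm}(F)[3]|)$; (3) bound $|\Cl_{\fm}(F)[3]|$ by splitting off the ray part: $|\Cl_{\fm}(F)[3]|\ll 3^{\omega(\fm)}|\Cl_F[3]|$, and $\omega(\fm)=O_{[k:\Q],\epsilon}(n^{\epsilon})$ so $3^{\omega(\fm)}=O_\epsilon(n^\epsilon)$, while $|\Cl_F[3]|$ is bounded by the trivial Minkowski bound $h(F)\ll_{[k:\Q],\epsilon}\Disc(F)^{1/2+\epsilon}$; (4) convert $\Disc(F)$ to the desired shape using $\Disc(F)=\Disc(k)^2\Nm(\disc(F/k))$ and $\Nm(\disc(F/k))\ll n$, giving $\Disc(F)^{1/2+\epsilon}\ll n^{1/2+\epsilon}\Disc(k)^{1+\epsilon}$ — but this has $\Disc(k)^{1+\epsilon}$, not $\Disc(k)^{1/2+\epsilon}h_2(k)$.

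The discrepancy in step (4) is exactly the point where one must be more careful and where I expect the main obstacle to lie: one should \emph{not} use the trivial bound $h(F)\ll\Disc(F)^{1/2+\epsilon}$ directly, but rather bound $h_3(F)$ by $h_3(k)$ times the relative $h_3(F/k)$, and bound $h(F/k)$ by the relative trivial bound which in the quadratic case $d=2$ reads $h(F/k)\ll_{[k:\Q],\epsilon}\Disc(F/k)^{1/2+\epsilon}\Disc(k)^{1/2+\epsilon}$ (this is precisely Lemma~\ref{lem:relative-trivial} with $d=2$, or just the classical $h(F)\ll \Disc(F/k)^{1/2+\epsilon}\Disc(k)^{1/2+\epsilon}\cdot(h(k)\Rg(k)/\Rg(F))$-type estimate). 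Then $|\Cl_F[3]|\leq h_3(k)\cdot h(F/k)$; since $F/k$ is quadratic, $h_3(k)\leq h_2(k)$ is \emph{not} quite what one wants — rather one observes that the number of resolvent fields $F$ sharing a given underlying quadratic character is controlled, and summing $h(F/k)$ over the $O(n^\epsilon)$ choices of $F$ while carrying the single global factor... Actually the cleanest route: directly invoke that for quadratic $F/k$ one has $h(F)\ll_{[k:\Q],\epsilon} n^{1/2+\epsilon}\Disc(k)^{1/2+\epsilon} h(k)$ is too weak, so instead use genus theory or the ambiguous class number formula to write $h(F)\ll h(k)\cdot 2^{\omega(\disc(F/k))}\cdot(\text{relative part})$ and bound the relative part by $\Disc(F/k)^{1/2+\epsilon}\Rg$-ratio; collecting, the $h(k)$ improves to $h_2(k)$ via the genus-theory $2$-rank bound. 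The heart of the matter is thus to replace $\Disc(k)$ by $\Disc(k)^{1/2}h_2(k)$, which amounts to one application of the relative trivial bound (Lemma~\ref{lem:relative-trivial}) for the quadratic extension $F/k$ combined with the genus-theoretic control $h_3(k)\leq h_2(k)$-type input, or more honestly a direct ambiguous-class-number argument; I would carry out this bookkeeping carefully as the final and most delicate step, with everything else being routine class field theory and Minkowski-bound estimates.
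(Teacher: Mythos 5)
Your overall skeleton (quadratic resolvent, class field theory over $F$, trivial class-number bounds) is the same as the paper's, but the two places where the stated bound actually comes from are exactly the places where your argument goes wrong, and you yourself leave the repair unfinished. First, your step (1) claims that the number of quadratic extensions $F/k$ with $\Disc(F/k)\mid n$ is $O_{[k:\Q],\epsilon}(n^\epsilon)$, ``cleanly'' with no dependence on $k$. That is false: by Kummer theory (or by counting index-$2$ subgroups of the relevant ray class group), the ramification data determines $\alpha$ with $F=k(\sqrt\alpha)$ only up to squares, units, \emph{and the $2$-part of the class group}, so the correct count is $O_{[k:\Q],\epsilon}(n^\epsilon h_2(k))$. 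This is precisely where the factor $h_2(k)$ in the lemma originates in the paper's proof (via the sequence $1\to \Hom(\Cl_k,C_2)\to\Hom(C_k,C_2)\to\Hom(\prod_v \O_v^\times,C_2)$, where the image in the last group determines the discriminant ideal); it is not an artifact of a ``crude'' bound that a cleaner argument avoids.

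Second, in steps (2)--(4) you count cubic characters of the full ray class group of $F$ and bound them by the \emph{absolute} $|\Cl_F[3]|\ll\Disc(F)^{1/2+\epsilon}$, which, as you note, produces $\Disc(k)^{1+\epsilon}$ instead of $\Disc(k)^{1/2+\epsilon}$. The correct fix is not the one you sketch: an $S_3$-extension of $k$ with resolvent $F$ corresponds to a cubic character $\chi$ of (a ray class group of) $F$ that is inverted by $\Gal(F/k)$, i.e.\ $\chi$ lives on the minus part of $\Cl_F[3]$, which (since $3\nmid[F:k]$) is exactly the relative group $\Cl_{F/k}[3]$. So the per-resolvent count is $\ll n^\epsilon h_3(F/k)$, and one application of Lemma~\ref{lem:relative-trivial} with $d=2$ gives $h_3(F/k)\ll_{[k:\Q],\epsilon}\Disc(F/k)^{1/2+\epsilon}\Disc(k)^{1/2+\epsilon}\le n^{1/2+\epsilon}\Disc(k)^{1/2+\epsilon}$; multiplying by the $O(n^\epsilon h_2(k))$ resolvents finishes the $S_3$ case, and $C_3$-extensions are handled separately by $O(n^\epsilon h_3(k))$. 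Your proposed repairs do not achieve this: the inequality ``$h_3(k)\le h_2(k)$'' is simply false (there is no comparison between $2$- and $3$-torsion of $\Cl_k$), and the genus-theory/ambiguous-class-number gesture does not convert $\Disc(k)^{1+\epsilon}$ into $\Disc(k)^{1/2+\epsilon}h_2(k)$. The missing idea is to restrict from the start to the relative (minus-part) class group rather than trying to post-process a bound on the absolute one.
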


\begin{proof}
	If $K/k$ is an $S_3$ cubic extension with $\mathrm{Disc}(K/k)=n$, then its associated quadratic resolvent $F/k$ has $\Disc(F/k) = m$ and $m|\Disc(K/k)$. By class field theory, quadratic extensions of $k$ are in bijection to surjective homomorphisms from the id\`ele class group $C_k = \mathbb{A}^{\times}_k/k^{\times}$ to $C_2$, where $\mathbb{A}^{\times}_k$ is the id\`ele group. We have the following exact sequence of $C_k$,
	$$1\rightarrow O_k^{\times} \rightarrow \prod_v O_{v}^{\times}\rightarrow C_k \rightarrow \Cl_k\rightarrow 1.$$
	Since $\Hom(\cdot, A)$ is left exact, we get
	$$1\rightarrow \Hom(\Cl_k, C_2 ) \rightarrow \Hom(C_k, C_2)  \rightarrow  \Hom( \prod_v O_{v}^{\times}, C_2).$$
	Here $|\Hom(\Cl_k, C_2)| = h_2(k)$. Given a $\rho\in \Hom(C_k, C_2)$, its image in $\Hom( \prod_v O_{v}^{\times}, C_2)$ determines the relative discriminant ideal of the quadratic field corresponding to $\rho$. Therefore for each fixed $m$, the number of possible $F/k$ with $\Disc(F/k)=m$ is bounded by $h_2(k)$. 
   For each fixed $n$, there are at most $O_{[k:\Q],\epsilon}(n^{\epsilon})$ possible $m$, therefore there are at most $O_{[k:\mathbb{Q}],\epsilon}(n^\epsilon h_2(k))$ quadratic extensions $F/k$ with $\mathrm{Disc}(F/k)|n$. 
   
   Similarly, for each fixed quadratic extension $F/k$, any associated $S_3$ cubic extension $K/k$ has Galois closure $\tilde{K}/k$ such that $\tilde{K}/F$ has $\Gal(\tilde{K}/F) = C_3$.  Thus, we can use class field theory again to determine that, for each fixed $F/k$, the number of associated $S_3$ cubic extensions with $\Disc(K/k)|n$ 
	is  
	$$\ll_{[k:\mathbb{Q}],\epsilon} n^{\epsilon} h_3(F/k) \ll_{[k:\mathbb{Q}],\epsilon} n^{\epsilon}\mathrm{Disc}(F/k)^{1/2+\epsilon}\mathrm{Disc}(k)^{1/2+\epsilon} \ll_{[k:\mathbb{Q}],\epsilon} n^{1/2+\epsilon} \mathrm{Disc}(k)^{1/2+\epsilon},$$ 
	by the trivial bound on $h_3(F/k)$ in Lemma \ref{lem:relative-trivial}. Altogether, the total number of $S_3$-extensions is now  $O_{[k:\mathbb{Q]},\epsilon}(n^{1/2+\epsilon} h_2(k) \mathrm{Disc}(k)^{1/2+\epsilon})$, as claimed. As above, we can bound the number of $C_3$-extensions $F/k$ by $O_{[k:\mathbb{Q}],\epsilon}(n^\epsilon h_3(k))$
	and the lemma follows.
\end{proof}

Summing Lemma \ref{lem:pointwise-bound} over integers $n$ immediately yields Proposition \ref{prop:trivial-cubic-bound}.

\subsection{Large range: the Shintani zeta function}\label{sec:large-Shintani}
In this section, we are going to give a better bound for $N_k(S_3, X)$ when $X$ is very large compared to $\Disc(k)$. The main idea is to use the bound 
in Lemma \ref{lem:pointwise-bound} to get a better estimate on $N_k(S_3, X)$ using \emph{Shintani zeta functions} over a general number field $k$. Moreover, the upper bound we obtain in this section is also an upper bound on the number of cubic rings over $k$, not just cubic fields.  This will be important in Section \ref{sec:mid-order}. 

Given a number field $k$, Shintani zeta functions over $k$ count \emph{cubic rings} over $k$ with a specified signature.  More specifically, given a cubic ring $R$ over $k$ (that is, a ring that is a locally free rank three $\mathcal{O}_k$-module), let $\disc(R/\mathcal{O}_k)$ denote the relative discriminant ideal of $R$ and $\mathrm{Disc}(R/\mathcal{O}_k) = \Nm_{k/\Q}(\disc(R/\mathcal{O}_k))$. 
For each archimedean place $v$ of $k$, let $k_v$ be the completion of $k$ at $v$, and let $R_v:= R\tensor_{\O_k} k_v$.
 Each $R_v$ is a cubic \'etale algebra over $k_v$.  If $k_v \simeq \mathbb{C}$, then there is only one such cubic \'etale algebra $\mathbb{C}^3$, while if $k_v \simeq \mathbb{R}$, then either $R_v \simeq \mathbb{R}^3$ or $R_v \simeq \mathbb{R}\times \mathbb{C}$.  The signature of $R$ is the tuple $\mathrm{sgn}(R) = (R_v)_{v\mid \infty}$ that records the isomorphism class of each $R_v$. 

We will use $N^3_k(X)$ to denote the number of isomorphism classes of cubic rings $K/k$ over a number field $k$ with $0<\Disc(K/k)\le X$, and $N^3_{k, \alpha}(X)$ to denote the number of isomorphism classes of cubic rings $K/k$ over a number field $k$ with $0<\Disc(K/k)\le X$ with signature $\alpha$. 
Then the main proposition we are going to prove for this subsection is as follows. 
\begin{proposition}\label{prop:effective-shintani}
For any number field $k$ 
and 	 $X \geq \mathrm{Disc}(k)^3 h_2(k)^{2/3}$, we have
	$$N^3_k(X) \ll_{[k:\mathbb{Q}],\epsilon} \mathrm{Disc}(k)^\epsilon X.$$
\end{proposition}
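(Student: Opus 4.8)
The plan is to realize $N^3_k(X)$ as a partial sum of the coefficients of the Datskovsky--Wright (Shintani) zeta function over $k$ and estimate it by a contour argument, using the functional equation to convert the weak pointwise bound of Lemma~\ref{lem:pointwise-bound} into a strong bound when $X$ is large relative to $\Disc(k)$. For each archimedean signature $\alpha$, set $\xi_{k,\alpha}(s) := \sum_R \Disc(R/\O_k)^{-s}$, the sum over isomorphism classes of cubic rings $R/\O_k$ with $\mathrm{sgn}(R)=\alpha$; its coefficients are non-negative integers, it converges absolutely for $\mathrm{Re}(s)>1$, and $N^3_k(X)$ is the sum over the finitely many $\alpha$ of the partial sums of these coefficients up to $X$. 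From \cite{DW88} (building on work of Shintani and Wright), the vector $\vec\xi_k(s) = (\xi_{k,\alpha}(s))_\alpha$ extends meromorphically to $\mathbb{C}$ with only simple poles at $s=1$ and $s=5/6$, and satisfies a matrix functional equation
\[
\vec\xi_k(1-s) = \Disc(k)^{a(s)}\,\Gamma_\infty(s)\,M(s)\,\vec\xi^{\vee}_k(s),
\]
where $\vec\xi^{\vee}_k$ is the analogous vector of dual Shintani zeta functions (counting cubic rings subject to local conditions at the primes above $6\,\disc(k)$), $\Gamma_\infty(s)$ is a product of shifted Gamma functions depending only on the signature of $k$, $M(s)$ is an explicit matrix of trigonometric functions bounded on vertical strips, and $a(s)$ is an affine function of $s$ with coefficients depending only on $[k:\Q]$. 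The one ingredient not available off the shelf is the \emph{explicit} appearance of $\Disc(k)$ here and in the residues: I would extract $a(s)$ and the $\Disc(k)$-dependence of the residues at $s=1,5/6$ by tracking the global zeta integral of \cite{DW88}, where the power of $\Disc(k)$ enters through the covolume of $\O_k^4$ in the $4[k:\Q]$-dimensional space of binary cubic forms and through the archimedean local integrals, while the residues are, up to that power, proportional to $\kappa_k$ (which is $O_{[k:\Q],\epsilon}(\Disc(k)^\epsilon)$ by Landau's bound) and to a value of $\zeta_k$ in the critical strip (bounded by convexity).

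Next I would bound the dual coefficients. The $n$th coefficient of $\vec\xi^{\vee}_k$ is at most $\#\{R/\O_k\text{ cubic ring}:\Disc(R/\O_k)=n\}$; every such $R$ lies in a unique maximal cubic ring $\tilde R = \O_E$ for a cubic \'etale $k$-algebra $E$, with $\Disc(R/\O_k) = \Disc(\tilde R/\O_k)\,\Nm(\mathfrak{f})^2$ for the conductor $\mathfrak{f}$, and $\O_E$ of relative discriminant $m$ is one of $\O_k^3$, $\O_k\times\O_F$ with $F/k$ quadratic, or $\O_K$ with $K/k$ cubic, so Lemma~\ref{lem:pointwise-bound} bounds the number of such $\O_E$ by $O_{[k:\Q],\epsilon}(m^{1/2+\epsilon}h_2(k)\Disc(k)^{1/2+\epsilon})$; together with the standard bound $O_\epsilon(\Nm(\mathfrak{f})^\epsilon)$ on the number of suborders of a fixed $\O_E$ of conductor $\mathfrak{f}$ (from the local generating functions for cubic rings), summing over the $O_\epsilon(n^\epsilon)$ possibilities for $(m,\mathfrak{f})$ gives that this coefficient is $O_{[k:\Q],\epsilon}(n^{1/2+\epsilon}h_2(k)\Disc(k)^{1/2+\epsilon})$. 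By partial summation, $\vec\xi^{\vee}_k(s)$ --- and likewise $\vec\xi_k(s)$ --- is $O_{[k:\Q],\epsilon}(h_2(k)\Disc(k)^{1/2+\epsilon})$ on the line $\mathrm{Re}(s)=\tfrac32+\epsilon$.

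Feeding this into the functional equation bounds $\vec\xi_k(s)$ on the line $\mathrm{Re}(s)=-\tfrac12-\epsilon$ by $O_{[k:\Q],\epsilon}(\Disc(k)^{A+\epsilon}h_2(k)(1+|t|)^{D})$ for explicit $A$ (from $a(s)$ and the $\tfrac12$ above) and $D$ (from $\Gamma_\infty$ and $M$). Phragm\'en--Lindel\"of applied to $(s-1)(s-\tfrac56)\xi_{k,\alpha}(s)$, normalized to remove the $\Disc(k)$-power and polynomial-in-$t$ growth, then yields a convexity bound $\xi_{k,\alpha}(\sigma+it)\ll_{[k:\Q],\epsilon}\Disc(k)^{B(\sigma)+\epsilon}h_2(k)^{\beta(\sigma)}(1+|t|)^{D(\sigma)}$ on $-\tfrac12-\epsilon\le\sigma\le\tfrac32+\epsilon$ with $B,\beta$ affine. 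Finally I would apply a truncated Perron formula to $\xi_{k,\alpha}(s)X^s/s$, shift the contour from $\mathrm{Re}(s)=1+\epsilon$ to $\mathrm{Re}(s)=\sigma_0$ for a suitable $\sigma_0<\tfrac56$, collect the residues at $s=1$ (contributing $\ll_{[k:\Q],\epsilon}\Disc(k)^\epsilon X$) and $s=\tfrac56$ (contributing $\ll_{[k:\Q],\epsilon}\Disc(k)^{O(1)}X^{5/6}$, which is $\ll\Disc(k)^\epsilon X$ once $X\ge\Disc(k)^3$), and bound the remaining horizontal and vertical segments using the convexity estimate; optimizing $\sigma_0$ and the truncation height, the total error is $\ll_{[k:\Q],\epsilon}\Disc(k)^\epsilon X$ exactly when $X\gg\Disc(k)^3h_2(k)^{2/3}$, the exponents $3$ and $2/3$ being what emerges from balancing $X^{\sigma_0}\Disc(k)^{B(\sigma_0)}h_2(k)^{\beta(\sigma_0)}$ against $X$. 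Summing over $\alpha$ gives the proposition.

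The analytic part --- Phragm\'en--Lindel\"of followed by a Perron contour shift --- is routine; the main obstacle, and the genuinely new content, is making the dependence on $\Disc(k)$ explicit throughout the Datskovsky--Wright machinery: the affine exponent $a(s)$ in the functional equation and the $\Disc(k)$-dependence of the residues at $s=1$ and $s=5/6$. One must also verify that the resulting constants $A$, $B(\sigma)$, $\beta(\sigma)$ are small enough that the balance point of the optimization is of size no larger than $\Disc(k)^3h_2(k)^{2/3}$; this is where essentially all the work lies.
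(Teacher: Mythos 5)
Your plan is, in outline, the same as the paper's proof of Proposition~\ref{prop:effective-shintani}: bound the Shintani zeta function in its half-plane of absolute convergence ($\Re(s)>3/2$) by combining the class-field-theory pointwise count of Lemma~\ref{lem:pointwise-bound} with the order-counting Dirichlet series, use the functional equation and Phragm\'en--Lindel\"of to get a discriminant-aspect convexity bound (Lemmas~\ref{lem:shintani-bound} and~\ref{lem:shintani-convexity}), shift a Perron-type contour past the poles at $s=1$ and $s=5/6$, and balance the shifted-line contribution against $X$, which is exactly where the threshold $\Disc(k)^3h_2(k)^{2/3}$ comes from. Two remarks on scope: the $\Disc(k)$-explicit functional equation and residues are already available off the shelf (Proposition~\ref{prop:shintani}, quoting Shintani, Wright, and Datskovsky--Wright), so no re-derivation of the adelic zeta integral is needed; and the optimization in fact forces the contour essentially to $\Re(s)=-1/2-\epsilon$ (a short computation with the convexity exponents $\Disc(k)^{7/2-2\sigma}$ and the constraint $X\ge \Disc(k)^3h_2(k)^{2/3}$ shows no $\sigma_0>-1/2$ suffices), not merely ``some $\sigma_0<5/6$''.

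The one step that would fail as written is the sharp truncated Perron formula with kernel $X^s/s$. The convexity bound grows like $(1+|t|)^{2[k:\mathbb{Q}](3/2-\sigma)+\epsilon}$, so the vertical segment at $\Re(s)\approx-1/2$ up to height $T$ costs a factor $T^{\asymp[k:\mathbb{Q}]}$ and forces $T$ to be at most a tiny power of $X$; but the truncation error of a sharp cutoff requires controlling the coefficients in a window of length $\approx X/T$ around $X$, and the only pointwise input available is $a_n\ll n^{1/2+\epsilon}h_2(k)\Disc(k)^{1/2+\epsilon}$, which forces $T$ to be at least roughly $X^{1/2}$; these demands are incompatible. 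The standard repair --- and what the paper does --- is to majorize the sharp cutoff by a smooth weight $\phi\ge 1_{[0,1]}$, $\phi>0$, whose Mellin transform decays rapidly off the real axis (legitimate here because the coefficients are non-negative and only an upper bound is sought), after which the $t$-growth is harmless; one then also needs to handle the kernel's pole at $s=0$, which the paper does via $\xi_{k,\alpha}(0)=0$ for $[k:\mathbb{Q}]\ge 2$ (Lemma~\ref{lem:shintani-vanishing}). Separately, your claim that a fixed maximal order $\mathcal{O}_E$ has only $O_\epsilon(\Nm(\mathfrak{f})^\epsilon)$ suborders of conductor $\mathfrak{f}$ is false: by Lemma~\ref{lem:datskovsky-wright} the order-counting series is $\zeta_k(4s)\zeta_k(6s-1)\zeta_A(2s)/\zeta_A(4s)$, whose $\zeta_k(6s-1)$ factor produces as many as about $(\text{index})^{1/3}=(\text{discriminant ratio})^{1/6}$ suborders. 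This does not damage your conclusion, since $m^{1/2+\epsilon}(n/m)^{1/6+\epsilon}\le n^{1/2+2\epsilon}$ still gives the coefficient bound $O(n^{1/2+\epsilon}h_2(k)\Disc(k)^{1/2+\epsilon})$ and hence boundedness on $\Re(s)=3/2+\epsilon$, but the correct accounting is via the Datskovsky--Wright series as in Lemma~\ref{lem:shintani-bound}.
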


We start by introducing properties of Shintani zeta functions over a general number field, including all useful notations. For a fixed signature $\alpha$, let
\[
\xi_{k,\alpha}(s) := \sum_{\begin{subarray}{c} R/\mathcal{O}_k: \\ \,\mathrm{Disc}(R/\mathcal{O}_k) \neq 0, \\ \mathrm{sgn}(R) = \alpha \end{subarray}} \frac{|\mathrm{Aut}_{\O_k}(R)|^{-1}}{\mathrm{Disc}(R/\mathcal{O}_k)^s},
\]
where the sum runs over isomorphism classes of cubic rings $R$ over $k$.

Let $\zeta_k(s)$ be the Dekekind zeta function of $k$.
For convenience in what is to come, we define
\[
\mathfrak{A}_k := \frac{\zeta_k(2) \mathrm{Res}_{s=1}\zeta_k(s)}{2^{r_1(k)+r_2(k)+1}} \text{ and } \mathfrak{B}_k := \frac{3^{r_1(k)+r_2(k)/2}\zeta_k(1/3) \mathrm{Res}_{s=1} \zeta_k(s)}{6\cdot 2^{r_1(k)+r_2(k)} \mathrm{Disc}(k)^{1/2}} \left(\frac{\Gamma(1/3)^3}{2\pi}\right)^{[k:\mathbb{Q}]}.
\]
Additionally, for each pair of signatures $\alpha,\beta$ and each infinite place $v \mid \infty$ of $k$, we define a factor $c_{v,\alpha\beta}(s)$ as follows.  If $k_v \simeq \mathbb{C}$, set $c_{v,\alpha\beta}(s) = \sin^2(\pi s)\sin(\pi s - \pi/6) \sin(\pi s + \pi/6)$.  If $k_v \simeq \mathbb{R}$ and $\alpha_v \simeq \beta_v$, set $c_{v,\alpha\beta}(s) = \sin(2\pi s)/2$.  In the remaining cases, set
\begin{equation}
\begin{aligned}
c_{v,\alpha\beta}(s) = \frac{1}{2} \left\{\begin{array}{ll} 3\sin(\pi s), & \text{if } \alpha_v \simeq \mathbb{R}^3, \beta_v \simeq \mathbb{R}\times\mathbb{C}, \\ \sin(\pi s), & \text{if } \alpha_v \simeq \mathbb{R}\times\mathbb{C}, \beta_v \simeq \mathbb{R}^3, \end{array}\right.
\end{aligned}
\end{equation}
and define $c_{\alpha\beta}(s) := \prod_{v\mid \infty} c_{v,\alpha\beta}(s)$. 

The basic properties of $\xi_{k,\alpha}(s)$ are recorded in the following proposition that collects results due to Shintani \cite{Shintani}, Wright \cite{WrightThesis,WrightAdelic}, and Datskovsky and Wright \cite{DW86}.  We also refer the interested reader to the work of Taniguchi \cite{Taniguchi} for more information.

\begin{proposition} \label{prop:shintani} 
	We have the following properties for Shintani zeta functions over a general number field $k$: 
	\begin{enumerate}
		\item For each number field $k$ and each signature $\alpha$, the function $\xi_{k,\alpha}(s)$ converges absolutely in the region $\Re(s)>1$.
		
		\item The function $\xi_{k,\alpha}(s)$ has a meromorphic continuation to $\mathbb{C}$ with poles only at $s=1$ and $s=5/6$.  Each of these poles is simple, with residues
		\[
		\mathfrak{A}_k\cdot(1+3^{-r(\alpha)-r_2(k)}) \text{ and } \mathfrak{B}_k\cdot 3^{-r(\alpha)/2}
		\]
		respectively, where $r(\alpha) = \#\{ v \mid \infty : \alpha_v \simeq \mathbb{R}^3\}$.
		
		\item The function $\xi_{k,\alpha}(s)$ satisfies a functional equation,
		\[
		\xi_{k,\alpha}(1-s) = \left(\frac{3^{(6s-2)}}{\pi^{4s}}\Gamma(s)^{2}\Gamma(s-1/6)\Gamma(s+1/6)\right)^{[k:\mathbb{Q}]} \mathrm{Disc}(k)^{4s-2} \sum_{\beta} c_{\alpha\beta}(s) \hat\xi_{k,\beta}(s),
		\]
		where the sum runs over possible signatures $\beta$ and where $\hat\xi_{k,\beta}(s)$ is the {dual Shintani zeta function} defined via the Dirichlet series
		\[
		\hat\xi_{k,\beta}(s) = \sum_{\substack{R/\mathcal{O}_k: \\ \mathrm{Disc}(R/\mathcal{O}_k) \neq 0 \\ \mathrm{sgn}(R)=\beta \\ 3\mid \mathrm{tr}(t)\, \forall t \in R}} \frac{|\mathrm{Aut}_{\O_k}(R)|^{-1}}{\mathrm{Disc}(R/\mathcal{O}_k)^s},
		\]		
		where the sum runs over isomorphism classes of cubic rings $R$ over $k$.
		\item
		For each $\alpha$, the function $\hat\xi_{k,\alpha}(s)$ has a meromorphic continuation to all of $\mathbb{C}$ with poles at $s=1$ and $s=5/6$ (both simple), and satisfies the inequality $\hat\xi_{k,\alpha}(s) < \xi_{k,\alpha}(s)$ for real $s>1$.
		\item
		For each $\alpha$, the functions $(s-1)(s-5/6) \xi_{k,\alpha}(s)$ and $(s-1)(s-5/6)\hat\xi_{k,\alpha}(s)$ are entire of order $1$. 
	\end{enumerate}
\end{proposition}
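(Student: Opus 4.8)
The plan is to identify $\xi_{k,\alpha}(s)$ with a component of the Shintani zeta function attached to the prehomogeneous vector space $\mathrm{Sym}^3$ of binary cubic forms under $\mathrm{GL}_2$ (whose relative invariant is, up to a power, the discriminant), in the adelic form that makes sense over an arbitrary base field, and then to extract each of the five assertions from the literature while carefully matching normalizations. The relevant inputs are: Shintani \cite{Shintani}, who over $\mathbb{Q}$ established the meromorphic continuation, the location of the two poles, and the functional equation with its $\Gamma$-factors and archimedean scattering matrix; Wright \cite{WrightThesis,WrightAdelic}, who recast the construction adelically so that it produces, over any number field $k$, the family of zeta functions indexed by the archimedean signature $\alpha$; and Datskovsky and Wright \cite{DW86,DW88}, who identified the Dirichlet coefficients with the automorphism-weighted count of cubic rings $R/\mathcal{O}_k$ — through the Delone--Faddeev parametrization of cubic rings by integral binary cubic forms, suitably twisted by the class group of $k$ — and computed the residues in terms of $\zeta_k$.

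Part (1) is the elementary half of this parametrization: the number of $\mathrm{GL}_2(\mathcal{O}_k)$-orbits on integral binary cubic forms with discriminant norm at most $X$ is $O_{[k:\mathbb{Q}]}(\mathrm{Disc}(k)^{O(1)}X)$ by counting lattice points in a fundamental domain (as in Davenport's classical count over $\mathbb{Q}$), so the partial sums of the Dirichlet series defining $\xi_{k,\alpha}(s)$ are $O(X^{1+\epsilon})$ and the series converges absolutely for $\Re(s)>1$; the same crude estimate on the dual lattice gives the corresponding assertion for $\hat\xi_{k,\alpha}(s)$ in (4). The meromorphic continuation in (2) and (4), and the fact that the only poles are simple ones at $s=1$ and $s=5/6$, are the principal output of Shintani's and (in general) Wright's zeta-integral analysis, the two poles arising from the two leading boundary contributions; this analysis applies verbatim to the dual series $\hat\xi_{k,\beta}(s)$, whose coefficients run over the sublattice of cubic rings $R$ with $3\mid\mathrm{tr}(t)$ for all $t\in R$ — precisely the dual-lattice condition under the trace pairing, i.e.\ $3\mid b$ and $3\mid c$ for the attached binary cubic $ax^3+bx^2y+cxy^2+dy^3$. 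The inequality $\hat\xi_{k,\alpha}(s)<\xi_{k,\alpha}(s)$ for real $s>1$ is then immediate: both are convergent Dirichlet series with positive coefficients $|\mathrm{Aut}_{\mathcal{O}_k}(R)|^{-1}$, the index set of the first lies inside that of the second, and the inclusion is strict (for instance $\mathcal{O}_k\times\mathcal{O}_k\times\mathcal{O}_k$ contributes to $\xi_{k,\alpha}$ but not to $\hat\xi_{k,\alpha}$).

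The residues in (2) are obtained by evaluating the global volume constant and the local orbital integrals appearing in the zeta integral; the archimedean integrals depend on $\alpha$ only through the split/non-split type at each real place, producing the factors $3^{-r(\alpha)-r_2(k)}$ at $s=1$ and $3^{-r(\alpha)/2}$ at $s=5/6$, with the remaining constants packaged into $\mathfrak{A}_k$ and $\mathfrak{B}_k$ — this is the content of the density theorems of \cite{DW88} over a general $k$. For the functional equation (3), the dual prehomogeneous vector space is again $\mathrm{Sym}^3$ of binary cubic forms, and Wright's adelic functional equation relates $\xi_{k,\alpha}(1-s)$ to $\sum_\beta c_{\alpha\beta}(s)\hat\xi_{k,\beta}(s)$: the non-archimedean places contribute $\mathrm{Disc}(k)^{4s-2}$ and, jointly with the archimedean local zeta functions, the factor $\big(3^{6s-2}\pi^{-4s}\Gamma(s)^2\Gamma(s-1/6)\Gamma(s+1/6)\big)^{[k:\mathbb{Q}]}$, while $c_{\alpha\beta}(s)=\prod_{v\mid\infty}c_{v,\alpha\beta}(s)$ is the product of the archimedean local functional-equation matrices — a single trigonometric coefficient $\sin^2(\pi s)\sin(\pi s-\pi/6)\sin(\pi s+\pi/6)$ at each complex place and the displayed $2\times2$ matrix in the types $\mathbb{R}^3,\mathbb{R}\times\mathbb{C}$ at each real place — as computed by Shintani \cite{Shintani} (real place) and Datskovsky--Wright \cite{DW86} (complex place). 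Finally, for (5): once (2) shows the only poles are the two simple ones, $(s-1)(s-5/6)\xi_{k,\alpha}(s)$ is entire, and it has order $1$ by a standard Phragm\'en--Lindel\"of argument — bounded for $\Re(s)\ge1+\delta$ by absolute convergence, polynomially bounded for $\Re(s)\le-\delta$ via the functional equation together with Stirling for the $\Gamma$-factors and the crude coefficient bound on $\hat\xi_{k,\beta}$, hence of polynomial growth in every vertical strip of bounded width, and of order exactly $1$ because of the $\Gamma$-factors; over $\mathbb{Q}$ this is in \cite{Shintani} and the general case is identical.

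I expect the main obstacle to be bookkeeping rather than new ideas. With the normalizations used here — $\mathrm{Disc}(R/\mathcal{O}_k)=\Nm_{k/\mathbb{Q}}(\disc(R/\mathcal{O}_k))$ and the weight $|\mathrm{Aut}_{\mathcal{O}_k}(R)|^{-1}$ — one must verify that the residues at $s=1$ and $s=5/6$ come out to exactly $\mathfrak{A}_k(1+3^{-r(\alpha)-r_2(k)})$ and $\mathfrak{B}_k\cdot3^{-r(\alpha)/2}$ with $\mathfrak{A}_k,\mathfrak{B}_k$ as defined — tracking the factors $2^{r_1(k)+r_2(k)+1}$, $6\cdot2^{r_1(k)+r_2(k)}$, $\mathrm{Disc}(k)^{1/2}$, $\zeta_k(1/3)$, and $(\Gamma(1/3)^3/2\pi)^{[k:\mathbb{Q}]}$ — and that the archimedean matrices $c_{v,\alpha\beta}(s)$ carry precisely the stated constants (the overall $\tfrac12$, the $3$ in the $\mathbb{R}^3\to\mathbb{R}\times\mathbb{C}$ entry, the $\pm\pi/6$ shifts at complex places). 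This amounts to reconciling the local integrals of Wright's adelic formalism with the explicit computations of \cite{Shintani,DW86,DW88}, which is where essentially all the care is needed.
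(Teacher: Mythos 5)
Your proposal is correct and follows essentially the same route as the paper, which proves this proposition purely by citation: part (1) from Wright's adelic construction, parts (2)--(4) from Datskovsky--Wright's Theorem 6.2 (with the inequality $\hat\xi_{k,\alpha}(s)<\xi_{k,\alpha}(s)$ coming from absolute convergence and the coefficient comparison, as you note), and part (5) from the generalization of Shintani's Theorem 2.1(iii) explained in Wright's thesis. Your added sketches (orbit counting for convergence, Phragm\'en--Lindel\"of for the order-one claim, and the normalization bookkeeping) are consistent with what those references supply.
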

\begin{proof}
	We provide references for these claims without regard to necessarily providing the original reference.  The first claim follows from \cite[Theorem 4.1]{WrightAdelic}.  The second through fourth primarily follow from \cite[Theorem 6.2]{DW86}, with the claim that $\hat\xi_{k,\alpha}(s) < \xi_{k,\alpha}(s)$ for real $s>1$ following from the absolute convergence of $\xi_{k,\alpha}(s)$ in this region.  The fifth follows from a generalization of \cite[Theorem 2.1.iii]{Shintani} as is explained on \cite[pg. 96]{WrightThesis}.
\end{proof}

We next record one more useful property of $\xi_{k,\alpha}(s)$.

\begin{lemma}\label{lem:shintani-vanishing}
	If $k$ has degree at least $2$, then $\xi_{k,\alpha}(0)=0$ for each signature $\alpha$.
\end{lemma}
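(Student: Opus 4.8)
The plan is to read off $\xi_{k,\alpha}(0)$ from the functional equation of Proposition~\ref{prop:shintani}(3). Since $\xi_{k,\alpha}(s)$ is holomorphic away from $s=1$ and $s=5/6$ by Proposition~\ref{prop:shintani}(2), the value $\xi_{k,\alpha}(0)$ is a well-defined complex number, and substituting $1-s=0$ (i.e.\ $s=1$) into the functional equation gives
\[
\xi_{k,\alpha}(0) = \left(\frac{3^{4}}{\pi^{4}}\Gamma(5/6)\Gamma(7/6)\right)^{[k:\mathbb{Q}]}\mathrm{Disc}(k)^{2}\,\lim_{s\to 1}\sum_{\beta} c_{\alpha\beta}(s)\,\hat\xi_{k,\beta}(s),
\]
where I have used $\Gamma(1)=1$ and the prefactor in front of the limit is a finite nonzero constant depending only on $k$. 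So it suffices to prove that $\lim_{s\to 1} c_{\alpha\beta}(s)\hat\xi_{k,\beta}(s)=0$ for each of the finitely many signatures $\beta$.

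First I would compute the order of vanishing of $c_{\alpha\beta}(s)=\prod_{v\mid\infty}c_{v,\alpha\beta}(s)$ at $s=1$ place by place. If $k_v\simeq\mathbb{C}$, then $c_{v,\alpha\beta}(s)=\sin^2(\pi s)\sin(\pi s-\pi/6)\sin(\pi s+\pi/6)$; using $\sin(\pi s-\pi/6)\sin(\pi s+\pi/6)=\sin^2(\pi s)-\frac14$, which equals $-\frac14\neq 0$ at $s=1$, this local factor vanishes to order exactly $2$ there. If $k_v\simeq\mathbb{R}$, then $c_{v,\alpha\beta}(s)$ is a nonzero constant times either $\sin(2\pi s)$ or $\sin(\pi s)$, each of which has a simple zero at $s=1$. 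Hence $c_{\alpha\beta}(s)$ vanishes at $s=1$ to order $r_1(k)+2r_2(k)=[k:\mathbb{Q}]$, which is $\geq 2$ by hypothesis.

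Finally, by Proposition~\ref{prop:shintani}(4), each $\hat\xi_{k,\beta}(s)$ has at worst a simple pole at $s=1$, so $(s-1)\hat\xi_{k,\beta}(s)$ is bounded near $s=1$. Writing
\[
c_{\alpha\beta}(s)\hat\xi_{k,\beta}(s)=\big((s-1)^{-1}c_{\alpha\beta}(s)\big)\cdot\big((s-1)\hat\xi_{k,\beta}(s)\big),
\]
the first factor still vanishes at $s=1$ (to order at least $[k:\mathbb{Q}]-1\geq 1$) while the second is bounded, so the product tends to $0$ as $s\to 1$. Summing over $\beta$ and inserting this into the displayed identity for $\xi_{k,\alpha}(0)$ yields $\xi_{k,\alpha}(0)=0$. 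The only point requiring care is the bookkeeping of orders: what is needed is that the order of vanishing of $c_{\alpha\beta}$ at $s=1$ strictly exceeds the order of the pole of $\hat\xi_{k,\beta}$ there, and this is precisely where both the simplicity of the pole (Proposition~\ref{prop:shintani}(4)) and the hypothesis $[k:\mathbb{Q}]\geq 2$ are used; for $k=\mathbb{Q}$ the two orders coincide and the value $\xi_{\mathbb{Q},\alpha}(0)$ need not vanish.
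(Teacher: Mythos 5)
Your proposal is correct and follows essentially the same route as the paper: the paper's proof also observes that each $c_{\alpha\beta}(s)$ vanishes to order $[k:\mathbb{Q}]$ at $s=1$ (your place-by-place computation giving $r_1+2r_2$) and then concludes from the functional equation, implicitly using that each $\hat\xi_{k,\beta}$ has only a simple pole there. Your version just spells out the limit bookkeeping that the paper leaves implicit.
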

\begin{proof}
	From the formulas given, it follows that each $c_{\alpha\beta}(s)$ has a zero of order $[k:\mathbb{Q}]$ at $s=1$.  The claim then follows from the functional equation for $\xi_{k,\alpha}(s)$.
\end{proof}

While Proposition \ref{prop:shintani} establishes that $\xi_{k,\alpha}(s)$ converges absolutely in the region $\Re(s)>1$, it does not guarantee that this convergence is uniform in $k$.  To control this uniformity, we must control the number of cubic rings over $k$ with small discriminant.  This will essentially be afforded to us by Proposition \ref{prop:trivial-cubic-bound} on the number of small discriminant cubic algebras over $k$, except that $\xi_{k,\alpha}(s)$ counts all orders in cubic \'etale algebras over $k$, not just the maximal orders.  We will later exploit this overcounting in the proof of Proposition \ref{prop:order-bound-sf}, but for now, we simply record the following result of Datskovsky and Wright \cite[Theorem 6.1]{DW86} in order to track the number of orders in a cubic \'etale algebra $A/k$. 

\begin{lemma}[Datskovsky--Wright]\label{lem:datskovsky-wright}
	Let $A$ be a cubic \'etale algebra over $k$ with maximal order $\mathcal{O}_A$.  Thus, either $A\simeq k^3$, $A \simeq F \times k$ where $F/k$ is a  quadratic  extension, or $A \simeq K$ where $K/k$ is a relative cubic field extension.  For each $n$, let $a_n$ denote the number of orders $\mathcal{O} \subset A$ for which $[\mathcal{O}_A:\mathcal{O}]^2 = n$, equivalently, the number of orders $\mathcal{O}\subset A$ for which $\Disc(\mathcal{O}/\mathcal{O}_k)  = \Disc(\mathcal{O}_A/\mathcal{O}_k) \cdot n$. 
	Then the Dirichlet series $f_A(s):=\sum_{n} a_n n^{-s}$ satisfies
	\[
	f_A(s)=\zeta_k(4s) \zeta_k(6s-1) \frac{\zeta_A(2s)}{\zeta_A(4s)},
	\]
	where
	\[
	\zeta_A(s) = 
	\begin{cases}
	\zeta_k(s)^3, & \text{if } A \simeq k^3, \\ 
	\zeta_k(s) \zeta_F(s), & \text{if } A \simeq F \times k,\\ 
	\zeta_K(s), & \text{if } A \simeq K,
	\end{cases}
	\]
	with notation  as above, and where for a number field $L$, $\zeta_L(s)$ denotes the Dedekind zeta function of $L$.
\end{lemma}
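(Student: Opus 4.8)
The plan is to prove the identity prime‑by‑prime. An order $\mathcal{O}\subseteq A$ is determined by its completions $\mathcal{O}_\mathfrak{p}=\mathcal{O}\otimes_{\mathcal{O}_k}\mathcal{O}_{k_\mathfrak{p}}\subseteq\mathcal{O}_{A_\mathfrak{p}}$, the maximal order of the étale cubic algebra $A_\mathfrak{p}:=A\otimes_k k_\mathfrak{p}$, as $\mathfrak{p}$ ranges over primes of $\mathcal{O}_k$, and the module index $[\mathcal{O}_A:\mathcal{O}]$ is the product of the local ones; since $\mathcal{O}_{A_\mathfrak{p}}/\mathcal{O}_\mathfrak{p}$ is nonzero for only finitely many $\mathfrak{p}$, one gets an Euler product $f_A(s)=\prod_{\mathfrak{p}}Z_{A_\mathfrak{p}}(s)$, where $Z_B(s):=\sum_{j\ge0}N_j(B)\,q^{-2js}$, $q:=\Nm_{k/\Q}(\mathfrak{p})$, and $N_j(B)$ is the number of $\mathcal{O}_{k_\mathfrak{p}}$‑orders in $\mathcal{O}_B$ of index ideal $\mathfrak{p}^j$. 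The right‑hand side factors the same way: using $\zeta_A(2s)/\zeta_A(4s)=\prod_{\mathfrak{P}\subseteq\mathcal{O}_A}(1+\Nm(\mathfrak{P})^{-2s})$, the Euler factor at $\mathfrak{p}$ of $\zeta_k(4s)\zeta_k(6s-1)\zeta_A(2s)/\zeta_A(4s)$ is
$$E_\mathfrak{p}(s)=\frac{1}{(1-q^{-4s})(1-q^{1-6s})}\prod_{\mathfrak{P}\mid\mathfrak{p}}\bigl(1+q^{-2f(\mathfrak{P}/\mathfrak{p})s}\bigr).$$
So the lemma is equivalent to the family of local identities $Z_B(s)=E_\mathfrak{p}(s)$. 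Here $E_\mathfrak{p}(s)$ depends only on the decomposition type of $\mathfrak{p}$ in $\mathcal{O}_A$ — whether $B=A_\mathfrak{p}$ is $k_\mathfrak{p}^3$, $k_\mathfrak{p}\times(\text{unram.\ quad.})$, $k_\mathfrak{p}\times(\text{ram.\ quad.})$, an unramified cubic field, or a ramified cubic field, with residue‑degree multisets $\{1,1,1\},\{1,2\},\{1,1\},\{3\},\{1\}$ respectively — while $Z_B(s)$ depends on the $k_\mathfrak{p}$‑isomorphism class of $B$ (a few extra classes arise in the wildly ramified cases, at primes above $2$ and above $3$). Thus finitely many local identities, each a rational‑function identity in $q^{-s}$, must be checked.

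For the local identities I would follow Datskovsky--Wright \cite{DW86} and use the Delone--Faddeev / Gan--Gross--Savin parametrization of cubic algebras over the complete DVR $R:=\mathcal{O}_{k_\mathfrak{p}}$ by $\mathrm{GL}_2(R)$‑orbits of binary cubic forms over $R$ (under the twisted action $g\cdot\phi=(\det g)^{-1}\phi\circ g$), under which the discriminant of the algebra equals the discriminant of the form. The key structural point is that although multiplication on $\mathcal{O}_B$ does not descend to $\mathcal{O}_B/(R\cdot1)$, its skew part does, yielding a map $\wedge^2_R\!\big(\mathcal{O}_B/(R\cdot1)\big)\to\mathcal{O}_B/(R\cdot1)$, which is the binary cubic form attached to $B$. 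Fixing $B$ selects the $\mathrm{GL}_2(k_\mathfrak{p})$‑orbit of nondegenerate forms it represents, and the orders $\mathcal{O}\subseteq\mathcal{O}_B$ of index ideal $\mathfrak{p}^j$ surject onto the $\mathrm{GL}_2(R)$‑orbits of integral forms in that orbit with discriminant valuation $v_\mathfrak{p}(\disc\mathcal{O}_B)+2j$, the fibres being controlled by automorphism groups whose contributions cancel overall. Hence $Z_B(s)$ is (up to the cancelling factor $\#\Aut_{k_\mathfrak{p}}(B)$) the local orbital zeta function of the prehomogeneous vector space of binary cubic forms restricted to the orbit of $B$, which is exactly the local density computed by Shintani \cite{Shintani} and Datskovsky--Wright over $\Q_p$; those computations go through verbatim over $k_\mathfrak{p}$ since they depend on the residue field only through its cardinality $q$ — precisely what is needed for a general base $k$. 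Alternatively one can bypass the parametrization and directly enumerate, in Hermite normal form, the index‑$\mathfrak{p}^j$ sublattices of $\mathcal{O}_B$ closed under multiplication, using the skew‑multiplication structure above.

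Carrying this out case by case: in the two unramified cases one reduces to a count over the residue field, or to the split case after an unramified base change, obtaining e.g.\ $Z_{k_\mathfrak{p}^3}(s)=\dfrac{(1+q^{-2s})^3}{(1-q^{-4s})(1-q^{1-6s})}=E_\mathfrak{p}(s)$ (consistent with $N_0=1$, $N_1=3$, $N_2=4$), and the $\{1,2\}$ case similarly. The ramified cases — the ramified quadratic factor and the ramified cubic field, including the wild subcases — are where one must genuinely count which index‑$\mathfrak{p}^j$ sublattices of $\mathcal{O}_B$ happen to be subrings. Once every local identity is established, multiplying them back together yields the stated formula, and one checks that the resulting Euler product converges for $\Re(s)>1/2$, consistent with the convergence of $\xi_{k,\alpha}(s)$ for $\Re(s)>1$ in Proposition~\ref{prop:shintani}(1) via the fibration $\xi_{k,\alpha}(s)=\sum_{[A]:\,\mathrm{sgn}(A)=\alpha}f_A(s)\big/\big(\#\Aut_k(A)\,\Disc(\mathcal{O}_A/\mathcal{O}_k)^s\big)$.

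The main obstacle is the ramified local computation, especially the wild subcases at primes above $2$ (for a ramified quadratic factor) and above $3$ (for a ramified cubic field): enumerating exactly which index‑$\mathfrak{p}^j$ sublattices of $\mathcal{O}_B$ are closed under multiplication is sensitive to the higher ramification filtration, and is the delicate part of Shintani's and Datskovsky--Wright's analyses — though, reassuringly, all of it takes place over a complete DVR and sees the residue field only through $q$, so no new work is needed to pass from $\Q_p$ to $k_\mathfrak{p}$. A secondary, routine‑but‑fiddly obstacle is the bookkeeping: matching, in each decomposition type, $\prod_{\mathfrak{P}\mid\mathfrak{p}}(1+q^{-2f(\mathfrak{P}/\mathfrak{p})s})$ against the output of the local count, and keeping the automorphism factors and the normalization between ``index ideal $\mathfrak{p}^j$'' and ``$n=[\mathcal{O}_A:\mathcal{O}]^2$'' straight.
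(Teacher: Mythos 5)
Your outline is sound, but it takes a genuinely different route from the paper, because the paper offers no proof at all: the lemma is quoted directly as Theorem 6.1 of Datskovsky--Wright \cite{DW86}, whose content is precisely the local theory you propose to redo. Your plan---localize to get an Euler product of local order-counting series, identify each local factor with $\prod_{\mathfrak{P}\mid\mathfrak{p}}\bigl(1+q^{-2f(\mathfrak{P}/\mathfrak{p})s}\bigr)/\bigl((1-q^{-4s})(1-q^{1-6s})\bigr)$, and compute these factors via the Delone--Faddeev parametrization or direct lattice enumeration---is exactly the shape of the Datskovsky--Wright argument, and your sanity checks ($N_0=1$, $N_1=3$, $N_2=4$ in the split case; convergence for $\Re(s)>1/2$) are correct. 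The one place where you assert more than you establish is the claim that the ramified and wild local counts ``depend on the residue field only through $q$'': the fact that the order-counting series of the maximal order of a ramified cubic \'etale algebra is insensitive to wildness and to the discriminant valuation, depending only on the splitting type and $q$, is the nontrivial \emph{output} of the local computation, not an input one may assume when transporting Shintani's $\mathbb{Q}_p$ computations to a general $k_\mathfrak{p}$. Since \cite{DW86} already carries out the local theory over an arbitrary local field, this is harmless---but it means that once your deferrals are unwound, your proof reduces to citing the same source the paper cites; what the extra machinery buys is a self-contained explanation of where the formula comes from, not a new argument. A smaller caveat: the phrase ``the fibres being controlled by automorphism groups whose contributions cancel'' glosses over the distinction between counting orders inside a fixed $A$ and counting $\mathrm{GL}_2(R)$-orbits of forms weighted by stabilizers; that bookkeeping is standard, but it is part of what \cite{DW86} actually does rather than something that can be waved through.
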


Combining Lemma \ref{lem:datskovsky-wright} and Lemma \ref{lem:pointwise-bound}, we derive the following lemma on the values of Shintani zeta function in a right half-plane.
\begin{lemma}\label{lem:shintani-bound}
	For any real $\sigma>3/2$ and any $t \in \mathbb{R}$, we have $\xi_{k,\alpha}(\sigma+it) \ll_{[k:\mathbb{Q}],\sigma,\epsilon} \mathrm{Disc}(k)^{1/2+\epsilon}h_2(k)$.
\end{lemma}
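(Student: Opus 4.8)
The plan is to bound $\xi_{k,\alpha}(\sigma+it)$ by its value on the positive real axis and then estimate $\xi_{k,\alpha}(\sigma)$ using the arithmetic interpretation of its coefficients together with Lemma~\ref{lem:datskovsky-wright} and Lemma~\ref{lem:pointwise-bound}. First I would observe that, since $\xi_{k,\alpha}(s)$ is a Dirichlet series with non-negative coefficients (the coefficients are sums of $|\mathrm{Aut}_{\O_k}(R)|^{-1}>0$ over cubic rings $R$ of a given discriminant), for any $\sigma>1$ and any real $t$ we have the trivial bound $|\xi_{k,\alpha}(\sigma+it)| \leq \xi_{k,\alpha}(\sigma)$. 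So it suffices to bound $\xi_{k,\alpha}(\sigma)$ for real $\sigma>3/2$.

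Next I would bound the coefficient of $\xi_{k,\alpha}(s)$ at a given discriminant value. Write $\xi_{k,\alpha}(s) = \sum_{m\geq 1} b_m(\alpha) m^{-s}$, where $b_m(\alpha) = \sum_{R} |\mathrm{Aut}_{\O_k}(R)|^{-1}$, the sum being over isomorphism classes of cubic rings $R$ over $k$ with $\mathrm{Disc}(R/\O_k)=m$ and $\mathrm{sgn}(R)=\alpha$. Each such ring $R$ sits inside its unique maximal order $\mathcal O_A$ in a cubic \'etale algebra $A$ with $\mathrm{Disc}(\mathcal O_A/\O_k)\mid m$, and the number of possible $A$ of bounded discriminant $n = \mathrm{Disc}(\mathcal O_A/\O_k)$ is controlled by Lemma~\ref{lem:pointwise-bound} (applied to the cubic/quadratic/trivial \'etale constituents), which gives $O_{[k:\Q],\epsilon}(n^{1/2+\epsilon}h_2(k)\mathrm{Disc}(k)^{1/2+\epsilon})$; for each such $A$, the number of suborders of index-squared $m/n$ is the coefficient $a_{m/n}$ of the Dirichlet series $f_A(s) = \zeta_k(4s)\zeta_k(6s-1)\zeta_A(2s)/\zeta_A(4s)$ from Lemma~\ref{lem:datskovsky-wright}, which is $O_{[k:\Q],\epsilon}((m/n)^{\epsilon})$ since $f_A(s)$ converges for $\Re(s)>1/3$. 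Summing over divisors $n\mid m$, this yields $b_m(\alpha) \ll_{[k:\Q],\epsilon} m^{1/2+\epsilon} h_2(k)\mathrm{Disc}(k)^{1/2+\epsilon}$.

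Finally I would plug this into the Dirichlet series: for $\sigma>3/2$,
\[
\xi_{k,\alpha}(\sigma) = \sum_{m\geq 1} \frac{b_m(\alpha)}{m^{\sigma}} \ll_{[k:\Q],\epsilon} h_2(k)\mathrm{Disc}(k)^{1/2+\epsilon} \sum_{m\geq 1} \frac{m^{1/2+\epsilon}}{m^{\sigma}},
\]
and the remaining sum converges (to a constant depending only on $\sigma$ and $\epsilon$, after shrinking $\epsilon$) precisely because $\sigma - 1/2 > 1$. Combining with the trivial bound on the vertical line gives $\xi_{k,\alpha}(\sigma+it) \ll_{[k:\Q],\sigma,\epsilon} \mathrm{Disc}(k)^{1/2+\epsilon}h_2(k)$, as claimed. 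The only mildly delicate point is bookkeeping the $\epsilon$'s—combining the $n^\epsilon$ from the order count, the $n^{1/2+\epsilon}$ from Lemma~\ref{lem:pointwise-bound}, and the divisor sum over $n\mid m$ (which costs only $m^\epsilon$)—but this is routine; I do not expect any genuine obstacle here, since all the hard analytic inputs (the pointwise field count and the generating function for suborders) are already in hand.
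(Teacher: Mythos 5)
Your overall route is the same as the paper's (reduce to the real axis by non-negativity of the coefficients, account for non-maximal orders via Lemma \ref{lem:datskovsky-wright}, count the underlying algebras via Lemma \ref{lem:pointwise-bound}), but the step in which you bound the coefficients of $f_A(s)$ is wrong. You assert that the coefficient $a_{j}$ of $f_A(s)=\zeta_k(4s)\,\zeta_k(6s-1)\,\zeta_A(2s)/\zeta_A(4s)$ satisfies $a_j=O_{[k:\mathbb{Q}],\epsilon}(j^{\epsilon})$ ``since $f_A(s)$ converges for $\Re(s)>1/3$.'' Three things fail here. First, the abscissa of convergence of $f_A$ is $1/2$, not $1/3$: the factor $\zeta_A(2s)/\zeta_A(4s)=\sum_{\mathfrak{b}\subseteq\mathcal{O}_A\ \mathrm{sq.free}}\Nm(\mathfrak{b})^{-2s}$ already diverges at $s=1/2$. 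Second, even absolute convergence at some $\sigma_0>0$ only yields $a_j\ll j^{\sigma_0}$ for a series with non-negative coefficients; it never gives $a_j\ll j^{\epsilon}$. Third, the coefficients genuinely are not $O(j^{\epsilon})$: the factor $\zeta_k(6s-1)=\sum_{\mathfrak{a}}\Nm(\mathfrak{a})\,(\Nm(\mathfrak{a})^{6})^{-s}$ contributes a coefficient of size at least $p$ at $j=p^{6}$ for any degree-one prime of norm $p$, i.e. the maximal order has $\gg p$ suborders of index $p^{3}$, so the number of suborders of index-squared $j$ can be as large as $j^{1/6}$.

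The gap is repairable, and the repaired argument is essentially the paper's proof. The correct pointwise bound, obtained by convolving the coefficient bounds $j^{\epsilon}$, $j^{1/6+\epsilon}$, $j^{\epsilon}$ for the three factors, is $a_j\ll_{[k:\mathbb{Q}],\epsilon} j^{1/6+\epsilon}$; feeding this into your divisor sum still gives $b_m(\alpha)\ll_{[k:\mathbb{Q}],\epsilon}\sum_{n\mid m} n^{1/2+\epsilon}(m/n)^{1/6+\epsilon}h_2(k)\mathrm{Disc}(k)^{1/2+\epsilon}\ll m^{1/2+\epsilon}h_2(k)\mathrm{Disc}(k)^{1/2+\epsilon}$, so the Dirichlet series still converges for $\sigma>3/2$ and your conclusion stands. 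Alternatively, and more cleanly, you can avoid pointwise coefficient bounds altogether, which is what the paper does: group the terms of $\xi_{k,\alpha}(\sigma)$ by the underlying \'etale algebra $A$ and bound the entire order-counting factor at the real point, $f_A(\sigma)\le\zeta_k(2\sigma)^{3}\zeta_k(4\sigma)\zeta_k(6\sigma-1)\ll_{[k:\mathbb{Q}],\sigma}1$ for $\sigma>3/2$ (using $\zeta_A(2\sigma)\le\zeta_k(2\sigma)^{3}$ and $\zeta_A(4\sigma)\ge 1$), so that only $\sum_{[L:k]\le 3}\mathrm{Disc}(L/k)^{-\sigma}\ll_{[k:\mathbb{Q}],\epsilon}h_2(k)\mathrm{Disc}(k)^{1/2+\epsilon}$ remains, by Lemma \ref{lem:pointwise-bound}.
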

\begin{proof}
By the triangle inequality, it suffices to prove the lemma when $t=0$.  Assume $\sigma>3/2$ is given.
	With the notation of Lemma \ref{lem:datskovsky-wright}, for any cubic \'etale algebra $A$, we have $\zeta_A(\sigma) \leq \zeta_k(\sigma)^3$ whenever $\sigma > 1$. It follows that 
	\begin{equation}
	\begin{aligned}
	\xi_{k,\alpha}(\sigma) & \leq \sum_{[L:k]\leq 3}  \frac{1}{\mathrm{Disc}(L/k)^\sigma} \cdot \zeta_k(4\sigma) \zeta_k(6\sigma-1) \frac{\zeta_A(2\sigma)}{\zeta_A(4\sigma)} \\	
	&  \leq \zeta_k(2\sigma)^3\zeta_k(4\sigma)\zeta_k(6\sigma-1) \sum_{[L:k]\leq 3} \frac{1}{\mathrm{Disc}(L/k)^\sigma},
	\end{aligned}
	\end{equation}
	where the sum runs over isomorphism classes of field extensions  $L/k$ of degree at most $3$. The second inequality comes from $\zeta_A(4\sigma)\ge 1$, valid for $\sigma>1$. The factors of $\zeta_k(\sigma)$ may be bounded by suitable powers of the Riemann zeta function $\zeta(\sigma)$,
	depending only on the degree of $k/\mathbb{Q}$. 
	 So it suffices to understand the Dirichlet series of field discriminants.
By Lemma \ref{lem:pointwise-bound}, we then find that for any $\epsilon < \sigma - 3/2$, 
	\[
	\sum_{L/k} \frac{1}{\mathrm{Disc}(L/k)^\sigma} \ll_{[k:\mathbb{Q}],\epsilon}  h_2(k)\mathrm{Disc}(k)^{1/2+\epsilon} \sum_{n}\frac{n^{1/2+\epsilon}}{n^\sigma} \ll_{[k:\Q], \epsilon}  \mathrm{Disc}(k)^{1/2+\epsilon}h_2(k) \zeta(\sigma-1/2-\epsilon),
	\]
	yielding the lemma.
\end{proof}

Next, we apply the functional equation to deduce a discriminant-aspect ``convexity bound'' for the Shintani zeta function.

\begin{lemma}\label{lem:shintani-convexity}
	For any $-1/2 \leq \sigma \leq 3/2$ and any $t \in \mathbb{R}$, the Shintani zeta function satisfies
		\[
			\xi_{k,\alpha}(\sigma + it)
				=O_{[k:\mathbb{Q}],\epsilon}( h_2(k) \mathrm{Disc}(k)^{\frac{7}{2}-2\sigma+\epsilon} (1+|t|)^{2[k:\mathbb{Q}](\frac{3}{2}-\sigma)+\epsilon}).
		\]
\end{lemma}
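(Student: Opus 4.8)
The plan is to prove the convexity bound by interpolating between the two estimates we already have. Recall that Lemma~\ref{lem:shintani-bound} gives $\xi_{k,\alpha}(\sigma+it) \ll_{[k:\mathbb{Q}],\sigma,\epsilon} \mathrm{Disc}(k)^{1/2+\epsilon} h_2(k)$ for $\sigma > 3/2$, and we can use the functional equation in Proposition~\ref{prop:shintani}(3) to transport this into a bound for $\sigma < -1/2$. Then we invoke the Phragm\'en--Lindel\"of principle for a strip to fill in the range $-1/2 \le \sigma \le 3/2$. The one delicate point is that $\xi_{k,\alpha}(s)$ has simple poles at $s=1$ and $s=5/6$, so before applying Phragm\'en--Lindel\"of one must multiply by a correcting factor (e.g. $(s-1)(s-5/6)$, or better a factor like $((s-1)(s-5/6))$ times something that tames the $t$-growth it introduces) to obtain a function holomorphic and of finite order in the closed strip, apply the principle there, and then divide back out, absorbing the polynomial factors into the stated $(1+|t|)^\epsilon$ and $\mathrm{Disc}(k)^\epsilon$ slack.

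First I would fix $\sigma_0$ slightly bigger than $3/2$, say $\sigma_0 = 3/2 + \epsilon$, and record the right-edge bound $|\xi_{k,\alpha}(\sigma_0 + it)| \ll h_2(k)\mathrm{Disc}(k)^{1/2+\epsilon}$ uniformly in $t$, with no $t$-dependence at all (the triangle-inequality reduction in the proof of Lemma~\ref{lem:shintani-bound} already gives this). Next I would use the functional equation to evaluate $\xi_{k,\alpha}$ on the line $\Re(s) = 1-\sigma_0 = -1/2-\epsilon$: we have $\xi_{k,\alpha}(1-s) = (\text{Gamma factors})^{[k:\mathbb{Q}]} \mathrm{Disc}(k)^{4s-2} \sum_\beta c_{\alpha\beta}(s)\, \hat\xi_{k,\beta}(s)$, evaluated at $s = \sigma_0 + it$. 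On this line each $\hat\xi_{k,\beta}(\sigma_0+it)$ is bounded by $\xi_{k,\beta}(\sigma_0+it) \ll h_2(k)\mathrm{Disc}(k)^{1/2+\epsilon}$ again by Proposition~\ref{prop:shintani}(4) together with (the absolute convergence version of) Lemma~\ref{lem:shintani-bound}; the trigonometric factors $c_{\alpha\beta}(\sigma_0+it)$ grow like $e^{O([k:\mathbb{Q}]\cdot\pi|t|)}$ but so does the reciprocal of the Gamma factor, and by Stirling the combination $(\text{Gamma factors})^{[k:\mathbb{Q}]} c_{\alpha\beta}(\sigma_0+it)$ is polynomially bounded in $|t|$ of degree about $2[k:\mathbb{Q}](\sigma_0 - 1/2) = 2[k:\mathbb{Q}](1+\epsilon)$ in $t$, while the $\mathrm{Disc}(k)^{4s-2}$ factor contributes $\mathrm{Disc}(k)^{4\sigma_0 - 2} = \mathrm{Disc}(k)^{4+4\epsilon}$ in modulus. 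Collecting, $|\xi_{k,\alpha}(-1/2-\epsilon+it)| \ll h_2(k)\mathrm{Disc}(k)^{9/2+\epsilon}(1+|t|)^{3[k:\mathbb{Q}]+\epsilon}$ or so, which matches the target at $\sigma = -1/2$ up to $\epsilon$'s.

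Then I would form $g(s) := (s-1)(s-5/6)\,\xi_{k,\alpha}(s)$, which by Proposition~\ref{prop:shintani}(5) is entire of order $1$, hence of finite order, and apply the Phragm\'en--Lindel\"of convexity theorem in the strip $-1/2-\epsilon \le \Re(s) \le 3/2+\epsilon$ using the two edge bounds just established (with their explicit dependence on $h_2(k)$, $\mathrm{Disc}(k)$, and $(1+|t|)$). The exponents of $\mathrm{Disc}(k)$ and of $(1+|t|)$ at a general $\sigma$ are then the linear interpolants of the edge values: the $\mathrm{Disc}(k)$-exponent interpolates between $1/2$ (at $\sigma=3/2$) and $9/2$ (at $\sigma=-1/2$), giving $\tfrac72 - 2\sigma$, and the $(1+|t|)$-exponent interpolates between $0$ and $3[k:\mathbb{Q}]$, giving $2[k:\mathbb{Q}](\tfrac32 - \sigma)$, exactly as stated. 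The main obstacle, and the only place that needs genuine care rather than bookkeeping, is the Stirling estimate for the product of Gamma factors and the trigonometric factors $c_{\alpha\beta}$ on the shifted line: one must check that the exponential growth of the $\Gamma$'s (in the numerator of the functional equation as written, i.e. $\Gamma(s)^2\Gamma(s-1/6)\Gamma(s+1/6)$ raised to $[k:\mathbb{Q}]$) and the exponential growth of the $\sin$-type factors $c_{\alpha\beta}(s)$ indeed cancel against each other to leave only polynomial $t$-dependence; I would handle this via the standard asymptotic $|\Gamma(\sigma+it)| \asymp |t|^{\sigma-1/2} e^{-\pi|t|/2}$ for $|t|$ large and the bound $|c_{v,\alpha\beta}(\sigma+it)| \ll e^{\pi|t|}$ (a bounded product of $\sin$'s), being slightly careful that in the functional equation it is $\xi_{k,\alpha}(1-s)$ on the left, so to get $\xi$ on the line $\Re = -1/2$ one substitutes $s$ with $\Re(s)$ near $3/2$ and the roles are as described. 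Everything else — holomorphy in the strip after the $(s-1)(s-5/6)$ correction, finite order, and absorbing the extra polynomial factor into $\mathrm{Disc}(k)^\epsilon (1+|t|)^\epsilon$ — is routine.
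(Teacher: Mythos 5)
Your strategy is exactly the paper's: the right edge at $\Re(s)=3/2+\delta$ from Lemma~\ref{lem:shintani-bound}, the left edge at $\Re(s)=-1/2-\delta$ via the functional equation with Stirling extracting the cancellation between the $\Gamma$-factors and the $c_{\alpha\beta}$, and then Phragm\'en--Lindel\"of applied to a pole-corrected function such as $\frac{(s-1)(s-5/6)}{(s+1)^2}\xi_{k,\alpha}(s)$ (your remark about taming the extra $t$-growth is exactly the paper's $(s+1)^{-2}$ normalization). The bound $|\hat\xi_{k,\beta}(\sigma_0+it)|\le \hat\xi_{k,\beta}(\sigma_0)<\xi_{k,\beta}(\sigma_0)$ via absolute convergence is also fine.

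However, your Stirling bookkeeping at the left edge is off, and the interpolation as written is internally inconsistent. With $s=\sigma_0+it$, $\sigma_0=3/2+\delta$, the product $\Gamma(s)^2\Gamma(s-1/6)\Gamma(s+1/6)$ contributes $|t|^{4\sigma_0-2}e^{-2\pi|t|}$, so raised to the power $[k:\mathbb{Q}]$ it gives polynomial growth of degree $(4\sigma_0-2)[k:\mathbb{Q}]\approx 4[k:\mathbb{Q}]$ after the exponentials cancel against the $\sin$-factors (which contribute at most $e^{2\pi|t|}$ per real place and $e^{4\pi|t|}$ per complex place, totalling at most $e^{2\pi(r_1+2r_2)|t|}=e^{2\pi[k:\mathbb{Q}]|t|}$, with no polynomial factor). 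So the left-edge exponent of $(1+|t|)$ is about $4[k:\mathbb{Q}]$, not your ``$2[k:\mathbb{Q}](\sigma_0-1/2)\approx 2[k:\mathbb{Q}]$'' nor the ``$3[k:\mathbb{Q}]$'' you subsequently quote; moreover, linearly interpolating between $0$ at $\sigma=3/2$ and $3[k:\mathbb{Q}]$ at $\sigma=-1/2$ would give $\tfrac{3}{2}[k:\mathbb{Q}](\tfrac32-\sigma)$, not the claimed $2[k:\mathbb{Q}](\tfrac32-\sigma)$, so your edge value and your final exponent do not match. Once the edge value is corrected to $(4+4\delta)[k:\mathbb{Q}]+\epsilon$ (as in the paper), the interpolation does give exactly $2[k:\mathbb{Q}](\tfrac32-\sigma)+\epsilon$ and the rest of your argument goes through; the $\mathrm{Disc}(k)$-exponents you computed ($1/2$ and $9/2$ at the edges) are already correct.
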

\begin{proof}
	Let $\delta > 0$ be small.  For $\sigma = 3/2+\delta$, it follows from Lemma \ref{lem:shintani-bound} that
		\[
			\xi_k(\sigma+it)
				\ll_{[k:\mathbb{Q}],\delta,\epsilon} \Disc(k)^{1/2+\epsilon} h_2(k).
		\]
	For $s = \sigma + it$ with $\sigma = 3/2 + \delta$, we apply the functional equation and Stirling's formula to extract cancellation between the gamma factors and the
	$c_{\alpha\beta}(s)$ in order to find also
		\begin{align*}
			\xi_{k,\alpha}(1-s)
				&= \left(\frac{3^{(6s-2)}}{\pi^{4s}}\Gamma(s)^{2}\Gamma(s-1/6)\Gamma(s+1/6)\right)^{[k:\mathbb{Q}]} \mathrm{Disc}(k)^{4s-2} \sum_{\beta} c_{\alpha\beta}(s) \hat\xi_{k,\beta}(s) \\
				&\ll_{[k:\mathbb{Q}],\delta,\epsilon} (1+|t|)^{(4+4\delta)[k:\mathbb{Q}]+\epsilon} \mathrm{Disc}(k)^{9/2+4\delta+\epsilon} h_2(k).
		\end{align*}
	The general result follows upon using the Phragmen--Lindel\"of principle \cite[Theorem 5.53]{IwaniecKowalski} applied to the function $\frac{(s-1)(s-5/6)}{(s+1)^2} \xi_{k,\alpha}(s)$ and letting $\delta$ be sufficiently small.  Specifically, we find for any $-1/2- \delta < \sigma < 3/2 + \delta$ that
		\[
			\xi_{k,\alpha}(\sigma+it)
				\ll_{[k:\mathbb{Q}],\delta,\epsilon} (1+|t|)^{2[k:\mathbb{Q}](\frac{3}{2}+\delta-\sigma)+\epsilon} \Disc(k)^{\frac{7}{2}+2\delta-2\sigma+\epsilon}.
		\]
	Upon choosing $\delta = \epsilon$ and then replacing $\epsilon$ in the equation above by $\epsilon/(1+2[k:\mathbb{Q}])$, we obtain the required statement.
\end{proof}

Now we apply the functional equation in Proposition \ref{prop:shintani} and the above lemmas to bound the number of cubic rings $N^3_{k, \alpha}(X)$. 

\begin{proof}[Proof of Proposition \ref{prop:effective-shintani}]
	Let $\phi(x)$ be a positive smooth function satisfying $\phi(x)\geq 1$ whenever $0\leq x \leq 1$ and whose Mellin transform exhibits rapid decay away from the real axis; for example, we may take $\phi(x)=e^{1-x}$.  Letting $R$ run over cubic rings over $k$, we have
	\[
	N^3_{k,\alpha}(X) 
	= \sum_{\substack{ 0<\mathrm{Disc}(R/\mathcal{O}_k) \leq X \\ \mathrm{sgn}(R) = \alpha}} 1  
	\ll_\phi \sum_{\substack{0<\mathrm{Disc}(R/\mathcal{O}_k)  \\ \mathrm{sgn}(R) = \alpha}} \frac{\phi(\mathrm{Disc}(R/\mathcal{O}_k)/X)}{|\mathrm{Aut}_{\O_k}(R)|}.
	\]
	This latter term may be evaluated via Perron's formula and the full Shintani zeta function $\xi_{k,\alpha}(s)$,
	\[
	\sum_{\substack{0<\mathrm{Disc}(R/\mathcal{O}_k)  \\ \mathrm{sgn}(R) = \alpha}} \frac{\phi(\mathrm{Disc}(R/\mathcal{O}_k)/X)}{|\mathrm{Aut}_{\O_k}(R)|} = \frac{1}{2\pi i} \int_{2-i\infty}^{2+i\infty} \xi_{k,\alpha}(s) \Phi(s) X^s \,ds,
	\]
	where $\Phi(s)$ is the Mellin transform of $\phi(x)$.  By Lemma \ref{lem:shintani-convexity}, the exponential decay of $\Phi(s)$ off the real axis allows us to shift the contour to the line $\Re(s)=-1/2-\epsilon$ for any $\epsilon>0$.  Doing so, we must account for the poles of $\xi_{k,\alpha}(s)$ at $s=1$ and $s=5/6$ and of $\Phi(s)$ at $s=0$.  From Proposition \ref{prop:shintani}, it follows that the contribution from the pole at $s=1$ subsumes that of the pole at $s=5/6$.  Moreover, Lemma \ref{lem:shintani-vanishing} shows that in fact the integrand has no pole at $s=0$ owing to the zero of $\xi_{k,\alpha}(s)$ at $s=0$.  Altogether, the contribution from the poles is seen to be $O_{[k:\mathbb{Q}]}(X\cdot \mathrm{Res}_{s=1}\zeta_k(s))$.  
	
	Finally, appealing to the functional equation and Lemma \ref{lem:shintani-bound}, we find
	\begin{align*}
	&\frac{1}{2\pi i} \int_{-1/2-\epsilon-i\infty}^{-1/2-\epsilon+i\infty} \xi_{k,\alpha}(s) \Phi(s) X^s \,ds \\
	\quad
	&= \frac{1}{2\pi i} \int_{3/2+\epsilon-i\infty}^{3/2+\epsilon+i\infty} \left(\frac{3^{(6s-2)}}{\pi^{4s}}\Gamma(s)^{2}\Gamma(s-1/6)\Gamma(s+1/6)\right)^{[k:\mathbb{Q}]} \mathrm{Disc}(k)^{4s-2} \hat\xi(s) \Phi(1-s) X^{1-s} \,ds \\
	&\ll_{[k:\mathbb{Q}],\epsilon} \mathrm{Disc}(k)^{9/2+\epsilon} h_2(k) X^{-1/2-\epsilon},
	\end{align*}
	where $\hat\xi(s) = \sum_{\beta}c_{\alpha\beta}(s)\hat\xi_{K,\beta}(s)$.

We conclude that for any number field $k$ and any $X \geq 1$, we have
\begin{equation}\label{eqn:prop:shintani-cubic-bound}
	N^3_{k,\alpha}(X) \ll_{[k:\mathbb{Q}],\epsilon} X \cdot \mathrm{Res}_{s=1} \zeta_k(s) + X^{-1/2-\epsilon} h_2(k) \mathrm{Disc}(k)^{9/2+\epsilon}.
\end{equation}
Using the well-known upper bound $\mathrm{Res}_{s=1} \zeta_k(s) \ll_{[k:\mathbb{Q}],\epsilon} \mathrm{Disc}(k)^\epsilon$
due to Landau (see also \cite[Lemma 3]{Brauer}),  
we conclude the proposition.
\end{proof}

\subsection{Intermediate range: propagation of orders}\label{sec:mid-order}
In this section, our main proposition is the following bound on $N_k(S_3, X)$ that is better than both Proposition \ref{prop:trivial-cubic-bound} and \ref{prop:effective-shintani} when $X$ is a bit smaller than the region allowed by Proposition \ref{prop:effective-shintani}. 
	Let $\kappa_k:=\mathrm{Res}_{s=1}\zeta_k(s)$.

\begin{proposition}\label{prop:cubic-bound-order-general}
	Let $k$ a number field and let $X \geq 1$. We have
	\[
	N_k(S_3, X)
	\ll_{[k:\mathbb{Q}],\epsilon} \mathrm{Disc}(k)^{1/2+\epsilon} \kappa_k^{-2} X^{1/2} ( {\mathrm{Disc}(k) h_2(k)^{1/3}}+ {X^{1/2}  }).
	\]
\end{proposition}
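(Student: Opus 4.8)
The plan is to exploit the overcounting inherent in the Shintani zeta function, exactly as advertised in the section heading: a maximal cubic order contains many non-maximal suborders, so if one counts \emph{all} cubic orders of discriminant up to $Z$ for a cleverly chosen $Z \geq X$, the contribution of the $N_k(S_3,X)$ maximal orders we care about gets amplified by the number of their small-index suborders, and comparing this lower bound against the upper bound $N^3_k(Z) \ll_{[k:\Q],\epsilon} \Disc(k)^\epsilon \kappa_k Z$ coming (after tracking the residue dependence, see \eqref{eqn:prop:shintani-cubic-bound}) from Proposition \ref{prop:effective-shintani} yields a bound on $N_k(S_3,X)$ that improves on the trivial count when $Z$ is in the right range. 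First I would fix an $S_3$-cubic extension $K/k$ with $\Disc(K/k) \leq X$ and, using Lemma \ref{lem:datskovsky-wright} with $A = K$, write the Dirichlet series $f_K(s) = \zeta_k(4s)\zeta_k(6s-1)\zeta_K(2s)/\zeta_K(4s)$ counting orders $\co \subset \co_K$ by index. The number of such suborders with $[\co_K:\co]^2 = n$, i.e. with $\Disc(\co/\co_k) = \Disc(K/k)\cdot n$, summed over $n \leq Y$, is $\gg_{[k:\Q]} \kappa_k^{?}\, Y^{1/4}$ or so — the key point being that the leading pole of $f_K(s)$ at $s = 1/4$ comes from $\zeta_k(4s)$ with residue proportional to $\kappa_k$, giving a lower bound of shape $\gg_{[k:\Q]} \kappa_k Y^{1/4}$ for $\sum_{n\leq Y} a_n$, and this lower bound is uniform in $K$ (one needs a lower bound on a partial sum, which one gets from positivity of the coefficients and a Tauberian/Landau-type argument, or more elementarily by exhibiting explicit suborders $\co_k + \mathfrak{m}\co_K$).

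Next I would assemble the inequality. Each maximal order $\co_K$ with $\Disc(K/k) \leq X$ produces $\gg_{[k:\Q]} \kappa_k (Z/X)^{1/4}$ distinct suborders $\co$ with $\Disc(\co/\co_k) \leq X \cdot (Z/X) = Z$ (taking $Y = Z/X$), and distinct $K$ give disjoint sets of suborders. Hence
\[
N_k(S_3,X) \cdot \kappa_k (Z/X)^{1/4} \ll_{[k:\Q]} N^3_k(Z) \ll_{[k:\Q],\epsilon} \Disc(k)^\epsilon \big( \kappa_k Z + \kappa_k^{-1} h_2(k) \Disc(k)^{9/2+\epsilon} Z^{-1/2-\epsilon}\big),
\]
where the second bound is \eqref{eqn:prop:shintani-cubic-bound} summed over the finitely many signatures $\alpha$ and using $\Res_{s=1}\zeta_k(s) = \kappa_k$. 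This gives
\[
N_k(S_3,X) \ll_{[k:\Q],\epsilon} \Disc(k)^\epsilon\, \frac{X^{1/4}}{Z^{1/4}}\Big( Z + \kappa_k^{-2} h_2(k)\Disc(k)^{9/2}Z^{-1/2}\Big),
\]
and I would optimize over $Z \geq X$. The first term $X^{1/4}Z^{3/4}$ is increasing in $Z$, so one wants $Z$ small; but one is constrained by $Z \geq X$ and, crucially, by the hypothesis of Proposition \ref{prop:effective-shintani} that $Z \geq \Disc(k)^3 h_2(k)^{2/3}$, which is where the second term is actually valid/subsumed. Setting $Z = \max(X, \Disc(k)^3 h_2(k)^{2/3})$ — or more precisely balancing so that the answer comes out symmetric — and simplifying should collapse to
\[
N_k(S_3,X) \ll_{[k:\Q],\epsilon} \Disc(k)^{1/2+\epsilon}\kappa_k^{-2} X^{1/2}\big(\Disc(k) h_2(k)^{1/3} + X^{1/2}\big),
\]
after noting that the two summands in the parenthesis correspond precisely to the two choices of which lower bound on $Z$ is binding (the $X$-term dominating when $X$ is large, the $\Disc(k)^3 h_2(k)^{2/3}$-term when $X$ is small).

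The main obstacle I anticipate is establishing the \emph{uniform} lower bound $\sum_{n\leq Y} a_n \gg_{[k:\Q]} \kappa_k Y^{1/4}$ for the suborder-counting coefficients of $f_K(s)$, with the right dependence on $\kappa_k$ and with an implied constant depending only on $[k:\Q]$ — in particular independent of $K$ and of $\Disc(K/k)$. A contour shift / Perron argument for the lower bound is delicate because one must handle the possible Landau--Siegel zero of $\zeta_K$ (which could push $\zeta_K(2s)$ small); the cleanest route is probably to avoid analysis entirely and construct $\gg \kappa_k Y^{1/4}$ explicit suborders of bounded index directly (e.g. orders of the form $\co_k + \mathfrak{c}\co_K$ for ideals $\mathfrak{c}$ of $\co_k$ of norm up to $Y^{1/4}$, counting ideals of bounded norm via $\kappa_k$), thereby sidestepping the zeta-function subtleties. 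A secondary bookkeeping issue is making sure the residue dependence in \eqref{eqn:prop:shintani-cubic-bound} is tracked correctly (the proof of Proposition \ref{prop:effective-shintani} keeps $\Res_{s=1}\zeta_k(s) = \kappa_k$ explicit, so this should be routine), and that the optimization over $Z$ respects the hypothesis $Z \geq \Disc(k)^3 h_2(k)^{2/3}$ needed to invoke Proposition \ref{prop:effective-shintani}.
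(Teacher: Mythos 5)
There is a genuine quantitative gap: the amplification you get per cubic field is too weak to give the stated bound. Your explicit suborders $\co_k + \mathfrak{c}\co_K$ (with $\mathfrak{c}\subseteq\co_k$) have index $\Nm(\mathfrak{c})^2$ in $\co_K$, hence discriminant $\Disc(K/k)\Nm(\mathfrak{c})^4$, so each field with $\Disc(K/k)\le X$ contributes only $\gg \kappa_k (Z/X)^{1/4}$ orders of discriminant $\le Z$ — this is the $Y^{1/4}$ you settle for after (rightly) abandoning the pole of $f_K$ at $s=1/2$ because of Siegel zeros of $\zeta_K$ (as an aside, $s=1/2$, not $s=1/4$, is the rightmost pole of $f_K$). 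Feeding this into $N_k(S_3,X)\,\kappa_k(Z/X)^{1/4}\ll_{[k:\Q],\epsilon}\Disc(k)^\epsilon Z$ and taking $Z=\max\{X,\Disc(k)^3h_2(k)^{2/3}\}$ gives $N_k(S_3,X)\ll \Disc(k)^\epsilon\kappa_k^{-1}X^{1/4}\max\{X,\Disc(k)^3h_2(k)^{2/3}\}^{3/4}$. For $X\ge\Disc(k)^3h_2(k)^{2/3}$ this is fine (indeed Proposition \ref{prop:effective-shintani} already covers that range), but in the intermediate range $X\le \Disc(k)^3h_2(k)^{2/3}$ — the only range where the proposition has content — your bound is $\kappa_k^{-1}\Disc(k)^{9/4+\epsilon}h_2(k)^{1/2}X^{1/4}$, which does \emph{not} collapse to the claimed $\Disc(k)^{3/2+\epsilon}\kappa_k^{-2}h_2(k)^{1/3}X^{1/2}$: at, say, $X=\Disc(k)^2$ it is larger by a factor $\approx \kappa_k\Disc(k)^{1/4}h_2(k)^{1/6}$. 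The exponent $1/4$ is the fundamental obstruction, and enlarging $Z$ only makes things worse since your bound is increasing in $Z$. Getting the needed $(Z/X)^{1/2}$ amplification field-by-field with only $\kappa_k$-dependence is exactly blocked by the issue you flagged: the index-$m$ (discriminant inflation $m^2$) suborders of a single $\co_K$ are governed by $\zeta_K(2s)$, resp.\ $L(2s,\chi_{\widetilde K/F})$, and there is not even a coefficientwise comparison with $\zeta_k(2s)$ — e.g.\ a prime inert in $K$ admits no suborder of index $\Nm\fp$ at all.

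The paper's route is structurally different precisely to dodge this. It does not propagate orders inside individual maximal orders; instead it first bounds $\sum_F h_3(F/k)$ (Proposition \ref{prop:order-bound-sf}) by grouping together \emph{all} cubic rings whose resolvent algebra has a fixed quadratic resolvent $F$, including orders in the reducible algebra $F\times k$. Character orthogonality (Lemma \ref{lem:gen-series-order-F}) then makes the combined Dirichlet series have nonnegative coefficients and bounded below by $\tfrac{1}{2}h_3(F/k)\Disc(F/k)^{-s}\zeta_k(2s)$, so Lemma \ref{lem:ideal-lower-bound} yields a contribution $\gg h_3(F/k)\kappa_k Y^{1/2}\Disc(F/k)^{-1/2}$ to $N_k^3(Y)$ — i.e.\ the full $Y^{1/2}$ growth with only $\kappa_k$-dependence, and with the $3$-torsion $h_3(F/k)$ itself as the amplifier. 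Comparing against Proposition \ref{prop:effective-shintani} gives Proposition \ref{prop:order-bound-sf}, and the bound on $N_k(S_3,X)$ is then deduced from it via the Datskovsky--Wright-type count that the number of $S_3$-cubic $K/k$ with quadratic resolvent $F$ and $\disc(K/k)=\mathfrak{q}^2\disc(F/k)$ is $O(4^{\omega(\mathfrak{q})}h_3(F/k))$, summed over squarefree $\mathfrak{q}$ with $\Nm(\mathfrak{q})\le X^{1/2}$. To repair your argument you would need to replace the single-field suborder count by some such resolvent-grouped (or otherwise positivity-restoring) count; as written, the optimization does not reach the stated exponents.
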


The key idea is that Proposition \ref{prop:effective-shintani} actually gives an upper bound on the number of cubic rings over $k$ instead of cubic fields, and if there are too many cubic fields in this intermediate range, then the orders inside these fields would overrun these bounds.  We will first apply this idea to bound the average $3$-class number of quadratic extensions of $k$,
and then we apply class field theory to bound the number $N_k(S_3, X)$ of general $S_3$ cubic fields. 

We begin with the following lemma.

\begin{lemma}\label{lem:gen-series-order-F}
	Let $F/k$ be a quadratic extension of number fields, and let $\mathcal{R}(F)$ denote the set of isomorphism classes of 
	 cubic rings $R$ over $k$	
	 whose underlying cubic \'etale algebra $A:=R\tensor_\Q k$
	 has discriminant $\disc(F/k)$ and is either of the form $A \simeq k \times F$ or $A \simeq K$ for a non-Galois cubic extension $K/k$ with quadratic resolvent $F/k$.	
	Then we have
	\[
	\sum_{R \in \mathcal{R}(F)} \frac{|\mathrm{Aut}_{k}(A)|^{-1}}{\mathrm{Disc}(R/\mathcal{O}_k)^s}
		= \frac{h_3(F/k)}{2 \mathrm{Disc}(F/k)^{s}} \zeta_k(2s) \zeta_k(6s-1) \sum_{\substack{\mathfrak{a} \subseteq \mathcal{O}_F \,\mathrm{sq. free} \\ [\mathfrak{a}] \in 3\mathrm{Cl}_F+\Cl_k}} \frac{1}{\Nm(\mathfrak{a})^{2s}},
	\]
	where the summation on the right-hand side runs over those squarefree ideals of $\mathcal{O}_F$ whose classes are in the subgroup $3\Cl_F+\Cl_k \subseteq \Cl_F$.
\end{lemma}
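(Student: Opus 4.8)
The plan is to pass from cubic rings in $\mathcal{R}(F)$ to their underlying cubic \'etale algebras, classify those algebras by class field theory, apply the Datskovsky--Wright order count (Lemma~\ref{lem:datskovsky-wright}) inside each algebra, and then collapse the resulting sum over characters by orthogonality; the identity is proved as Dirichlet series for $\Re(s)$ large, where all series converge absolutely, and then follows in general. Every $R\in\mathcal{R}(F)$ is an order in its \'etale algebra $A$, hence (up to isomorphism) a sublattice subring of the maximal order $\mathcal{O}_A$, and isomorphism classes of cubic rings with underlying algebra $A$ are exactly the $\Aut_k(A)$-orbits of such suborders. So, by orbit--stabilizer (counting suborders of $\mathcal{O}_A$ and dividing by $\Aut_k(A)$), the left-hand side of the lemma equals
\[
\sum_{A}\frac{|\Aut_k(A)|^{-1}}{\Disc(F/k)^s}\sum_{\mathcal{O}\subseteq\mathcal{O}_A}\frac{\Disc(\mathcal{O}_A/\mathcal{O}_k)^s}{\Disc(\mathcal{O}/\mathcal{O}_k)^s},
\]
where $A$ runs over the \'etale algebras over $k$ with $\disc(A/\mathcal{O}_k)=\disc(F/k)$ of the form $k\times F$ or a non-Galois cubic field with quadratic resolvent $F$.

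First I would classify those algebras. By class field theory, cubic \'etale algebras $A/k$ with quadratic resolvent $F$ correspond bijectively to unordered pairs $\{\chi,\chi^{-1}\}$ of finite-order cubic characters $\chi$ of $F$ with $\chi^\sigma=\chi^{-1}$, where $\sigma$ generates $\Gal(F/k)$: here $\chi=\mathbf{1}$ gives $A=k\times F$, and nontrivial $\chi$ gives the cubic field $K_\chi$ whose Galois closure $\widetilde{K_\chi}$ satisfies $\Gal(\widetilde{K_\chi}/F)\cong C_3$. The conductor--discriminant formula gives $\disc(K_\chi/k)=\disc(F/k)\cdot\Nm_{F/k}(\mathfrak{f}_\chi)$, so the requirement $\disc(A/\mathcal{O}_k)=\disc(F/k)$ forces $\mathfrak{f}_\chi=(1)$, i.e.\ $\chi$ is a character of $\Cl_F$ itself. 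Since $3\nmid[F:k]=2$, the splitting $\Cl_F/3\Cl_F\cong(\Cl_k/3\Cl_k)\oplus(\Cl_{F/k}/3\Cl_{F/k})$ from Section~\ref{sec:arakelov}, on which $\sigma$ acts trivially on the first factor and by $-1$ on the second (because $\Nm_{F/k}[\fa]=[\fa]+\sigma[\fa]=0$ for $[\fa]\in\Cl_{F/k}$), shows $\chi^\sigma=\chi^{-1}$ holds precisely when $\chi$ is trivial on $\Cl_k/3\Cl_k$; such $\chi$ form the group $\Hom(\Cl_F/(3\Cl_F+\iota\Cl_k),C_3)$, whose order is $h_3(F/k)$, so there are $(h_3(F/k)-1)/2$ fields $K_\chi$.

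Next, for a fixed $A=A_\chi$, since $\disc(\mathcal{O}_A/\mathcal{O}_k)=\disc(F/k)$, Lemma~\ref{lem:datskovsky-wright} gives $\sum_{\mathcal{O}\subseteq\mathcal{O}_A}(\Disc(\mathcal{O}_A/\mathcal{O}_k)/\Disc(\mathcal{O}/\mathcal{O}_k))^s=\zeta_k(4s)\zeta_k(6s-1)\zeta_{A}(2s)/\zeta_{A}(4s)$. By Artin formalism the permutation representation of $\Gal(\widetilde{K_\chi}/k)\cong S_3$ on the cosets of $\Gal(\widetilde{K_\chi}/K_\chi)$ decomposes as $\mathbf{1}\oplus\mathrm{Ind}_F^k\chi$, so $\zeta_{A}(s)=\zeta_k(s)L_F(s,\chi)$ (with $L_F(s,\mathbf{1})=\zeta_F(s)$ covering $A=k\times F$). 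Thus the factor $\zeta_k(4s)$ cancels the $\zeta_k(4s)$ in the denominator, and the Euler product identity $L_F(2s,\chi)/L_F(4s,\chi)=\sum_{\fa\subseteq\mathcal{O}_F\text{ squarefree}}\chi(\fa)\Nm(\fa)^{-2s}$ yields
\[
\frac{|\Aut_k(A_\chi)|^{-1}}{\Disc(F/k)^s}\sum_{\mathcal{O}\subseteq\mathcal{O}_{A_\chi}}\frac{\Disc(\mathcal{O}_{A_\chi}/\mathcal{O}_k)^s}{\Disc(\mathcal{O}/\mathcal{O}_k)^s}=\frac{|\Aut_k(A_\chi)|^{-1}}{\Disc(F/k)^s}\,\zeta_k(2s)\zeta_k(6s-1)\sum_{\fa\subseteq\mathcal{O}_F\text{ sq.\ free}}\frac{\chi(\fa)}{\Nm(\fa)^{2s}},
\]
where $|\Aut_k(A_\chi)|=1$ for $\chi\ne\mathbf{1}$ and $=2$ for $\chi=\mathbf{1}$.

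Finally I would sum over the pairs $\{\chi,\chi^{-1}\}$. Reindexing primes of $\mathcal{O}_F$ by $\sigma$ shows $L_F(s,\chi)=L_F(s,\chi^\sigma)=L_F(s,\chi^{-1})$, so the two members of each nontrivial pair contribute equally; together with the weight $\tfrac12$ attached to the trivial pair, the total unfolds to $\tfrac12\sum_{\chi}\,\zeta_k(2s)\zeta_k(6s-1)\Disc(F/k)^{-s}\sum_{\fa}\chi(\fa)\Nm(\fa)^{-2s}$ with $\chi$ running over all $h_3(F/k)$ characters found above. Swapping the two sums and invoking orthogonality in $\Cl_F/(3\Cl_F+\iota\Cl_k)$ gives $\sum_{\chi}\chi([\fa])=h_3(F/k)$ when $[\fa]\in 3\Cl_F+\Cl_k$ and $0$ otherwise, which is exactly the claimed formula. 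I expect the most delicate point to be precisely this last folding: tracking the factor $\tfrac12$, the automorphism weights, and the special role of the algebra $k\times F$ (whose automorphism group is nontrivial and fixes its maximal order) so that the maximal-order terms are counted with the correct multiplicity. The substantive inputs, by contrast, are all quotable — Lemma~\ref{lem:datskovsky-wright}, the class field theory dictionary for cubic algebras with a prescribed resolvent together with the conductor--discriminant formula, and character orthogonality.
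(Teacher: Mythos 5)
Your proposal is correct and is essentially the paper's own argument: the same passage from isomorphism classes of rings in $\mathcal{R}(F)$ to the order-counting series $f_A(s)$ of Lemma~\ref{lem:datskovsky-wright} with the weight $\tfrac12$ for $A\simeq k\times F$, the same identification $\zeta_A(s)=\zeta_k(s)L_F(s,\chi)$ via the cubic character cutting out $\widetilde{K}/F$, and the same folding over all $h_3(F/k)$ characters followed by orthogonality; your class-field-theory classification (conductor--discriminant forcing $\mathfrak{f}_\chi=(1)$, and $\chi^\sigma=\chi^{-1}$ being equivalent to triviality on $\Cl_k$) simply spells out what the paper leaves implicit. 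The one slip is the quoted ``Euler product identity'': the squarefree sum satisfies $\sum_{\mathfrak{a}\,\mathrm{sq.free}}\chi(\mathfrak{a})\Nm(\mathfrak{a})^{-2s}=L_F(2s,\chi)/L_F(4s,\chi^2)$, not $L_F(2s,\chi)/L_F(4s,\chi)$ prime-by-prime, and your version is rescued only because $\chi^2=\chi^{-1}=\chi^\sigma$ gives $L_F(4s,\chi)=L_F(4s,\chi^2)$ --- exactly the substitution the paper makes explicitly and a fact you already record in your final paragraph, so you should invoke it at that step.
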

\begin{proof}
	If $R \in \mathcal{R}(F)$, then $\mathrm{Disc}(R/\mathcal{O}_k) = \mathrm{Disc}(F/k) [\mathcal{O}_A:R]^2$.  Thus, with the notation of Lemma \ref{lem:datskovsky-wright}, we have
	\[
	\sum_{R \in \mathcal{R}(F)} \frac{|\mathrm{Aut}_{k}(A)|^{-1}}{\mathrm{Disc}(R/\mathcal{O}_k)^s}
		= \mathrm{Disc}(F/k)^{-s} \cdot\left(\frac{1}{2}f_{k\times F}(s) + \sum_{\substack{[K:k] =3 \\ \disc(K/k) = \disc(F/k) \\ F \subseteq \widetilde{K}/k}} f_K(s) \right).
	\]
Note that the summation on the right is restricted to those cubic extensions $K/k$ with quadratic resolvent $F$ and which satisfy $\disc(K/k) =\disc(F/k)$.  For each such $K/k$, by Lemma \ref{lem:datskovsky-wright} we have
	\begin{equation}
	f_K(s) 
		=\zeta_k(4s) \zeta_k(6s-1) \frac{\zeta_K(2s)}{\zeta_K{(4s)}} 	
		= \zeta_k(2s) \zeta_k(6s-1)\frac{L(2s,\chi_{\widetilde{K}/F})}{L(4s,\chi_{\widetilde{K}/F})},
	\end{equation}
	where $\widetilde{K}$ is the Galois closure of $K$ over $k$, and $\chi_{\widetilde{K}/F}$ is one of the non-trivial cyclic cubic characters of the class group $\mathcal{C}= \mathrm{Cl}_F/\mathrm{Cl}_k$ that cuts out $\widetilde{K}/F$.  
	The second equality comes from the relation $\zeta_K(s)/\zeta_k(s)= L(s, \chi_{\widetilde{K}/F})$. In fact, we have $L(s,\chi_{\widetilde{K}/F}) = L(s,\chi_{\widetilde{K}/F}^2) = L(s, \overline{\chi}_{\widetilde{K}/F})$, since all are equal to the irreducible degree $2$ Artin $L$-function of $K/k$.  Thus, we obtain also
	\[
	f_K(s)
		= \zeta_k(2s) \zeta_k(6s-1)\frac{L(2s,\chi_{\widetilde{K}/F})}{L(4s,\chi_{\widetilde{K}/F}^2)}.
	\]
	Meanwhile, the Dirichlet series for orders inside $F\times k$ is
	\begin{equation}
	f_{F\times k}(s) = \zeta_k(2s) \zeta_k(6s-1)\frac{\zeta_F(2s)}{\zeta_F(4s)}.
	\end{equation}
	Notice that $\zeta_F(s)$ is the $L$-function attached to the trivial character of $\mathcal{C}$.  Therefore, 
	\begin{align*}
	\sum_{R \in \mathcal{R}(F)} \frac{|\mathrm{Aut}_{k}(A)|^{-1}}{\mathrm{Disc}(R/\mathcal{O}_k)^s}
		& = \frac{ \zeta_k(2s) \zeta_k(6s-1)}{2\mathrm{Disc}(F/k)^{s}} \sum_{\chi \in \widehat{\mathcal{C}}[3]} \frac{L(2s,\chi_{\widetilde{K}/F})}{L(4s,\chi_{\widetilde{K}/F}^2)}  \\
		&= \frac{ \zeta_k(2s) \zeta_k(6s-1)}{2\mathrm{Disc}(F/k)^{s}} \sum_{\mathfrak{a}\subseteq\mathcal{O}_F\,\text{sq. free}} \frac{1}{\Nm(\mathfrak{a})^{2s}} \sum_{\chi \in \widehat{\mathcal{C}}[3]}\chi(\mathfrak{a})\\
		& =\frac{ h_3(F/k)\zeta_k(2s) \zeta_k(6s-1)}{2\mathrm{Disc}(F/k)^{s}} \sum_{\substack{\mathfrak{a}\subseteq \mathcal{O}_F\,\text{sq. free}\\ \mathfrak{a}\in 3\Cl_F + \Cl_k }} \frac{1}{\Nm(\mathfrak{a})^{2s}},
	\end{align*}
	by orthogonality of characters and the relation $|\widehat{\mathcal{C}}[3]| = h_3(F/k)$.
\end{proof}

We next have the following simple lemma.

\begin{lemma}\label{lem:ideal-lower-bound}
Let $k$ be a number field and let $\delta>0$.  There is an effectively computable constant $C$, depending at most on $\delta$ and $[k:\mathbb{Q}]$, such that for any $X \geq C \mathrm{Disc}(k)^{1/2+\delta} \kappa_k^{-1}$,
 we have
\[
\#\{\mathfrak{a} \subseteq \mathcal{O}_K \text{ ideal}: \Nm(\mathfrak{a}) \leq X\}
	\gg_{[k:\mathbb{Q}],\delta} \kappa_k X .
\]
\end{lemma}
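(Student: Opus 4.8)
The plan is to count ideals of $\co_k$ of bounded norm by relating them to lattice points, making the dependence on $\Disc(k)$ and $\kappa_k$ completely explicit. First I would recall the classical exact formula (Weber, or the standard argument via the ideal class group and Dirichlet's unit theorem): if $I_k(X)$ denotes the number of nonzero integral ideals $\fa\subseteq\co_k$ with $\Nm(\fa)\le X$, then
\[
I_k(X) = \kappa_k X + O_{[k:\Q]}\bigl(\Disc(k)^{1/2}\kappa_k^{?} X^{1-1/[k:\Q]} + \cdots\bigr),
\]
but rather than quote an off-the-shelf error term with possibly sloppy field dependence, I would derive the bound I need from first principles. The idea is: within each ideal class $\mathcal{C}$, the ideals of norm $\le X$ correspond (after fixing an integral ideal $\mathfrak{b}$ in the inverse class) to nonzero elements $\alpha\in\mathfrak{b}$ with $|\Nm(\alpha)|\le X\Nm(\mathfrak{b})$, modulo units, and $\mathfrak{b}$ is a lattice in $k\otimes\R\cong\R^{r_1}\times\C^{r_2}$ of covolume $\asymp \Nm(\mathfrak{b})\sqrt{\Disc(k)}$. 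Counting such elements up to units is a standard lattice-point count in a homogeneously expanding region cut out by the norm form together with a fundamental domain for the unit action.

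The key steps, in order: (1) Reduce to a single ideal class; sum over the $h_k$ classes at the end, noting $h_k \le $ something controlled, or better, absorb the class sum into the main term since $\kappa_k$ already incorporates $h_k \Rg_k$ via the analytic class number formula $\kappa_k = \frac{2^{r_1}(2\pi)^{r_2} h_k \Rg_k}{w_k \sqrt{\Disc(k)}}$. (2) Fix a fundamental domain $\mathcal{D}$ for the action of $\co_k^\times$ on the norm-one surface, of the type used in Minkowski/Landau counting, whose boundary has bounded $(n-1)$-dimensional measure depending only on $[k:\Q]$ (this uses $\Rg_k$ and is where the regulator enters). (3) Apply a lattice-point count: the number of points of the lattice $\mathfrak{b}$ lying in the region $X^{1/n}$-dilate of $\mathcal{D}\cap\{|\Nm|\le 1\}$ equals $\mathrm{vol}(\text{region})/\mathrm{covol}(\mathfrak{b}) + O(\text{boundary terms})$, and the main term is exactly $\kappa_k X/h_k$ per class (times $h_k$ gives $\kappa_k X$), while the boundary/error term is $O_{[k:\Q]}\bigl(\Disc(k)^{?}X^{1-1/n}\bigr)$ up to powers of $\log$, which—and this is the crucial point—is smaller than $\kappa_k X$ precisely once $X \gg \Disc(k)^{1/2+\delta}\kappa_k^{-1}$, giving the threshold in the statement (here one uses $\kappa_k \gg_{[k:\Q],\delta}\Disc(k)^{-\delta/2}$ or a Brauer--Siegel-type lower bound, OR one arranges the constant $C$ to swallow the relevant power). (4) Conclude $I_k(X) \ge \frac12\kappa_k X \gg_{[k:\Q],\delta}\kappa_k X$ for $X$ above the threshold, which is the claim; note the lemma only asserts a lower bound, so only the easy direction of the error estimate is needed.

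The main obstacle I expect is making the error term in the ideal-counting function \emph{explicit and uniform in the field}: textbook statements of $I_k(X) = \kappa_k X + O(X^{1-1/n})$ often hide the dependence of the implied constant on $\Disc(k)$ and on the regulator/fundamental domain geometry. The honest way through is to track, in the lattice-point count, (a) the covolume $\asymp\sqrt{\Disc(k)}$ of $\co_k$ (and of the $\mathfrak{b}$'s, which can be chosen with $\Nm(\mathfrak{b}) \ll \sqrt{\Disc(k)}$ by Minkowski), and (b) the shape of the fundamental domain for the unit action, whose diameter and boundary measure depend on $\Rg_k$; a clean reference for exactly this kind of uniform count is the work of Schmidt or of Lang on the asymptotics of ideal counting, or one can cite the explicit version in the literature on Dedekind zeta residues. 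Once the error is pinned down as $O_{[k:\Q]}\bigl(\Disc(k)^{1/2}X^{1-1/n}(\log X)^{?}\bigr)$ (all $\kappa_k$- and $\Rg_k$-dependence having been converted to $\Disc(k)$ via the class number formula and a regulator lower bound $\Rg_k\gg_{[k:\Q]}1$), comparing it to the main term $\kappa_k X$ and solving for when the main term dominates yields exactly the stated threshold $X \geq C\,\Disc(k)^{1/2+\delta}\kappa_k^{-1}$, with $C$ effective and depending only on $\delta$ and $[k:\Q]$. The effectivity claim is automatic since every ingredient (Minkowski bound, lattice-point counting, regulator lower bound of Remak--Friedman type) is effective.
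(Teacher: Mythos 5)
There is a genuine quantitative gap at the heart of your step (3): a sharp-cutoff lattice-point count can only produce an error term carrying a positive power of $X$, of the shape $O_{[k:\mathbb{Q}]}\bigl(\Disc(k)^{1/2}X^{1-1/n}\bigr)$ with $n=[k:\mathbb{Q}]$ (this is intrinsic to boundary-versus-volume estimates, before one even worries about the skewness of the unit fundamental domain). Comparing that error with the main term $\kappa_k X$ does \emph{not} give the stated threshold: $\Disc(k)^{1/2}X^{1-1/n}\le \tfrac12\kappa_k X$ forces $X\gg\bigl(\Disc(k)^{1/2}\kappa_k^{-1}\bigr)^{n}$, which for $n\ge 2$ is far larger than the range $X\ge C\,\Disc(k)^{1/2+\delta}\kappa_k^{-1}$ the lemma demands (and the lemma is used in the paper exactly in this small-$X$ regime, so weakening the threshold is not acceptable). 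Your assertion that the comparison ``yields exactly the stated threshold'' is a miscalculation, and invoking $\kappa_k\gg\Disc(k)^{-\delta/2}$ does not repair the exponent mismatch; it also reintroduces ineffectivity (Brauer--Siegel), conflicting with the effectivity claim, whereas the hedge ``let $C$ swallow the power'' cannot change a $\Disc(k)^{n/2}\kappa_k^{-n}$ threshold into a $\Disc(k)^{1/2+\delta}\kappa_k^{-1}$ one. A secondary, unresolved issue is uniformity in the field: the boundary measure of the unit fundamental domain is not controlled by $[k:\mathbb{Q}]$ and $\Rg_k$ alone in the naive way you describe, and making the classical Landau count uniform in $\Disc(k)$ is itself a delicate matter.

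The paper's proof avoids both problems by smoothing: it bounds the count below by $\sum_{\Nm(\mathfrak{a})\le X}(1-\Nm(\mathfrak{a})/X)^m$ with $m\ge [k:\mathbb{Q}]/2+2$, writes this via Perron's formula as a contour integral of $\zeta_k(s)X^s/\bigl(s(s+1)\cdots(s+m)\bigr)$, and shifts the contour to $\Re(s)=\epsilon$ using the convexity bound $(s-1)\zeta_k(s)\ll_{[k:\mathbb{Q}],\epsilon}\Disc(k)^{(1-\sigma)/2+\epsilon}(1+|s|)^{[k:\mathbb{Q}](1-\sigma)/2+1+\epsilon}$. The residue at $s=1$ gives $\gg_{[k:\mathbb{Q}]}\kappa_k X$, and the shifted integral is $O_{[k:\mathbb{Q}],\epsilon}\bigl(X^{\epsilon}\Disc(k)^{1/2+\epsilon}\bigr)$ --- an error with essentially no power of $X$ --- which is dominated by $\kappa_k X$ precisely once $X\ge C\,\Disc(k)^{1/2+\delta}\kappa_k^{-1}$, with everything effective. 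If you want to keep a geometric flavor you would still need some smoothing device achieving the same effect; as written, your plan cannot reach the threshold in the statement.
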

\begin{proof}
Let $m \geq \frac{[k:\mathbb{Q}]}{2}+2$ be an integer.  Then
\begin{align*}
\#\{\mathfrak{a} \subseteq \mathcal{O}_K \text{ ideal}: \Nm(\mathfrak{a}) \leq X\}
	\geq \sum_{\Nm(\mathfrak{a}) \leq X} \left(1 - \frac{\Nm(\mathfrak{a})}{X}\right)^m
	= \frac{m!}{2\pi i} \int_{2-i\infty}^{2+i\infty} \zeta_k(s) X^s \frac{ds}{s(s+1)\dots(s+m)}.
\end{align*}
The convexity bound for $\zeta_k(s)$ (see \cite[Equation (5.20)]{IwaniecKowalski}, for example) implies that when $\sigma:=\Re(s)$ lies in the critical strip $0 \leq \sigma \leq 1$, we have
\begin{equation}\label{eqn:convexity-dedekind}
	(s-1) \zeta_k(s) \ll_{[k:\mathbb{Q}],\epsilon} \mathrm{Disc}(k)^{(1-\sigma)/2+\epsilon} (1+|s|)^{\frac{[k:\mathbb{Q}](1-\sigma)}{2}+1+\epsilon}.
\end{equation}
Thus, the integral above conveges absolutely when the line of integration is moved anywhere inside the critical strip.  Taking it arbitrarily close to the line $\Re(s)=0$, we conclude that
\[
\#\{\mathfrak{a} \subseteq \mathcal{O}_K \text{ ideal}: \Nm(\mathfrak{a}) \leq X\}
	\gg_{[k:\mathbb{Q}],\epsilon} X \cdot\mathrm{Res}_{s=1} \zeta_k(s) + O(X^\epsilon \mathrm{Disc}(k)^{\frac{1}{2}+\epsilon}).
\]
The result follows.
\end{proof}

We can now apply our count of orders to get an upper bound on the average $3$-class number of quadratic extensions.

\begin{proposition}\label{prop:order-bound-sf}
	Let $k$ a number field and let $X \geq 1$. We have
		\[
	\sum_{\substack{[F:k]=2 \\   \mathrm{Disc}(F/k) \leq X}} h_3(F/k)
		\ll_{[k:\mathbb{Q}],\epsilon} 	\mathrm{Disc}(k)^{1/2+\epsilon} \kappa_k^{-2} X^{1/2} ( {\mathrm{Disc}(k) h_2(k)^{1/3}}+ {X^{1/2}  }).
	\]

\end{proposition}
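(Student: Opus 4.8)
The plan is to extract the average $3$-class number from the Dirichlet series in Lemma~\ref{lem:gen-series-order-F} by using the lower bound on the number of small-norm ideals from Lemma~\ref{lem:ideal-lower-bound}, and then play this against the upper bound on the number of all cubic rings from Proposition~\ref{prop:effective-shintani} (or rather the version \eqref{eqn:prop:shintani-cubic-bound} valid for all $X$). The point is that each quadratic field $F$ with $\Disc(F/k)\le X$ contributes, through the orders $R\in\mathcal{R}(F)$, roughly $h_3(F/k)$ times a count of squarefree ideals $\fa\subseteq\O_F$ lying in the subgroup $3\Cl_F+\Cl_k$; if we can show this ideal count is $\gg\kappa_F Y/[\text{index}]$ up to norm $Y$ for a suitable $Y$, then summing over all such $F$ and bounding the total by $N^3_k(\text{something})$ gives the inequality we want.

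\medskip

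First I would fix a parameter $Y\ge 1$ to be optimized and, for each quadratic $F/k$ with $\Disc(F/k)\le X$, lower bound the number of orders $R\in\mathcal{R}(F)$ with $\Disc(R/\O_k)\le \Disc(F/k)\cdot Y^2$. By Lemma~\ref{lem:gen-series-order-F}, extracting coefficients, this count is $\gg h_3(F/k)$ times the number of squarefree ideals $\fa\subseteq\O_F$ with $\Nm(\fa)\le Y$ and $[\fa]\in 3\Cl_F+\Cl_k$. Since the subgroup $3\Cl_F+\Cl_k$ has index $h_3(F/k)/h_3(k)$ (at most $h_3(F/k)$) in $\Cl_F$, and ideals equidistribute among ideal classes, for $Y$ a sufficiently large power of $\Disc(F)$ this is $\gg_{[k:\Q]} \kappa_F Y/h_3(F/k)$ by a class-group-twisted version of Lemma~\ref{lem:ideal-lower-bound} (applied with a Hecke character, or by summing Lemma~\ref{lem:ideal-lower-bound} over the fiber). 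Here one needs $Y\gg \Disc(F)^{1/2+\delta}\kappa_F^{-1}$; using $\Disc(F)\le X^2 \Disc(k)$ (from $\Disc(F)=\Disc(k)^2\Disc(F/k)$... actually $\Disc(F) = \Disc(k)^2 \Disc(F/k)$) and the Brauer--Siegel lower bound $\kappa_F\gg \Disc(F)^{-\epsilon}$, it suffices to take $Y$ a fixed small power of $X\Disc(k)$ larger than the threshold.

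\medskip

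Then, crucially, the orders $R\in\mathcal{R}(F)$ produced for distinct $F$ are distinct cubic rings over $k$ (they have distinct underlying étale algebras up to the two genuinely overlapping cases $k\times F$ and $K$ with resolvent $F$, which is exactly why $\mathcal{R}(F)$ was defined to include both), so
$$
\sum_{\substack{[F:k]=2\\ \Disc(F/k)\le X}} \frac{h_3(F/k)\,\kappa_F Y}{h_3(F/k)}
\ll \sum_{\substack{[F:k]=2\\ \Disc(F/k)\le X}} \#\{R\in\mathcal{R}(F): \Disc(R/\O_k)\le \Disc(F/k) Y^2\}
\le N^3_k(X Y^2).
$$
But this only bounds $\sum_F \kappa_F$, not $\sum_F h_3(F/k)$, so instead I would keep the $h_3(F/k)$ factor: the number of orders attached to $F$ up to the bound is $\gg h_3(F/k)\cdot \kappa_F Y/h_3(F/k) = \kappa_F Y$, which loses the very quantity we want. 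So the correct move is to \emph{not} cancel: bound the ideal count below by $\gg \kappa_F Y$ only when the subgroup is everything, and in general by $\gg \kappa_F Y / (\text{index})$, giving order count $\gg h_3(F/k)\kappa_F Y/(h_3(F/k)/h_3(k)) = h_3(k)\kappa_F Y$; still no $h_3(F/k)$. The resolution — and the step I expect to be the genuine obstacle — is to instead choose the ideal-norm cutoff $Y$ to \emph{depend on $F$} so that we capture $\gg h_3(F/k)\cdot(\text{const})$ orders, i.e. take $Y$ just large enough ($Y\asymp$ a power of $\Disc(F)$ divided by $\kappa_F$, times $h_3(F/k)^{1/2}$ or so) that the squarefree-ideal count in the subgroup reaches size $\asymp h_3(F/k)$. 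Then $\#\{R\in\mathcal{R}(F)\}\gg h_3(F/k)^2$ with $\Disc(R/\O_k)\le \Disc(F/k)Y^2 \ll$ a power of $X\Disc(k)$, so
$$
\sum_{\Disc(F/k)\le X} h_3(F/k)^2 \ll N^3_k\!\big(X\cdot(X\Disc(k))^{O(1)}\big) \ll_{[k:\Q],\epsilon}\Disc(k)^\epsilon X (X\Disc(k))^{O(1)},
$$
and then Cauchy--Schwarz against the trivial count of quadratic fields ($\ll h_2(k)\Disc(k)^\epsilon X$ by Proposition~\ref{prop:trivial-cubic-bound}) converts the second-moment bound into the claimed first-moment bound $\sum h_3(F/k)\ll \Disc(k)^{1/2+\epsilon}\kappa_k^{-2}X^{1/2}(\Disc(k)h_2(k)^{1/3}+X^{1/2})$; the $\kappa_k^{-2}$ and the precise exponents come from tracking the Brauer--Siegel factors $\kappa_F^{-1}$ (bounded below in terms of $\kappa_k$ and $\Disc(F/k)$) through the choice of $Y$ and the relation $\Disc(F)=\Disc(k)^2\Disc(F/k)$. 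The delicate points are making the ideal equidistribution in the subgroup $3\Cl_F+\Cl_k$ effective and uniform in $F$ (this is where a Hecke-character version of Lemma~\ref{lem:ideal-lower-bound} enters, and where the $\kappa_k^{-2}$ ultimately originates), and verifying that the $F$-dependent cutoff $Y$ stays below the range where \eqref{eqn:prop:shintani-cubic-bound}'s main term dominates.
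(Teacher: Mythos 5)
There is a genuine gap: you never need to count squarefree ideals of $\O_F$ in the subgroup $3\Cl_F+\Cl_k$, and trying to do so is what derails the argument. In the paper's proof, the factor $h_3(F/k)$ in Lemma~\ref{lem:gen-series-order-F} is already the quantity to be summed (it arises from class field theory, as the number of cubic fields with resolvent $F$ and conductor $(1)$, together with $k\times F$); the subgroup-restricted $\mathfrak{a}$-sum and the factor $\zeta_k(6s-1)$ are simply bounded below by $1$ (they are Dirichlet series with non-negative coefficients and constant term $1$), and the surviving factor $\zeta_k(2s)$ is what supplies ``many orders per maximal object'': its coefficients count ideals of $\O_k$, so Lemma~\ref{lem:ideal-lower-bound} is applied \emph{over the base field $k$} with cutoff $(Y/\Disc(F/k))^{1/2}$, giving $\gg h_3(F/k)\,\kappa_k\,Y^{1/2}\Disc(F/k)^{-1/2}$ orders in $\mathcal{R}(F)$ of discriminant $\le Y$. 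This is exactly where the $\kappa_k^{-2}$ and the threshold $Y\gg \kappa_k^{-2}X\Disc(k)^{1+\epsilon}$ come from --- not, as you suggest, from a Hecke-character equidistribution statement over $F$. Comparing $\sum_F h_3(F/k)\kappa_k Y^{1/2}\Disc(F/k)^{-1/2}\ll N^3_k(Y)\ll \Disc(k)^\epsilon Y$ (Proposition~\ref{prop:effective-shintani}, valid once $Y\ge \Disc(k)^3h_2(k)^{2/3}$) then yields the first moment directly, with no equidistribution input and no second moment.

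Your proposed repair --- choose an $F$-dependent cutoff so that the subgroup ideal count reaches size $\asymp h_3(F/k)$, deduce $\gg h_3(F/k)^2$ orders per $F$, bound the second moment by a single $N^3_k(Z)$, and finish by Cauchy--Schwarz --- cannot give the stated bound. To make the main term of the class-restricted ideal count both dominate the (Hecke-character) error term and exceed $h_3(F/k)$, you are forced to take $Y_F\gg h_3(F/k)^2\Disc(F)^{\theta}/\kappa_F^{c}$ for some $\theta>0$; since $h_3(F/k)$ can a priori be as large as the trivial bound $(\Disc(F/k)\Disc(k))^{1/2+\epsilon}$, the single cutoff $Z=\max_F \Disc(F/k)Y_F^2$ needed to invoke $N^3_k(Z)$ is at least a power like $X^3\Disc(k)^2$, so the resulting bound $\sum_F h_3(F/k)^2\ll Z^{1+\epsilon}$, followed by Cauchy--Schwarz against $\#\{F\}\ll h_2(k)\Disc(k)^\epsilon X$, gives something of size $X^2$ or worse --- weaker than even Proposition~\ref{prop:trivial-cubic-bound} in the relevant ranges and nowhere near $\Disc(k)^{1/2+\epsilon}\kappa_k^{-2}X^{1/2}(\Disc(k)h_2(k)^{1/3}+X^{1/2})$. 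The structural reason is that in your scheme the discriminant budget needed to ``see'' $h_3(F/k)^2$ orders grows proportionally with $h_3(F/k)^2$, so there is no net leverage; in the paper's scheme the number of orders seen per field is $h_3(F/k)$ times a quantity independent of $h_3(F/k)$, which is what makes raising $Y$ profitable. (Two smaller slips: the index of $3\Cl_F+\Cl_k$ in $\Cl_F$ is $h_3(F/k)$, not $h_3(F/k)/h_3(k)$; and making the class-restricted squarefree ideal count effective and uniform in $F$ is itself an unproven input your argument would need, whereas the paper's proof avoids it entirely.)
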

\begin{proof}
	Let $Y$ be given by
	\begin{equation}\label{eqn:Y-value}
		  Y := \max\left\{ { \mathrm{Disc}(k)^3 h_2(k)^{2/3}},\kappa_k^{-2} {X C^2\mathrm{Disc}(k)^{1+\epsilon}} \right\},
	\end{equation}
    where $C$ is the constant from Lemma \ref{lem:ideal-lower-bound}.
	For each quadratic extension $F/k$ with discriminant at most $X$, we consider the contribution to $N^3_k(Y)$ from the orders in the family $\mathcal{R}(F)$ from Lemma \ref{lem:gen-series-order-F}. 
 For any $R \in \mathcal{R}(F)$, any ring automorphism extends to an automorphism of the \'etale algebra $A = R \otimes_\mathbb{Q} k$.  Thus, $|\mathrm{Aut}_{\mathcal{O}_k} (R)| \leq |\mathrm{Aut}_k(A)|$, and we obtain an inequality of generating functions, 
		\[
			\sum_{R \in \mathcal{R}(F)} \frac{|\mathrm{Aut}_{\mathcal{O}_k} (R)|^{-1}}{\mathrm{Disc}(R/\mathcal{O}_k)^s}
				\geq \sum_{R \in \mathcal{R}(F)} \frac{|\mathrm{Aut}_{k}(A)|^{-1}}{\mathrm{Disc}(R/\mathcal{O}_k)^s},
		\]
	which is the Dirichlet series from Lemma \ref{lem:gen-series-order-F}.  Appealing to Lemma \ref{lem:gen-series-order-F}, this series is in turn bounded below by
	\[
	\frac{h_3(F/k)}{2 \mathrm{Disc}(F/k)^{s}} \zeta_k(2s) \zeta_k(6s-1)
		\geq \frac{h_3(F/k)}{2 \mathrm{Disc}(F/k)^s} \zeta_k(2s),
	\]
	since the summation over ideals $\mathfrak{a}$ has non-negative coefficients. It then follows from our assumption 
$Y\geq \kappa_k^{-2} {X C^2\mathrm{Disc}(k)^{1+\epsilon}} $	
	that we may appeal to Lemma \ref{lem:ideal-lower-bound} (with $X = Y^{1/2}/\mathrm{Disc}(F/k)^{1/2}$) to find that the contribution to $N^3_k(Y)$ of orders arising in $\mathcal{R}(F)$ is
	\[
		\gg_{[k:\mathbb{Q}],\epsilon} \frac{h_3(F/k) Y^{1/2} 
		\kappa_k
		}{\mathrm{Disc}(F/k)^{1/2}}.
	\]
	
	Thus, 
	\[
	N^3_k(Y)
		\gg_{[k:\mathbb{Q}],\epsilon} \sum_{\substack{[F:k]=2 \\ X/2 \leq \mathrm{Disc}(F/k) \leq X}} \frac{h_3(F/k) Y^{1/2} 
\kappa_k		
		}{\mathrm{Disc}(F/k)^{1/2}},
	\]
	and we conclude that
	\[
	\sum_{\substack{[F:k]=2 \\  X/2 \le \mathrm{Disc}(F/k) \leq X}} h_3(F/k)
		\ll_{[k:\mathbb{Q}],\epsilon} \frac{N^3_k(Y) X^{1/2} }{Y^{1/2} 
		\kappa_k}.
	\]
	Finally, by Proposition \ref{prop:effective-shintani}, we have 
$N^3_k(Y) \ll_{[k:\mathbb{Q}],\epsilon} \mathrm{Disc}(k)^{\epsilon/2} Y$ 	
	whenever 
		$
	Y\geq\mathrm{Disc}(k)^3h_2(k)^{2/3},$
	and the result follows 
	  using dyadic summation and the fact that $\kappa_k \ll_{[k:\mathbb{Q}],\epsilon} \mathrm{Disc}(k)^{\epsilon/2}$.
	\end{proof}

\begin{proof}[Proof of Proposition \ref{prop:cubic-bound-order-general}]
	Following the proof of Lemma $6.2$ in \cite{DW88}, for a square-free ideal $\mathfrak{q}$ of $k$ of norm $\Nm \mathfrak{q} \leq X^{1/2}$, the number of $S_3$ cubic fields $K/k$ with quadratic resolvent $F/k$ and $\disc(K/k) = \mathfrak{q}^2 \disc(F/k)$ is bounded by $O(4^{\omega(\mathfrak{q})} h_3(F/k))$, where $\omega(\mathfrak{q})$ denotes the number of prime ideals dividing $\mathfrak{q}$. 
	 Thus, for fixed $\mathfrak{q}$, the number of $S_3$-extensions $K/k$ with $\disc(K/k)\leq X$ whose discriminant is $\mathfrak{q}^2$ times that of their quadratic resolvent, may be bounded by a uniform constant times
		\[
		\sum_{\substack{[F:k] = 2 \\ \Disc(F/k) \leq X/\Nm(\mathfrak{q})^2}} 4^{\omega(\mathfrak{q})} h_3(F/k) 
			\ll_{[k:\mathbb{Q}],\epsilon}  4^{\omega(\mathfrak{q})} \mathrm{Disc}(k)^{1/2+\epsilon} \kappa_k^{-2} 
			 \left( \frac{\mathrm{Disc}(k) h_2(k)^{1/3}X^{1/2}}{\Nm(\mathfrak{q})} + \frac{X}{\Nm(\mathfrak{q})^{2}}  \right),
	\]
	the latter inequality provided by Proposition \ref{prop:order-bound-sf}.  
	
	When $X \geq \Disc(k)^2 h_2(k)^{2/3}$, the expression is parentheses above is bounded by twice the second term, and 
	adding the above inequality across $\mathfrak{q}$ with norm at most $X^{1/2}$, we have 
		\begin{equation*}
		N_k(S_3, X) \ll_{[k:\Q],\epsilon} \mathrm{Disc}(k)^{1/2+\epsilon} \kappa_k^{-2} X.
	\end{equation*}
Otherwise, 	$X \leq \Disc(k)^3$, and adding over $\mathfrak{q}$ have have
		\begin{equation*}
		N_k(S_3, X) \ll_{[k:\Q],\epsilon} \mathrm{Disc}(k)^{3/2+\epsilon} \kappa_k^{-2} h_2(k)^{1/3} X^{1/2+\epsilon} \ll_{[k:\Q],\epsilon} \mathrm{Disc}(k)^{3/2+4\epsilon} \kappa_k^{-2} h_2(k)^{1/3} X^{1/2},
	\end{equation*}	
	and the proposition follows.
\end{proof}
	
\subsection{Proof of Theorem \ref{thm:cubic-bound}}\label{sec:effective}

We can now combine our results in the various ranges to prove Theorem \ref{thm:cubic-bound}.
By  the bijection between $3$-torsion elements in relative class groups of quadratic extensions and $S_3$ relative cubic extensions with square-free discriminant ideal   \cite[Section 5]{DW88}, we have
	\begin{equation}\label{eq:h3-bijection}
\sum_{\substack{[F:k]=2\\\Disc(F/k)\leq X}} h_3(F_2/k) \leq  2N_k(S_3,X) + N_{k}(C_2,X).  
\end{equation}
		Using this, the first case  follows directly from Proposition \ref{prop:trivial-cubic-bound}.
			 The second and third cases follow from Propositions \ref{prop:order-bound-sf} and \ref{prop:cubic-bound-order-general}
	and the fact that $\mathrm{Res}_{s=1} \zeta_k(s) \gg_{[k:\Q],\epsilon} \mathrm{Disc}(k)^{-\epsilon}$
	 (though, this is only known to hold with an effectively computable implied constant when $\zeta_k(s)$ does not have an exceptional Landau--Siegel zero; see \cite{Stark}). 		 
		 The  fifth case follows from \eqref{eq:h3-bijection} and Proposition \ref{prop:effective-shintani}, and the fourth from the observation that $N_{k}(S_3, X) \leq N_k(S_3, D_k^3h^{2/3})$ for any $X \leq D_k^3h^{2/3}$ (and analogously for class numbers).

\section{Zero-density estimates for ray class $L$-functions}\label{sec:zero-density}

In this section, we collect the results we need on zero-density estimates for ray class $L$-functions and use them to show that most quadratic extensions will have sufficiently many small split primes.  This will allow us to apply our relative Ellenberg--Venkatesh Lemma~\ref{lem:relative-E-V} and get a non-trivial bound on $h_3(F/k)$ for most quadratic $F/k$.

Let $\chi$ be a nontrivial ray class character of a number field $k$ with conductor $\mathfrak{f}_\chi$.  The associated $L$-function $L(s,\chi)$ is defined by
\[
L(s,\chi) = \sum_{\mathfrak{a}} \frac{\chi(\mathfrak{a})}{\Nm(\mathfrak{a})^s},
\]
the series running over the integral ideals of $k$.  Let $r_1^+(\chi)$ denote the number of real places of $k$ that are split in the cyclic extension of $k$ cut out by $\chi$, let $r_1^-(\chi)$ denote the number of real places that are ramified in the extension, and let $r_2$ denote the number of complex places of $k$.  If we define
\[
\gamma_\chi(s) := \pi^{-[k:\mathbb{Q}]s/2} \Gamma\left(\frac{s}{2}\right)^{r_1^+(\chi)+r_2} \Gamma\left(\frac{s+1}{2}\right)^{r_1^-(\chi)+r_2},
\]
then the completed $L$-function $\Lambda(s,\chi) := (\Nm(\mathfrak{f}_\chi) \mathrm{Disc}(k))^{s/2} \gamma_\chi(s) L(s,\chi)$ 
is entire and satisfies a functional equation, $\Lambda(1-s,\chi) = \varepsilon(\chi) \Lambda(s,\bar\chi)$.  The poles of $\gamma_\chi(s)$ thus lend $L(s,\chi)$ its trivial zeros; these are the non-positive even integers with multiplicity $r_1^+(\chi)+r_2$, and the negative odd integers with multiplicity $r_1^-(\chi)+r_2$.

The following lemma summarizes the analytic properties we shall need.

\begin{lemma}\label{lem:analytic-ray}
Assume notation as above and set $q_\chi = \Nm(\mathfrak{f}_\chi) \mathrm{Disc}(k)$.  Then:
\begin{enumerate}
\item For any $t\geq 0$, the number of non-trivial zeros $\rho=\beta+i\gamma$ of $L(s,\chi)$ with $|\gamma-t| \leq 1$ is $O_{[k:\mathbb{Q}]}(\log((t+1) q_\chi))$.
\item For any $s$ with $-3/2 \leq \Re(s) \leq 2$, we have
\[
-\frac{L^\prime}{L}(s,\chi)
	= -\frac{r_1^+(\chi)+r_2}{s} - \frac{r_1^-(\chi)+r_2}{s+1}-\sum_{\substack{ \rho: \\ |s-\rho|<1}} \frac{1}{s-\rho} + O_{[k:\mathbb{Q}]}(\log((|s|+1) q_\chi)),
\]
where the sum runs over the nontrivial zeros $\rho$ of $L(s,\chi)$. 
\end{enumerate}
\end{lemma}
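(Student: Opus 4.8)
The plan is to derive both statements from the Hadamard factorization of the completed $L$-function $\Lambda(s,\chi)$, exactly as in the classical treatment of Dirichlet $L$-functions (see, e.g., \cite[Ch. 5]{IwaniecKowalski}). Since $\Lambda(s,\chi)$ is entire of order $1$ (this follows from the functional equation together with standard bounds on $L(s,\chi)$ and $\gamma_\chi(s)$ in a half-plane), we may write
\[
\Lambda(s,\chi) = e^{a(\chi)+b(\chi)s}\prod_\rho \left(1-\frac{s}{\rho}\right)e^{s/\rho},
\]
the product over nontrivial zeros $\rho$, with $\sum_\rho |\rho|^{-1-\delta} < \infty$ for every $\delta>0$. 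Taking the logarithmic derivative and using the explicit shape of $\gamma_\chi(s)$ gives
\[
\frac{L^\prime}{L}(s,\chi) = \tfrac{1}{2}\log\frac{q_\chi}{\pi^{[k:\Q]}} - \frac{\gamma_\chi^\prime}{\gamma_\chi}(s) + b(\chi) + \sum_\rho\left(\frac{1}{s-\rho}+\frac{1}{\rho}\right).
\]

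First I would establish (1), the zero-counting bound. Evaluate the real part of the expression above at $s = 2 + it$: since $L(2+it,\chi) \asymp 1$ uniformly (the Euler product converges absolutely and is bounded away from $0$ on $\Re s = 2$), and since $\Re\,\gamma_\chi^\prime/\gamma_\chi(2+it) = O_{[k:\Q]}(\log(|t|+2))$ by Stirling, one obtains $\Re\, b(\chi) = -\sum_\rho \Re(1/\rho) + O_{[k:\Q]}(\log((|t|+1)q_\chi))$; applying the functional equation relates $\Re\,b(\chi)$ to $\sum_\rho\Re(1/\rho)$ and lets one conclude $\Re\, b(\chi) = -\sum_\rho \Re(1/\rho)$ plus a controlled error, hence
\[
\sum_\rho \frac{1-\beta/2 + t(t-\gamma)}{(2-\beta)^2 + (t-\gamma)^2}\;\text{-type sum} \ll_{[k:\Q]} \log((|t|+1)q_\chi).
\]
More cleanly: each term $\Re\frac{1}{2+it-\rho} = \frac{2-\beta}{(2-\beta)^2+(t-\gamma)^2}$ is positive and is $\gg 1$ when $|\gamma - t|\le 1$, so bounding the whole sum by $O_{[k:\Q]}(\log((|t|+1)q_\chi))$ forces the number of such zeros to be $O_{[k:\Q]}(\log((t+1)q_\chi))$, giving (1).

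Then (2) follows by subtracting: compute $\frac{L^\prime}{L}(s,\chi) - \frac{L^\prime}{L}(2+it,\chi)$ for $-3/2\le\Re s\le 2$ with $\Im s = t$, so the constant $\tfrac12\log(q_\chi/\pi^{[k:\Q]})$ and $b(\chi)$ cancel. The gamma-factor difference $\frac{\gamma_\chi^\prime}{\gamma_\chi}(2+it) - \frac{\gamma_\chi^\prime}{\gamma_\chi}(s)$ contributes the $-\frac{r_1^+(\chi)+r_2}{s} - \frac{r_1^-(\chi)+r_2}{s+1}$ terms (the poles of $\gamma_\chi^\prime/\gamma_\chi$ at $s=0$ and $s=-1$) plus $O_{[k:\Q]}(\log((|s|+1)))$ from Stirling on the remaining smooth part. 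For the zero-sum $\sum_\rho\left(\frac{1}{s-\rho}-\frac{1}{2+it-\rho}\right)$, split into zeros with $|\gamma - \Im s| < 1$ and the rest: for the far zeros, $\frac{1}{s-\rho}-\frac{1}{2+it-\rho} = \frac{2-\Re s}{(s-\rho)(2+it-\rho)} \ll \frac{1}{(|\gamma-t|+1)^2}$, which sums over dyadic ranges against the bound from (1) to give $O_{[k:\Q]}(\log((|s|+1)q_\chi))$; the nearby terms $\frac{1}{2+it-\rho}$ are $O(1)$ each and $O_{[k:\Q]}(\log((|s|+1)q_\chi))$ in total, again by (1), while the nearby terms $\frac{1}{s-\rho}$ are kept explicitly, producing exactly the sum $\sum_{|s-\rho|<1}\frac{1}{s-\rho}$ in the statement.

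The main obstacle is uniformity: every implied constant must depend only on $[k:\Q]$ and not on $\chi$ or $k$ itself. This is where care is needed in (1) --- the bound $L(2+it,\chi)\asymp 1$ must be made uniform (straightforward from the Euler product, with constants depending only on $[k:\Q]$), and the analytic conductor $q_\chi = \Nm(\mathfrak{f}_\chi)\Disc(k)$ must be the right normalization absorbing both the field and the character. I do not anticipate genuine difficulty beyond bookkeeping, since the ray class $L$-function $L(s,\chi)$ behaves formally just like a Dirichlet $L$-function of modulus $q_\chi$ and degree $[k:\Q]$; the argument is the standard one, and the only new feature relative to the rational case is keeping the degree-$[k:\Q]$ gamma factor and the split/ramified real-place counts $r_1^\pm(\chi)$ tracked correctly through Stirling's formula.
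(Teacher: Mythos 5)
Your proposal is correct, and it is the standard Hadamard-factorization argument that underlies the result the paper actually invokes: the paper's proof is essentially a citation to \cite[Proposition 5.7]{IwaniecKowalski}, plus one remark to cover the range $-3/2 \leq \Re(s) \leq -1/2$ of part (2), which lies outside the range stated there. The one genuine point of divergence is how that extension is handled. The paper uses the functional equation: for $\Re(s) \leq -1/2$ one relates $-\frac{L'}{L}(s,\chi)$ to $-\frac{L'}{L}(1-s,\bar\chi)$, which is in the region of absolute convergence, so the gamma/conductor factors from the functional equation dominate and the estimate is immediate. You instead run the subtraction argument $\frac{L'}{L}(s,\chi)-\frac{L'}{L}(2+it,\chi)$ uniformly over the whole strip $-3/2 \leq \Re(s) \leq 2$; this works because the only poles of $\gamma_\chi'/\gamma_\chi$ in that strip are at $s=0$ and $s=-1$ (producing exactly the terms $-\frac{r_1^+(\chi)+r_2}{s}$ and $-\frac{r_1^-(\chi)+r_2}{s+1}$), while the poles at $s=-2,-3,\dots$ stay at distance at least $1/2$ and contribute only $O_{[k:\Q]}(\log(|s|+2))$ through Stirling — a point worth stating explicitly, since it is the only thing that could go wrong in the wider strip. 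Your route is self-contained and makes the uniformity in $q_\chi=\Nm(\mathfrak{f}_\chi)\Disc(k)$ and $[k:\Q]$ visible; the paper's route is shorter and avoids redoing the gamma-factor bookkeeping. One small slip: from $\Lambda(s,\chi)=q_\chi^{s/2}\gamma_\chi(s)L(s,\chi)$ the logarithmic derivative gives $\frac{L'}{L}(s,\chi)=b(\chi)-\tfrac12\log q_\chi-\frac{\gamma_\chi'}{\gamma_\chi}(s)+\sum_\rho\bigl(\frac{1}{s-\rho}+\frac{1}{\rho}\bigr)$, so the sign on the $\log q_\chi$ term in your displayed formula is off; this is harmless since that constant cancels in the subtraction and is in any case absorbed into $O_{[k:\Q]}(\log q_\chi)$.
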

\begin{proof}
These are standard properties of $L$-functions.  For example, see Proposition 5.7 in Iwaniec and Kowalski \cite{IwaniecKowalski}.  The second conclusion is stated there only for $-1/2 \leq \Re(s) \leq 2$, but in the region $-3/2 \leq \Re(s) \leq -1/2$, the functional equation relates $-\frac{L^\prime}{L}(s,\chi)$ to $-\frac{L^\prime}{L}(1-s,\bar\chi)$, which is absolutely convergent.  Thus, the conclusion is easier in this case, with the factors arising from the functional equation being dominant.
\end{proof}

We next recall a consequence of Proposition A.2 in the appendix by Lemke Oliver and Thorner to work of Pasten \cite{Pasten}, which follows upon taking $\pi$ to be the trivial representation of $k$.  See also \cite[Theorem 1.2]{ThornerZaman-LargeSieve} for a somewhat stronger statement, which would allow also $\epsilon = 0$ below with an explicit value of $c$.

\begin{theorem}\label{thm:zero-density}
Let $k$ be a number field, and for any ray class character $\chi$ of $k$, let 
\[
N_\chi(\sigma,T) := \#\{ \rho : L(\rho,\chi) = 0, \Re(\rho) \in (\sigma,1), |\Im(\rho)| \leq T\}.
\]
Then there is a constant $c>0$, depending only on the degree $[k:\mathbb{Q}]$, such that for any $Q,T>1$, any $1/2 \leq \sigma <1$, and any $\epsilon > 0$, we have
\[
\sum_{\Nm(\mathfrak{q}) \leq Q} \,\sideset{}{^\prime}\sum_{\chi \pmod{\mathfrak{q}}} N_\chi(\sigma,T)
	\ll_{[k:\mathbb{Q}],\epsilon} (\mathrm{Disc}(k) QT)^{c(1-\sigma)+\epsilon}.
\]
Here, the outer summation runs over ideals $\mathfrak{q}$ of $\mathcal{O}_k$ with bounded norm and the inner summation runs over primitive ray class characters of conductor $\mathfrak{q}$.
\end{theorem}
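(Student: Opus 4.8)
The statement is a repackaging of known (nearly log-free) zero-density estimates for families of Hecke $L$-functions, so the most direct route is to deduce it from the cited results rather than reprove it. Concretely, the plan is to apply Proposition A.2 of the Lemke Oliver--Thorner appendix to \cite{Pasten} (or \cite[Theorem 1.2]{ThornerZaman-LargeSieve}) with automorphic input the trivial representation of $\mathrm{GL}_1/k$: then $L(s,\chi)$ is exactly the Hecke $L$-function of the ray class character $\chi$, its analytic conductor at height $t$ is $\asymp \Nm(\mathfrak{f}_\chi)\mathrm{Disc}(k)(1+|t|)^{[k:\mathbb{Q}]}$, and the double sum over $\Nm(\mathfrak{q}) \le Q$ and primitive $\chi \bmod \mathfrak{q}$ is precisely the family averaged over in those theorems. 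The only checking required is that the $\epsilon$ in the exponent absorbs the degree-dependent polylogarithmic factors in the cited bounds and that $\mathrm{Disc}(k)$ enters only through the conductor, hence at worst to a fixed power, which it does; enlarging $c$ also accommodates any discrepancy between a $Q$ and a $Q^2$ in the family conductor.

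If one instead wanted a self-contained proof, I would run the Montgomery--Huxley zero-detection method over $k$. First, for each $\chi$ detect the zeros $\rho = \beta+i\gamma$ with $\beta > \sigma$ and $|\gamma| \le T$ by a mollified Dirichlet polynomial: with $M_X(s,\chi) = \sum_{\Nm(\mathfrak{n}) \le X} \mu_k(\mathfrak{n})\chi(\mathfrak{n})\Nm(\mathfrak{n})^{-s}$, the product $L(s,\chi) M_X(s,\chi)$ has constant term $1$ and tail coefficients bounded by the divisor function, so at a zero one gets $\big|\sum_{X < \Nm(\mathfrak{n}) \le Y} a_{\mathfrak{n}}\chi(\mathfrak{n})\Nm(\mathfrak{n})^{-\rho}\big| \gg 1$ after truncating the tail (controlled by the convexity bound for $L(s,\chi)$, whose logarithmic derivative is handled by Lemma \ref{lem:analytic-ray}) and after passing to a $1$-spaced subset of zeros, which costs only a factor $O_{[k:\mathbb{Q}]}(\log((|\gamma|+1) q_\chi))$ per unit interval by part (1) of Lemma \ref{lem:analytic-ray}. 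Second, square, sum over the spaced zeros, over primitive $\chi \bmod \mathfrak{q}$, and over $\Nm(\mathfrak{q}) \le Q$, then apply Gallagher's lemma together with a hybrid large sieve inequality for ray class characters of $k$ of the shape $\sum_{\Nm(\mathfrak{q}) \le Q}\sideset{}{^\prime}\sum_{\chi \bmod \mathfrak{q}} \int_{-T}^{T} \big|\sum_{\Nm(\mathfrak{n}) \le N} c_{\mathfrak{n}}\chi(\mathfrak{n})\big|^2\,dt \ll_{[k:\mathbb{Q}],\epsilon} (\mathrm{Disc}(k) Q T N)^{\epsilon}(Q^2 + N)\sum_{\Nm(\mathfrak{n}) \le N}|c_{\mathfrak{n}}|^2$. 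Third, dyadically decompose the range of $\Nm(\mathfrak{n})$, balance the regimes $N \le Q^2$ and $N > Q^2$ by choosing $X$, and optimize to reach $(\mathrm{Disc}(k) Q T)^{c(1-\sigma)}(\log\cdots)^{O(1)}$, absorbing the logs into the $\epsilon$-power.

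I expect the main obstacle in the from-scratch approach to be establishing the large sieve inequality for ray class characters with the correct---polynomial, and ideally small---dependence on $\mathrm{Disc}(k)$, since both the count of primitive characters of modulus $\mathfrak{q}$ and the distribution of norms of integral ideals depend on $k$; this is exactly the technical core of \cite{ThornerZaman-LargeSieve}, which is why invoking it is the sensible choice. A minor secondary point is checking that the zero-detection step is uniform across the whole family and that the ``$\epsilon$'' genuinely swallows the degree- and discriminant-dependent logarithmic losses, but this is routine once the large sieve input is in hand.
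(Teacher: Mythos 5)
Your primary route—specializing Proposition A.2 of the Lemke Oliver--Thorner appendix to \cite{Pasten} (or \cite[Theorem 1.2]{ThornerZaman-LargeSieve}) to the trivial representation of $\mathrm{GL}_1/k$, so that the family is exactly the primitive ray class characters of conductor up to $Q$—is precisely how the paper obtains this statement, which it records as a direct consequence of those results rather than proving it from scratch. Your additional Montgomery--Huxley sketch is extra and not needed; the citation-based argument is the paper's proof.
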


Theorem \ref{thm:zero-density} implies that over a general number field $k$, most ray class $L$-functions do not have zeros near $s=1$. Therefore we will utilize Theorem \ref{thm:zero-density} to show that for most quadratic extensions over $k$ we can find enough prime ideals with a specified splitting behavior.  Recall that $\pi_k(Y)$ denotes the prime ideal counting function of $k$, and for an extension $F/k$, $\pi_k(Y;F,e)$ denotes the number of primes in $k$ of norm at most $Y$ that are split in $F$.

\begin{lemma}\label{lem:effective-Chebo-over-k-average}
	Let $k$ be a number field and let $\mathcal{F}_2(X)$  be the set of  quadratic extensions of $k$ with $\Disc(F/k)\le X$. Let $c = c_{[k:\Q]}$ be the constant as given in Theorem \ref{thm:zero-density}. For any $\epsilon_1>0$ 
	and $X\geq 2$,
	   there exists a set $\mathcal{E} = \mathcal{E}(k, X, 
	   \epsilon_1)\subset \mathcal{F}_2(X)$ of exceptional quadratic extensions $F/k$ such that:
		\begin{enumerate}
			\item for all $F \in \mathcal{F}_2(X)\setminus \mathcal{E}$, for all $4 \leq Y \le X$
there exists a constant $C_{[k:\Q],\epsilon_1}$ depending only on $[k:\Q],\epsilon_1$ such that
$$
\pi_k(Y; F, e) \geq \frac{1}{8} \pi_k(Y/2) - C_{[k:\Q],\epsilon_1}Y^{\sigma_1} \log^2( X \Disc(k)),
$$
	where $\sigma_1 = \max(1- \frac{\epsilon_1}{4c},1/2)$; and
			\item $|\mathcal{E}| \ll_{[k:\Q], \epsilon_1} \Disc(k)^{\epsilon_1}X^{\epsilon_1}$.
		\end{enumerate}
\end{lemma}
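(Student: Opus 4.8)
The plan is to phrase everything in terms of the quadratic ray class characters cutting out the fields in $\mathcal{F}_2(X)$, to use the zero-density input Theorem~\ref{thm:zero-density} to define and bound the exceptional set, and to use the truncated explicit formula to control primes for the remaining fields. First I would set up the dictionary: each $F\in\mathcal{F}_2(X)$ corresponds to a unique nontrivial quadratic ray class character $\chi_F$ of $k$, whose conductor $\mathfrak{f}_{\chi_F}$ equals $\disc(F/k)$ by the conductor--discriminant formula, so $\Nm(\mathfrak{f}_{\chi_F})=\Disc(F/k)\le X$. A prime $\mathfrak{p}\nmid\mathfrak{f}_{\chi_F}$ splits in $F/k$ exactly when $\chi_F(\mathfrak{p})=1$, which gives
\[
\pi_k(Y;F,e)=\sum_{\substack{\Nm(\mathfrak{p})\le Y\\ \mathfrak{p}\nmid\mathfrak{f}_{\chi_F}}}\frac{1+\chi_F(\mathfrak{p})}{2}\ \ge\ \tfrac12\pi_k(Y/2)-\tfrac12\,\omega_k(\mathfrak{f}_{\chi_F})-\tfrac12\Bigl|\sum_{\Nm(\mathfrak{p})\le Y}\chi_F(\mathfrak{p})\Bigr|,
\]
where I used $\pi_k(Y)\ge\pi_k(Y/2)$, that $\chi_F$ vanishes on primes dividing its conductor, and $\omega_k$ for the number of prime ideal divisors. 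Since $\omega_k(\mathfrak{f}_{\chi_F})\ll\log X$ is harmless, the whole lemma reduces to bounding the prime character sum $\sum_{\Nm(\mathfrak{p})\le Y}\chi_F(\mathfrak{p})$ by $O_{[k:\Q],\epsilon_1}(Y^{\sigma_1}\log^2(X\Disc(k)))$ for all $4\le Y\le X$, for all but a controlled number of $F$; the claimed inequality (with the weaker constant $\tfrac18$) then follows immediately.

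To define the exceptional set, set $T=X$ and let $\mathcal{E}:=\{F\in\mathcal{F}_2(X): N_{\chi_F}(\sigma_1,T)\ge 1\}$, that is, the $F$ for which $L(s,\chi_F)$ has a zero $\rho$ with $\Re(\rho)>\sigma_1$ and $|\Im(\rho)|\le T$. Distinct $F$ give distinct primitive characters $\chi_F$, all of conductor norm $\le X$, and $N_\chi\ge 0$, so I would apply Theorem~\ref{thm:zero-density} with $Q=T=X$:
\[
|\mathcal{E}|\ \le\ \sum_{F\in\mathcal{F}_2(X)}N_{\chi_F}(\sigma_1,T)\ \le\ \sum_{\Nm(\mathfrak{q})\le X}\ \sideset{}{^\prime}\sum_{\chi \pmod{\mathfrak{q}}}N_\chi(\sigma_1,X)\ \ll_{[k:\Q],\epsilon}\ (\Disc(k)X^2)^{c(1-\sigma_1)+\epsilon}.
\]
Since $\sigma_1=\max(1-\tfrac{\epsilon_1}{4c},\tfrac12)$ forces $c(1-\sigma_1)\le\epsilon_1/4$, taking $\epsilon=\epsilon_1/4$ gives $|\mathcal{E}|\ll_{[k:\Q],\epsilon_1}\Disc(k)^{\epsilon_1/2}X^{\epsilon_1}\le\Disc(k)^{\epsilon_1}X^{\epsilon_1}$, which is the second assertion.

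For the first assertion I would fix $F\notin\mathcal{E}$ and $4\le Y\le X$, write $q=\Nm(\mathfrak{f}_{\chi_F})\Disc(k)\le X\Disc(k)$, and apply the truncated explicit formula for $L(s,\chi_F)$ — a standard consequence of Lemma~\ref{lem:analytic-ray}, after the usual smoothing to avoid $Y$ near a prime power:
\[
\psi_k(Y;\chi_F)=-\sum_{|\Im(\rho)|\le T}\frac{Y^\rho}{\rho}+O_{[k:\Q]}\!\Bigl(\frac{Y\log^2(qYT)}{T}+\log Y\Bigr).
\]
Because $F\notin\mathcal{E}$, every zero with $|\Im(\rho)|\le T$ has $\Re(\rho)\le\sigma_1$, and Lemma~\ref{lem:analytic-ray}(1) with a dyadic decomposition in $|\Im(\rho)|$ gives $\sum_{|\Im(\rho)|\le T}|\rho|^{-1}\ll_{[k:\Q]}\log^2(qT)$; hence the zero sum is $O_{[k:\Q]}(Y^{\sigma_1}\log^2(X\Disc(k)))$, and with $T=X\ge Y$ the error term is of the same size, so $|\psi_k(Y;\chi_F)|\ll_{[k:\Q]}Y^{\sigma_1}\log^2(X\Disc(k))$ uniformly for $4\le Y\le X$. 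Removing the higher prime powers changes $\psi_k(Y;\chi_F)$ by $\ll_{[k:\Q]}Y^{1/2}\log Y$, which is negligible since $\sigma_1\ge\tfrac12$, and then partial summation against $1/\log t$ — fed the resulting bound on $\theta_k(t;\chi_F):=\sum_{\Nm(\mathfrak{p})\le t}\chi_F(\mathfrak{p})\log\Nm(\mathfrak{p})$ for $2\le t\le Y$, and using $\int_2^Y t^{\sigma_1-1}\,dt\ll Y^{\sigma_1}$ — yields $\bigl|\sum_{\Nm(\mathfrak{p})\le Y}\chi_F(\mathfrak{p})\bigr|\ll_{[k:\Q]}Y^{\sigma_1}\log^2(X\Disc(k))$. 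Substituting this into the display from the first paragraph finishes the proof.

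The main obstacle is the interlocking choice of parameters. The truncation height $T$ has to be large enough that the explicit-formula error is absorbed into $Y^{\sigma_1}\log^2(X\Disc(k))$ for all $Y\le X$ (which needs $T\gg X^{1-\sigma_1}$, so $T=X$ suffices), while the product $QT$ appearing in Theorem~\ref{thm:zero-density} has to stay polynomially bounded in $X$ and $\Disc(k)$ so that $|\mathcal{E}|\ll\Disc(k)^{\epsilon_1}X^{\epsilon_1}$; balancing these, through the relation $c(1-\sigma_1)\le\epsilon_1/4$, is exactly what pins down the shape $\sigma_1=\max(1-\tfrac{\epsilon_1}{4c},\tfrac12)$. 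The analytic ingredients — the explicit formula, the zero count of Lemma~\ref{lem:analytic-ray}(1), and the zero-density bound of Theorem~\ref{thm:zero-density} — are standard or already in hand, so the real work is the bookkeeping.
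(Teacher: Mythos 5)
Your proposal is correct and follows essentially the same route as the paper: you define the exceptional set by zeros of $L(s,\chi_{F/k})$ with real part exceeding $\sigma_1$ up to a height polynomial in $X$, bound its size via Theorem~\ref{thm:zero-density}, and for non-exceptional $F$ lower-bound the split primes through an explicit-formula estimate for the prime character sum, exactly as in the paper's proof (which uses a smoothed Perron integral over all zeros split at height $X^{1/2}$ rather than your sharp-cutoff formula truncated at $T=X$ --- a purely cosmetic difference). One small point: in bounding $\sum_{|\Im(\rho)|\le T}|\rho|^{-1}$ you should note that for $F\notin\mathcal{E}$ the functional equation also rules out zeros with $\Re(\rho)<1-\sigma_1$, so $|\rho|\gg 1-\sigma_1$ and the implied constant acquires the (permitted) dependence on $\epsilon_1$, as in the paper's $1/(1-\sigma_1)$ factors.
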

\begin{proof}
	For each quadratic extension $F/k$, there is a unique quadratic ray class character $\chi_{F/k}$ with conductor $\disc(F/k)$ such that $\frac{\zeta_F(s)}{\zeta_k(s)} = L(s, \chi_{F/k})$.
	As in the statement of the lemma, we set $\sigma_1 = 1-\epsilon_1/4c$
	and define 
		\[
			\mathcal{E}: = \{  F \in \mathcal{F}_2(X) : \exists \rho \text{ s.t. }  L(\rho, \chi_{F/k}) =0,  \Re(\rho) \in (\sigma_1, 1), \text{ and } |\Im(\rho)| \leq X^{1/2}
			 \}.
		\]
	Then Theorem \ref{thm:zero-density} implies $|\mathcal{E}| \ll_{[k:\Q], \epsilon_1/4} (\Disc(k) X^{3/2}
	)^{c(1-\sigma_1)+\epsilon_1/4}\ll_{[k:\Q],\epsilon_1} \Disc(k)^{\epsilon_1}X^{\epsilon_1}$. 
	
	Suppose $F \notin \mathcal{E}$.  In order to bound from below the number of primes of $k$ with $\chi_{F/k}(\mathfrak{p}) = 1$, we first observe that, away from the $O(\log \Disc(F))$ ramified primes, the function $(1+\chi_{F/k}(\mathfrak{p}))/2$ is precisely $1$ or $0$ according to whether $\mathfrak{p}$ is split or inert.  As is standard, we first consider a version weighted by the von Mangoldt function $\Lambda_k$ of $k$,  and we observe
		\begin{align*}
			\sum_{\Nm(\mathfrak{a}) \leq Y} \Lambda_k(\mathfrak{a})\, \frac{1+\chi_{F/k}(\mathfrak{a})}{2}
				&\geq \sum_{\Nm(\mathfrak{a}) \leq Y} \Lambda_k(\mathfrak{a})\, \frac{1+\chi_{F/k}(\mathfrak{a})}{2} \left(1 - \frac{\Nm(\mathfrak{a})}{Y}\right) \\
				&= \frac{1}{2} \sum_{\Nm(\mathfrak{a}) \leq Y} \Lambda_k(\mathfrak{a})\left(1 - \frac{\Nm(\mathfrak{a})}{Y}\right) + \frac{1}{2} \sum_{\Nm(\mathfrak{a}) \leq Y} \chi_{F/k}(\mathfrak{a})\Lambda_k(\mathfrak{a})\left(1 - \frac{\Nm(\mathfrak{a})}{Y}\right).
		\end{align*}
	The first term is related to $\pi_k(Y)$ by means of partial summation, so our goal is to show that the second is small.  
	
	Via Perron's formula, we find
	\begin{align*}
		\sum_{|\mathfrak{a}| \leq Y} \chi_{F/k}(\mathfrak{a})\Lambda_k(\mathfrak{a}) \left( 1-\frac{\Nm(\mathfrak{a})}{Y} \right)
		&= \frac{-1}{2\pi i} \int_{2-i\infty}^{2+i\infty} \frac{L^\prime}{L}(s,\chi_{F/k}) \cdot \frac{Y^s}{s(s+1)}\,ds.
	\end{align*}
     It follows from Lemma \ref{lem:analytic-ray} and the functional equation that the integral converges absolutely over any vertical line.  Thus, we may shift the contour all the way to the left, finding
		\[
			\sum_{\Nm(\mathfrak{a}) \leq Y}  \chi_{F/k}(\mathfrak{a}) \Lambda_k(\mathfrak{a})\left( 1 - \frac{\Nm(\mathfrak{a})}{Y}\right)
				=- \sum_{\substack{ \rho: \\ L(\rho,\chi)=0 \\ \rho\neq 0,-1}} \frac{Y^\rho}{\rho(\rho+1)} - \mathrm{Res}_0 - \mathrm{Res}_{-1},
		\]
	where the summation runs over all zeros of $L(s,\chi)$, nontrivial or trivial, and where $\mathrm{Res}_0$ and $\mathrm{Res}_{-1}$ denote the residues of the integrand at $s=0$ and $s=-1$, respectively.  
	The trivial zeros of $L(s,\chi_{F/k})$ occur at negative integers, and each has multiplicity at most $[k:\mathbb{Q}]$, so their total contribution to the sum may be bounded unifomly by $O_{[k:\mathbb{Q}]}(Y^{-2})$.  Using Lemma \ref{lem:analytic-ray}, the residue $\mathrm{Res}_{-1}$ is bounded by $O_{[k:\mathbb{Q}]}((\log Y)(\log X\mathrm{Disc}(k))Y^{-1})$, which is sufficient.  Again using Lemma \ref{lem:analytic-ray}, we see that
		\begin{align*}
			\mathrm{Res}_0
				&= -(r_1^+(\chi)+r_2) \log Y + \sum_{\substack{\rho: \\ |\rho| \leq 1}} \frac{1}{\rho} + O_{[k:\mathbb{Q}]}(\log (X \mathrm{Disc}(k))) \\
				&\ll_{[k:\mathbb{Q}]} \frac{\log (X \mathrm{Disc}(k))}{1-\sigma_1},
		\end{align*}
	since $F \not\in \mathcal{E}$ and $Y \leq X
	$.

By Lemma \ref{lem:analytic-ray}, the summation over zeros of height greater than $T = X^{1/2}
$ is bounded
\[
\sum_{\substack{ \rho: \\ L(\rho, \chi)=0 \\ |\Im(\rho)| \geq T}} \frac{Y^\rho}{\rho(\rho+1)}
\ll_{[k:\mathbb{Q}]} \frac{Y \log(X\Disc(k)T)}{T}  \ll_{[k:\Q]} Y^{1/2} \log(X\Disc(k)).
\]

Notice that by definition of $\mathcal{E}$, for $F \notin \mathcal{E}$ there are no zeros for $L(s, \chi_{F/k})$ with $\Re(\rho)>\sigma_1$ and $|\Im(\rho)| \leq T = X^{1/2}
$.  Using the functional equation, there are also no zeros with $\Re(\rho)\in (0,1-\sigma_1)$ and $|\Im(\rho)| \leq T$.  The summation over low lying zeroes of $L(s, \chi_{F/k})$ may therefore be bounded by
\[
\sum_{\substack{ \rho: \\ L(\rho,\chi)=0 \\ |\Im(\rho)| < T}} \frac{Y^\rho}{\rho(\rho+1)}
\ll_{[k:\mathbb{Q}]} \frac{Y^{\sigma_1} \log (X\Disc(k)T)}{1-\sigma_1} \ll_{[k:\Q],
\epsilon_1} Y^{\sigma_1}\log (X\Disc(k)).
\]

Finally, by the standard translation from the von Mangoldt function to the prime counting function, we can conclude the lemma.
\end{proof}

In order to apply Lemma \ref{lem:effective-Chebo-over-k-average} efficiently, we need the following effective lower bound on the number of prime ideals in a general number field $k$.
 \begin{lemma}[\cite{ZamThesis}]\label{lem:prime-number-theorem-k-dependence}
	There exist absolute, effective constants $\gamma$, $\beta$, and $D_0 >0$ such that if $k$ is a number field with $\Disc(k)\ge D_0$, then for $Y\ge \Disc(k)^{\beta}$, we have
	$$\pi_k(Y)  \gg_{[k:\Q]} \frac{1}{\Disc(k)^{\gamma}}\cdot  \frac{Y}{\log Y}.$$
\end{lemma}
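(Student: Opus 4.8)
The plan is to prove the equivalent lower bound $\psi_k(Y) \gg_{[k:\Q]} \Disc(k)^{-\gamma} Y$ for $\psi_k(Y) := \sum_{\Nm(\mathfrak{a}) \le Y} \Lambda_k(\mathfrak{a})$, where $\Lambda_k$ is the von Mangoldt function of $k$; since the prime powers contribute only $O_{[k:\Q]}(\sqrt Y \log Y)$, one has $\psi_k(Y) \le \pi_k(Y)\log Y + O_{[k:\Q]}(\sqrt Y\log Y)$, and the lemma follows once $\beta$ is taken large enough to absorb the square-root term. The tool is the truncated explicit formula for $\zeta_k$: for a parameter $T$ (a suitable power of $\Disc(k)$, or of $Y$ when $Y$ is much larger than $\Disc(k)$),
\[
\psi_k(Y) = Y - \sum_{\substack{\zeta_k(\rho) = 0 \\ 0 < \Re(\rho) < 1,\ |\Im(\rho)| \le T}} \frac{Y^\rho}{\rho} + O\!\left(\frac{Y\log^2(Y\Disc(k))}{T} + \log(Y\Disc(k))\right),
\]
whose $O$-term is negligible for our ranges, so that the whole problem is to control the sum over nontrivial zeros.

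I would feed in three standard facts about the zeros of $\zeta_k$, each with explicit $\Disc(k)$-dependence (all available in \cite{ZamThesis}): (i) the classical zero-free region, which excludes all zeros with $\Re(\rho) > 1 - c_0/\log(\Disc(k)(|\Im(\rho)|+3)^{[k:\Q]})$ apart from at most one simple real zero $\beta_1$; write $\lambda := 1 - \beta_1$ if this zero exists, and recall that such a $\beta_1$ is necessarily an exceptional zero of a quadratic subfield of $k$ (Stark, \cite{Stark}), whose $L$-value admits an effective lower bound, so that $\lambda \gg_{[k:\Q]} \Disc(k)^{-O(1)}$ effectively; (ii) a log-free zero-density estimate, e.g. $\#\{\rho : \zeta_k(\rho)=0,\ \Re(\rho) > \sigma,\ |\Im(\rho)| \le T\} \ll_{[k:\Q]} (\Disc(k)T^{[k:\Q]})^{c(1-\sigma)}$ (a close relative of Theorem \ref{thm:zero-density} with $Q = 1$); and (iii) the Deuring--Heilbronn phenomenon: if $\beta_1$ exists, then every other zero with $|\Im(\rho)| \le T$ satisfies $1 - \Re(\rho) \gg \log(1/(\lambda\log(\Disc(k)T)))/\log(\Disc(k)T)$.

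With $\beta$ chosen large in terms of $[k:\Q]$ and the density exponent $c$, I would split into cases. Bounding $\sum_\rho Y^\rho/\rho$ by summing the density estimate over dyadic strips in $1-\Re(\rho)$ and in $|\Im(\rho)|$, and using the zero-free region to control $Y^{\Re(\rho)}$ on each strip, the contribution of all nonexceptional zeros is at most $Y$ times a constant that can be made $< \tfrac12$ by enlarging $\beta$, plus a negligible $O(Y^{1/2+\epsilon})$. Hence if $\beta_1$ does not exist, $\psi_k(Y) \gg Y$; and if $\beta_1$ exists but $\lambda\log Y \ge 1$, then $|Y^{\beta_1}/\beta_1| \le Y/2$ as well, so again $\psi_k(Y) \gg Y$ (in these cases there is no $\Disc(k)$-loss at all). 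The delicate case is $\beta_1$ present with $\lambda\log Y$ small: by concavity of $1-e^{-x}$ one has $Y - Y^{\beta_1}/\beta_1 \gg Y\lambda\log Y$, while the Deuring--Heilbronn repulsion forces every remaining zero so far to the left that the same strip-by-strip density bound shows they contribute only $O(\beta^{-1}Y\lambda\log Y)$, which is at most half of $Y - Y^{\beta_1}/\beta_1$ for $\beta$ large. Therefore $\psi_k(Y) \gg Y\lambda\log Y \gg_{[k:\Q]} \Disc(k)^{-O(1)} Y$, and passing back to $\pi_k(Y)$ gives the lemma with an effective $\gamma$ and $\beta$.

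The main obstacle is exactly this last case: one must quantitatively play the surviving main term, of size $\asymp Y\lambda\log Y$, against the Deuring--Heilbronn displacement of all the other zeros, and the only available handle on $\lambda$ is an effective but weak bound $\lambda \gg \Disc(k)^{-O(1)}$ --- this is precisely what dictates the unavoidable $\Disc(k)^{-\gamma}$ in the statement. The remaining pieces (the explicit formula, the dyadic-strip bookkeeping in the nonexceptional cases, and the conversion $\psi_k \to \pi_k$) are routine.
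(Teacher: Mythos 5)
The paper does not prove this lemma at all: it is quoted directly from Zaman's thesis \cite{ZamThesis} (with the remark that $\beta=35$, $\gamma=19$ are admissible, and that later work improves the constants). Your sketch---truncated explicit formula for $\zeta_k$, the classical zero-free region with at most one exceptional zero, a log-free zero-density estimate, Deuring--Heilbronn repulsion, and Stark's effective bound $1-\beta_1\gg \Disc(k)^{-O(1)}$ via a quadratic subfield---is exactly the Linnik-type machinery by which the cited source establishes the bound, so your outline is sound and follows essentially the same approach as the reference the paper relies on.
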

An effective lower bound, weaker than that above, can be derived from \cite{LMO}, but Zaman \cite{ZamThesis} was the first to prove a bound of this strength; he did so with $\beta = 35$ and $\gamma = 19$.  It follows from \cite{ThornerZaman} that $\beta = 694$ and $\gamma = 5$ are also admissible, and from \cite{ZTChebo} that $\gamma = 1/[k:\Q]$ is admissible with an inexplicit value of $\beta$. 

\section{A Uniform Tail Estimate}\label{sec:tail}

A \emph{$2$-extension} is a $G$-extension for some $2$-group $G$.
In this section, our main goal is to prove the following theorem on a tail estimate for the $3$-torsion in class groups of $2$-extensions with a uniform dependence on the base field $k$. 
It will be the critical input for the proof of our main theorems in Section \ref{sec:main-theorem}.

\begin{theorem}\label{thm:tail}
Let $k$ be a number field.  For each $m\ge 2$, there exist constants $\delta_m>0$ and $\alpha_m$ depending at most on $m$ and the degree $[k:\Q]$ such that for any 
$X,Y>0$,
 we have 
	\begin{equation}
		\sum_{\substack{F_m/F_{m-1}/\cdots/ F_1/k \\ \Disc(F_{m-1})\ge Y \\ X/2\le \Disc(F_m)\le X}} h_3(F_m/k)  = O_{\epsilon, [k:\Q],m}\left(\Big(\frac{X}{Y^{1-\epsilon}} + X^{1-\delta_m} \Big) \cdot \Disc(k)^{\alpha_m} \right),
	\end{equation}	
	where the summation is over all towers $F_m/F_{m-1}/ \cdots F_1/F_0=k$
in $\bar{\Q}$	
	 with $F_{i+1}/F_{i}$ being  quadratic extensions for each $ 0\le i\le m-1$. 
\end{theorem}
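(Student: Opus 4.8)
The strategy is to proceed by induction on $m$, peeling off the last quadratic layer $F_m/F_{m-1}$ and using the tower $F_{m-1}/\cdots/k$ as the inductive hypothesis. Fix $F_{m-1}$; its absolute discriminant $D:=\Disc(F_{m-1})$ is at least $Y$. By the relative class-number factorization $h_3(F_m/k) = h_3(F_{m-1}/k)\,h_3(F_m/F_{m-1})$ (using that $\Cl_{F_m/k}$ contains $\Cl_{F_m/F_{m-1}}$, or more precisely splitting the tower of norm maps), the inner sum over quadratic $F_m/F_{m-1}$ with $\Disc(F_m/F_{m-1}) = \Disc(F_m)/D^2 \le X/D^2$ becomes $h_3(F_{m-1}/k)$ times $\sum_{[F_m:F_{m-1}]=2,\ \Disc(F_m/F_{m-1})\le X/D^2} h_3(F_m/F_{m-1})$. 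For this last sum I would apply Theorem~\ref{thm:cubic-bound} with base field $F_{m-1}$, together with the bound $h_2(F_{m-1}) \ll_{[k:\Q],\epsilon} D^\epsilon$ that holds because $F_{m-1}$ is a $2$-extension (Lemma~\ref{lem:two-part}, cited right after Corollary~\ref{cor:general-bound}). This gives, in most ranges of $X/D^2$ versus $D$, a bound that when multiplied by $h_3(F_{m-1}/k)$ and summed over $F_{m-1}$ via the inductive hypothesis is acceptable.

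The heart of the matter is the \emph{critical range}, where $\Disc(F_m) \approx D^3$, i.e. $\Disc(F_m/F_{m-1}) \approx D$. Here Theorem~\ref{thm:cubic-bound} only gives roughly $h_2(F_{m-1})^{1/3} (X/D^2)^{1/2} D^{3/2} \approx D^{3/2}$, which is $\approx \Disc(F_m)^{1/2}$ — the trivial bound, and too large once summed. To beat this I would invoke the relative Ellenberg--Venkatesh Lemma~\ref{lem:relative-E-V} for $F_m/F_{m-1}$ with $\ell=3$: it gives $h_3(F_m/F_{m-1}) = O(\Disc(F_m/F_{m-1})^{1/2+\epsilon}\Disc(F_{m-1})^{1/2+\epsilon}/M)$ provided there are $M$ split primes in $F_m/F_{m-1}$ of norm below $\Disc(F_m/F_{m-1})^\theta$. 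To guarantee that most $F_m$ have many such primes, I would apply Lemma~\ref{lem:effective-Chebo-over-k-average} with base field $F_{m-1}$: outside an exceptional set $\mathcal{E}$ of size $\ll D^{\epsilon_1}(X/D^2)^{\epsilon_1}$, every quadratic $F_m/F_{m-1}$ has $\pi_{F_{m-1}}(Y_0; F_m, e) \gg \pi_{F_{m-1}}(Y_0/2) - (\text{error})$ for $Y_0$ up to $\Disc(F_m/F_{m-1})$, and using Lemma~\ref{lem:prime-number-theorem-k-dependence} to make the main term a genuine power saving over the error, one extracts $M \gg \Disc(F_{m-1})^{-\gamma}\Disc(F_m/F_{m-1})^{\theta - o(1)}$ split primes below $\Disc(F_m/F_{m-1})^\theta$. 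Here is the one genuinely new feature: to absorb the factor $\Disc(F_{m-1})^{1/2+\epsilon} = D^{1/2+\epsilon}$ appearing in the relative EV bound, I would structure the induction so that for the $h_3(F_{m-1}/k)$ factor I may already assume the \emph{optimal} (constant-on-average, i.e. $\ll X^\epsilon$) bound — this is exactly why the theorem is phrased as a tail estimate rather than a clean asymptotic, and why the inductive hypothesis must be strong enough that summing $h_3(F_{m-1}/k)$ over $F_{m-1}$ with $\Disc(F_{m-1}) \le (2X)^{1/2^{?}}$ costs only $\Disc(k)^{O(1)}$ times a power of the discriminant bound.

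Concretely the steps are: (1) set up the induction on $m$, with base case $m=2$ (tower $F_2/F_1/k$) handled directly by Theorem~\ref{thm:cubic-bound} plus Lemma~\ref{lem:two-part}; (2) for general $m$, fix $F_{m-1}$, split $h_3(F_m/k)=h_3(F_{m-1}/k)h_3(F_m/F_{m-1})$, and dyadically decompose the range of $\Disc(F_m/F_{m-1})$; (3) in the non-critical dyadic blocks, bound the inner sum by Theorem~\ref{thm:cubic-bound}, then sum over $F_{m-1}$ using the inductive tail estimate (for the blocks with $\Disc(F_m)$ much larger than $\Disc(F_{m-1})^3$ one gains the $X/Y^{1-\epsilon}$ or $X^{1-\delta_m}$ saving from the Shintani-based ranges; for $\Disc(F_m)$ much smaller than $\Disc(F_{m-1})^3$ the constraint $\Disc(F_{m-1})\ge Y$ together with $\Disc(F_m)\le X$ forces $Y$ small relative to $X^{1/3}$, again giving room); (4) in the critical block, split $F_m$ into those in $\mathcal{E}$ (bounded in number by Lemma~\ref{lem:effective-Chebo-over-k-average}(2), each contributing at most the trivial bound) and those not in $\mathcal{E}$ (bounded by relative EV via Lemmas~\ref{lem:relative-E-V}, \ref{lem:effective-Chebo-over-k-average}, \ref{lem:prime-number-theorem-k-dependence}), picking $\theta$ small enough that $\theta<\frac{1}{4\cdot 3\cdot 1}=\frac{1}{12}$ and $\epsilon_1$ small enough that the power saving survives; then sum over $F_{m-1}$ using the \emph{optimal} inductive bound on $h_3(F_{m-1}/k)$; (5) collect the dependence on $\Disc(k)$ into a single exponent $\alpha_m$ and the savings into $\delta_m$, choosing $\delta_m$ smaller than all the savings obtained in the individual ranges. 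The main obstacle, and the step requiring the most care, is (4): balancing $\theta$, $\epsilon_1$, the power of $\Disc(F_{m-1})$ lost to Lemma~\ref{lem:prime-number-theorem-k-dependence}'s $\Disc(k)^\gamma$ factor, and the $D^{1/2+\epsilon}$ from relative EV, against the gain from the inductive hypothesis — and verifying that the exponential-in-$m$ growth of the accumulated discriminant exponent $\alpha_m$ and the decay of $\delta_m$ remain finite and positive respectively at each stage.
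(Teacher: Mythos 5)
There is a genuine gap in your treatment of the critical range, and it is precisely the point the paper's induction is structured to avoid. In the critical block you propose to get the $M$ small split primes needed for Lemma~\ref{lem:relative-E-V} by applying Lemma~\ref{lem:effective-Chebo-over-k-average} to quadratic extensions of $F_{m-1}$ and then lower-bounding $\pi_{F_{m-1}}(Y_0/2)$ via Lemma~\ref{lem:prime-number-theorem-k-dependence} with base field $F_{m-1}$. But Lemma~\ref{lem:prime-number-theorem-k-dependence} only applies for $Y_0 \ge \Disc(F_{m-1})^{\beta}$ with $\beta$ a large absolute constant (e.g.\ $\beta=35$ or $694$), whereas Lemma~\ref{lem:relative-E-V} with $\ell=3$, $d=2$ forces the primes to have norm below $\Disc(F_m/F_{m-1})^{\theta}$ with $\theta<1/12$; in the critical range $\Disc(F_m/F_{m-1})\asymp\Disc(F_{m-1})\asymp X^{1/3}$, so the range of primes you may use is $\ll \Disc(F_{m-1})^{1/12}$, far below $\Disc(F_{m-1})^{\beta}$, and the lemma simply does not apply. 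Even if one had some substitute prime count, the loss you record, $M \gg \Disc(F_{m-1})^{-\gamma}\Disc(F_m/F_{m-1})^{\theta-o(1)}$, carries a factor $\Disc(F_{m-1})^{\gamma}\approx X^{\gamma/3}$ into the Ellenberg--Venkatesh bound, which is a positive power of $X$ far exceeding the saving $X^{\theta/3}$ with $\theta<1/12$; so the bound is destroyed rather than improved. The paper only ever invokes Lemma~\ref{lem:prime-number-theorem-k-dependence} over the \emph{fixed} base field $k$ (after assuming $X\ge\Disc(k)^A$, with the complementary range handled trivially), so that the $\Disc(k)^{\gamma}$ loss is absorbed into $\Disc(k)^{\alpha_m}$; the abundance of small primes is then transported up to $F_1$ by a \emph{first} application of Lemma~\ref{lem:effective-Chebo-over-k-average} to the extensions $F_1/k$, with its own exceptional set $\mathcal{E}_0$, before the second application to $F_2/F_1$. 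Your plan omits this first application, and the substitute you propose for it fails for the reasons above; this affects your base case $m=2$ as well as the general step.

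Relatedly, your inductive step runs the Ellenberg--Venkatesh/zero-density argument directly on $F_m/F_{m-1}$ for every $m$, which would require transporting small primes from $k$ all the way up the tower to $F_{m-1}$ (a chain of zero-density applications with exceptional towers controlled at every level) --- an argument you have not supplied. The paper's proof is organized differently exactly to sidestep this: for $m=i+1\ge 3$ the critical range is split according to $\Disc(F_{i-1})\gtrless Y_2$; when $\Disc(F_{i-1})\ge Y_2$ one uses only the crude class-field-theory bound (Proposition~\ref{prop:trivial-cubic-bound}) for the top layer and harvests the saving from the inductive tail estimate at level $i$ with parameter $Y_2$, and when $\Disc(F_{i-1})\le Y_2$ one applies the already-proved $m=2$ case (which contains all the analytic input) over the \emph{small} base field $F_{i-1}$, whose discriminant-power cost $\Disc(F_{i-1})^{\alpha_2'}\le Y_2^{\alpha_2'}$ is affordable, together with the level-$(i-1)$ hypothesis. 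Your non-critical-range treatment (Theorem~\ref{thm:cubic-bound}, Lemma~\ref{lem:two-part}, partial summation against Corollary~\ref{cor:general-bound-m}) does match the paper, but as it stands the critical range for $m\ge 2$ is not established by your argument.
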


We emphasize the point that Theorem \ref{thm:tail} is stated with respect to the absolute discriminant, while the results of Section \ref{sec:uniform-cubic} are with respect to the relative discriminant.
	  
\begin{remark}
	The statement of Theorem \ref{thm:tail} for $m=1$ is empty except when $Y \le \Disc(k)$.
	 If $Y \leq \Disc(k)$, the statement for $m=1$ is non-trivial, but  
	 follows from Corollary \ref{cor:general-bound}, and noting that it is stated with respect to \emph{relative} discriminant, we may take $\alpha_1$ to be any constant greater than $1/3$.
\end{remark}
\begin{remark}
	For our purposes, the actual values of $\delta_m$ and $\alpha_m$ are irrelevant; it suffices to know only that the constants $\delta_m$ and $\alpha_m$ exist.  However, in the course of the proof, we provide explicit but non-optimal values.  This will show that the value of $\delta_2$  may be chosen without dependence on $[k:\mathbb{Q}]$.
	We choose specific non-optimal values of many related exponents in the course of the proof, because we wish to avoid tracking further dependencies of implied constants that would come with avoiding such choices.
\end{remark}

Note that the tail estimate we prove in Theorem \ref{thm:tail}  handles all $2$-extensions due to the following easy lemma.
\begin{lemma}[\cite{KluWan}, Lemma 2.3]\label{lem:tower}
	Let $n=\ell^r$ be a prime power and $G\subset S_n$ be an $\ell$-group and $E/F$ be an extension of number fields with $\Gal(E/F)\cong G$. Then there exists a tower of fields
	\begin{equation}
		F=F_0 \subset F_1\subset \ldots \subset  F_{r-1} \subset F_r=E
	\end{equation}
	such that $\Gal(F_{i+1}/F_i)=C_\ell$ for all $0\leq i \leq r-1$.
\end{lemma}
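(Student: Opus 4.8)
The final statement (Lemma~\ref{lem:tower}, attributed to \cite{KluWan}) is a purely group-theoretic fact translated into field theory via the Galois correspondence, so the plan is to prove the corresponding statement about finite $\ell$-groups and then dictionary it back. The claim to establish is: if $G$ is a finite $\ell$-group (here of order $n=\ell^r$), then there is a chain of subgroups
\[
\{1\} = H_r \subsetneq H_{r-1} \subsetneq \cdots \subsetneq H_1 \subsetneq H_0 = G
\]
with each $H_{i+1}$ \emph{normal} in $H_i$ and $H_i/H_{i+1}\cong C_\ell$. Given such a chain, setting $F_i := E^{H_i}$ (the fixed field of $H_i$ inside $E$, where we identify $\Gal(E/F)\cong G$) yields $F = F_0 \subset F_1 \subset \cdots \subset F_r = E$, and by the fundamental theorem of Galois theory $F_{i+1}/F_i$ is Galois with $\Gal(F_{i+1}/F_i)\cong H_i/H_{i+1}\cong C_\ell$. (Normality of $H_{i+1}$ in $H_i$ — not in $G$ — is exactly what is needed for $F_{i+1}/F_i$ to be Galois; it does not matter whether the larger extensions $F_i/F$ are Galois.)

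\textbf{Constructing the subgroup chain.} This is the standard fact that finite $p$-groups are supersolvable, or even just the weaker statement that they have a normal series with cyclic quotients of order $p$; I would prove it by induction on $r$. The key input is that a nontrivial finite $\ell$-group $G$ has nontrivial center $Z(G)$ (proved by the class equation: the sizes of non-central conjugacy classes are divisible by $\ell$, and $|G|$ is divisible by $\ell$, forcing $\ell \mid |Z(G)|$). Pick any element $z\in Z(G)$ of order $\ell$ — it exists since $Z(G)$ is a nontrivial abelian $\ell$-group, so by Cauchy (or directly) it contains an element of order $\ell$. Then $N := \langle z\rangle$ is a normal subgroup of $G$ (central subgroups are automatically normal) with $N\cong C_\ell$, and $G/N$ is an $\ell$-group of order $\ell^{r-1}$. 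By the inductive hypothesis, $G/N$ has a chain $\{1\}=\bar H_{r}\subsetneq\cdots\subsetneq \bar H_1 = G/N$ with each step normal in the previous and quotient $C_\ell$. Pulling these back along the quotient map $G\to G/N$ gives subgroups $H_i$ of $G$ containing $N$, with $H_i/H_{i+1}\cong \bar H_i/\bar H_{i+1}\cong C_\ell$ and each $H_{i+1}\trianglelefteq H_i$ (preimages of normal subgroups are normal in the corresponding preimage), and finally appending $H_r := \{1\}\subsetneq N = H_{r-1}$ at the bottom completes the chain of length $r$. The base case $r=0$ is trivial (or $r=1$, where $G\cong C_\ell$ itself).

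\textbf{Main obstacle.} There is essentially no obstacle: the only nontrivial ingredient is the nontriviality of the center of a finite $\ell$-group, which is classical. One small point worth stating carefully is that we only get $H_{i+1}\trianglelefteq H_i$ at each step, not $H_i\trianglelefteq G$, so the intermediate fields $F_i$ need not be Galois over $F$ — but the lemma only asserts $\Gal(F_{i+1}/F_i)\cong C_\ell$, which is exactly what the chain delivers. Another routine check is that the permutation-group hypothesis $G\subset S_n$ plays no role beyond fixing $|G|=n=\ell^r$; the argument is insensitive to the permutation representation. So the write-up is short: invoke the class equation for the center, induct to build the subgroup chain, and apply the Galois correspondence.
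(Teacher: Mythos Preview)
The paper does not prove this lemma; it cites it from \cite{KluWan}. So there is no paper proof to compare against, but your argument has a genuine gap stemming from the paper's conventions.

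In this paper (see Section~\ref{sec:notations}), $\Gal(E/F)$ denotes the \emph{Galois closure group}: the Galois group of the normal closure $\tilde{E}/F$, acting as a permutation group on the $n=[E:F]$ embeddings of $E$. The extension $E/F$ is \emph{not} assumed Galois, and $|G|$ need \emph{not} equal $n=\ell^r$; rather $[G:\Stab_G(1)]=n$. Your claim that ``the permutation-group hypothesis $G\subset S_n$ plays no role beyond fixing $|G|=n=\ell^r$'' is therefore incorrect, and writing $F_i=E^{H_i}$ does not make sense as stated. Under the Galois correspondence, subfields $F\subseteq F_i\subseteq E$ correspond to subgroups $H_i$ with $\Stab_G(1)\subseteq H_i\subseteq G$, so the group-theoretic statement you actually need is a chain
\[
\Stab_G(1)=H_r\subsetneq H_{r-1}\subsetneq\cdots\subsetneq H_0=G
\]
with each $H_{i+1}\trianglelefteq H_i$ and $H_i/H_{i+1}\cong C_\ell$. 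Your chain from $\{1\}$ to $G$ gives a tower ending at $\tilde{E}$, not $E$, and has the wrong length.

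The fix is short and in the same spirit. In a finite $\ell$-group every proper subgroup is strictly contained in its normalizer (or: every maximal subgroup is normal of index $\ell$). So starting from $S:=\Stab_G(1)$, either pick a maximal subgroup $H_{1}$ of $G$ containing $S$ (normal of index $\ell$) and induct on $[H_1:S]=\ell^{r-1}$, or take any subgroup $H_{r-1}\subseteq N_G(S)$ with $[H_{r-1}:S]=\ell$ and induct upward. Then set $F_i:=\tilde{E}^{H_i}$; since $H_{i+1}\trianglelefteq H_i$, each $F_{i+1}/F_i$ is Galois with group $C_\ell$, and $F_r=\tilde{E}^{\Stab_G(1)}=E$ as required.
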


We will prove Theorem~\ref{thm:tail} first for $m = 2$. Then treating $m=2$ as the base case, we will apply an inductive proof to prove Theorem~\ref{thm:tail} for general $m$.  For $m=2$, the theorem requires us to sum over all towers $F_2/F_1/F_0=k$ of relative quadratic extensions. We will separate the discussion depending on how large $\Disc(F_1)$ is. In Section \ref{ssec:m-2-non-critical}, we will consider the summation when $\Disc(F_1)$ is away from $X^{1/3}$ using results established in Section \ref{sec:uniform-cubic}. In Section \ref{ssec:initial-critical}, we will consider the summation when $\Disc(F_1)$ is near $X^{1/3}$ using results established in Section \ref{sec:arakelov} and \ref{sec:zero-density}.

Taking $Y=1$ in Theorem \ref{thm:tail}, we get the following immediate corollary, essentially an analogue of Corollary \ref{cor:general-bound}, which will be useful in the induction argument to come.

\begin{corollary} \label{cor:general-bound-m}
	Let $k$ be a number field.  For each $m \geq 2$, let $\alpha_m$ be the constant from Lemma \ref{thm:tail}.  Then for any $X \geq 1$, we have 
		\[
			\sum_{\substack{F_m/F_{m-1}/\cdots/ F_1/k \\ X/2\le \Disc(F_m)\le X}} h_3(F_m/k)  = O_{ [k:\Q],m}\Big ( X \Disc(k)^{\alpha_m} \Big ).
		\]
\end{corollary}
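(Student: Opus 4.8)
The plan is to obtain Corollary~\ref{cor:general-bound-m} as an immediate specialization of Theorem~\ref{thm:tail} with $Y = 1$. The first observation is that for every number field $F$ one has $\Disc(F) \geq 1$; in fact, for any intermediate field $F_{m-1}$ of one of our towers, the inclusion $k \subseteq F_{m-1}$ together with the tower formula for discriminants gives $\Disc(F_{m-1}) = \Disc(F_{m-1}/k)\cdot \Disc(k)^{[F_{m-1}:k]} \geq \Disc(k) \geq 1$. Hence, taking $Y = 1$, the constraint $\Disc(F_{m-1}) \geq Y$ in the sum defining the left-hand side of Theorem~\ref{thm:tail} is satisfied by every tower $F_m/F_{m-1}/\cdots/F_1/k$ with $X/2 \leq \Disc(F_m) \leq X$. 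Thus the left-hand side of Theorem~\ref{thm:tail} at $Y=1$ is precisely the sum appearing in the corollary.

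It then remains to simplify the right-hand side. Fix once and for all some admissible value of $\epsilon$ in Theorem~\ref{thm:tail}, say $\epsilon = 1$; this freezes the corresponding implied constant and removes the $\epsilon$-dependence. With $Y = 1$ the term $X/Y^{1-\epsilon}$ equals $X$ irrespective of the chosen $\epsilon$, while $X^{1-\delta_m} \leq X$ for all $X \geq 1$ since $\delta_m > 0$. Therefore the conclusion of Theorem~\ref{thm:tail} yields
$$
\sum_{\substack{F_m/F_{m-1}/\cdots/F_1/k \\ X/2 \leq \Disc(F_m) \leq X}} h_3(F_m/k) = O_{[k:\Q],m}\big((X + X^{1-\delta_m})\Disc(k)^{\alpha_m}\big) = O_{[k:\Q],m}\big(X \Disc(k)^{\alpha_m}\big),
$$
which is exactly the stated bound.

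There is no genuine obstacle here: the corollary is a purely formal consequence of Theorem~\ref{thm:tail}, and indeed this is why it is recorded as a corollary rather than proved independently. The only point worth an explicit sentence is the verification that the hypothesis $\Disc(F_{m-1}) \geq Y$ becomes vacuous at $Y = 1$, since that is what allows the unrestricted sum over all towers with $X/2 \leq \Disc(F_m) \leq X$ to be controlled by the theorem; the remaining manipulation is trivial bookkeeping with the two terms of the error bound.
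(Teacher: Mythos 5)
Your proof is correct and is exactly the paper's argument: the corollary is obtained by taking $Y=1$ in Theorem~\ref{thm:tail}, noting the constraint $\Disc(F_{m-1})\ge Y$ is then vacuous and that $X/Y^{1-\epsilon}=X$ and $X^{1-\delta_m}\le X$ for $X\ge 1$, with any fixed choice of $\epsilon$ absorbing the $\epsilon$-dependence into the constant.
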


\subsection{Base Case: Non-critical Range}\label{ssec:m-2-non-critical}
In this subsection, we will consider the following summation:
	\begin{equation} \label{eqn:non-critical-def}
	\sum_{\substack{F_2/ F_1/k \\ \Disc(F_1) \in \mathfrak{N}(X,Y) \\ X/2\le \Disc(F_2)\le X}} h_3(F_2/k),
    \end{equation}	
where 
		$\mathfrak{N}(X,Y) 
			:= [Y, X^{1/3-\delta_0}) \cup (X^{1/3+\delta_0}, X^{1/2}]$
for an arbitrary small number $\delta_0$ satisfying $0<\delta_0<1/6$.  We regard this range for $F_1$ as non-critical because 
the results of Section~\ref{sec:uniform-cubic}
together with the following  lemma on the $2$-class number of 
relative $2$-extensions will give the desired bound (Lemma~\ref{lem:m-2-non-critical}) in this range.

\begin{lemma}[\cite{KluWan}, Theorem $2.7$]\label{lem:two-part}
	Let $k$ be a number field and let $F/k$ be a $2$-extension.  Then for any $\epsilon>0$, we have $h_2(F) \ll_{[F:\mathbb{Q}],\epsilon} \mathrm{Disc}(F)^\epsilon h_2(k)^{[F:k]}$.
\end{lemma}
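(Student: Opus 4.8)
The plan is to reduce Lemma~\ref{lem:two-part} to a single quadratic extension and then iterate, controlling the single step by classical genus theory together with one extra observation about the norm map.

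First I would set up the tower. Since $F/k$ is a $2$-extension, its Galois closure $\widetilde{F}/k$ has $G:=\Gal(\widetilde{F}/k)$ a finite $2$-group and $F=\widetilde{F}^{H}$ for $H:=\Gal(\widetilde{F}/F)$. Every subgroup of a finite $2$-group is subnormal, and any subnormal series can be refined to have all successive indices equal to $2$ (a nontrivial $2$-group has a normal subgroup of index $2$), so there is a chain $H=H_0\trianglelefteq H_1\trianglelefteq\cdots\trianglelefteq H_r=G$ with $[H_{i+1}:H_i]=2$ and $r=\log_2[F:k]$. Taking fixed fields gives a tower $k=L_r\subseteq L_{r-1}\subseteq\cdots\subseteq L_0=F$ in which each $L_i/L_{i+1}$ is a quadratic (hence Galois) extension, with $\Disc(L_i)\le\Disc(F)$ and $r\le\log_2[F:\mathbb{Q}]$. (This plays the role of Lemma~\ref{lem:tower} in the non-Galois setting.)

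The heart of the argument is the claim that $h_2(L)\ll_{[L:\mathbb{Q}],\epsilon}\Disc(L)^{\epsilon}h_2(M)^2$ for any quadratic extension $L/M$. I would prove this by analysing $V:=\Cl_L/2\Cl_L$ as a module over $\mathbb{F}_2[\langle\sigma\rangle]$, $\langle\sigma\rangle=\Gal(L/M)$, so that $\rk_2\Cl_L=\dim_{\mathbb{F}_2}V$. This ring is local with only two indecomposable modules, $\mathbb{F}_2$ and $\mathbb{F}_2[\langle\sigma\rangle]$, so $V\cong\mathbb{F}_2^{a}\oplus\mathbb{F}_2[\langle\sigma\rangle]^{b}$ and $\rk_2\Cl_L=a+2b=(a+b)+b$. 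The ``genus'' part $a+b=\dim_{\mathbb{F}_2}V/(1-\sigma)V=\rk_2(\Cl_L/\Cl_L^{1-\sigma})$ equals $\rk_2\Gal(L^{\mathrm{gen}}/L)$ for the genus field $L^{\mathrm{gen}}$ (maximal over $L$, unramified over $L$, abelian over $M$), and since $L^{\mathrm{gen}}/M$ is abelian and unramified outside the places of $M$ ramified in $L/M$, class field theory bounds its maximal elementary $2$-subextension: $a+b\le\rk_2\Cl_M+t+[M:\mathbb{Q}]$, where $t$ is the number of ramified places. For the remaining $b$: over $\mathbb{F}_2$ we have $1-\sigma=1+\sigma$, and $(1+\sigma)$ sends the class of $\mathfrak{a}$ to the class of $\mathfrak{a}\,\sigma(\mathfrak{a})=\Nm_{L/M}(\mathfrak{a})\mathcal{O}_L$, i.e.\ into the image of $\Cl_M\to\Cl_L$; hence $b=\dim_{\mathbb{F}_2}(1-\sigma)V\le\rk_2\Cl_M$. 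Thus $\rk_2\Cl_L\le2\,\rk_2\Cl_M+t+[M:\mathbb{Q}]$, and since $t\le\omega(\disc(L/M))+r_1(M)$ with $\omega(n)\ll_\epsilon n^\epsilon$ and $\Disc(L/M)\le\Disc(L)$, exponentiating by $2$ gives the claim.

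Finally I would iterate the quadratic step along $L_r,\dots,L_0$, taking the parameter $\epsilon$ in each application to be $\epsilon\,2^{-r-1}$: by induction on $r-j$ one gets $h_2(L_j)\ll_{[F:\mathbb{Q}],\epsilon}\Disc(F)^{\epsilon}h_2(k)^{2^{r-j}}$, since at each step the exponent of $\Disc(F)$ at most doubles and increases by $\epsilon\,2^{-r-1}$ (staying below $\epsilon$ over $r\le\log_2[F:\mathbb{Q}]$ steps), the implied constant is squared and multiplied by a bounded factor, and the exponent of $h_2(k)$ doubles; at $j=0$ this is $h_2(F)\ll_{[F:\mathbb{Q}],\epsilon}\Disc(F)^{\epsilon}h_2(k)^{[F:k]}$. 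The main obstacle is the single quadratic step, specifically the bound on the ``non-genus'' part $b$: classical genus theory controls only $a+b$, and the needed extra input is that the $\mathbb{F}_2[\langle\sigma\rangle]$-free part of $\Cl_L/2\Cl_L$ is cut out by the norm map from $\Cl_M$; an alternative is to bound $\rk_2\Cl_L$ directly via the Reidemeister--Schreier generator bound applied to $\Gal(H/L)\trianglelefteq\Gal(H/M)$ for $H$ the maximal unramified abelian $2$-extension of $L$. The remaining bookkeeping of constants is harmless because the number of steps depends only on $[F:\mathbb{Q}]$.
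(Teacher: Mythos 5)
The paper does not actually prove this lemma: it is quoted verbatim from Kl\"uners--Wang (\cite{KluWan}, Theorem 2.7), so there is no in-paper argument to compare against. Your self-contained proof is correct, and it follows the same strategy as that reference: reduce to a tower of quadratic steps via subnormality in a $2$-group (this is exactly Lemma~\ref{lem:tower}), prove a one-step inequality $\rk_2 \Cl_L \le 2\,\rk_2 \Cl_M + t + O([M:\Q])$ for $L/M$ quadratic, and iterate, absorbing $2^t \ll_\epsilon \Disc(L)^\epsilon$ via the divisor bound. Where Kl\"uners--Wang work with the filtration of $\Cl_L$ by powers of $(1-\sigma)$ and Chevalley's ambiguous class number formula (which is what makes their statement work for general $\ell$ and cyclic degree-$\ell$ steps), you package the $\ell=2$ case through the $\F_2[\langle\sigma\rangle]$-module decomposition $V\cong \F_2^a\oplus\F_2[\langle\sigma\rangle]^b$, bounding $a+b$ by genus theory and $b$ by the observation that $(1+\sigma)$ factors through the norm, hence through the image of $\Cl_M$; this is a clean equivalent of their rank inequality in the quadratic case. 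Both halves of your step check out: $\Gal(L^{\mathrm{gen}}/L)\cong \Cl_L/(1-\sigma)\Cl_L$ since $[\Gal(H_L/M),\Gal(H_L/M)]=(1-\sigma)\Cl_L$, and $\mathfrak{a}\,\sigma(\mathfrak{a})=\Nm_{L/M}(\mathfrak{a})\O_L$ gives $b\le \rk_2\Cl_M$. One minor quantitative caveat: in bounding $\rk_2\Gal(L^{\mathrm{gen}}/M)$ by a ray class group, primes of $M$ above $2$ contribute up to $[M_v:\Q_2]+1$ (not $1$) to the $2$-rank of the local unit factor, so the additive term should be $O([M:\Q])$ rather than exactly $t+[M:\Q]$; this only changes the degree-dependent constant and is absorbed into the implied constant, as is the bookkeeping in your iteration, which is handled correctly.
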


\begin{lemma}\label{lem:m-2-non-critical}
	Let $k$ be a number field and let $\delta_0 \in (0, 1/6)$. There exists $\delta>0$, only depending on $\delta_0$, and an absolute constant $\alpha>0$ such that for all 
$X,Y>0$,	
	we have
	\begin{equation}
		\sum_{\substack{F_2/ F_1/k \\ \Disc(F_1) \in \mathfrak{N}(X,Y) \\ X/2\le \Disc(F_2)\le X}} h_3(F_2/k)  = O_{\epsilon, [k:\Q],\delta_0}\Big (\frac{X}{Y^{1-\epsilon}} + X^{1-\delta} \Big) \cdot \Disc(k)^{\alpha},
	\end{equation}	
	where the summation is over all towers $F_2/F_1/F_0=k$ with $F_2/F_1$ and $F_1/F_0$ being relative quadratic extensions and where $\mathfrak{N}(X,Y) = [Y, X^{1/3-\delta_0}) \cup (X^{1/3+\delta_0}, X^{1/2}]$.
\end{lemma}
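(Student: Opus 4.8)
The plan is to handle the two pieces of $\mathfrak{N}(X,Y)$ separately, by first fixing the intermediate field $F_1$, applying the uniform bounds of Section~\ref{sec:uniform-cubic} to the quadratic extensions $F_2/F_1$ (with base field $F_1$ in place of $k$), and then summing over all quadratic $F_1/k$ with $\Disc(F_1)$ in the given range. For a fixed $F_1$ with $\Disc(F_1) = D$, the inner sum is $\sum_{X/(2D^2) \le \Disc(F_2/F_1) \le X/D^2} h_3(F_2/F_1)$ since $\Disc(F_2) = \Disc(F_2/F_1)\Disc(F_1)^2$ (note: I will need the conductor-discriminant / tower formula relating absolute and relative discriminants, $\Disc(F_2) = \Nm_{F_1/\Q}(\disc(F_2/F_1))\Disc(F_1)^{[F_2:F_1]}$). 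To this inner sum I apply Theorem~\ref{thm:cubic-bound} with $k$ replaced by $F_1$, using $D_{F_1} = D$, $h_2(F_1) \ll_\epsilon D^\epsilon h_2(k)^2$ by Lemma~\ref{lem:two-part}, and $h_2(k) \ll_\epsilon \Disc(k)^\epsilon \cdot \Disc(k)^{1/2}$ trivially (or better if available); this gives, for each $F_1$, a bound that is a function of $D$ and $X$.

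\medskip

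For the \emph{upper} branch $D \in (X^{1/3+\delta_0}, X^{1/2}]$: here the relative discriminant argument $X/D^2$ is small compared to $D$ — indeed $X/D^2 < X^{1/3-2\delta_0} \ll D^{1-c}$ for a suitable $c = c(\delta_0)>0$ — so we are in the first (or second) case of Theorem~\ref{thm:cubic-bound}, where the bound looks like $h_2(F_1)\,(X/D^2)^{3/2} D^{1/2} D^\epsilon$ or $h_2(F_1)^{1/3} (X/D^2)^{1/2} D^{3/2} D^\epsilon$. The key point is that in either case the exponent of $D$ coming out is \emph{negative} once $D$ is this large, so summing over the $O_\epsilon(D^\epsilon h_2(k))$ choices of $F_1$ with $\Disc(F_1) \asymp D$ (by Lemma~\ref{lem:pointwise-bound}), and then dyadically over $D$ from $X^{1/3+\delta_0}$ up to $X^{1/2}$, converges and is dominated by the contribution at $D \asymp X^{1/3+\delta_0}$, giving something like $X^{1-\delta} \Disc(k)^\alpha$ with $\delta$ depending only on $\delta_0$. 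For the \emph{lower} branch $D \in [Y, X^{1/3-\delta_0})$: here $X/D^2 > X^{1/3+2\delta_0}$ is large compared to $D$, so we land in the last (or fourth) case of Theorem~\ref{thm:cubic-bound}, where the bound is $\asymp X/D^2$ times an ignorable $D^\epsilon$ (the $X$-term of the fifth case, with relative discriminant $X/D^2$ contributing a factor $D^2$ back, i.e.\ $\approx X$ divided by the right power; more precisely the fifth case bound $\Disc(F_2/F_1) D_{F_1}^\epsilon$ becomes $(X/D^2) D^\epsilon$). Multiplying by the $O_\epsilon(D^\epsilon h_2(k))$ fields $F_1$ and summing over $D$ gives $\sum_{Y \le D \le X^{1/3}} (X/D^2) D^\epsilon h_2(k) \ll_\epsilon (X/Y^{1-\epsilon}) h_2(k)$, which after bounding $h_2(k)$ by a power of $\Disc(k)$ is the desired $(X/Y^{1-\epsilon})\Disc(k)^\alpha$ term.

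\medskip

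The main technical care — and the step I expect to be the real obstacle — is making sure the cases of Theorem~\ref{thm:cubic-bound} line up correctly across the whole range, since that theorem's case boundaries are phrased in terms of $X$ versus $D_k$ and $h = h_2(k)$, and here both the "$X$" (which is $X/D^2$) and the "$D_k$" (which is $D$) and "$h$" (which is $\asymp h_2(k)^2$) are varying simultaneously as $D$ ranges over $\mathfrak{N}(X,Y)$. I will need to check that for $D$ in the upper branch we never fall into a case whose $D$-exponent is nonnegative, and similarly for the lower branch, possibly shrinking $\delta_0$ or enlarging $\alpha$; the middle cases of Theorem~\ref{thm:cubic-bound} (the $h^{1/3}X^{1/2}D_k^{3/2}$, $XD_k^{1/2}$, $h^{2/3}D_k^3$ cases) only occur for $D$ very close to $X^{1/3}$, precisely the critical range excluded by $\mathfrak{N}(X,Y)$, so $\delta_0>0$ is exactly what buys us a power saving. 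A secondary bookkeeping point is absorbing the various $\Disc(k)^\epsilon$ and $h_2(k)$ factors into a single absolute exponent $\alpha$ and a single $\delta$ depending only on $\delta_0$; since $h_2(k) \ll_{[k:\Q],\epsilon} \Disc(k)^{1/2+\epsilon}$ this is routine but must be done uniformly. Finally I note the bound is trivially $O(1)$ unless $Y \le X^{1/2}$, so one may assume $Y$ is in the relevant range; for $Y$ larger than $X^{1/3-\delta_0}$ only the upper branch contributes and the claimed $X/Y^{1-\epsilon}$ term is $\ge X^{1-\delta}$ anyway, so the two terms in the statement cover all cases.
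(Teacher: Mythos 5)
There is a genuine gap, and it comes in two parts. First, the quantity being summed is $h_3(F_2/k)=h_3(F_1/k)\,h_3(F_2/F_1)$, but your argument only ever bounds the relative piece $h_3(F_2/F_1)$ and then multiplies by a \emph{count} of fields $F_1$; the factor $h_3(F_1/k)$ never appears. Moreover your count is off: Lemma~\ref{lem:pointwise-bound} gives $O_\epsilon(D^\epsilon h_2(k))$ quadratic extensions with a single fixed value of the (relative) discriminant, whereas a dyadic block $\Disc(F_1)\asymp D$ contains on the order of $D$ such values, so the number of $F_1$ per block is of size roughly $D$, not $D^\epsilon$. Once the count is corrected, the only pointwise bound available for the dropped factor is the trivial $h_3(F_1/k)\ll_\epsilon \Disc(F_1)^{1/2+\epsilon}\Disc(k)^{1/2+\epsilon}$, and in the lower branch this yields $\sum_{D\ge Y} (X/D^{2})\cdot D^{1/2}\cdot D \cdot D^{-1+\epsilon}\asymp X/Y^{1/2-\epsilon}$, which does not give the claimed $X/Y^{1-\epsilon}$. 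The paper's proof avoids exactly this by keeping $h_3(F_1/k)$ in the sum and controlling it \emph{on average}: it applies Corollary~\ref{cor:general-bound} together with partial summation, e.g. $\sum_{\Disc(F_1)\ge Y} h_3(F_1/k)\Disc(F_1)^{-2+\epsilon}\ll Y^{-1+\epsilon}\Disc(k)^{O(1)}$, which is what produces the $X/Y^{1-\epsilon}$ term. This use of the averaged $3$-torsion bound for the bottom layer is not a bookkeeping refinement of your argument; it is the essential input you are missing.

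Second, your case analysis in the lower branch is incorrect. With base field $F_1$ of discriminant $D$ and counting parameter $X'=X/D^2$, the final ("linear in $X'$") case of Theorem~\ref{thm:cubic-bound} requires $X'\ge D^3 h_2(F_1)^{2/3}$, i.e.\ roughly $D\ll X^{1/5}$, so the middle cases of that theorem are in force throughout $D\in(X^{1/5},X^{1/3-\delta_0})$ --- well inside the non-critical range, contrary to your claim that they only occur near $X^{1/3}$ and are excluded by $\mathfrak{N}(X,Y)$. This portion is not fatal (the bound of Proposition~\ref{prop:order-bound-sf}, i.e.\ the $X'^{1/2}D^{1/2}+X'D^{-3/2}$ line, still gives a power saving there when combined with the averaged bound on $h_3(F_1/k)$), but as written your proof asserts a bound of shape $X/D^2$ per field in a range where it does not hold. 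The paper handles this by splitting the lower branch at $Y_0\asymp X^{1/6}$, using the propagation-of-orders bound (Proposition~\ref{prop:order-bound-sf}) on $[Y_0,X^{1/3-\delta_0})$ and the linear Shintani bound only for $\Disc(F_1)\le Y_0$; your write-up needs an analogous subdivision, in addition to restoring the $h_3(F_1/k)$ factor as above.
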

\begin{proof}[Proof of Lemma \ref{lem:m-2-non-critical}]
  Writing $D := \Disc(F_1)$ and $h:=h_2(F_1)$, from Section~\ref{sec:uniform-cubic} (and the translation from relative to absolute discriminant), we have
	\begin{equation}\label{eqn:3-torsion-summation}
		\sum_{\substack{F_2/ F_1\\ X/2\le \Disc(F_2)\le X}} h_3(F_2/F_1) \ll_{[k:\Q],\epsilon} h \cdot D^{\epsilon} 		
    \begin{cases}
		X^{3/2} D^{-5/2}, & \text{for all $D^2\leq X \leq D^3$},  \\
		X^{1/2} D^{1/2} +X D^{-3/2}, & \text{for all $D^2\leq X$}, \\
		X D^{-2}, & \text{if } X \geq D^5 h^{2/3}.
	\end{cases}
	\end{equation}
Specifically, the first line follows from Proposition \ref{prop:trivial-cubic-bound} and \eqref{eq:h3-bijection},
 the second from Proposition \ref{prop:order-bound-sf}, 
 and the third from the final case of Theorem~\ref{thm:cubic-bound}.
 
We will use the first line of \eqref{eqn:3-torsion-summation} for $\Disc(F_1) \in (X^{1/3+\delta_0}, X^{1/2}]$, the second line for $\Disc(F_1) \in [Y_0, X^{1/3-\delta_0})$, 
      and the third line for $\Disc(F_1) \in [Y, Y_0)$, where we choose a value for $Y_0$ so that we can apply the third line to the range $[Y, Y_0)$ for every $F_1/k$. 
       In particular, if $C_{[K:\Q]}$ is such that we always have $h(K)^{2/3}\leq C_{[K:\Q]} \Disc(K)$, we can choose $Y_0=C_{2[k:\Q]}^{-1/6}X^{1/6}$.
Applying \eqref{eqn:3-torsion-summation} as described and applying Lemma \ref{lem:two-part} to bound $h$, we find
    \begin{equation}\label{eqn:non-critical-summation-m-2}
    		\begin{aligned}
    			\sum_{\substack{F_2/ F_1/k \\ \Disc(F_1) \in \mathfrak{N}(X,Y)\\ X/2\le \Disc(F_2)\le X}} h_3(F_2/k) 
					& \ll_{\epsilon',\epsilon, [k:\Q]}  h_2(k)^{2} \cdot X^{3/2} \sum_{\substack{F_1/k \\ \Disc(F_1) \in (X^{1/3+\delta_0}, X^{1/2})}} h_3(F_1/k)  \Disc(F_1)^{-5/2+\epsilon'}\\
					& + h_2(k)^{2} \cdot X^{1/2} \sum_{\substack{F_1/k \\ \Disc(F_1) \in [Y_0,X^{1/3-\delta_0})}} h_3(F_1/k)  \Disc(F_1)^{1/2+\epsilon'}\\
					& + h_2(k)^{2} \cdot X \sum_{\substack{F_1/k \\ \Disc(F_1) \in [Y_0,X^{1/3-\delta_0})}} h_3(F_1/k)  \Disc(F_1)^{-3/2+\epsilon'}\\
					& + h_2(k)^{2} \cdot X \sum_{\substack{F_1/k \\ \Disc(F_1) \in (Y,Y_0)}} h_3(F_1/k)  \Disc(F_1)^{-2+\epsilon}\\
					& \ll_{\epsilon',\epsilon, [k:\Q]}  \Disc(k)^{1/3+\epsilon} \cdot (X^{1-3\delta_0/2+\epsilon'} + X^{11/12+\epsilon'} + \frac{X}{Y^{1-\epsilon}}),
    		\end{aligned}
      \end{equation}
  where for the last inequality we have applied  partial summation and Corollary~\ref{cor:general-bound}, as well as the trivial bound for $h_2(k)$.
Then  we can choose, for example, $\delta=\min\{ \delta_0,1/15\}$, and $\alpha=2/5$. 
Above if we take $\epsilon'=\min\{\delta_0/2,1/60\}$, then the lemma follows for $0<\epsilon\leq \alpha- 1/3$, and hence for all $\epsilon> 0.$
\end{proof}

\subsection{Base Case: Critical Range}\label{ssec:initial-critical}
In this section, we will consider the following summation:
	\begin{equation}
	\sum_{\substack{F_2/ F_1/k \\ \Disc(F_1) \in \mathfrak{C}(X,Y) \\ X/2\le \Disc(F_2)\le X}} h_3(F_2/k),
\end{equation}	
where $\mathfrak{C}(X,Y) = \mathfrak{C}(X)= [X^{1/3-\delta_0}, X^{1/3+\delta_0}]$, which we regard as the critical range of $\Disc(F_1)$. 
In this range, summing the best bound for each $F_1$ is not sufficient, and we need to extract an additional saving from the sum over $F_1$. 
 We will apply the results developed in Sections \ref{sec:arakelov} and \ref{sec:zero-density} to obtain this saving. 

It will be convenient to use the following simple bound on the number of relative quadratic extensions. 

\begin{lemma}\label{lem:quadratic-bound}
	Let $k$ be a number field. Then
	$$N_k(C_2, X) = O_{[k:\Q], \epsilon}(h_2(k)\Disc(k)^{\epsilon} X).$$
\end{lemma}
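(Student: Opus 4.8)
The plan is to combine class field theory---in the spirit of the proof of Lemma~\ref{lem:pointwise-bound}---with a standard bound for the number of ideals of $\mathcal{O}_k$ of bounded norm. First I would recall that quadratic extensions $F/k$ correspond bijectively to nontrivial homomorphisms $\chi\colon C_k\to C_2$ of the id\`ele class group, and that by the conductor--discriminant formula $\disc(F/k)$ is the finite part of the conductor of $\chi$. As in Lemma~\ref{lem:pointwise-bound}, applying $\Hom(-,C_2)$ to the exact sequence $1\to\mathcal{O}_k^\times\to\prod_v\mathcal{O}_v^\times\to C_k\to\Cl_k\to 1$ shows that the restriction map $\chi\mapsto\chi|_{\prod_v\mathcal{O}_v^\times}$ has kernel of size $|\Hom(\Cl_k,C_2)|=h_2(k)$. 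At a finite place $v\nmid 2$ there is a unique ramified quadratic character of $\mathcal{O}_v^\times$, and it has conductor exponent $1$; at the $O_{[k:\mathbb{Q}]}(1)$ places above $2$, and at the real places, there are only $O_{[k:\mathbb{Q}]}(1)$ local quadratic characters of any given conductor exponent. Hence a fixed ideal $\mathfrak{d}\subseteq\mathcal{O}_k$ determines $\chi|_{\prod_v\mathcal{O}_v^\times}$ up to $O_{[k:\mathbb{Q}]}(1)$ choices, so the number of quadratic $F/k$ with $\disc(F/k)=\mathfrak{d}$ is $O_{[k:\mathbb{Q}]}(h_2(k))$.

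It then remains to show that $\#\{\mathfrak{d}\subseteq\mathcal{O}_k:\Nm(\mathfrak{d})\le X\}\ll_{[k:\mathbb{Q}],\epsilon}\Disc(k)^{\epsilon}X$; summing $O_{[k:\mathbb{Q}]}(h_2(k))$ over such $\mathfrak{d}$ then yields the lemma. When $X$ exceeds a suitable fixed power of $\Disc(k)$, I would obtain this by a contour shift for $\zeta_k(s)\,X^s/\big(s(s+1)\cdots(s+m)\big)$ with $m$ large: the pole at $s=1$ contributes $\ll\kappa_k X\ll_{[k:\mathbb{Q}],\epsilon}\Disc(k)^{\epsilon}X$ (using $\kappa_k\ll_{[k:\mathbb{Q}],\epsilon}\Disc(k)^{\epsilon}$, Landau), while moving the line to $\Re(s)=1/2+\epsilon$ and applying the convexity bound~\eqref{eqn:convexity-dedekind} bounds the remaining integral by $\ll_{[k:\mathbb{Q}],\epsilon}\Disc(k)^{1/4+\epsilon}X^{1/2+\epsilon}$, which is $\ll_{[k:\mathbb{Q}],\epsilon}\Disc(k)^{\epsilon}X$ in this range; a routine desmoothing passes to the sharp cutoff. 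For the remaining (small) $X$, I would instead use the elementary coefficientwise comparison $\#\{\mathfrak{a}:\Nm(\mathfrak{a})=n\}\le\tau_{[k:\mathbb{Q}]}(n)$ (at most $[k:\mathbb{Q}]$ primes of $k$ lie above any rational prime), which gives $\#\{\mathfrak{d}:\Nm(\mathfrak{d})\le X\}\le\sum_{n\le X}\tau_{[k:\mathbb{Q}]}(n)\ll_{[k:\mathbb{Q}],\epsilon}X^{1+\epsilon}$, and $X^{1+\epsilon}\le\Disc(k)^{\epsilon}X$ once $X\le\Disc(k)$.

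This is a soft argument and there is no real obstacle; the only point requiring a little care is the uniformity in $X$ of the ideal count, since the error term in the asymptotic for $\#\{\mathfrak{d}:\Nm(\mathfrak{d})\le X\}$ worsens with $\Disc(k)$. This is exactly why one splits into the range where $X$ is large compared to $\Disc(k)$ (contour shift, where the main term $\kappa_k X$ dominates) and the range where $X$ is small (the trivial divisor-function bound, which is favorable precisely because $X$ is small). Everything else is standard class field theory.
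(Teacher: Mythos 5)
Your proposal is correct and follows essentially the same route as the paper: class field theory bounds the number of quadratic $F/k$ with given relative discriminant by $O_{[k:\mathbb{Q}]}(h_2(k))$, and the ideal count is handled via Perron's formula, the convexity bound \eqref{eqn:convexity-dedekind} for $\zeta_k(s)$, and Landau's bound on $\mathrm{Res}_{s=1}\zeta_k(s)$. The only cosmetic difference is that the paper smooths with the majorant $e^{1-n/X}$ (so no desmoothing is needed) and shifts only to $\Re(s)=1-\epsilon$, making the error term $\Disc(k)^{\epsilon/2}X^{1-\epsilon}\leq \Disc(k)^{\epsilon}X$ for all $X\geq 1$, so your split into large and small $X$ ranges is unnecessary.
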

\begin{proof}	
	By class field theory, the number
	 $N_k(C_2, X)$ is bounded by the product of $h_2(k)$ and the number of integral ideals with bounded norm. 
	 The latter can be counted by integrating $\zeta_k(s)$. 
	If $a_n$ denotes the number of integral ideals of norm $n$, then by Perron's formula we have
	\begin{equation}
		\sum_{n \leq X} a_n \leq \sum_{n=1}^\infty a_n e^{1-\frac{n}{X}} = \frac{e}{2\pi i} \int_{1+\epsilon-i\infty}^{1+\epsilon+i\infty} \zeta_k(s)\cdot \Gamma(s)\cdot X^s\,ds.
	\end{equation}
   Shifting the contour integral to $\Re(s)=1-\epsilon$ and using the convexity bound \eqref{eqn:convexity-dedekind} on $\zeta_k(s)$,  
	we get the upper bound 
	\begin{equation}
		\sum_{n \leq X} a_n = \Res(\zeta_k(s) \Gamma(s) X^s)_{s = 1} + O_{[k:\mathbb{Q}],\epsilon}(\Disc(k)^{\epsilon/2} X^{1-\epsilon}) = O_{[k:\mathbb{Q}],\epsilon}(\Disc(k)^{\epsilon} X),
	\end{equation}
    where the upper bound on the residue of the Dedekind zeta function comes from Landau.  
\end{proof}

\begin{lemma}\label{lem:m-2-critical}
	Let $k$ be a number field and let $\delta_0 \in (0, 1/100]$. There exists an absolute positive constant $\delta$, and a value $\alpha$, depending only on $[k:\Q]$, such that for all $X\ge 1$ we have 
	\begin{equation}
		\sum_{\substack{F_2/ F_1/k \\ \Disc(F_1) \in \mathfrak{C}(X) \\ X/2\le \Disc(F_2)\le X}} h_3(F_2/k)  = O_{[k:\Q] }\Big (X^{1-\delta} \Big) \cdot \Disc(k)^{\alpha},
	\end{equation}	
	where the summation is over all towers $F_2/F_1/F_0=k$ with $F_2/F_1$ and $F_1/F_0$ being relative quadratic extensions and $\mathfrak{C}(X) = [X^{1/3-\delta_0}, X^{1/3+\delta_0}]$.
\end{lemma}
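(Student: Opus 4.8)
The strategy is to use the factorization $h_3(F_2/k)=h_3(F_1/k)\,h_3(F_2/F_1)$, valid since $3\nmid[F_2:k]$, and to pair the on-average bound of Corollary~\ref{cor:general-bound} for the factor $h_3(F_1/k)$ with a power saving over the trivial bound of Lemma~\ref{lem:relative-trivial} for the factor $h_3(F_2/F_1)$, the latter holding for all but a negligible set of towers. First I would observe that if $X\le\Disc(k)^{C}$ for a suitable $C=C([k:\Q])$, the claim follows from the trivial bounds of Lemma~\ref{lem:relative-trivial} together with the field counts of Lemmas~\ref{lem:quadratic-bound} and~\ref{lem:two-part}, once $\alpha$ is taken large; so I may assume $X$ is large compared with $\Disc(k)$.

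Fix a small absolute exponent $\theta_0$ and a value $\theta<\tfrac1{12}=\tfrac1{4\cdot3\cdot(2-1)}$ with $\theta_0<\theta(\tfrac13-2\delta_0)$, and set $Y_0:=X^{\theta_0}$; since $\Disc(F_2/F_1)=\Disc(F_2)/\Disc(F_1)^2\ge X^{1/3-2\delta_0}/2$ in the critical range, this guarantees $Y_0<\Disc(F_2/F_1)^{\theta}$ for all towers under consideration once $X$ is large. Call $F_1$ \emph{good} if it avoids the exceptional set of Lemma~\ref{lem:effective-Chebo-over-k-average} applied over $k$, with a fixed small parameter $\epsilon_1$; for good $F_1$, call $F_2/F_1$ \emph{good} if it avoids the exceptional set of Lemma~\ref{lem:effective-Chebo-over-k-average} applied over $F_1$. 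Each exceptional set has size $O_{[k:\Q],\epsilon_1}(\Disc(k)^{\epsilon_1}X^{\epsilon_1})$, so the number of towers that are not good is $O_{[k:\Q],\epsilon_1}(\Disc(k)^{O(1)}X^{1/3+\epsilon_1})$.

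For a good tower I would lower bound the number of small split primes of $F_2/F_1$. Since each prime of $k$ of norm $\le Y_0/2$ that splits in $F_1$ yields two primes of $F_1$ of the same norm, the goodness of $F_1$ together with the effective Chebotarev estimate over $k$ and the effective prime number theorem of Lemma~\ref{lem:prime-number-theorem-k-dependence} (which is why we need $Y_0\ge\Disc(k)^{\beta}$, hence $X$ large) gives $\pi_{F_1}(Y_0/2)\gg_{[k:\Q]}\Disc(k)^{-\gamma}Y_0/\log Y_0$; feeding this into the effective Chebotarev estimate over $F_1$ for the good $F_2$ then gives $\pi_{F_1}(Y_0;F_2,e)\gg_{[k:\Q]}\Disc(k)^{-\gamma}Y_0/\log Y_0$. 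These are primes of $F_1$ split completely in $F_2/F_1$ of norm $<\Disc(F_2/F_1)^{\theta}$, so Lemma~\ref{lem:relative-E-V} applies (with $L/K=F_2/F_1$, $d=2$, $\ell=3$; conditions (1) and (3) are automatic, the latter vacuously as $[F_2:F_1]=2$) and yields
\[
h_3(F_2/F_1)\ll_{[k:\Q],\epsilon}\Disc(k)^{\gamma}(\log X)\,X^{-\theta_0}\,\Disc(F_2)^{1/2+\epsilon}\Disc(F_1)^{-1/2+\epsilon}.
\]
Summing this against $h_3(F_1/k)$ over good towers, using $\Disc(F_2)\le X$, the count of quadratic extensions of $F_1$ from Lemmas~\ref{lem:quadratic-bound} and~\ref{lem:two-part}, and Corollary~\ref{cor:general-bound} via partial summation to bound $\sum_{F_1}h_3(F_1/k)\Disc(F_1)^{-5/2+\epsilon}$, controls the contribution of good towers by $O_{[k:\Q],\epsilon}\big(\Disc(k)^{\alpha}(\log X)\,X^{1-\theta_0+3\delta_0/2+\epsilon}\big)$. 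For the towers that are not good I would use the trivial bounds of Lemma~\ref{lem:relative-trivial} for both $h_3(F_1/k)$ and $h_3(F_2/F_1)$ together with the above count of bad towers, getting a contribution $O_{[k:\Q],\epsilon}\big(\Disc(k)^{\alpha}X^{5/6+O(\delta_0)+O(\epsilon_1)}\big)$. Since $\delta_0\le\tfrac1{100}$, one can choose $\theta_0$, $\theta$, and $\epsilon_1$ so that both exponents of $X$ are $\le 1-\delta$ for an absolute $\delta>0$ (absorbing $\log X$ and $X^\epsilon$ into a slightly smaller $\delta$), which completes the proof.

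The step I expect to be the main obstacle is the production of the split primes. Lemma~\ref{lem:relative-E-V} only accepts primes of $F_1$ of norm below a small fixed power of $\Disc(F_2/F_1)\approx X^{1/3}$, yet at that scale $Y_0$ lies far below $\Disc(F_1)^{\beta}$, so an effective Chebotarev or prime number theorem over $F_1$ alone produces nothing. The fix is to route the split primes through small primes of $k$, which is exactly why the argument requires the two-layer good/bad structure — goodness of $F_1$ over $k$ and of $F_2$ over $F_1$ — and why the careful bookkeeping of the $\Disc(k)$-dependence and of the sizes of the two exceptional sets is the technically delicate part.
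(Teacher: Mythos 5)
Your proposal is correct and follows essentially the same route as the paper's proof: factor $h_3(F_2/k)=h_3(F_1/k)h_3(F_2/F_1)$, build the two-layer exceptional sets from Lemma~\ref{lem:effective-Chebo-over-k-average}, produce small split primes via Lemma~\ref{lem:prime-number-theorem-k-dependence} routed through $k$, apply the relative Ellenberg--Venkatesh Lemma~\ref{lem:relative-E-V} off the exceptional sets, bound the exceptional contributions by trivial/average bounds plus sparsity, and treat $X\ll\Disc(k)^{C}$ separately. The only differences are cosmetic (a fixed cutoff $Y_0=X^{\theta_0}$ instead of the paper's $x=(X/2\Disc(F_1)^2)^{\delta_1}$, and a cruder trivial bound in the case $F_2/F_1\in\mathcal{E}(F_1)$ where the paper averages $h_3(F_1/k)$), and these still yield exponents strictly below $1$ for $\delta_0\le 1/100$.
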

\begin{proof}
We first give the idea of the proof in broad terms to motivate the notation that is to follow. 
 Exploiting the factorization $h_3(F_2/k) = h_3(F_2/F_1) h_3(F_1/k)$, Corollary \ref{cor:general-bound} provides strong control on the average of $h_3(F_1/k)$, while Lemma \ref{lem:relative-E-V} provides a non-trivial bound on $h_3(F_2/F_1)$ if there are ``enough'' small primes that split in $F_2/F_1$.  Lemma \ref{lem:prime-number-theorem-k-dependence} ensures there are enough small primes in $k$.  Applying Lemma \ref{lem:effective-Chebo-over-k-average} twice, we see first that ``most'' extensions $F_1/k$ have roughly the same number of small primes as $k$, and then second that in ``most'' extensions $F_2/F_1$ for such $F_1$, many of those small primes are split.  This lets us apply Lemma \ref{lem:relative-E-V}, and yields the lemma.

To make this precise, and in preparation for applying Lemma \ref{lem:effective-Chebo-over-k-average}, we let $\delta_1=1/20$ and $\epsilon_1=1/30$. 
We set $\sigma_1 = \max(1 - \epsilon_1/4c,1/2)$, where $c = c_{[k:\Q]}$ is the constant given in Theorem \ref{thm:zero-density}. 
Throughout the proof, we will make assumptions on $X$, including that $X$ is at least any absolute  constant necessary to apply the results we use, and then we will handle the remaining  values of $X$ at the end of the proof.  
 Applying Lemma \ref{lem:effective-Chebo-over-k-average} the first time, we obtain an exceptional set $\mathcal{E}_0 =\mathcal{E}(k, X^{1/3+\delta_0},  \epsilon_1)$.
  Then for each $F_1/k\notin \mathcal{E}_0$, we apply Lemma \ref{lem:effective-Chebo-over-k-average} a second time to obtain an exceptional set $\mathcal{E}(F_1) = \mathcal{E}(F_1, X/\Disc(F_1)^2,  \epsilon_1)$.  By Lemma \ref{lem:effective-Chebo-over-k-average}, both exceptional sets contain few elements: 
	\begin{equation} \label{eqn:exceptional-sparse}
		|\mathcal{E}_0|\ll_{[k:\Q]} \Disc(k)^{\epsilon_1} X^{\epsilon_1} \quad \text{and} \quad |\mathcal{E}(F_1)|\ll_{[k:\Q]} \Disc(k)^{\epsilon_1} X^{\epsilon_1}. 
	\end{equation}

We now consider the consequences of Lemma \ref{lem:effective-Chebo-over-k-average} for fields outside the exceptional sets.  For $F_1/k \notin \mathcal{E}_0$ with $\Disc(F_1) \in \mathfrak{C}(X)$ and $x =(X/(2\Disc(F_1)^2))^{\delta_1} \leq X^{1/3+\delta_0}$, by Lemma \ref{lem:effective-Chebo-over-k-average} we have, for $X$ sufficiently large, 
\begin{equation}\label{eqn:getprimes}
		\pi_{F_1}(x/2) \ge \pi_k(x/2;F_1, e) \geq \frac{1}{8}\pi_k(x/4)-C_{[k:\Q],\epsilon_1}(x/2)^{\sigma_1} \log^2( X^{1/3+\delta_0}\Disc(k)).
\end{equation}
We next assume that $X\ge   4^{\frac{1}{(1/3-2/100)\delta_1}} \Disc(k)^A$
 where $A =\frac{ \max \{ \beta, 8c(\gamma+2)/\epsilon_1 \} }{(1/3-2/100) \delta_1} = A([k:\Q])$, and $\gamma$ and $\beta$ 
 are  absolute constants for which Lemma \ref{lem:prime-number-theorem-k-dependence} holds; let also $D_0$ be an absolute constant such that Lemma \ref{lem:prime-number-theorem-k-dependence} holds for those $\gamma$ and $\beta$.
	If $\Disc(k) \geq D_0$, we use Lemma \ref{lem:prime-number-theorem-k-dependence}, 
	and for $X\ge \Disc(k)^A$  and $X$ sufficiently large given  $[k:\Q]$, we obtain
	\begin{equation} \label{eqn:X0-requirement}
		\frac{1}{8}\pi_k(x/4)-C_{[k:\Q],\epsilon_1}(x/2)^{\sigma_1} \log^2( X^{1/3+\delta_0}\Disc(k)) \gg_{[k:\Q]} \frac{x}{\Disc(k)^{\gamma} \log x}.
	\end{equation}
We require that $X$ is large enough 
so that \eqref{eqn:X0-requirement} holds also for the finitely many fields $k$ with $\Disc(k) \leq D_0$.  Under these assumptions, we conclude for $F_1 \notin \mathcal{E}_0$ that
\begin{equation}\label{eqn:halfway}
		\pi_{F_1}(x/2) \gg_{[k:\Q]} \frac{x}{\Disc(k)^{\gamma} \log x}.
\end{equation}
Next, for $F_2/F_1\notin \mathcal{E}(F_1)$, by Lemma \ref{lem:effective-Chebo-over-k-average} and \eqref{eqn:halfway}, and the assumptions above on $X$, we have
	\begin{equation}\label{eqn:non-exceptional-primes}
		\pi_{F_1}(x; F_2, e)  \gg_{[k:\Q]} \frac{x}{ \Disc(k)^{\gamma} \log x}. 
	\end{equation}

We now consider the contribution to the summation from the various kinds of exceptional and non-exceptional fields. First, we consider the summation $F_2/F_1/k$ where $F_1/k\in \mathcal{E}_0$. Using the trivial bound on the  class group 
and the sparsity
of exceptional fields along with Lemmas~\ref{lem:quadratic-bound} and \ref{lem:two-part}, we find
\begin{equation}
	\begin{aligned}
		\sum_{\substack{F_2/ F_1/k \\ \Disc(F_1) \in \mathfrak{C}(X)\\ X/2\le \Disc(F_2)\le X, F_1/k\in \mathcal{E}_0}} h_3(F_2/k) 
		& \ll_{[k:\Q],\epsilon}  X^{1/2+\epsilon} \sum_{\substack{F_1/k \in \mathcal{E}_0 \\ \Disc(F_1) \in \mathfrak{C}(X)}}  \sum_{\substack{F_2/ F_1 \\ X/2\le \Disc(F_2)\le X}} 1  \\
		& \ll_{[k:\Q],\epsilon} h_2(k)^2 \Disc(k)^{\epsilon_1} \cdot X^{5/6+2\delta_0+\epsilon_1+\epsilon} \\
		& \ll_{[k:\Q],\epsilon} \Disc(k)^{1+\epsilon} \cdot X^{5/6+2\delta_0+\epsilon_1+\epsilon}.
	\end{aligned}
\end{equation}

Second, we consider the contribution from those $F_2/F_1/k$ where $F_1 \notin \mathcal{E}_0$ but $F_2/F_1\in \mathcal{E}(F_1)$. Using the trivial bound on relative class group $h_3(F_2/F_1)$ and an average bound on $h_3(F_1/k)$,
we find
\begin{equation}
	\begin{aligned}
		\sum_{\substack{F_2/ F_1/k \\ \Disc(F_1) \in \mathfrak{C}(X)\\ X/2\le \Disc(F_2)\le X\\ F_2/F_1\in \mathcal{E}(F_1)}} h_3(F_2/k) & \ll_{[k:\Q],\epsilon}  \sum_{\substack{F_1/k \\ \Disc(F_1) \in \mathfrak{C}(X)}} h_3(F_1/k) \sum_{\substack{F_2/ F_1 \\ X/2 \leq \Disc(F_2) \leq X \\ F_2/F_1\in \mathcal{E}(F_1)}} \Disc(F_2/F_1)^{1/2+\epsilon} \Disc(F_1)^{1/2+\epsilon} \\
		& \ll_{[k:\Q],\epsilon}  \Disc(k)^{\epsilon_1} X^{1/2+\epsilon_1+\epsilon}  \sum_{\substack{F_1/k \\ \Disc(F_1) \in \mathfrak{C}(X)}} h_3(F_1/k) \Disc(F_1)^{-1/2+\epsilon}\\
		& \ll_{[k:\Q],\epsilon} \Disc(k)^{4/3+\epsilon_1+\epsilon} \cdot X^{2/3+\delta_0/2+\epsilon_1+\epsilon}. \\
	\end{aligned}
\end{equation}
Precisely, the first inequality follows from Lemma \ref{lem:relative-trivial}. The second inequality follows from 
the estimate $|\mathcal{E}(F_1)|\ll_{[k:\Q], \epsilon_1}  \Disc(k)^{\epsilon_1} X^{\epsilon_1}$ 
from \eqref{eqn:exceptional-sparse}.
The third inequality follows from Corollary \ref{cor:general-bound} and partial summation.

We now consider the contribution from those $F_2/F_1/k$ where neither $F_1/k \notin \mathcal{E}_0$ nor $F_2/F_1 \notin \mathcal{E}(F_1)$.  Using the relative Ellenberg--Venkatesh method developed in Section \ref{sec:arakelov} and existence of small split primes for non-exceptional fields developed in Section \ref{sec:zero-density} and made explicit in \eqref{eqn:non-exceptional-primes}, we find 
\begin{equation}
	\begin{aligned}
		&\sum_{\substack{F_2/ F_1/k \\ \Disc(F_1) \in \mathfrak{C}(X)\\ X/2\le \Disc(F_2)\le X \\ F_1 \notin \mathcal{E}_0, F_2/F_1\notin \mathcal{E}(F_1)}}  h_3(F_2/k) \\
			&\quad\quad \ll_{[k:\Q], \epsilon} \Disc(k)^{\gamma} \cdot \sum_{\substack{F_1/k \\ \Disc(F_1) \in \mathfrak{C}(X)}} h_3(F_1/k) \left(\sum_{\substack{F_2/ F_1 \\ F_2\notin \mathcal{E}(F_1)\\ X/2\le \Disc(F_2)\le X }} \Disc(F_2/F_1)^{1/2-\delta_1+\epsilon} \Disc(F_1)^{1/2+\epsilon}\right) \\
			&\quad\quad \ll_{[k:\Q], \epsilon}  \Disc(k)^{\gamma} \cdot h_2(k)^2 \cdot 
			\left(\sum_{\substack{F_1/k \\ \Disc(F_1) \in \mathfrak{C}(X)}} h_3(F_1/k) \cdot (\frac{X}{\Disc(F_1)^2})^{3/2-\delta_1+\epsilon}\cdot \Disc(F_1)^{1/2+\epsilon}\right) \\
			& \quad\quad \ll_{[k:\Q],\epsilon} \Disc(k)^{\gamma+7/3+\epsilon}  X^{1 - \frac{\delta_1}{3}+\delta_0(\frac{3}{2}-2\delta_1)+\epsilon}.\\
	\end{aligned}
\end{equation}
Precisely, the first inequality comes from Lemma \ref{lem:relative-E-V} and \eqref{eqn:non-exceptional-primes}.  
The second inequality follows from Lemma \ref{lem:quadratic-bound} and Lemma~\ref{lem:two-part}. 
We apply Corollary \ref{cor:general-bound} and partial summation for the last inequality. 

Finally, in the complementary case that $X$ is not sufficiently large, we may assume that $X \ll_{[k:\mathbb{Q}]} \Disc(k)^A$, so that
 by applying the trivial bound $h_3(F_2/k) \ll_{[k:\mathbb{Q}],\epsilon} X^{1/2+\epsilon}$, we find
\begin{equation}
	\begin{aligned}
		\sum_{\substack{F_2/ F_1/k \\ \Disc(F_1) \in \mathfrak{C}(X) \\ X/2\le \Disc(F_2)\le X}} h_3(F_2/k) 
			& \ll_{[k:\mathbb{Q}],\epsilon} X^{1/2+\epsilon} \cdot \sum_{\substack{F_2/ F_1/k \\ \Disc(F_1) \in \mathfrak{C}(X) \\ X/2\le \Disc(F_2)\le X}} 1 \\
			& \ll_{[k:\Q], \epsilon} X^{4/3+\delta_0 + \epsilon} h_2(k)^3 \Disc(k)^{-2+\epsilon} \\
			& \ll_{[k:\Q],\epsilon}   \Disc(k)^{(4/3+\delta_0+\epsilon)A-1/2+\epsilon}.
	\end{aligned}
\end{equation}	
 Here  
the second inequality follows from Lemma \ref{lem:quadratic-bound} twice, and the last inequality follows from the bound on $X$.

So to deduce the statement of the lemma, we note we have four upper bounds of the form $\Disc(k)^{a_i} X^{b_i+\epsilon}$.  We can take, e.g. $\epsilon=1/1000$, and note that each of the $b_i$ is less than $998/1000$.  Then we can take $\delta=1/1000$, and $\alpha$ can be taken to be $\max_i{a_i}$, which depends on $[k:\Q]$.
\end{proof}

\subsection{Induction}
Finally, in this section, we will prove Theorem \ref{thm:tail} in general. We will use an induction argument with the initial case $m=2$.

\begin{proof}[Proof of Theorem \ref{thm:tail}]
	The statement for $m = 2$ follows from the combination of Lemma \ref{lem:m-2-non-critical} and \ref{lem:m-2-critical}. 	
	Now assume for some $i$ that we can prove the lemma for every $m\le i$.  Our goal is to prove the lemma for $m = i+1$. 
	
	Firstly, we treat the summation when $\Disc(F_i) \in \mathfrak{N}(X,Y)=[Y, X^{1/3-\delta_0}) \cup (X^{1/3+\delta_0}, X^{1/2}]$, the non-critical range. We use an argument similar with that in Lemma \ref{lem:m-2-non-critical} where non-critical range is treated for $m=2$. For any $0< \delta_0 < 1/6$ and $Y \geq 1$, using the results on counting cubic fields in Section \ref{sec:uniform-cubic}, as collected in \eqref{eqn:3-torsion-summation}, and the induction hypothesis in the form of 
	 Corollary \ref{cor:general-bound-m}, we find
		\begin{equation}\label{eqn:induction-noncritical}
		\begin{aligned}
			\sum_{\substack{F_{i+1}/F_i/\cdots/F_1/k \\ \Disc(F_i) \in \mathfrak{N}(X,Y) \\ X/2\le \Disc(F_{i+1})\le X}} h_3(F_{i+1}/k)  
				&= \sum_{\substack{F_i/\cdots/F_1/k \\ \Disc(F_i) \in \mathfrak{N}(X,Y)}} h_3(F_i/k) \sum_{\substack{F_{i+1}/F_i \\ X/2\le \Disc(F_{i+1})\le X }} h_3(F_{i+1}/F_i) \\
				&\ll_{[k:\Q],i,\epsilon',\epsilon}  \Disc(k)^{\alpha_i-2^{i-1}+\epsilon} \Big(X^{1-\frac{3\delta_0}{2}+\epsilon'} + X^{11/12+\epsilon'}+\frac{X}{Y^{1-\epsilon}}\Big) .\\
		\end{aligned}
		\end{equation}
	The  inequality above follows as in \eqref{eqn:non-critical-summation-m-2}, with $F_i$ in place of $F_1$,
using the cases of \eqref{eqn:3-torsion-summation} in three ranges of $\Disc(F_i)$ followed by partial summation with  Corollary \ref{cor:general-bound-m}
(in place of Corollary~\ref{cor:general-bound}),
	 and the $h_2(k)^2$ factor from bounding $h_2(F_1)$ is replaced by $h_2(k)^{2^i}$ in the bound for $h_2(F_i)$.
		
	We now treat the contribution from towers with $\Disc(F_i)\in \mathfrak{C}(X) = [X^{1/3-\delta_0},X^{1/3+\delta_0}]$. We separate the discussion into two cases depending on the size of $\Disc(F_{i-1})$. Fix $1 \leq Y_2 \leq X^{\frac{1}{6}-\frac{\delta_0}{2}}$. Then, 
	we find 
	\begin{equation}\label{eqn:induction-1}
		\begin{aligned}
			\sum_{\substack{F_{i+1}/F_i/\cdots/F_1/k \\ \Disc(F_{i-1})\ge Y_2 \\\Disc(F_i) \in \mathfrak{C}(X) \\ X/2\le \Disc(F_{i+1})\le X}} h_3(F_{i+1}/k) 
				& = \sum_{\substack{F_i/\cdots/F_1/k \\ \Disc(F_{i-1})\ge Y_2 \\ \Disc(F_i) \in \mathfrak{C}(X,Y)}} h_3(F_{i}/k) \sum_{\substack{F_{i+1}/F_i\\  X/2\le \Disc(F_{i+1})\le X}} h_3(F_{i+1}/F_i)\\
				& \ll_{[k:\Q],i, \epsilon} h_2(k)^{2^i} \sum_{\substack{F_i/\cdots/F_1/k \\ \Disc(F_{i-1})\ge Y_2 \\ \Disc(F_i) \in \mathfrak{C}(X)}} h_3(F_{i}/k) \cdot \Disc(F_i)^{1/2+\epsilon} \cdot (\frac{X}{\Disc(F_{i})^2})^{3/2+\epsilon}\\
				& \ll_{[k:\Q], i,\epsilon} h_2(k)^{2^i} \Disc(k)^{\alpha_i}\cdot X^{3/2+\epsilon} \cdot \Big( \frac{ X^{-\frac{1}{2}+\frac{3\delta_0}{2}} }{Y_2^{1-\epsilon}} + X^{-(1/3-\delta_0)(3/2+\delta_i)}\Big)\\	
				& \ll_{[k:\Q], i,\epsilon} \Disc(k)^{\alpha_i+2^{i-1}+\epsilon} \cdot \Big( \frac{ X^{1+{3\delta_0}/{2}+\epsilon} }{Y_2^{1-\epsilon}} + X^{ 1+3\delta_0/2-\delta_i/3+\delta_0\delta_i+\epsilon  }\Big).
		\end{aligned}
	\end{equation}
	 Precisely, the first inequality follows from 
\eqref{eq:h3-bijection}, 	
	Proposition \ref{prop:trivial-cubic-bound},  Lemma~\ref{lem:quadratic-bound},and Lemma~\ref{lem:two-part}. The second inequality follows from partial summation, and the induction hypothesis with $m=i$ applied to $F_i/\cdots/F_1/k$ with $Y_2$ being the lower bound for $\Disc(F_{i-1})$. 
The third inequality follows from the trivial bound on $h_2(k)$. 
    
    Finally we consider those towers with $\Disc(F_{i-1}) \le Y_2$. 	Let $\delta'_2$ and $\alpha'_2$ be the constants associated to $[F_{i-1}:\Q]$ and $m =2$. 
We have
		\begin{equation}\label{eqn:induction-2}
		\begin{aligned}
			\sum_{\substack{F_{i+1}/F_i/\cdots/F_1/k \\ \Disc(F_{i-1})\le Y_2 \\\Disc(F_i) \in \mathfrak{C}(X) \\ X/2\le \Disc(F_{i+1})\le X}} h_3(F_{i+1}/k) 
				&= \sum_{\substack{F_{i-1}/\cdots/F_1/k \\ \Disc(F_{i-1})\le Y_2}} h_3(F_{i-1}/k) \sum_{\substack{F_{i+1}/F_i/F_{i-1}\\ \Disc(F_i) \in \mathfrak{C}(X)\\ X/2\le \Disc(F_{i+1})\le X}} h_3(F_{i+1}/F_{i-1})\\	
				& \ll_{ [k:\Q],i, \epsilon} \sum_{\substack{F_{i-1}/\cdots/F_1/k \\ \Disc(F_{i-1})\le Y_2}} h_3(F_{i-1}/k) \Big( X^{2/3+\delta_0+\epsilon} + X^{1-\delta'_2} \Big) \cdot \Disc(F_{i-1})^{\alpha'_2}  \\
				& \ll_{[k:\Q],i} \Disc(k)^{\alpha_{i-1}} X^{\max\{{3/4+\delta_0, 1-\delta'_2}\}} Y_2^{1+\alpha'_2}.\\
		\end{aligned}
	\end{equation}	
Here precisely, the first inequality follows from the induction hypothesis with $m=2$ and base field $k = F_{i-1}$. The second inequality follows from the induction hypothesis with $m=i-1$. 

If we let $Y_2=X^b$, then to obtain the desired bound, \eqref{eqn:induction-1} requires us to choose $\delta_0$ small, given $b$ and $\delta_i$,
and \eqref{eqn:induction-2} requires us to take $b$ small given $\delta_2'$ and $\alpha_2'$ (as long as we have taken $\delta_0$ absolutely sufficiently small). 
Since we can do this by choosing $b$ and then $\delta_0$, we will be able to obtain the theorem.  
In particular, making the convenient but non-optimal choices $b=\min\{\delta'_2/(2(1+\alpha'_2)),1/(8(1+\alpha'_2))\}$ and $Y_2 = X^{b}$ and $\delta_0 = \min\{\delta_i/9, b/2,1/16\}$, we conclude that the statement of the theorem holds for $m=i+1$ with  
	\[
		\alpha_{i+1} = \max\{\alpha_i+2^{i}, \alpha_{i-1}\} \quad \text{and} \quad \delta_{i+1} = \min\{\frac{\delta'_2}{2}, \frac{b}{8},\frac{\delta_i}{36},\frac{1}{32}\}. 
	\]
\end{proof}

\section{Average of $3$-torsion in Class Groups of $2$-extensions}\label{sec:main-theorem}

In this section, we will prove a refined version of Theorem~\ref{T:G}, in Theorem~\ref{thm:main} below, using crucially the tail estimates from Section \ref{sec:tail}.
We use the same approach to prove Theorem~\ref{T:rel}.
The 
  \emph{group signature} $\Sigma$ of $K/k$ is the ordered tuple $(\Sigma_v)_v$, where $v$ ranges over the real places of $k$, and $\Sigma_v$ is the conjugacy class of complex conjugation in $\Gal(K/k)$ over $v$.  This further refines the enriched signature of Cohen and Martinet.  
Let $E_k(m,X)$  
be the set of degree $2^m$ $2$-extensions $K/k$ (in $\bar{\Q}$) with $\Disc K\leq X$.
Let $E_k(G,X)$  
be the set of $G$-extensions $K/k$ (in $\bar{\Q}$) with $\Disc K\leq X$ and let $E_k^\Sigma(G,X)$  be those with group signature $\Sigma$.

\begin{theorem}\label{thm:main}
	Let $k$ be a number field and $m$ a positive integer. 
	Then there exists $C_m>0$ 
	such that
	\begin{equation}\label{eqn:m-average}
		\lim_{X\to\infty}\frac{1}{|E_k(m,X)|} {\sum_{K\in E_k(m,X)}\quad h_3(K)}=C_m.
	\end{equation}
	Moreover, let $G\subset S_{2^m}$ be a transitive permutation $2$-group containing a transposition and $\Sigma$ a group signature
	that occurs for some $G$-extension of $k$. 
	Then there exists $C_{G,\Sigma},C_G>0$  such that
\begin{equation} \label{eqn:G-average}
	\lim_{X\to\infty} \frac{1}{|E_k^\Sigma(G,X)|} {\sum_{K\in E_k^\Sigma(G,X)}\quad h_3(K)}= C_{G,\Sigma}
\quad \quad \textrm{and} \quad \quad	
\lim_{X\to\infty} \frac{1}{|E_k(G,X)|} {\sum_{K\in E_k(G,X)}\quad h_3(K)} = C_{G}.
\end{equation}
\end{theorem}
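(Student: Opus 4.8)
The plan is to prove \eqref{eqn:G-average} for $G=C_2\wr H$ first (with and without a fixed group signature $\Sigma$), and then to deduce \eqref{eqn:m-average} by summing over the finitely many $G=C_2\wr H\subseteq S_{2^m}$ and absorbing the contribution of degree‑$2^m$ $2$‑extensions whose Galois closure group contains no transposition. For $G=C_2\wr H$, every $K\in E_k(G,X)$ has a unique index‑$2$ subfield $F=F_K$, which is an $H$‑extension of $k$ of degree $2^{m-1}$ with $K/F_K$ quadratic; moreover $K$ lies in a tower $F_m/\cdots/F_1/k$ of quadratic steps with $F_{m-1}=F_K$, and the number of such towers through a given $K$ is $O_m(1)$. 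Since $3\nmid 2^m$, the split exact sequences of Section~\ref{sec:arakelov} give $h_3(K)=h_3(k)\,h_3(F_K/k)\,h_3(K/F_K)$, and the conductor--discriminant formula gives $\Disc(K)=\Disc(F_K)^2\,\Disc(K/F_K)$, so that
\[
\sum_{K\in E_k^\Sigma(G,X)} h_3(K) = h_3(k)\sum_{F} h_3(F/k)\!\!\!\sum_{\substack{K/F\ \text{quadratic},\ \Disc(K/F)\le X/\Disc(F)^2\\ \Gal(K/k)=G,\ \mathrm{sgn}(K/k)=\Sigma}}\!\!\! h_3(K/F),
\]
where $F$ ranges over $H$‑extensions of $k$ with the signature induced by $\Sigma$. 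The case $m=1$ is the classical Davenport--Heilbronn/Datskovsky--Wright statement, with the sum over $F$ degenerating to the single term $F=k$.

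For each \emph{fixed} $F$, the innermost sum is a Davenport--Heilbronn/Datskovsky--Wright count of quadratic extensions of $F$, weighted by $h_3(\cdot/F)$, subject to the archimedean signature condition, finitely many conditions recording the group signature at the real places, and the condition $\Gal(K/k)=G$. By \cite{DH71, DW88} together with the bijection recorded in \eqref{eq:h3-bijection}, for fixed $F$ one has $\sum_{\Disc(K/F)\le Z,\ (\text{conditions})} h_3(K/F)\sim c_F^{\Sigma}\,Z$ as $Z\to\infty$, where $c_F^{\Sigma}$ is an explicit constant built from $\mathrm{Res}_{s=1}\zeta_F(s)$, local densities, and the (full) density of quadratic extensions of $F$ realizing $\Gal(K/k)=G$; crucially $c_F^{\Sigma}\ll_{[k:\mathbb{Q}],\epsilon}\Disc(F)^{\epsilon}$ via the bound $\mathrm{Res}_{s=1}\zeta_F(s)\ll\Disc(F)^\epsilon$. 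The quadratic extensions of $F$ that \emph{fail} $\Gal(K/k)=G$ are those pulled back from proper subfields; they form a thin family, and $h_3$‑weighted they contribute $o_X(X)$ for fixed $F$ (via moment estimates for $h_3$ over quadratic fields in low degree, or crude bounds in higher degree — such $K$ in any case have transposition‑free Galois closure). The identical analysis with weight $1$ gives $\#\{K/F:\Disc(K)\le X,\ \Gal(K/k)=G,\ \mathrm{sgn}=\Sigma\}\sim d_F^{\Sigma}\,X/\Disc(F)^2$ with $0\le d_F^{\Sigma}\ll\Disc(F)^{\epsilon}$, and $d_F^{\Sigma}>0$ for at least one $F$ whenever $E_k^\Sigma(G,\infty)\ne\emptyset$.

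Now fix $Y>0$ and split the outer sum at $\Disc(F)=Y$. Using that each $K$ contributes through $O_m(1)$ towers, the range $\Disc(F_K)\ge Y$ is bounded by Theorem~\ref{thm:tail}, giving a contribution $\ll_{\epsilon,[k:\mathbb{Q}],m}(X/Y^{1-\epsilon}+X^{1-\delta_m})\Disc(k)^{\alpha_m}$; since $h_3\ge 1$, the same bound controls the number of such $K$. For $\Disc(F)<Y$ there are only finitely many $F$, so by the previous paragraph $\tfrac1X\sum_{\Disc(F)<Y}(\cdots)\to h_3(k)\sum_{\Disc(F)<Y}h_3(F/k)c_F^{\Sigma}/\Disc(F)^2$ as $X\to\infty$, whence
\[
\limsup_{X\to\infty}\Big|\tfrac1X\!\!\sum_{K\in E_k^\Sigma(G,X)}\!\! h_3(K) - h_3(k)\!\!\sum_{\Disc(F)<Y}\!\!\frac{h_3(F/k)\,c_F^{\Sigma}}{\Disc(F)^2}\Big| \ll Y^{-(1-\epsilon)}.
\]
The series $\sum_{F} h_3(F/k)\,c_F^{\Sigma}/\Disc(F)^2$ converges absolutely: Corollary~\ref{cor:general-bound-m} (and Corollary~\ref{cor:general-bound} when $m=2$) gives $\sum_{\Disc(F)\le T} h_3(F/k)\ll_k T$, which with $c_F^{\Sigma}\ll\Disc(F)^\epsilon$ and partial summation does it. Letting $Y\to\infty$ shows $\tfrac1X\sum_{K\in E_k^\Sigma(G,X)} h_3(K)$ tends to a constant $A_{G,\Sigma}$, and the same argument with weight $1$ gives $|E_k^\Sigma(G,X)|/X\to B_{G,\Sigma}>0$, so $C_{G,\Sigma}:=A_{G,\Sigma}/B_{G,\Sigma}$ exists. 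Summing over the finitely many group signatures $\Sigma$ yields $C_G$, and summing over the finitely many $G=C_2\wr H\subseteq S_{2^m}$ — together with the observation that $2$‑extensions whose Galois closure $G'$ is transposition‑free satisfy $a(G')\ge 2$, so that by Malle‑type upper bounds for $|E_k(G',X)|$ combined with the bounds on $h_3$ of low‑degree or Galois fields (\cite{EV}, \cite{Wang2021a,Wang2020}) they contribute $o(X)$ to both the weighted sum and the count — yields $C_m$. The explicit values of $C_m$, $C_{G,\Sigma}$, $C_G$ are then recovered as the ratios of the two convergent series.

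With Theorem~\ref{thm:tail} in hand the essential analytic difficulty has already been surmounted, and the main remaining obstacle is structural: one must set up the fixed‑base‑field Datskovsky--Wright main term with exactly the right Galois‑closure and group‑signature constraints and keep the leading constants $c_F^{\Sigma}, d_F^{\Sigma}$ explicit enough ($\ll\Disc(F)^\epsilon$, and strictly positive for admissible $F$) to guarantee convergence and positivity of the final series; and one must verify that the thin families of ``degenerate'' quadratic extensions of $F$, and the transposition‑free $2$‑extensions entering \eqref{eqn:m-average}, genuinely contribute $o(X)$ to the $h_3$‑weighted count — the single point where a little more than the trivial bound on $h_3$ is required. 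Everything else is bookkeeping modulo the deep tail input of Theorem~\ref{thm:tail}.
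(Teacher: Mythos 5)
Your overall architecture is the same as the paper's: pass to the unique index two subfield $F_K$, factor $h_3(K)=h_3(F_K)h_3(K/F_K)$, apply Datskovsky--Wright over each fixed $F$ with $\Disc(F)\le Y$, truncate the tail $\Disc(F)>Y$ with Theorem~\ref{thm:tail}, and let $Y\to\infty$ using Corollary~\ref{cor:general-bound-m} for convergence. That part is sound. The genuine gap is in how you dispose of the ``degenerate'' quadratic extensions $K/F$ for which $\Gal(K/k)$ contains no transposition (equivalently, the transposition-free $2$-extensions entering \eqref{eqn:m-average}). You assert these contribute $o(X)$ to the $h_3$-weighted sum ``via moment estimates for $h_3$ over quadratic fields in low degree, or crude bounds in higher degree,'' citing \cite{EV} and \cite{Wang2021a,Wang2020}. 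This does not work in general. The number of such $K$ with $\Disc(K)\le X$ is $O_{k,\epsilon}(X^{1/2+\epsilon})$ (powerful relative discriminant plus a multiplicity bound), but the trivial bound $h_3(K)\ll\Disc(K)^{1/2+\epsilon}$ then only yields $O(X^{1+\epsilon})$, exactly the threshold you need to beat; a Cauchy--Schwarz/second-moment argument with the trivial bound lands at the same threshold. The pointwise nontrivial bounds you invoke do not cover these fields: \cite{EV} only treats $[K:\Q]\le 4$ (so already fails for $m\ge 3$ or $k\ne\Q$), and \cite{Wang2021a,Wang2020} require $K/k$ Galois with a non-cyclic, non-quaternion $2$-group, which excludes, e.g., all $C_{2^m}$-extensions (transposition-free and ubiquitous in your degenerate family) as well as the many non-Galois transposition-free permutation $2$-groups. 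Also, your description of the degenerate $K$ as ``pulled back from proper subfields'' is inaccurate (e.g., $C_4$-quartics inside the $D_4$ count are not base-changed from $k$), though your parenthetical ``transposition-free Galois closure'' is the correct characterization.

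This is precisely the point where the paper needs a dedicated, non-trivial input: Theorem~\ref{thm:3-torsion-thin-2-ext} shows $\sum_{K\in E_k(G',X)}h_3(K/k)=O(X^{1-\delta}\Disc(k)^{\alpha})$ for every transposition-free transitive $2$-group $G'$, and its proof is not bookkeeping. It combines Theorem~\ref{thm:tail} (to reduce to $\Disc(F)\le X^{\delta'}$), the thin-family count of Theorem~\ref{thm:counting-thin-2-extensions}, and then a power saving on $h_3(K/F)$ for all but $O(X^{\epsilon_1})$ quadratic extensions of each small $F$, obtained from the relative Ellenberg--Venkatesh Lemma~\ref{lem:relative-E-V} fed by the zero-density/small-split-prime results of Section~\ref{sec:zero-density} (Lemmas~\ref{lem:effective-Chebo-over-k-average} and \ref{lem:prime-number-theorem-k-dependence}). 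Your proposal never supplies this saving, so the subtraction of the non-$G$ quadratic extensions of $F$ (and the absorption of transposition-free groups into \eqref{eqn:m-average}) is unjustified as written; to repair it you should either import Theorem~\ref{thm:3-torsion-thin-2-ext} or reproduce an argument of that type. The remaining bookkeeping in your write-up (multiplicity of $G$-structures per field, positivity of the denominator constants, uniformity over the finitely many $F$ with $\Disc(F)\le Y$) matches the paper's Lemma~\ref{lem:KFbij}, Theorem~\ref{thm:counting-2-ext}, and the choice of $X_0(Y)$, and is fine.
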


The proof of Theorem \ref{thm:main} gives the constants explicitly.
We let $r_1(F)$ denote the number of real places of $F$, and $r_2(F)$ the number of pairs of complex places of $F$.
We have
$$
C_m= \left( \sum_{F \in E_k(m-1,\infty)} 
 \frac{h_3(F) \mathrm{Res}_{s=1} \zeta_{F}(s) }{2^{r_2(F)}\zeta_{F}(2)\Disc(F)^2 } \cdot \left(1 + \frac{2^{r_1(F)}}{3^{r_1(F)+r_2(F)}}\right) \right) \left( 
 \sum_{\substack{F\in E_k(m-1,\infty)}} \frac{\mathrm{Res}_{s=1} \zeta_{F}(s) }{2^{r_2(F)} \zeta_{F}(2)\Disc(F)^2
		}
  \right)^{-1}.
$$
When the group $G$ has a transposition, Lemma \ref{lem:2-grp-with-transposition-is-wreath-product} below shows that $G=C_2\wr H$ for some transitive subgroup $H \subseteq S_{2^{m-1}}$.  
The image of a group signature $\Sigma$ for $G$ gives a group signature $\bar{\Sigma}$ for $H$.
So a $G$-extension $K/k$ of signature $\Sigma$ has an index $2$ subfield $F/k$ which is an $H$-extension of signature $\bar{\Sigma}$, and 
 there is a number $u(\Sigma)$, depending only on $\Sigma$, which gives the number of infinite places of $F$ that are split in $K$, or equivalently, $\rk\O_K^*-\rk \O_F^*$, \emph{the relative unit rank of $K/F$}. Then we have
$$
C_{G,\Sigma}=\left( 
\sum_{F \in E_k^{\bar{\Sigma}}(H,\infty)} 
 \frac{h_3(F) \mathrm{Res}_{s=1} \zeta_{F}(s) }{\zeta_{F}(2)\Disc(F)^2 } \cdot \left(1 +3^{-u(\Sigma)}\right)
\right)\left(\sum_{F \in E_k^{\bar{\Sigma}}(H,\infty)} 
 \frac{ \mathrm{Res}_{s=1} \zeta_{F}(s) }{\zeta_{F}(2)\Disc(F)^2 } \right)^{-1},
$$
$$
C_G =\left( \sum_{F \in E_k(H,\infty)} \frac{h_3(F) \mathrm{Res}_{s=1} \zeta_{F}(s) }{\zeta_{F}(2)\Disc(F)^2 } \cdot 
 \left(1 + \frac{2^{r_1(F)}}{3^{r_1(F)+r_2(F)}}\right) \right)\left( \sum_{F \in E_k(H,\infty)} 
 \frac{ \mathrm{Res}_{s=1} \zeta_{F}(s) }{\zeta_{F}(2)\Disc(F)^2 }	 \right)^{-1}.
$$

Before proving Theorem \ref{thm:main}, we first give a result on the asymptotic number of $2$-extensions, i.e. the denominator in Theorem \ref{thm:main}.

\begin{theorem}  \label{thm:counting-2-ext}
	Let $k$ be a number field and let $m$ be a positive integer. 
	 There exists $D_m>0$ such that
	\begin{equation}
		|E_k(m,X)| \sim  D_mX.
	\end{equation}
Moreover, let $G\subset S_{2^m}$ be a transitive permutation $2$-group containing a transposition and $\Sigma$ a group signature. 
Then there exists $D_{G,\Sigma}\geq 0$ and $D_G>0$ such that
	\begin{equation}
	|E_k^\Sigma(G,X)| \sim  D_{G,\Sigma} X \quad \textrm{and}\quad 	|E_k(G,X)| \sim  D_{G}X.
     \end{equation}
     If there exists an $H$-extension of $k$ with group signature $\bar{\Sigma}$, then $D_{G,\Sigma}>0$. 
Further, if $G\subset S_{2^m}$ is a transitive permutation $2$-group not containing a transposition, then $|E_k(G,X)| = O_{k,G,\epsilon}(X^{1/2+\epsilon})$.
\end{theorem}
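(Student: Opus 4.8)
The plan is to prove all of Theorem~\ref{thm:counting-2-ext} by induction on $m$, fibering each degree $2^m$ $2$-extension over an index $2$ subfield and reducing to counting quadratic extensions of that subfield. For the base case $m=1$, $E_k(C_2,X)$ is the set of quadratic extensions of $k$ of bounded absolute discriminant; I would recover the asymptotic $|E_k(C_2,X)| \sim D_1 X$ (and its refinement by group signature $\Sigma$, i.e. by behaviour at the real places) from the Dirichlet series $\sum_\chi \Nm(\mathfrak f_\chi)^{-s}$ over quadratic ray class characters $\chi$ of $k$: this series differs from $\zeta_k(s)/\zeta_k(2s)$ by an explicit finite Euler factor at $2$, hence has a simple pole at $s=1$ with residue an explicit positive multiple of $\mathrm{Res}_{s=1}\zeta_k(s)/\zeta_k(2)$, and a contour shift using the convexity bound \eqref{eqn:convexity-dedekind} gives a power-saving error term with explicit dependence on $\Disc(k)$. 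Since $C_2$ is the only transitive $2$-group of degree $2$ and it contains a transposition, the last assertion is vacuous at $m=1$. At each level $m$ I would first establish the transposition-free bound (which uses only the counting results in degrees $<2^m$) and then the asymptotics for transposition-containing $G$.

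\textbf{Inductive step, transposition case.} Let $m\ge 2$ and $G\subseteq S_{2^m}$ a transitive $2$-group with a transposition; by Lemma~\ref{lem:2-grp-with-transposition-is-wreath-product} write $G=C_2\wr H$ with $H\subseteq S_{2^{m-1}}$ transitive. Each $K\in E_k^\Sigma(G,X)$ has a unique index $2$ subfield $F$, which is an $H$-extension with the induced group signature $\bar\Sigma$, and by the tower formula $\Disc(K)=\Disc(F)^2\,\Disc(K/F)\ge \Disc(F)^2$, so $\Disc(F)\le X^{1/2}$. Conversely, for a fixed such $F$ a quadratic extension $K/F$ gives $\Gal(K/k)=C_2\wr H$ with group signature $\Sigma$ exactly when $K/F$ is ``generic'' (the conjugate square roots $\sqrt{\sigma\alpha}$, for $K=F(\sqrt\alpha)$, become $\mathbb F_2$-independent over the Galois closure of $F$) and has the prescribed splitting type at the real places of $F$; all but $O((X/\Disc(F)^2)^{1/2+\epsilon})$ quadratic $K/F$ of bounded discriminant are generic, since failure is cut out by a fixed proper $\mathbb F_2$-subspace of square classes. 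Hence
\[
|E_k^\Sigma(G,X)| \;=\; \sum_{\substack{F\in E_k^{\bar\Sigma}(H,\infty)\\ \Disc(F)\le X^{1/2}}} \#\{\,K/F \text{ quadratic, generic, prescribed }\infty\text{-type},\ \Disc(K)\le X\,\}\;+\;(\text{error}).
\]
For each $F$ the inner count is, up to the thin genericity defect, the number of quadratic $K/F$ with prescribed type at the real places of $F$ and $\Disc(K/F)\le X/\Disc(F)^2$; as in the base case this equals $c_{F,\Sigma}\,X/\Disc(F)^2 + O\big((X/\Disc(F)^2)^{1-\delta}\Disc(F)^{O(1)}\big)$, where $c_{F,\Sigma}$ is an explicit positive multiple of $\mathrm{Res}_{s=1}\zeta_F(s)/\zeta_F(2)$ depending only on $F$ and the relative unit rank $u(\Sigma)$. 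Summing the main term gives $X\sum_{F\in E_k^{\bar\Sigma}(H,\infty)} c_{F,\Sigma}/\Disc(F)^2$, a convergent series (by the inductive form of the theorem $\#\{F\in E_k(H,T)\}=O(T^{1+\epsilon})$ while $\mathrm{Res}_{s=1}\zeta_F(s)/\zeta_F(2)=O(\Disc(F)^\epsilon)$), whose value is $D_{G,\Sigma}$, positive whenever $E_k^{\bar\Sigma}(H,\infty)\ne\emptyset$. For the error: for $\Disc(F)\le Y$ (a slowly growing function of $X$) the power-saving in the quadratic count beats the $\Disc(F)^{O(1)}$ factor, and for $\Disc(F)>Y$ I would invoke Theorem~\ref{thm:tail} together with the trivial lower bound $h_3(\,\cdot\,)\ge 1$ to bound $\#\{K\in E_k(m,X):\Disc(F)>Y\}$; both pieces, and the summed genericity defect $\ll X^{1/2+\epsilon}$, are $o(X)$. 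Summing over the finitely many $\Sigma$ gives $|E_k(G,X)|\sim D_GX$, and summing over the finitely many transitive $2$-groups $G$ of degree $2^m$ (those with a transposition contributing $\asymp X$, the rest $O(X^{1/2+\epsilon})$) gives $|E_k(m,X)|\sim D_mX$.

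\textbf{Transposition-free case.} Let $G\subseteq S_{2^m}$ be a transitive $2$-group with no transposition ($m\ge 2$). By Lemma~\ref{lem:tower} every $K\in E_k(G,X)$ lies in a tower $k\subseteq F_{m-1}\subseteq K$ with $[K:F_{m-1}]=2$; write $K=F_{m-1}(\sqrt\alpha)$ and let $H'$ be the Galois closure group of $F_{m-1}$. Since $G$ has no transposition, $G$ is a \emph{proper} subgroup of $C_2\wr H'$, which forces a nontrivial $\mathbb F_2$-linear relation among the conjugate square classes $[\sigma\alpha]$ over the Galois closure of $F_{m-1}$; such a fixed relation links the ramification of $K/F_{m-1}$ above conjugate primes, so for fixed $F_{m-1}$ the number of admissible $K$ with $\Disc(K)\le X$ is $O\big((X/\Disc(F_{m-1})^2)^{1/2+\epsilon}\big)$. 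Using $\Disc(F_{m-1})\le X^{1/2}$ and the inductive count $\#\{F_{m-1}:\Disc(F_{m-1})\le T\}=O(T^{1+\epsilon})$,
\[
|E_k(G,X)| \;\ll\; \sum_{\Disc(F_{m-1})\le X^{1/2}}\Big(\frac{X}{\Disc(F_{m-1})^2}\Big)^{1/2+\epsilon} \;\ll\; X^{1/2+\epsilon}\sum_{F_{m-1}}\Disc(F_{m-1})^{-1-2\epsilon} \;\ll_{k,G,\epsilon}\; X^{1/2+\epsilon}.
\]

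\textbf{Main obstacle.} The hard part will be the second step: obtaining an asymptotic count of quadratic extensions of $F$ with a genuine main term \emph{and} an error term whose dependence on $\Disc(F)$ is explicit and strong enough to be summed over infinitely many base fields $F$ (controlling the range $\Disc(F)$ close to $X^{1/2}$ is precisely what Theorem~\ref{thm:tail} is engineered for). A secondary technical point is the Galois/Kummer-theoretic translation of ``$G$ has no transposition'' (equivalently ``$K/F$ is non-generic'') into a linear condition of positive codimension on the square class of the generator, and deducing from it the $O(T^{1/2+\epsilon})$ bounds used above.
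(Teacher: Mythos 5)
Your overall architecture is sound, but it is a genuinely different route from what the paper actually does for this statement: the paper's proof of Theorem~\ref{thm:counting-2-ext} is essentially by citation --- the $O_{k,G,\epsilon}(X^{1/2+\epsilon})$ bound for transposition-free $G$ is quoted from Kl\"uners--Malle, the asymptotic $|E_k(G,X)|\sim D_GX$ for $G=C_2\wr H$ is quoted from Kl\"uners (with Shafarevich supplying existence of $H$-extensions), and the signature-refined constants are only sketched, with the remark that they follow ``by a simpler version of the proof of Theorem~\ref{thm:main}.'' Your proposal is, in effect, a self-contained execution of that suggested simpler version: fiber over the unique index-two subfield $F$, count quadratic extensions of $F$ by relative discriminant and behaviour at infinity, and control the range $\Disc(F)>Y$ by Theorem~\ref{thm:tail} with $h_3\geq 1$. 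That is legitimate and buys independence from \cite{Klu12}/\cite{KlunersMalle}; the costs are (i) you need the quadratic count over $F$ with an error term \emph{uniform} (polynomially) in $\Disc(F)$, which is standard but is exactly what the paper avoids by its ``$X_0(Y)$'' trick of fixing $Y$, applying Theorem~\ref{thm:cubic-count} to the finitely many $F$ with $\Disc(F)\leq Y$, and only then letting $Y\to\infty$; and (ii) you are gliding over the multiplicity bookkeeping between fields and $G$-extensions (each $K$ with a compatible $F$-structure carries $2^{2^{m-1}}$ choices of $G$-structure, and $E_k(G,X)$ versus $E_k(m,X)$ differ by $|\Aut_{\operatorname{perm}}|$ factors, cf.\ Lemma~\ref{lem:KFbij}); this only affects the values of the constants, which the theorem does not specify, so it is harmless here.

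The one step whose stated justification would fail is your bound on the genericity defect in the transposition case: ``all but $O((X/\Disc(F)^2)^{1/2+\epsilon})$ quadratic $K/F$ are generic, since failure is cut out by a fixed proper $\mathbb{F}_2$-subspace of square classes.'' Membership of the square class of $\alpha$ in a proper subgroup of $F^\times/(F^\times)^2$ does not by itself force a $T^{1/2+\epsilon}$ count: such subgroups (e.g.\ those cut out by a splitting condition at a fixed prime, or by a congruence condition) routinely contain a positive proportion of quadratic extensions ordered by discriminant. The correct mechanism is the one the paper isolates in Theorem~\ref{thm:counting-thin-2-extensions}: non-generic $K/F$ are exactly those whose Galois closure group over $k$ is a proper subgroup of $C_2\wr H$, hence contains no transposition (the argument in Lemma~\ref{lem:2-grp-with-transposition-is-wreath-product}/\ref{lem:KFbij}), hence $\disc(K/k)$ is a powerful ideal, and powerful norms up to $X$ number $O(X^{1/2})$, with discriminant multiplicity controlled as in Lemma~\ref{lem:disc-multi-general}. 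You do invoke the right idea (``linked ramification above conjugate primes'') in your transposition-free paragraph, so the repair is available to you, but as written both the genericity-defect estimate and the fiberwise $O((X/\Disc(F_{m-1})^2)^{1/2+\epsilon})$ bound need that powerful-discriminant argument spelled out (noting also that the fiberwise implied constant involves $h_2(F)$ and the primes ramified in $F/k$, which is summable over $F$ only because $h_2(F)\ll_{[F:\Q],\epsilon}\Disc(F)^{\epsilon}$ for $2$-extensions, Lemma~\ref{lem:two-part}). With that step replaced, your induction goes through and recovers the theorem, including the $D_{G,\Sigma}>0$ claim once you check (as the paper does group-theoretically) that the prescribed behaviour at infinity is actually attainable over each $F$ with signature $\bar{\Sigma}$.
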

\begin{proof}
	Malle \cite{Mal02,Mal04} conjectured that for $2$-groups $|E_k(G,X)|$ has linear growth if and only if $G$ contains a transposition. 
When $G$ has no transpositions, it is proved by \cite[Corollary 7.3]{KlunersMalle} that $|E_k(G,X)| = O_{k,G,\epsilon}(X^{1/2+\epsilon})$. When $G$ contains a transposition, by \cite[Lemma $5.5$]{Klu12} $G$ must be isomorphic to $C_2\wr H$ for some permutation $2$-group $H$. It is proved in \cite[Theorem $5.8$]{Klu12} that when there exists a $H$-extension and $|E_k(H,X)|$ is not growing too fast, Malle's conjecture $|E_k(G,X)| \sim D_G X$ holds for $G = C_2 \wr H$. The existence of $2$-group extensions is shown by a celebrated theorem of Shafarevich \cite{Shafa}, and the upper bound for $|E_k(H,X)|$ is known for all permutation $2$-groups $H$, again by \cite[Corollary 7.3]{KlunersMalle}.  
This gives the first statement of the theorem and a version of the second statement without signature conditions.  
Following Kl\"{u}ners proof, or a simpler version of the proof of Theorem~\ref{thm:main}, one can 
obtain a version with the signature condition and also with the precise constants
$$
		D_m =  \sum_{\substack{F\in E_k(m-1,\infty)}} \frac{\mathrm{Res}_{s=1} \zeta_{F}(s) }{2^{r_2(F)} \zeta_{F}(2)\Disc(F)^2
		},
$$
$$
		D_{G,\Sigma}	= \sum_{F \in E_k^{\bar{\Sigma}}(H,\infty)} 
 \frac{ p_\Sigma 2^{2^{m-1}-r_2(F)} \mathrm{Res}_{s=1} \zeta_{F}(s) }{\zeta_{F}(2)\Disc(F)^2 },
\quad \quad \textrm{and}\quad\quad
D_G= 
\sum_{F \in E_k(H,\infty)} 
 \frac{2^{2^{m-1}-r_2(F)} \mathrm{Res}_{s=1} \zeta_{F}(s) }{\zeta_{F}(2)\Disc(F)^2 }	 
 ,	
$$
where $\bar{\Sigma}$ is the group signature on $H$ given from $\Sigma$ via the quotient map $G\ra H$, and $p_\Sigma$ is the proportion of the 
$2^{r_1(F)}$ possible behaviors at infinity for a quadratic extension $K/F$ such that when $K$ is a $G$-extension with such behavior then it has group signature $\Sigma$.  (We note that $p_\Sigma$ is determined group theoretically from $\Sigma$ and is non-zero---see the proof of Theorem~\ref{thm:main}.)
\end{proof}

Theorem \ref{thm:counting-2-ext} shows that, among $2$-extensions of a given degree, the fields with Galois groups without transpositions are \emph{thin} families.  Thus, the restriction in Theorem \ref{thm:main} to groups $G$ with transpositions is natural.  In fact, while Theorem \ref{thm:main} does not give class number averages for the thin families of extensions with Galois group $G$ for $G$ without transpositions, a key step in its proof is to show that class group elements arising from such extensions 
give a negligible contribution to the average in \eqref{eqn:m-average}.
  This is a result of independent interest that complements Theorem \ref{thm:main}.  Thus, we begin by considering thin families in Section \ref{ssec:thin-2-group}, which culminates in Theorem \ref{thm:3-torsion-thin-2-ext} that makes this discussion precise.   Then, in Section \ref{ssec:proof-main-theorem}, we turn to the proof of Theorem~\ref{thm:main}.

\subsection{Thin families of $2$-extensions}\label{ssec:thin-2-group}
In this section, we focus on studying $3$-torsion in class groups of $2$-extensions with a permutation Galois group $G$ without a transposition.
 We begin by proving an upper bound on the count of these extensions, but with explicit base field discriminant dependence. We first give the following lemma, which is a uniform version of Theorem $1.6$ in \cite{KluWan} for $\ell=2$.

\begin{lemma}\label{lem:disc-multi-general}
Let $k$ be a number field. Then there exists $\alpha_m>0$ depending at most on $[k:\Q]$ and $m$ such that the number of degree $2^m$ $2$-extension $K/k$ with $\Disc(K/k) = D$ is bounded by $O_{[k:\Q], m, \epsilon}(D^{\epsilon} \Disc(k)^{\alpha_m})$.
\end{lemma}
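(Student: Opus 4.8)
The plan is to induct on $m$, at each step peeling off the top quadratic layer of the tower and controlling the $2$-class number there by genus theory (this is what gives the needed uniformity compared to the statement of Theorem~$1.6$ in \cite{KluWan}). For the base case $m=1$, a degree $2$ $2$-extension of $k$ is simply a quadratic extension, and by the class field theory argument in the proof of Lemma~\ref{lem:pointwise-bound} the number of quadratic $K/k$ with a fixed relative discriminant ideal is $O_{[k:\Q]}(h_2(k))$; since there are only $O_{[k:\Q],\epsilon}(D^\epsilon)$ ideals of $\O_k$ of norm $D$, the count is $O_{[k:\Q],\epsilon}(D^\epsilon h_2(k))$, and the trivial bound $h_2(k)\le h(k)\ll_{[k:\Q],\epsilon}\Disc(k)^{1/2+\epsilon}$ lets us take $\alpha_1=1$. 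Note that along the way this argument produces, for any base field $L$, the intermediate bound $O_{[L:\Q],\epsilon}(n^\epsilon h_2(L))$ on the number of quadratic extensions of $L$ of relative discriminant norm $n$, which is what will actually be fed into the induction.

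For the inductive step, suppose the statement holds for $m-1$ with a constant $\alpha_{m-1}$ (uniform over base fields of a fixed degree). Given a degree $2^m$ $2$-extension $K/k$ with $\Disc(K/k)=D$, Lemma~\ref{lem:tower} produces a subfield $F$ with $[F:k]=2^{m-1}$, $[K:F]=2$, and $F/k$ again a $2$-extension. The tower formula for discriminant ideals, $\disc(K/k)=\disc(F/k)^2\,\Nm_{F/k}(\disc(K/F))$, gives after taking norms to $\Q$ the relation of integers $D=\Disc(F/k)^2\cdot\Disc(K/F)$; in particular $\Disc(F/k)^2\mid D$, so $\Disc(F/k)$ takes only $O_{m,\epsilon}(D^\epsilon)$ values and each determines $\Disc(K/F)=D/\Disc(F/k)^2$. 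I would then count as follows: for fixed $F$, the class field theory count of quadratic extensions (the $m=1$ case, applied with base field $F$, before bounding $h_2$ trivially) bounds the number of quadratic $K/F$ with this relative discriminant by $O_{[k:\Q],m,\epsilon}(D^\epsilon h_2(F))$; meanwhile, by the inductive hypothesis, the number of admissible $F$ with a fixed value of $\Disc(F/k)$ is $O_{[k:\Q],m,\epsilon}(D^\epsilon\Disc(k)^{\alpha_{m-1}})$. Summing over $F$ and over the $O_{m,\epsilon}(D^\epsilon)$ values of $\Disc(F/k)$ then yields the claim with, e.g., $\alpha_m=\alpha_{m-1}+2^{m-2}+1$; since only existence of $\alpha_m$ is needed I will not optimize.

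The one place where care is genuinely required — and the reason this is more than a routine $\Disc(k)$-bookkeeping exercise on top of \cite{KluWan} — is the bound on $h_2(F)$ in the display above. Using the trivial estimate $h_2(F)\ll\Disc(F)^{1/2+\epsilon}$ would, via $\Disc(F)=\Disc(k)^{2^{m-1}}\Disc(F/k)\le\Disc(k)^{2^{m-1}}D^{1/2}$, contribute a factor $\gg D^{1/4}$, which destroys the $D^\epsilon$ bound and compounds disastrously under iteration. Instead the plan is to invoke the genus-theoretic bound Lemma~\ref{lem:two-part}: since $F/k$ is a $2$-extension, $h_2(F)\ll_{[k:\Q],m,\epsilon}\Disc(F)^\epsilon h_2(k)^{2^{m-1}}$, and feeding in the estimate for $\Disc(F)$ together with the trivial bound on $h_2(k)$ gives $h_2(F)\ll_{[k:\Q],m,\epsilon}D^\epsilon\Disc(k)^{2^{m-2}+\epsilon}$ — a power of $\Disc(k)$, but only $D^\epsilon$ in $D$, which is exactly what makes the induction close. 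After that, the $\Disc(k)$-dependence throughout is just a matter of tracking exponents through Lemmas~\ref{lem:tower}, \ref{lem:pointwise-bound}, and \ref{lem:two-part}.
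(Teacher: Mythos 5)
Your proposal is correct and takes essentially the same route as the paper's own proof: induction on $m$ using Lemma~\ref{lem:tower} to peel off the top quadratic layer, the class field theory count from Lemma~\ref{lem:pointwise-bound} for that layer, and Lemma~\ref{lem:two-part} to bound $h_2(F)$ by a power of $h_2(k)$ times $\Disc(F)^\epsilon$, which is exactly the step the paper also relies on to keep the $D$-dependence at $D^\epsilon$. The only differences are cosmetic, e.g.\ your slightly larger admissible value of $\alpha_m$ versus the paper's $\alpha_{i+1}=\alpha_i+2^{i-1}$.
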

\begin{proof}
	We proceed by induction over $m$. For $m = 1$, we know from class field theory (see the proof of Lemma~\ref{lem:pointwise-bound}) that the number of relative quadratic extension $K/k$ with $\Disc(K/k) = D$ is at most $O_{[k:\Q], \epsilon}(h_2(k)\cdot D^{\epsilon}) = O_{[k:\Q], \epsilon}( D^{\epsilon} \Disc(k)^{1/2+\epsilon})$. Now assuming the statement is true for $m =i$, we will prove that it also holds for $m = i+1$.  	By Lemma \ref{lem:tower}, it suffices to count quadratic extensions of degree $2^i$ $2$-extensions $F/k$ with $\Disc(F/k)^2| D$. The number of such $F$ is bounded by $O_{[k:\Q], \epsilon, i}( D^{\epsilon} \Disc(k)^{\alpha_i})$ by the induction hypothesis.  For each such $F$, the number of quadratic relative extension $K/F$ with $\Disc(K/F) = D/\Disc(F/k)^2$ is bounded by $O_{[F:\Q], \epsilon}(h_2(F) \cdot D^{\epsilon})$ from class field theory. Applying Lemma \ref{lem:two-part} and summation over all divisors of $D$, we obtain the upper bound $O_{[k:\Q], i+1, \epsilon}( D^{\epsilon} \Disc(k)^{\alpha_{i+1}})$, where $\alpha_{i+1}$ can be taken as $\alpha_{i}+ 2^{i-1}$. 
\end{proof}

\begin{theorem}\label{thm:counting-thin-2-extensions}
	Let $k$ be a number field $k$ and let $G \subseteq S_{2^m}$ be a transitive permutation $2$-group without a transposition.  
	Then there exists $\alpha>0$ depending at most on $[k:\Q]$ and $m$ such that
	$$|E_k(G, X)| = O_{[k:\Q],m,\epsilon}(X^{1/2+\epsilon} \Disc(k)^{\alpha}).$$
\end{theorem}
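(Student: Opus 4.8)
The plan is to reduce the count of $G$-extensions to the count of cubic (more precisely quadratic) subextensions at the bottom of a tower, exactly as in the proof of Lemma~\ref{lem:disc-multi-general} and Theorem~\ref{thm:tail}. First I would invoke Lemma~\ref{lem:tower} to write any degree $2^m$ extension $K/k$ with $\Gal(K/k) \cong G$ as a tower $k = F_0 \subset F_1 \subset \dots \subset F_m = K$ of successive quadratic extensions. The key structural input is that since $G$ has \emph{no} transposition, at the bottom of the tower the quadratic extension $F_1/k$ is "wasteful" in discriminant: more precisely, Malle's conjecture for $2$-groups (known as an upper bound by \cite[Corollary 7.3]{KlunersMalle}, as cited in the proof of Theorem~\ref{thm:counting-2-ext}) gives $|E_k(G,X)| = O_{k,G,\epsilon}(X^{1/2+\epsilon})$ already, but here we need the explicit dependence on $\Disc(k)$, which is exactly what Lemma~\ref{lem:disc-multi-general} is designed to supply in the pointwise (fixed-discriminant) form.

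The main steps, in order, would be: (1) By Lemma~\ref{lem:tower}, parametrize $K$ by towers $F_m/\dots/F_1/k$ of relative quadratic extensions. (2) Use the conductor-discriminant relation / tower formula to write $\Disc(K) = \Disc(k)^{2^m} \prod_{i=1}^m \Disc(F_i/F_{i-1})^{2^{m-i}\cdot[\text{something}]}$; the precise bookkeeping is standard but the point is that $\Disc(K)$ controls each relative discriminant $\Disc(F_i/F_{i-1})$ up to a bounded power, so $\Disc(F_i/F_{i-1}) \leq \Disc(K)^{O(1)}$. (3) Sum the pointwise bound of Lemma~\ref{lem:disc-multi-general} (in the degree $2^m$ form, or built up level by level) over all admissible relative discriminants, getting a factor $\Disc(k)^{\alpha_m}$ times a sum over integers $D \leq \Disc(K)$; since $G$ has no transposition, Malle's conjecture predicts exponent $1/a(G) \leq 1/2$, so crucially the number of such towers with $\Disc(K) \leq X$ is $O(X^{1/2+\epsilon})$ — this is the step where the "no transposition" hypothesis is used, and it should be imported from \cite{KlunersMalle} but tracked for $\Disc(k)$-dependence using Lemma~\ref{lem:disc-multi-general} and Lemma~\ref{lem:two-part} to bound the relevant $h_2$ factors appearing at each level of the tower. (4) Collect the powers of $\Disc(k)$ into a single constant $\alpha$ depending only on $[k:\Q]$ and $m$.

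The step I expect to be the main obstacle is making the "no transposition $\Rightarrow$ exponent $\leq 1/2$" input quantitative in the base field. The qualitative statement $|E_k(G,X)| = O_{k,G,\epsilon}(X^{1/2+\epsilon})$ is \cite[Corollary 7.3]{KlunersMalle}, but extracting the explicit polynomial dependence on $\Disc(k)$ requires re-running their argument (or the inductive tower argument) while carrying the $\Disc(k)^{\alpha_i}$ factors through each quadratic step, where Lemma~\ref{lem:two-part} controls the $2$-torsion class number that governs the number of quadratic extensions of each intermediate $2$-extension $F_i$. The cleanest approach is probably a direct induction on $m$ mirroring Lemma~\ref{lem:disc-multi-general}: at the inductive step one counts quadratic extensions $K/F$ of a degree $2^m$ $2$-extension $F$ with $\Gal(K/k)\cong G$; the absence of a transposition forces a genuine constraint (e.g. that the new relative discriminant $\Disc(K/F)$ cannot be arbitrarily small relative to $\Disc(K)$, equivalently the ramification is "spread out"), and summing the resulting pointwise counts yields the $X^{1/2+\epsilon}$ saving. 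I would then set $\alpha = \alpha_m$ from the final level of this induction, completing the proof.
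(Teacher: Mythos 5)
Your overall architecture (pointwise multiplicity bound from Lemma~\ref{lem:disc-multi-general} times a count of admissible discriminants) is the right shape, but there is a genuine gap at exactly the step you flag as the main obstacle: you never identify the mechanism by which ``no transposition'' produces the $X^{1/2}$ saving, and the substitute you propose would not work. The paper's argument is that if $G$ contains no transposition, then the inertia generator at any tamely ramified prime is a non-transposition element of order a power of $2$, so the exponent of every ramified prime in $\disc(K/k)$ is at least $2$ (and wild primes only make this larger); hence $\disc(K/k)$ is a powerful ideal, $\Disc(K/k)$ is a powerful integer, and there are only $O(X^{1/2})$ powerful integers up to $X$. Summing the bound of Lemma~\ref{lem:disc-multi-general}, which is $O_{[k:\Q],m,\epsilon}(D^{\epsilon}\Disc(k)^{\alpha_m})$ for each fixed value $D$ of the relative discriminant, over these $O(X^{1/2})$ possible values gives the theorem at once, with no need to re-run the Kl\"uners--Malle machinery or to track anything level by level.

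Your proposed route has two problems. First, invoking \cite[Corollary 7.3]{KlunersMalle} for the $O(X^{1/2+\epsilon})$ count and then saying one must ``re-run their argument'' to get the $\Disc(k)$-dependence is essentially restating the theorem to be proved rather than proving it: the $\Disc(k)$-uniformity is the whole content here, and you do not supply the argument that would deliver it. Second, the constraint you guess at in the inductive step --- that $\Disc(K/F)$ ``cannot be arbitrarily small relative to $\Disc(K)$'' because the ramification is spread out --- is false as stated and is in any case not where the hypothesis acts: individual quadratic steps $F_{i}/F_{i-1}$ can be unramified (their degree-$2$ Galois groups are always generated by a transposition), so the no-transposition condition on the degree-$2^m$ permutation group $G$ cannot be detected one level of the tower at a time; it is a global condition on $\disc(K/k)$, namely that its exponents are all at least $2$. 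Once you replace your step (3) with the powerful-integer count, the rest of your outline (and your correct choice of Lemma~\ref{lem:disc-multi-general} as the multiplicity input, with $\alpha=\alpha_m$) goes through.
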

\begin{proof}
	Notice that when $G$ does not contain a transposition, the discriminant ideal $\disc(K/k)$ must have exponent at least $2$ at every ramified prime, i.e., it is powerful.  Thus, its norm, $\Disc(K/k)$, must be a powerful integer, and there are $O(X^{1/2})$ powerful integers below $X$.  The result now follows from Lemma \ref{lem:disc-multi-general}. 
\end{proof}

The $\ell$-torsion conjecture then would imply that the summation of $|\Cl_K[3]|$ for $G$-extensions $K$ without a transposition should be also thin, i.e. $O(X^{1/2+\epsilon})$.
By applying the trivial bound for class group $h_3(K)= O_{[K:\Q], \epsilon}(\Disc(K)^{1/2+\epsilon})$, we can conclude immediately that $\sum_{K\in E_k(G,X)} h_3(K)= O_{k, \epsilon}(X^{1+\epsilon})$.  However, it is crucial for our main theorem that we prove something slightly better than this. 

\begin{theorem}\label{thm:3-torsion-thin-2-ext}
Let $k$ be a number field and let $G\subset S_{2^m}$ be a transitive permutation $2$-group without a transposition. 
Then there exist $\delta>0$ and $\alpha$ depending on $G$ and $[k:\Q]$ such that
	$$\sum_{K\in E_k(G,X)} h_3(K/k)= O_{[k:\Q],G}(X^{1-\delta} \cdot \Disc(k)^{\alpha}).$$
\end{theorem}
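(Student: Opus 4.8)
The plan is to reduce, via Lemma~\ref{lem:tower}, to a double sum over an index-$2$ subfield $F\subset K$ and the quadratic extension $K/F$, and then to split according to the size of $\Disc(F)$: when $\Disc(F)$ is large we quote Theorem~\ref{thm:tail} directly, and when $\Disc(F)$ is small we exploit the hypothesis that $G$ has no transposition. Concretely, every $K\in E_k(G,X)$ sits in a tower $k=F_0\subset\cdots\subset F_m=K$ of quadratic extensions, so $K$ has an index-$2$ subfield $F:=F_{m-1}$, which (being built from iterated quadratics) is a $2$-extension of $k$ of degree $2^{m-1}$; since $3\nmid[K:F]$ we have $h_3(K/k)\le h_3(K)=h_3(F)\,h_3(K/F)$, and since $G$ has no transposition $\disc(K/k)$ is powerful, so
\[
\sum_{K\in E_k(G,X)} h_3(K/k)\ \le\ \sum_{\substack{[F:k]=2^{m-1}\\ F/k\ \text{a $2$-extension}}}\ \sum_{\substack{[K:F]=2,\ \Disc(K)\le X\\ \disc(K/k)\ \text{powerful}}} h_3(F)\,h_3(K/F).
\]
After a dyadic decomposition in $\Disc(K)$, and disposing of $X\le\Disc(k)^{A}$ (for suitable $A=A([k:\Q])$) directly from Theorem~\ref{thm:counting-thin-2-extensions} and the trivial bound $h_3(K/k)\ll\Disc(K)^{1/2+\epsilon}$, it remains to bound the inner double sum with $\Disc(K)\asymp X$, $X$ large, split into ranges of $D:=\Disc(F)$.

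In range (A), $D\ge X^{1/2-\eta}$, we forget that $K$ has Galois group $G$ and apply Theorem~\ref{thm:tail} with $Y=X^{1/2-\eta}$ to the full sum over towers, getting $\ll_{[k:\Q],m,\epsilon}(X^{1/2+\eta+\epsilon}+X^{1-\delta_m})\Disc(k)^{\alpha_m}\ll X^{1-\delta}\Disc(k)^{\alpha_m}$ once $\eta,\epsilon$ are small. The remaining ranges use the powerful condition as follows. Writing $Z:=X/D^{2}\asymp\Disc(K/F)$: since $\disc(K/k)=\disc(F/k)^{2}\Nm_{F/k}(\disc(K/F))$ is powerful, away from the $O_m(\log\Disc(F))$ primes dividing $2\disc(F/k)$ the ideal $\disc(K/F)$ can be divisible by a prime $\wp$ of $F$ only when $f(\wp\mid\wp\cap\O_k)\ge 2$ or some other prime over $\wp\cap\O_k$ also divides $\disc(K/F)$; a convergent Euler product over the primes of $k$ then bounds the number of such admissible relative discriminant ideals of norm $\le Z$ by $\ll_{[k:\Q],m,\epsilon}Z^{1/2+\epsilon}$, uniformly, whence (by class field theory, as in Lemma~\ref{lem:pointwise-bound}) the number of admissible quadratic $K/F$ with $\Disc(K)\le X$ is $\ll h_2(F)Z^{1/2+\epsilon}$ — a square-root saving over the trivial count. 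Throughout, $\sum_{F:\Disc(F)\asymp D}h_3(F)$ is controlled by $\ll D\,\Disc(k)^{O_m(1)}$ using $h_3(F)\le h_3(F/k)h_3(k)$, Corollary~\ref{cor:general-bound-m} (or Corollary~\ref{cor:general-bound} when $m=2$) for the towers under $F$, and the bounded tower-overcount; the count of $F$ and the size of $h_2(F)$ are controlled by Lemma~\ref{lem:disc-multi-general} and Lemma~\ref{lem:two-part}.

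In range (B), $X^{1/(\beta+3)}\le D<X^{1/2-\eta}$ with $\beta$ as in Lemma~\ref{lem:prime-number-theorem-k-dependence}, I would bound $h_3(K/F)$ by the trivial relative bound $\ll Z^{1/2+\epsilon}D^{1/2+\epsilon}$ of Lemma~\ref{lem:relative-trivial}; combined with the square-root count and the $F$-sum estimates above, a dyadic block contributes $\ll X^{1+\epsilon}D^{-1/2+\epsilon}\Disc(k)^{O_m(1)}$, and the lower bound $D\ge X^{1/(\beta+3)}$ turns this into $\ll X^{1-\frac{1}{2(\beta+3)}+\epsilon}\Disc(k)^{O_m(1)}$. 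In range (C), $D<X^{1/(\beta+3)}$, we have $Z=X/D^{2}\ge D^{\beta}$, so Lemma~\ref{lem:prime-number-theorem-k-dependence} applies and Lemma~\ref{lem:effective-Chebo-over-k-average} (applied to $F$ with parameter $Z$) shows that all but $\ll\Disc(F)^{\epsilon_1}Z^{\epsilon_1}$ quadratic $K/F$ of bounded discriminant have $\gg Z^{\delta_1}\Disc(F)^{-\gamma}$ primes of $F$ of norm at most $Z^{\delta_1}$ that split in $K$; for such non-exceptional $K$, Lemma~\ref{lem:relative-E-V} yields $h_3(K/F)\ll Z^{1/2-\delta_1+\epsilon}\Disc(F)^{O_m(1)}\Disc(k)^{O_m(1)}$, while the few exceptional $K$ are treated by Lemma~\ref{lem:relative-trivial}; feeding both into the square-root count and the $F$-sum estimates, and using that in this range $D$ is bounded above by $X^{1/(\beta+3)}$ with $\beta$ large, the total is $\ll X^{1-\delta_1+\epsilon}\Disc(k)^{O_m(1)}$ for the non-exceptional part and a strictly smaller power of $X$ for the exceptional part. (The finitely many $k$ with $\Disc(k)\le D_0$, or $F$ with $\Disc(F)\le D_0$, are handled exactly as in the proof of Lemma~\ref{lem:m-2-critical}.) Combining (A), (B), (C) and taking $\delta$ to be the minimum of the three gains proves the theorem.

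The main obstacle is the small-$D$ analysis, i.e.\ ranges (B) and (C): proving the square-root bound on the number of admissible relative discriminants \emph{uniformly in the base field}, and arranging the split so that \emph{every} regime beats $X$ — in particular, that the range where Lemma~\ref{lem:relative-E-V} is needed coincides with the range where the effective prime number theorem (Lemma~\ref{lem:prime-number-theorem-k-dependence}) is available, and that the accumulated polynomial dependence on $\Disc(F)$ and $\Disc(k)$ coming from $h_2(F)$, $h_3(F)$, the trivial relative bound, and the relative Ellenberg--Venkatesh bound does not swamp the savings (which forces the threshold exponent $1/(\beta+3)$, with $\beta$ large, to appear in the split).
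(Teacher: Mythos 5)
Your overall architecture is sound and close in spirit to the paper's: reduce to the index-two subfield $F$, use Theorem~\ref{thm:tail} when $\Disc(F)$ is large, and when $\Disc(F)$ is small combine the thinness forced by the powerful discriminant $\disc(K/k)$ with the relative Ellenberg--Venkatesh bound, the split primes coming from Lemma~\ref{lem:effective-Chebo-over-k-average} and Lemma~\ref{lem:prime-number-theorem-k-dependence}. Two of your choices genuinely differ from the paper. The paper applies Theorem~\ref{thm:tail} with $Y=X^{\delta'}$ for a tiny $\delta'$, so that the only remaining range is $\Disc(F)\le X^{\delta'}$, and there it multiplies the pointwise Ellenberg--Venkatesh bound by the global thin count of Theorem~\ref{thm:counting-thin-2-extensions}; you instead stop the tail theorem at $Y=X^{1/2-\eta}$ and cover the wide middle range by a per-$F$ square-root bound on ``admissible'' relative discriminants (those making $\disc(K/k)$ powerful) together with the trivial relative bound of Lemma~\ref{lem:relative-trivial}. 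That middle-range device is a legitimate alternative: the Euler-product/Rankin argument you sketch does give $\ll_{[k:\Q],m,\epsilon}(Z\Disc(F)\Disc(k))^{\epsilon}Z^{1/2}$ admissible relative discriminants of norm at most $Z$, uniformly in $F$, and the resulting block bound $X^{1+\epsilon}D^{-1/2+\epsilon}\Disc(k)^{O_m(1)}$ is correct; it buys you a range where no analytic input (zero density, effective Chebotarev) is needed at all.

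The concrete error is the placement of the (B)/(C) threshold. In range (C) you justify the split-prime count by ``$Z=X/D^2\ge D^{\beta}$, so Lemma~\ref{lem:prime-number-theorem-k-dependence} applies,'' but the primes you feed into Lemma~\ref{lem:relative-E-V} must have norm at most $\Disc(K/F)^{\theta}$ with $\theta<1/12$, i.e.\ at most $Z^{\delta_1}$, and what Lemma~\ref{lem:prime-number-theorem-k-dependence} (together with dominating the $Y^{\sigma_1}\log^2$ error in Lemma~\ref{lem:effective-Chebo-over-k-average}) requires is $Z^{\delta_1}\ge\Disc(F)^{\beta}$, not $Z\ge\Disc(F)^{\beta}$. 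With $\beta\ge 35$ and $\delta_1<1/12$, your threshold $D<X^{1/(\beta+3)}$ leaves most of range (C) outside the lemma's hypotheses: e.g.\ for $D\asymp X^{1/(\beta+4)}$ one has $Z^{\delta_1}\approx X^{\delta_1}$ while $D^{\beta}\approx X^{\beta/(\beta+4)}$, so the claimed lower bound $\gg Z^{\delta_1}\Disc(F)^{-\gamma}$ on split primes is unjustified and the Ellenberg--Venkatesh step fails there. The fix is easy and keeps your structure: take the threshold to be a sufficiently small power $X^{\delta'}$ (chosen so that $(X/D^2)^{\delta_1}\ge D^{\beta}$ and the error term is dominated, exactly as the paper arranges via its hypothesis $X\ge\Disc(k)^{A}$ and $\Disc(F)\le X^{\delta'}$), and extend range (B) down to that threshold; since the range-(B) bound $X^{1+\epsilon}D^{-1/2+\epsilon}\Disc(k)^{O_m(1)}$ saves a power of $X$ for any fixed positive exponent of $D$, the theorem still follows. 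With that correction your proposal is complete.
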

\begin{proof}
	A $G$-extension $K$ can be constructed as a relative quadratic extension over a degree $2^{m-1}$ $2$-extension $F$ with $\Gal(F/k) =: H\subset S_{2^{m-1}}$ by Lemma \ref{lem:tower}. We fix $\delta_1=1/24$
	and $\epsilon_1=1/4$. 
	Let $\gamma$, $\beta$ and $D_0$ be absolute constants allowable in Lemma \ref{lem:prime-number-theorem-k-dependence}.  Let $Y = X^{\delta'}$ for 
	some $\delta'$ that we will choose sufficiently small in terms of $m$ and $[k:\Q]$.
Then by Theorem \ref{thm:tail} and dyadic summation, taking $\delta'\leq \delta_m$, we have
		$$
			\sum_{\substack{ K/F/k\\ \Disc(K)\le X \\ \Disc(F)\ge Y}}  h_3(K/k) \ll_{[k:\Q], m,\epsilon} \Disc(k)^{\alpha_m} X^{1-\delta'+\epsilon}.
		$$
	Therefore it suffices to consider the summation over $G$-extensions where the associated $H$-extension $F/k$ has $\Disc(F)\le Y = X^{\delta'}$. 
	
	We will apply an argument similar to Section \ref{ssec:initial-critical} to complete the proof. 
	  For each $F$, denote the set $\mathcal{E}(F) = \mathcal{E}(F, X/\Disc(F)^2, \epsilon_1)$ of exceptional quadratic extensions as given by Lemma \ref{lem:effective-Chebo-over-k-average}, for $X \gg 1$. The set has size $|\mathcal{E}(F)| \ll_{[F:\Q],\epsilon_1 } X^{\epsilon_1}$.  Let $x = (X/(2Y^2))^{\delta_1}$.  We assume $X$ is sufficiently large in terms of $[k:\mathbb{Q}]$
	  and $\delta'$ sufficiently small in terms of our absolute constants,
	 so that for $\sigma_1 = \max\{1- \epsilon_1/4c,1/2\}$ with $c = c_{[F:\Q]}$ in Theorem \ref{thm:zero-density} and $F$ with $\Disc(F) \geq D_0$, we may apply Lemmas \ref{lem:effective-Chebo-over-k-average} and \ref{lem:prime-number-theorem-k-dependence} in concert to conclude for $K\not\in \mathcal{E}(F)$,
		\begin{equation}\label{eqn:thin-primes}
			\pi_F(x;K, e) \geq \frac{1}{8}\pi_F(x/2)- C_{[F:\mathbb{Q}],\epsilon_1} x^{\sigma_1} (\log (X\Disc(F)))^2 \gg_{[k:\Q],\epsilon} \frac{x}{\Disc(F)^{\gamma}\log x}.
		\end{equation}
	We also assume that $X$ is sufficiently large 
	that this holds for the finite number of fields $F$ with $\Disc(F) \leq D_0$.
	
	Using the trivial bound for $h_3(K/k)$, the summation over $K/F/k$ with $K/F\in \mathcal{E}(F)$ is
	\begin{equation}\label{eqn:thin-2-extension-exceptional}
		\begin{aligned}
			\sum_{\substack{K/F/k \\ \Disc(F)\le Y\\\Disc(K)\le X \\K/F\in \mathcal{E}(F)}} h_3(K/k) & \ll_{[k:\Q],\epsilon}  \sum_{\Disc(F)\le Y}  \sum_{\substack{ K/F\in \mathcal{E}(F)}} \Disc(K)^{1/2+\epsilon}  \\
			& \ll_{[k:\Q],\epsilon_1,m, \epsilon} \Disc(k)^{\alpha_{m-1}} \cdot X^{1/2+\epsilon_1+ \epsilon} Y,
		\end{aligned}
	\end{equation}	
	where the last inequality follows from combining the bound on $|\mathcal{E}(F)|$ with an upper bound on the number of such $F$, for example as is provided by Corollary \ref{cor:general-bound-m} and dyadic summation. 
	
	For fields $K/F \notin \mathcal{E}(F)$, we use Lemma \ref{lem:relative-E-V}, \eqref{eqn:thin-primes}, and the trivial bound on $h_3(F/k)$ to obtain that $h_3(K/k) \ll_{[k:\mathbb{Q}],m,\epsilon} X^{1/2-\delta_1+\epsilon}Y^{2\delta_1+\gamma}$.  Then we find
		\begin{align*}
			\sum_{\substack{K/F/k \\  X/2\le \Disc(K)\le X \\  \Disc(F)\le Y\\K/F\notin \mathcal{E}(F)\\ \Gal(K/k) = G}}  h_3(K/k)
				&\ll_{[k:\Q],m,\epsilon} \sum_{\substack{K/k \\ X/2 \leq \Disc(K) \leq X \\  \Disc(F)\le Y\\ \mathrm{Gal}(K/k) = G}} X^{1/2-\delta_1+\epsilon}Y^{2\delta_1+\gamma} \\
				&\ll_{[k:\Q],m,\epsilon} \Disc(k)^{\alpha} X^{1-\delta_1+\epsilon}Y^{2\delta_1+\gamma},
		\end{align*}
	by Theorem \ref{thm:counting-thin-2-extensions}, where $\alpha$ is associated to $[k:\Q]$ in Theorem \ref{thm:counting-thin-2-extensions}. 
	We can use dyadic summation to remove the lower bound on $\Disc(K)$ and obtain the same bound (with a different implied constant).  
	
	Finally, for $X\ll_{[k:\Q],\delta'} 1$, we have
				\begin{align*}
			\sum_{\substack{K/F/k \\  X/2\le \Disc(K)\le X \\  \Disc(F)\le Y\\ \Gal(K/k) = G}}  h_3(K/k)
				&\ll_{[k:\Q],m,\delta'} \Disc(F)^{\alpha_m} \leq X^{\delta' \alpha_m}
		\end{align*}
by Corollary \ref{cor:general-bound-m}. Choosing $\delta'$ and sufficiently small in terms of $[k:\Q]$ and $m$, we conclude the theorem.
\end{proof}

\subsection{Proof of the Main Theorem}\label{ssec:proof-main-theorem}
Now we are ready to prove Theorem \ref{thm:main}. We start with the following lemmas that allow us to move from summing over $G$ extensions to summing over $H$-extensions, where $G\simeq C_2\wr H$.

\begin{lemma}\label{lem:2-grp-with-transposition-is-wreath-product}
	A transitive permutation $2$-group $G\subset S_{2d}$ contains a transposition if and only if $G\simeq C_2\wr H$ for some $H\subset S_{d}$. 	
For such a $G$, 	
if $K/k$ is a $G$-extension, then there exists a unique subfield $F/k$ with $[K:F] = 2$.  Moreover, $F/k$ is an $H$-extension. 
For such a $G$, there is only one permutation group $H$ up to isomorphism such that $G\simeq C_2\wr H$.
\end{lemma}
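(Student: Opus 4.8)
The plan is to settle the purely group-theoretic content first and then read off the field-theoretic statements by applying the Galois correspondence to the Galois closure $\tilde K/k$, whose group is $G$ as a permutation group. Throughout, write $2d=2^m$ for the degree. For the implication $\Leftarrow$ of the first assertion there is nothing to do: in the imprimitive action of $C_2\wr H$ on $\{1,\dots,d\}\times\{1,2\}$, the element of the base group $C_2^d$ nontrivial in exactly the first coordinate is the transposition $((1,1)\,(1,2))$, and transitivity of $C_2\wr H$ follows from that of $H$. For $\Rightarrow$, suppose $\tau=(a\,b)\in G$. First I would show $\{a,b\}$ is a block for $G$: if some $g\in G$ had $g\{a,b\}\cap\{a,b\}$ equal to a single point, then $\tau$ and $g\tau g^{-1}=(g(a)\,g(b))$ would be two transpositions sharing exactly one point, whose product is a $3$-cycle, contradicting that $G$ is a $2$-group. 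Hence $\{a,b\}$ is a block, its $G$-orbit is a system of $d$ blocks of size $2$, and the action on blocks gives a surjection $G\twoheadrightarrow H\subseteq S_d$ onto a transitive $2$-group, with kernel $N$ contained in the product of the symmetric groups of the blocks, i.e.\ $N\subseteq C_2^d$. Since $\tau\in N$ and $N\trianglelefteq G$, the conjugates $g\tau g^{-1}$ (each swapping the single block $g\{a,b\}$) lie in $N$ and, as $g\{a,b\}$ ranges over all $d$ blocks, generate all of $C_2^d$; thus $N=C_2^d$. As $G\subseteq C_2\wr S_d$ projects onto $H$ and contains $C_2^d$, comparing orders yields $|G|=2^d|H|=|C_2\wr H|$, whence $G\simeq C_2\wr H$. (This direction is also \cite[Lemma 5.5]{Klu12}.)

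Now identify $\{1,\dots,2d\}$ with $\{1,\dots,d\}\times\{1,2\}$ so that $G\subseteq C_2\wr S_d$, and take the distinguished point to be $(1,1)$, so $K=\tilde K^{G_1}$ with $G_1=\Stab_G((1,1))$. Since every element of $G$ preserves the block system, $G_1$ fixes the block $\{(1,1),(1,2)\}$, so $G_1=\Stab_G((1,2))$ as well. Subfields $F$ with $k\subseteq F\subseteq K$ and $[F:k]=d$ correspond under the Galois correspondence to subgroups $U$ with $G_1\subseteq U\subseteq G$ and $[U:G_1]=2$, equivalently to blocks of size $2$ containing $(1,1)$. The key combinatorial point is that there is only one such block: if $\{(1,1),(j,y)\}$ were a block with $j\neq 1$, applying the element $\tau_1\in C_2^d\subseteq G$ nontrivial only in the first coordinate sends it to $\{(1,2),(j,y)\}$, and these two blocks of one block system share the point $(j,y)$ yet are distinct --- a contradiction. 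Hence the unique such block is $\{(1,1),(1,2)\}$, so $F$ is unique, namely $F=\tilde K^B$ with $B=\Stab_G(\{(1,1),(1,2)\})$, and $[K:F]=2$. Finally, $\mathrm{core}_G(B)=\bigcap_g gBg^{-1}$ is the subgroup fixing every block of the system setwise, i.e.\ the kernel $N=C_2^d$ of $G\to H$; therefore the Galois closure of $F/k$ is $\tilde K^N$, with $\Gal(\tilde K^N/k)\cong G/N\cong H$ acting on $G/B$, which is exactly the given action of $H$ on the $d$ blocks. Thus $F/k$ is an $H$-extension.

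For the last assertion, the computation just made shows that $C_2\wr H$ admits a \emph{unique} block system with blocks of size $2$: the block through $(1,1)$ is forced to be $\{(1,1),(1,2)\}$, and transitivity then determines the whole system. Consequently this block system --- and hence the homomorphism from $C_2\wr H$ to the symmetric group on its $d$ blocks, with image $H$ --- is intrinsic to $C_2\wr H$ as an abstract permutation group. So if $C_2\wr H_1$ and $C_2\wr H_2$ are permutation isomorphic, any witnessing bijection of the underlying $2d$-element sets carries the unique size-$2$ block system of the one to that of the other, hence induces a bijection of the $d$ blocks conjugating $H_1$ onto $H_2$; thus $H$ is determined up to permutation isomorphism. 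The main obstacle is really the single combinatorial fact used in the second and third paragraphs --- that a transitive $2$-group of the form $C_2\wr H\subseteq S_{2d}$ has no size-$2$ block besides the ``columns'' --- after which uniqueness of $F$ and of $H$, together with the identification of $F/k$ as an $H$-extension, all follow formally from the Galois correspondence.
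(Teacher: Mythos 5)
Your proof is correct, and it reaches the same destination as the paper's by a mildly different route, so a brief comparison is worthwhile. For the first assertion the paper simply cites \cite[Lemma 5.5]{Klu12}, whereas you re-prove it: your observation that a transposition $(a\,b)$ in a $2$-group forces $\{a,b\}$ to be a block (two transpositions meeting in one point multiply to a $3$-cycle), followed by the normal-closure argument showing the kernel of the block action is all of $C_2^d$, is a clean self-contained argument. For uniqueness of $F$, the paper argues directly with elements $v\rtimes h$ and proves the stronger group-theoretic claim that \emph{every} subgroup properly containing $\Stab_G(1)$ contains $C_2^d\rtimes\Stab_H(1)$; you instead invoke the standard bijection between overgroups of a point stabilizer and blocks containing the base point (a textbook fact you use without proof, which is fine) and show there is no size-$2$ block through $(1,1)$ other than the column $\{(1,1),(1,2)\}$, using the element of $C_2^d$ supported in the first coordinate. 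These are essentially the same computation packaged differently: the paper's version yields slightly more (the full lattice statement above $\Stab_G(1)$), while yours stays at the level of blocks, which then pays off in the last assertion, since uniqueness of the size-$2$ block system immediately makes the quotient map to $H$ intrinsic to the permutation group. The paper instead identifies the blocks intrinsically via its transpositions (``two points lie in a block iff $G$ contains the transposition swapping them''); both characterizations are correct and equally quick. Your identification of $F/k$ as an $H$-extension via $\mathrm{core}_G(B)=C_2^d$ and the action on $G/B$ matches what the paper leaves to ``Galois theory.''
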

\begin{proof}
	The first statement is Lemma $5.5$ in \cite{Klu12}. For the second statement, we claim 
	if $M$ is a subgroup of $G$ and contains $\Stab_G(1)$ as a proper subgroup, then $C_2^d\rtimes \Stab_H(1)\subset M$.	
	 To show the claim, we write $\Stab_G(1) = C_2^{d-1}\rtimes \Stab_H(1)$ where $C_2^{d-1}$ is the subspace of $C_2^d$ with first entry $0$. Suppose $g=v\rtimes h\in M$ and $g\notin \Stab_G(1)$. If $h\in \Stab_H(1)$, then $v_1\neq 0$, and multiplication by all $u\rtimes h^{-1}\in \Stab_G(1)$ shows that $C_2^d \subset M$. 
	 If $h\notin \Stab_H(1)$, then for $u\in C_2^{d-1}$ with a $1$ in position $h^{-1}(1)$ and $0$'s elsewhere, we have that
$(v\rtimes h) u  (v\rtimes h)^{-1}\in C_2^d$ with a $1$ in position $1$ and $0$'s elsewhere, so we conclude the claim.
	The second and third statements of lemma then follow by Galois theory.
If $G \isom C_2\wr H$, then $H$ acts on $2$-element blocks of the elements that $G$ acts on, and all elements of $G$ preserve the block structure.  From this, it follows that two elements are together in a block if and only if $G$ contains a transposition that interchanges them.  From this it follows that $H$ is determined uniquely as a permutation group.
\end{proof}

We now want to be precise about exactly what we mean by permutation group isomorphisms.
\begin{definition}\label{D:permgroup}
A permutation group $G$ is a group $G$, a set $B_G$, and a faithful action of $G$ on $B_G$.  
The degree of $G$ is $|B_G|$.
An isomorphism of permutation groups $G\ra H$ is a bijection $\sigma:B_G \ra B_H$
so that the induced map $\sigma_*: G\ra H$ is a a group isomorphism (where $\sigma_*(g)(b)= \sigma g\sigma^{-1}(b)$).
We write $\Aut_{\operatorname{perm}}(G)$ for the group of permutation  automorphisms of $G$, which is also the normalizer $N_{S_{B_H}}(H)$ of $H$ in the symmetric group $S_{B_H}$. 
\end{definition}

In particular, a $G$-extension $K/k$ includes the data of a bijection between the embeddings $K\ra \bar{k}$ and $B_G$.  Note that $B_{C_2\wr H}=\{1,2\}\times B_H$.

\begin{lemma}\label{lem:KFbij}
Let $G$ be a transitive permuation $2$-group and $G=C_2\wr H$, with $H$ a permutation group of degree $d$.
Given an $H$-extension $F\in E_k(H,\infty)$, and a quadratic extension $K/F$ (in $\bar{\Q}$), then either (1)$\Gal(K/k)$ does not contain a transposition or
(2)$K/k$ can be realized as a $G$-extension with $2^d$ choices of 
$\Gal(K/k)\isom G$ 
(i.e. choices of $\{K \ra\tilde{K} \}\ra B_G$)
that are compatible with the permutation group isomorphism $\Gal(F/k)\isom H$ (i.e. choice of $\{F \ra\tilde{F} \}\ra B_H$)
in the quotient.  
Moreover, any $G$-extension $K\in E_k(G,\infty)$ arises in this construction from a unique $F$, one quadratic extension $K/F$, and one of the 
$2^d$ choices of $\Gal(K/k)\isom G$.
\end{lemma}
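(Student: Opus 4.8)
The plan is to pass to Galois closures and translate everything into the language of permutation groups.  Write $\widetilde{K}$ for the Galois closure of $K/k$ inside $\bar{\mathbb{Q}}$; since $[K:F]=2$ and $F$ is an $H$-extension we have $[F:k]=d$ and $[K:k]=2d$, and $\Gal(K/k)$ in the sense of Section~\ref{sec:notations} is $\Gal(\widetilde{K}/k)$ acting on the set $\Omega$ of the $2d$ embeddings $K\to\widetilde{K}$.  First I would observe that $\widetilde{F}\subseteq\widetilde{K}$, so restriction of embeddings to $F$ gives a $\Gal(\widetilde{K}/k)$-equivariant surjection from $\Omega$ onto the set of embeddings $F\to\widetilde{F}$, whose fibres form a block system $\mathcal{B}$ of $d$ blocks of size $2$, and the induced action of $\Gal(\widetilde{K}/k)$ on $\mathcal{B}$ is exactly the action of $\Gal(\widetilde{F}/k)\cong H$ on $B_H$.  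Using the $H$-structure on $F$ to identify $\mathcal{B}$ with $B_H$, this exhibits $\Gal(\widetilde{K}/k)$ as an imprimitive subgroup of $C_2\wr H$ acting on $B_G=\{1,2\}\times B_H$, surjecting onto $H$, and with $N:=\Gal(\widetilde{K}/\widetilde{F})=\Gal(\widetilde{K}/k)\cap C_2^d$ equal to the kernel of the block action.  Since $\widetilde{F}/k$ is Galois, $N$ is normal in $\Gal(\widetilde{K}/k)$, so the conjugation action on the abelian group $N$ factors through $H$; a routine wreath-product computation identifies it with coordinate permutation, so $N$ is an $\mathbb{F}_2[H]$-submodule of the coordinate-permutation module $\mathbb{F}_2^d$.

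Next comes the dichotomy, which is the heart of the argument.  A permutation of $\Omega$ preserving $\mathcal{B}$ that moves only two points must swap the two points of a single block (two points in distinct blocks cannot be interchanged by a block-preserving permutation without moving at least four points), hence lies in $C_2^d$ and equals a standard basis vector $e_i$.  Therefore $\Gal(K/k)$ contains a transposition if and only if $N$ contains some $e_i$; and since $N$ is an $\mathbb{F}_2[H]$-submodule and $H$ acts transitively on the $d$ coordinates, this forces $N=\mathbb{F}_2^d$, i.e.\ $\Gal(\widetilde{K}/k)=C_2\wr H=G$.  This is precisely alternative (1) versus alternative (2).

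For the count in case (2): a permutation-group isomorphism $\Gal(\widetilde{K}/k)\cong G$ is, by Definition~\ref{D:permgroup}, a bijection $\Omega\to B_G$; it is compatible with the chosen isomorphism $\Gal(\widetilde{F}/k)\cong H$ in the quotient exactly when it carries $\mathcal{B}$ onto $\{\{1,2\}\times\{b\}\}_{b\in B_H}$ compatibly with the fixed identification $\mathcal{B}\cong B_H$.  Such a bijection is determined by an independent choice of one of the two bijections with $\{1,2\}$ within each of the $d$ blocks, giving $2^d$ possibilities; and each one does induce a group isomorphism onto $G$, because its image is block-preserving, surjects onto $H$, and has order $|G|$, hence equals $G$.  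Conversely, the data of $F$, the quadratic extension $K/F$, and one of these $2^d$ choices reconstruct $K/k$ together with its bijection $\Omega\to B_G$.

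Finally, for the ``moreover'': given $K\in E_k(G,\infty)$, Lemma~\ref{lem:2-grp-with-transposition-is-wreath-product} produces the unique subfield $F$ with $[K:F]=2$, shows $F/k$ is an $H$-extension, and shows $H$ is the unique permutation group with $C_2\wr H\cong G$; the $H$-structure on $F$ is the one induced from the given $G$-structure on $K$ through the projection $B_G\to B_H$.  Since $G$ contains a transposition, so does $\Gal(K/k)$, hence alternative (2) holds, and because the nontrivial blocks of $G$ on $B_G$ are exactly the sets $\{1,2\}\times\{b\}$, the given isomorphism $\Gal(\widetilde{K}/k)\cong G$ is one of the $2^d$ compatible choices above.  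Thus the assignment $K\mapsto(F,\,K/F,\,\text{choice})$ is inverse to the construction in case (2), which is exactly the asserted bijection.  I expect the only real difficulty to be notational bookkeeping --- keeping straight the identifications among embeddings, blocks, and the abstract sets $B_G$ and $B_H$ so that ``compatible permutation isomorphism'' means what it should --- since the mathematical content is the one-line submodule argument in the second paragraph.
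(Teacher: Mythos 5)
Your proposal is correct and follows essentially the same route as the paper's proof: identify $\Gal(\tilde{K}/k)$ inside $C_2\wr H$ via the block system coming from restriction of embeddings to $F$, note that a transposition must be a standard basis vector of $C_2^d$, and use transitivity of $H$ (your $\mathbb{F}_2[H]$-submodule phrasing is just the paper's conjugation argument) to force the kernel to be all of $C_2^d$, with the $2^d$ compatible identifications counted blockwise and uniqueness of $F$ supplied by Lemma~\ref{lem:2-grp-with-transposition-is-wreath-product}. The extra bookkeeping you supply (fibres of size two, normality of $N$, the order argument for the image) is a more detailed write-up of the same argument, not a different one.
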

\begin{proof}
Given $F\in E_k(H,\infty)$ and a quadratic extension $K/F$, we have $2^d$ choices of identification of the embeddings $K\ra \tilde{K}$ with $\{1,2\}\times B_H$ that are compatible with
the map from the embeddings of $F$ to $B_H$.  Any of these gives an injection of groups $\Gal(K/k)\sub C_2\wr H$ compatible with the permutation actions.
Any transposition in $C_2\wr H$ is an element of $C_2^d$ that is non-trivial in exactly one coordinate.  Since $H$ is transitive, this implies that any subgroup of 
$C_2\wr H$ containing a transposition and with image in $H$ all of $H$
must contain all of $C_2^d$ and thus be all of $C_2\wr H$.
So if $\Gal(K/k)$ contains a transposition, we see that $\Gal(K/k)\isom G$, and there are $2^d$ ways of choosing such a permutation group isomorphism that are compatible with the map
from the embeddings of $F$ to $B_H$.

Given a $G$-extension $K\in E_k(G,\infty)$, from Lemma~\ref{lem:2-grp-with-transposition-is-wreath-product} we see that it 
arises in this way from a unique $F$, and the lemma follows.
\end{proof}

 Next we recall the following important result by Datskovsky--Wright \cite{DW88}. 

\begin{theorem}[Datskovsky--Wright]\label{thm:cubic-count} 
	Let $F$ be a number field.  For a group signature $\Sigma$ of a quadratic extension $K/F$, let $u(\Sigma)$ be the number of  infinite places 
	of $F$ where $\Sigma$ is trivial (which is also the relative unit rank of $K/F$). 
	Then
	\[
	\sum_{ \substack{ [K:F]=2\\
	\textrm{signature $\Sigma$} \\ |\mathrm{Disc}(K/F)| \leq X }} h_3(K/F)
	\sim X  \cdot 
\frac{\mathrm{Res}_{s=1}\zeta_F(s)}{2^{r_1(F)+r_2(F)}\zeta_F(2)}(1+3^{-u(\Sigma)})
\quad \textrm{and}  \sum_{ \substack{ [K:F]=2\\
	\textrm{signature $\Sigma$} \\ |\mathrm{Disc}(K/F)| \leq X }} 1
	\sim X  \cdot 
\frac{\mathrm{Res}_{s=1}\zeta_F(s)}{2^{r_1(F)+r_2(F)}\zeta_F(2)}.
	\]
\end{theorem}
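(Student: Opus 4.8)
The plan is to deduce Theorem~\ref{thm:cubic-count} from the work of Datskovsky and Wright \cite{DW88} by repackaging their count of cubic fields over a number field $F$, keeping careful track of the local conditions at the archimedean places. First I would recall the dictionary, used already in Section~\ref{sec:uniform-cubic} via \eqref{eq:h3-bijection} and the discussion around Lemma~\ref{lem:gen-series-order-F}, between $3$-torsion in the relative class group $\Cl_{K/F}$ of a quadratic extension $K/F$ and $S_3$-cubic extensions of $F$ with quadratic resolvent $K$ and squarefree relative discriminant ideal: for each nontrivial cubic character $\chi$ of the group $\mathcal{C}=\Cl_K/\Cl_F$ one gets such a cubic extension, and counting these (together with the correction coming from $C_2$-extensions, which is lower order and contributes only to the $\sim X$ count of quadratic fields, not to the $h_3$ sum) recovers $\sum h_3(K/F)$. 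Thus the asymptotic for $\sum_{[K:F]=2} h_3(K/F)$ with $\Disc(K/F)\le X$ is equivalent to an asymptotic for the number of $S_3$-cubic extensions of $F$ with bounded discriminant, and Datskovsky--Wright prove exactly such an asymptotic using the Shintani zeta function $\xi_{F,\alpha}(s)$, whose rightmost pole at $s=1$ has residue (by Proposition~\ref{prop:shintani}(2)) equal to $\mathfrak{A}_F\cdot(1+3^{-r(\alpha)-r_2(F)})$.

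Next I would carry out the bookkeeping that converts the signature $\alpha$ of a cubic \'etale algebra into the group signature $\Sigma$ of the quadratic resolvent. At a real place $v$ of $F$, a cubic \'etale algebra $A_v$ is either $\mathbb{R}^3$ (resolvent split, $\Sigma_v$ trivial) or $\mathbb{R}\times\mathbb{C}$ (resolvent ramified, $\Sigma_v$ the nontrivial class); at a complex place there is no choice. Hence the quantity $r(\alpha)=\#\{v\mid\infty : \alpha_v\simeq\mathbb{R}^3\}$ appearing in the residue is precisely $u(\Sigma)$, the number of real places of $F$ at which $\Sigma$ is trivial, which by Dirichlet's unit theorem is the relative unit rank $\rk\O_K^*-\rk\O_F^*$. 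The passage from the Shintani count (which counts cubic \emph{rings}, and counts all cubic fields including the $C_3$ ones and those with non-squarefree discriminant ideal) to the count of $S_3$-fields with squarefree discriminant ideal and fixed resolvent signature is exactly the sieving argument of \cite[Section~5 and Lemma~6.2]{DW88}; the non-squarefree and $C_3$ contributions are of strictly smaller order, as is the difference between $\Disc(R/\mathcal{O}_F)$ for rings and $\Disc(K/F)$ for the maximal order. Tracking the constant $\mathfrak{A}_F = \zeta_F(2)\,\mathrm{Res}_{s=1}\zeta_F(s)\,/\,2^{r_1(F)+r_2(F)+1}$ through this sieve, together with the factor accounting for the number of archimedean completions and the normalization of the Shintani orbit count, yields the stated main constant $\frac{\mathrm{Res}_{s=1}\zeta_F(s)}{2^{r_1(F)+r_2(F)}\zeta_F(2)}(1+3^{-u(\Sigma)})$; setting the trivial weight in place of $h_3$ (equivalently, counting $C_2$-extensions directly by class field theory as in Lemma~\ref{lem:quadratic-bound}) gives the companion statement $\sum 1\sim X\cdot\frac{\mathrm{Res}_{s=1}\zeta_F(s)}{2^{r_1(F)+r_2(F)}\zeta_F(2)}$.

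I expect the main obstacle to be purely one of normalization and reconciliation of conventions rather than any new analytic input: Datskovsky and Wright work adelically and state their results in terms of Tamagawa-type measures and orbital integrals, and one must be careful to (a) match their discriminant normalization with the relative discriminant ideal norm $\Disc(K/F)$ used here, (b) confirm that their local density computation at the real places produces precisely the split/ramified dichotomy and hence the $3^{-u(\Sigma)}$ weighting separately for each group signature $\Sigma$ (they state totals summed over signatures, so one has to extract the signature-by-signature version, which is exactly what the per-$\alpha$ residue in Proposition~\ref{prop:shintani}(2) provides), and (c) verify that the contribution of reducible cubic algebras ($F^3$ and $F\times(\text{quadratic})$) and of imprimitive/non-maximal orders is genuinely lower order in $X$ after the sieve, so it does not disturb the leading constant. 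Since Proposition~\ref{prop:shintani} already records the needed meromorphic continuation, pole locations, and residues uniformly, the remaining work is to run the standard Tauberian/Perron argument (as in the proof of Proposition~\ref{prop:effective-shintani}, but now extracting the main term rather than bounding it) and to identify the constant; no estimate beyond what is already established in Section~\ref{sec:uniform-cubic} is required.
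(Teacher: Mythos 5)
The paper's own ``proof'' of Theorem~\ref{thm:cubic-count} is a two-line citation: it simply combines Theorems 4.2 and 5.1 of Datskovsky--Wright, which is why the theorem is attributed to them. Your proposal instead sketches a re-derivation of those results from the Shintani zeta function: the dictionary between $3$-torsion in $\Cl_{K/F}$ and $S_3$-cubic extensions with trivial conductor (as in \eqref{eq:h3-bijection}), the per-signature residue from Proposition~\ref{prop:shintani}(2), a Tauberian/Perron extraction of the main term as in Proposition~\ref{prop:effective-shintani}, and a sieve down to maximal orders. That is a legitimate route and is essentially what happens inside the cited reference, so in spirit you and the paper are leaning on the same machinery; the difference is only that the paper treats Datskovsky--Wright as a black box while you open it.

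However, one step of your sketch is justified incorrectly in a way that would produce the wrong constant if taken literally. You assert that the contributions of non-maximal orders, of reducible algebras $F\times K'$, and of cubic fields with non-squarefree discriminant ideal are ``of strictly smaller order.'' They are not: each occurs with positive density, and removing them is exactly the local-density sieve that Datskovsky--Wright carry out. Indeed, the residue of $\xi_{F,\alpha}$ at $s=1$ is $\mathfrak{A}_F\,(1+3^{-r(\alpha)-r_2(F)})$ with $\mathfrak{A}_F=\zeta_F(2)\,\mathrm{Res}_{s=1}\zeta_F(s)/2^{r_1(F)+r_2(F)+1}$, whereas the constant in Theorem~\ref{thm:cubic-count} is $\mathrm{Res}_{s=1}\zeta_F(s)/\bigl(2^{r_1(F)+r_2(F)}\zeta_F(2)\bigr)$ times $(1+3^{-u(\Sigma)})$; these differ by the factor $2/\zeta_F(2)^2$, which is precisely the density of the maximal/squarefree locus and not an error term. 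So ``negligible after the sieve'' must be replaced by the actual inclusion--exclusion over conductors and orders (or by citing the end results, as the paper does). Two smaller slips: $u(\Sigma)$ counts \emph{all} infinite places where $\Sigma$ is trivial, so $u(\Sigma)=r(\alpha)+r_2(F)$ rather than $r(\alpha)$ (harmless only because the residue already carries the $3^{-r_2(F)}$), and Lemma~\ref{lem:quadratic-bound} is only an upper bound, so the companion count of quadratic extensions with its exact constant also needs the Datskovsky--Wright (or a classical class-field-theoretic Tauberian) asymptotic rather than that lemma.
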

\begin{proof}
	 This follows from combining Theorems 4.2 and 5.1 in \cite{DW88}.  
\end{proof}

We now turn to the proof of our main theorem.  

\begin{proof}[Proof of Theorem \ref{thm:main}]
Suppose $G \subseteq S_{2^m}$ is a $2$-group with a transposition, so by Lemma \ref{lem:2-grp-with-transposition-is-wreath-product}, $G = C_2 \wr H$ for a unique $H \subseteq S_{2^{m-1}}$.
Let $\bar{\Sigma}$ be the group signature for $H$ that is the image of $\Sigma$.
So a $G$-extension $K/k$ with group signature $\Sigma$ has an index two subfield $F/k$ with the group signature $\bar{\Sigma}$.
We next wish to show that the group signature of $K/k$ is determined by $F$ and the group signature of $K/F$.

We first consider the structure of order $2$ elements of $G$, to understand the possible group signatures.
For $g\in G=C_2 \wr H$, let $\bar{g}$ denote the image of $g$ in $H$.  
The group $H$ acts on a set of $2^{m-1}$ elements $B_H$.  The element $g\in G$ has order $2$ if and only if  $\bar{g}$  has order $2$ and the $C_2$ coordinates of $g$ (of which there is one for each element of $B_H$) are constant on $\bar{g}$ orbits of $B_H$.  
Moreover, if $\bar{g}$ transposes two elements $a,b$ of $B_H$, and $g'$ is obtained from $g$ by changing the $C_2$ coordinates at positions $a,b$, then $g'$ is conjugate to $g$.  In particular, a conjugacy class $[g]$ of an element $g\in G$ is determined by the image of the conjugacy class in $H$, and for any element $h$ of that conjugacy class in $H$, the $C_2$ coordinates of the $h$ fixed points of $B_H$. 

Next, we need to relate the infinite places of $F$ above a place $v$ of $k$ to the permutation group $H$.  To do this, at each real place $v$ of $k$ we choose an embedding $i_v: \bar{\Q}\ra \C$ that restricts to $v$ on $k$.  
If $F$ is an $H$-extension, the elements $B_H$ correspond to embeddings $\tau: F\ra \tilde{F}$ (as part of the data of the $H$-extension), and via $i_v:\bar{\Q}\ra \C$ these correspond to  embeddings $i_v\circ \tau: F\ra \C$.  So for each real place $v$ of $k$, we now have a correspondence between $B_H$ and the complex embeddings of $F$ that restrict to $i_v$ on $k$.
With these choices of $i_v$, we can now specify an actual element of $H$ corresponding to complex conjugation over $v$ (as opposed to a conjugacy class).  Precisely, 
we define $\sigma_v(F)$ to be the element of $H$ that acts on the embeddings $\tau: F\ra \tilde{F}$ so that $\sigma_v(F)(\tau)=i_v^{-1} \circ \overline{i_v\circ\tau}$, where the bar denotes complex conjugation, i.e. $\sigma_v(F)$ 
is the pullback of complex conjugation via $i_v$.
 Similarly, for a $G$-extension $K/k$ of group signature $\Sigma$, we have 
$\sigma_v(K) \in {\Sigma}_v$.
If $F/k$ is the index two subfield of $K$, then
at the fixed points of $\sigma_v(F)$, our chosen complex conjugation, on $B_H$, the $C_2$ coordinate of $\sigma_v(K)$ is trivial when $K/F$ is split at the corresponding place of $F$ and non-trivial when $K/F$ is ramified at that place.  
Using the conclusion of the previous paragraph, we conclude that if $K/k$ is a $G$-extension with index two subfield $F/k$,
the conjugacy class of $\sigma_v(K)$, and hence the group signature of $K/k$, is determined by $F$ and the group signature of $K/F$. 

Next, we wish to determine the number of group signatures of the quadratic extension $K/F$ that will lead to group signature $\Sigma$ for $K/k$.
Let $F/k$ be an $H$-extension with group signature $\bar{\Sigma}.$ 
 Let $M_F$ be the set of group signatures $\Sigma_2$ for quadratic extensions of $F$ such that   
$K/k$ has group signature $\Sigma$ when $K/F$ has group signature $\Sigma_2$.  
For a $G$-extension $K/k$ of group signature $\Sigma$, we let $u(\Sigma)$ be the difference in unit ranks between $K$ and $F$, which only depends on $\Sigma$.
  Note that for any $\Sigma_2\in M_F$, we have that $u(\Sigma)$ is the number of split places in $\Sigma_2$. 
  Also let $M_\Sigma:=|M_F|$, and note that it only depends on $\Sigma$, since  if we choose $\sigma_v\in \Sigma_v$ for all real places $v$ of $k$, then
$M_\Sigma$ is the product over $v$ of the number of elements $u\in C_2^{(B_H^{\sigma_v})}$  such that $u\sigma_v$ is conjugate to $\sigma_v$
(where $B_H^{\sigma_v}$ denotes the elements of $B_H$ fixed by $\sigma_v$).  
  Given an $H$-extension $F$, let $\mathcal{Q}_F^{\Sigma}$ be the set of quadratic extensions $K/F$ (in $\bar{\Q}$) of group signature in $M_F$.

Let $Y \geq 1$, and suppose $X_0 = X_0(Y)$ is sufficiently large so that: 1) $X_0\geq Y^2$; 2) and 
for any $X\geq X_0$ and each $F \in E_k(H,Y)$, we have
	\begin{equation} \label{eqn:X-hypothesis}
		\left|\sum_{ \substack{ K\in \mathcal{Q}_F^{\Sigma} \\ \mathrm{Disc}(K/F) \leq \frac{X}{\Disc(F)^2 }
		}} h_3(K/F)
			- \frac{X}{\Disc(F)^2} \cdot \frac{\mathrm{Res}_{s=1} \zeta_F(s)}{2^{r_1(F)+r_2(F)}\zeta_F(2)} \cdot \left(1 + 3^{-u(\Sigma)}\right) \right| \leq \frac{X}{2^{2^{m-1}} h_3(F) Y |E_k(H,Y)|}.
	\end{equation}
Since there are only finitely many fields $F \in E_k(H,Y)$, such an $X_0$ formally exists by Theorem \ref{thm:cubic-count}.  By Theorem \ref{thm:tail} and dyadic summation, we find for any $X \geq X_0$
	\begin{align*}
		\sum_{K \in E_k^{\Sigma}(G,X)} h_3(K)
			&= \sum_{\substack{ K \in  E_k^{\Sigma}(G,X) \\ \Disc(F) \leq Y }} h_3(K) + O_{k,m,\epsilon}(X/Y^{1-\epsilon} + X^{1-\delta_m}),
	\end{align*}
where $F \in E_k^{\bar{\Sigma}}(H,Y)$ is the unique index 2 subfield of $K$ containing $k$.
Then by
Lemma~\ref{lem:KFbij} and
 Theorem \ref{thm:3-torsion-thin-2-ext},  we find, for $X\geq X_0$, 
	\begin{align*}
		\sum_{\substack{ K \in  E_k^{\Sigma}(G,X) \\ \Disc(F) \leq Y }} h_3(K)
			&= 	
			\sum_{\substack{F \in E_k^{\bar{\Sigma}}(H,Y) }} 
			h_3(F)
			\sum_{\Sigma_2\in M_F} 2^{2^{m-1}}
			 \sum_{ \substack{  [K:F]=2\\\textrm{signature $\Sigma_2$} \\ \mathrm{Disc}(K/F) \leq X/\Disc(F)^2 
			 }} h_3(K/F) + O_k(X^{1 - \delta}). 
	\end{align*}
	Then by \eqref{eqn:X-hypothesis},	 for $X\geq X_0$, 
		\begin{align*}
		\sum_{\substack{ K \in  E_k^{\Sigma}(G,X) \\ \Disc(F) \leq Y }} h_3(K)
			&= X 			\sum_{\substack{F \in E_k^{\bar{\Sigma}}(H,Y) }} 
			 \frac{M_\Sigma 2^{2^{m-1}}h_3(F) \mathrm{Res}_{s=1} \zeta_{F}(s) }{2^{
		r_1(F)+
			r_2(F)} \zeta_{F}(2)\Disc(F)^2 
			} \cdot \left(1 + 3^{-u(\Sigma)}  \right) + O_k(X^{1-\delta}) + O(X/Y),
	\end{align*}
and thus also
	\[
		\frac{1}{X}\sum_{K \in E_k^{\Sigma}(G,X)} h_3(K)
			= \sum_{F \in E_k^{\bar{\Sigma}}(H,Y)} 
			\frac{		M_\Sigma 2^{2^{m-1}}	h_3(F) \mathrm{Res}_{s=1} \zeta_{F}(s) }{2^{
		r_1(F)+
			r_2(F)} \zeta_{F}(2)\Disc(F)^2 
			} \cdot \left(1 +3^{-u(\Sigma)}\right) + O_{k,m,\epsilon}(Y^{-1+\epsilon} + Y^{-\delta_m} + Y^{-\delta}),
	\]
since we assumed $X \geq X_0 \geq Y^2$.	Letting  $Y \to \infty$, the sum over $F \in E_k^{\bar{\Sigma}}(H,Y)$ converges by virtue of Theorem \ref{thm:counting-2-ext},  the trivial bounds $h_3(F) \ll_{[F:\mathbb{Q}],\epsilon} \Disc(F)^{1/2+\epsilon}$ and $\Res_{s=1}\zeta_F(s) \ll_{[F:\mathbb{Q}],\epsilon} \Disc(F)^{\epsilon}$, and partial summation.  
The error term on the right-hand side also converges, so the left-hand side must converge as well.  It follows that
	\[
		\lim_{X \to \infty} \frac{1}{X} \sum_{K \in E_k^{\Sigma}(G,X)} h_3(K)
			= \sum_{F \in E_k^{\bar{\Sigma}}(H,\infty)} 
 \frac{M_\Sigma 2^{2^{m-1}} h_3(F) \mathrm{Res}_{s=1} \zeta_{F}(s) }{2^{
		r_1(F)+
			r_2(F)}\zeta_{F}(2)\Disc(F)^2 } \cdot \left(1 +3^{-u(\Sigma)}\right).			
	\]
	Let $\Sigma_0$ be a group signature for an $H$-extension $F/k$.  Then 
$$
\sum_{\substack{\Sigma\\ \bar{\Sigma}=\Sigma_0}}  M_\Sigma =2^{r_1(F)}
\quad \textrm{and}\quad
\sum_{\substack{\Sigma\\ \bar{\Sigma}=\Sigma_0}}  M_\Sigma 3^{-u(\Sigma)} =\sum_{i=0}^{r_1(F)} \binom{r_1(F)}{i} 3^{-i-r_2(F)}=
\frac{2^{2r_1(F)}}{3^{r_1(F)+r_2(F)}}.
$$	
	where the sums are over group signatures $\Sigma$ for $G$-extensions, since the choices of $\Sigma$ with $i$ real places of $F$ split have $u(\Sigma)=i+r_2(F).$
		We can then sum over all group signatures to obtain
\begin{align*}
		\lim_{X \to \infty} \frac{1}{X} \sum_{K \in E_k(G,X)} h_3(K)
		&=\sum_{\bar{\Sigma}} 
	\sum_{\substack{\Sigma\\ \textrm{given }\bar{\Sigma}}}  
	\sum_{F \in E_k^{\bar{\Sigma}}(H,\infty)} 
 \frac{M_\Sigma 2^{2^{m-1}} h_3(F) \mathrm{Res}_{s=1} \zeta_{F}(s) }{2^{
		r_1(F)+
			r_2(F)}\zeta_{F}(2)\Disc(F)^2 } \cdot \left(1 +3^{-u(\Sigma)}\right)		
   \\		
   	&= \sum_{F \in E_k(H,\infty)} \frac{2^{2^{m-1}}h_3(F) \mathrm{Res}_{s=1} \zeta_{F}(s) }{2^{r_2(F)}\zeta_{F}(2)\Disc(F)^2 } \cdot 
 \left(1 + \frac{2^{r_1(F)}}{3^{r_1(F)+r_2(F)}}\right).
\end{align*}
	
To count each field $K$ exactly once (instead of once for each $G$ extension structure), we can divide the above sum by $|\Aut_{\operatorname{perm}}(G)|$.
We then sum over (isomorphism classes of ) $G$ of the form $C_2\wr H$, and by Lemma~\ref{lem:2-grp-with-transposition-is-wreath-product}
	 this will give a sum over $T_m$, the set of all (isomorphism classes of) transitive permutation $2$-groups $H$  of degree $ 2^{m-1}$.  Using Theorem~\ref{thm:3-torsion-thin-2-ext}, we obtain
	 \begin{align*}
&\lim_{X \to \infty}   \frac{1}{X} {\sum_{K \in E_k(m,X)} h_3(K)}\\
			&= \sum_{\substack{H\in T_m } }
\frac{1}{|\Aut_{\operatorname{perm}}(C_2\wr H)|}			
			 \sum_{F \in E_k(H,\infty)} \frac{2^{2^{m-1}}h_3(F) \mathrm{Res}_{s=1} \zeta_{F}(s) }{2^{r_2(F)}\zeta_{F}(2)\Disc(F)^2 } \cdot 
 \left(1 + \frac{2^{r_1(F)}}{3^{r_1(F)+r_2(F)}}\right).		 	 
\end{align*}
	 Above, each field $F$ appears in the sum $|\Aut_{\operatorname{perm}}(H)|$ times.
Next, we claim that the factors have value $|\Aut_{\operatorname{perm}}(H)|/ |\Aut_{\operatorname{perm}}(C_2\wr H)|=2^{-2^{m-1}}.$
The blocks of $B_G$ that $C_2\wr H$ acts on are the pairs that appear in $G$ as transpositions, so these are preserved by any permutation isomorphism of $C_2\wr H$, and we have a map 	 $\Aut_{\operatorname{perm}}(C_2\wr H)\ra \Aut_{\operatorname{perm}}(H)$.  It is easy to see this is a surjection and the kernel is given by all the permutations of $\{1, 2\}\times B_H$ that fix the $B_H$ coordinate, showing the claim.
	 Thus we have
	 $$
\lim_{X \to \infty} \frac{1}{X} {\sum_{K \in E_k(m,X)} h_3(K)}
			= 	
			 \sum_{F \in E_k(m-1,\infty)} 
 \frac{h_3(F) \mathrm{Res}_{s=1} \zeta_{F}(s) }{2^{r_2(F)}\zeta_{F}(2)\Disc(F)^2 } \cdot \left(1 + \frac{2^{r_1(F)}}{3^{r_1(F)+r_2(F)}}\right).		 	 
	 $$ 
	
Combined with Theorem \ref{thm:counting-2-ext}, this yields the theorem.
(Note we may obtain the constants in Theorem \ref{thm:counting-2-ext} by the same argument as above, using the count of quadratic extensions of $F$ in Theorem~\ref{thm:cubic-count} in place of the average $3$-torsion result.)
\end{proof}

\subsection{Relative Class Group Averages}

For the families  in Theorem \ref{thm:main}, the approach of the proof of Theorem \ref{thm:main} also permits us to determine the average size of the $3$-torsion subgroup of the relative class group $\Cl_{K/F}$,
where $F$ is the index two subfield of $K$, proving Theorem~\ref{T:rel} (when combined with Proposition~\ref{P:asCM}). 
These averages are particularly nice when the Galois closure group and
$u := r_1(K)+r_2(K)-r_1(F)-r_2(F)$
is fixed.  
For $G$ a transitive permutation $2$-group with a transposition, let $E_k^u(G,X)$ denote the set of $G$-extensions $K/k$ with $\Disc K\leq X$ and
$K/F$ of relative unit rank $u$, where $F/k$ is the unique index $2$ subfield of $K$.

\begin{theorem}\label{thm:relative-average}
	Let $k$ be a number field, $m$ a positive integer, and $G \subseteq S_{2^m}$ a transtive $2$-group with a transposition.  If $u$ is such that 
$E_k^u(G,\infty)$ is non-empty, then
		\[
			\lim_{X \to \infty} \frac{1}{|E_k^u(G,X)|} \sum_{K \in E_k^u(G,X)} h_3(K/F)
				= 1 + 3^{-u}.
		\]
\end{theorem}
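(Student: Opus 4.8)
The plan is to mimic the proof of Theorem~\ref{thm:main} with the signature-refined family $E_k^\Sigma(G,X)$, but now tracking $h_3(K/F)$ instead of $h_3(K)=h_3(F)h_3(K/F)$, and then to aggregate over all group signatures $\Sigma$ lying above a fixed relative unit rank $u$. First I would fix a transitive $2$-group $G$ with a transposition, so by Lemma~\ref{lem:2-grp-with-transposition-is-wreath-product} we have $G=C_2\wr H$ with $H\subseteq S_{2^{m-1}}$ the unique such $H$, and every $G$-extension $K/k$ has a unique index-two subfield $F/k$, which is an $H$-extension. As in the proof of Theorem~\ref{thm:main}, the group signature $\Sigma$ of $K/k$ determines and is determined by the group signature $\bar\Sigma$ of $F/k$ together with the group signature $\Sigma_2$ of the quadratic extension $K/F$; moreover $u(\Sigma)$, the relative unit rank of $K/F$, equals the number of split infinite places recorded by any such $\Sigma_2$, and depends only on $\Sigma$.

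Next I would run the same truncation-and-tail argument: split $E_k^\Sigma(G,X)$ according to whether $\Disc(F)\le Y$ or $\Disc(F)>Y$, bound the tail $\Disc(F_{m-1})=\Disc(F)\ge Y$ contribution to $\sum h_3(K/F)$ using Theorem~\ref{thm:tail} (noting $h_3(K/F)\le h_3(K)$), bound the contribution of $F$ whose Galois closure group over $k$ has no transposition by Theorem~\ref{thm:3-torsion-thin-2-ext} (again via $h_3(K/F)\le h_3(K)$), and for the finitely many remaining $F\in E_k^{\bar\Sigma}(H,Y)$ invoke the Datskovsky--Wright asymptotic Theorem~\ref{thm:cubic-count}:
\[
\sum_{\substack{[K:F]=2,\ \mathrm{signature}\ \Sigma_2\\ \Disc(K/F)\le Z}} h_3(K/F)\sim Z\cdot\frac{\mathrm{Res}_{s=1}\zeta_F(s)}{2^{r_1(F)+r_2(F)}\zeta_F(2)}\bigl(1+3^{-u(\Sigma)}\bigr),
\]
applied with $Z=X/\Disc(F)^2$, and the companion count of quadratic extensions with the same main term but factor $1$ in place of $(1+3^{-u(\Sigma)})$. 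Crucially, because we now divide by $|E_k^u(G,X)|$ rather than by $X$, the factors $h_3(F)$ appearing in the constant $C_{G,\Sigma}$ of Theorem~\ref{thm:main} do \emph{not} appear: both numerator and denominator sums over $F$ carry the \emph{same} weight $\frac{M_\Sigma 2^{2^{m-1}}\,\mathrm{Res}_{s=1}\zeta_F(s)}{2^{r_1(F)+r_2(F)}\zeta_F(2)\Disc(F)^2}$ (with $M_\Sigma$ the signature multiplicity from the proof of Theorem~\ref{thm:main}), and the ratio telescopes to exactly $1+3^{-u(\Sigma)}$ after letting $Y\to\infty$ (the sums converge by Theorem~\ref{thm:counting-2-ext} and the trivial bound on $\mathrm{Res}_{s=1}\zeta_F(s)$). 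This gives
\[
\lim_{X\to\infty}\frac{1}{|E_k^\Sigma(G,X)|}\sum_{K\in E_k^\Sigma(G,X)}h_3(K/F)=1+3^{-u(\Sigma)}.
\]

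Finally I would pass from a single group signature $\Sigma$ to the union over all $\Sigma$ with $u(\Sigma)=u$. Since $E_k^u(G,X)=\bigsqcup_{\Sigma:\,u(\Sigma)=u}E_k^\Sigma(G,X)$ and, by Theorem~\ref{thm:counting-2-ext}, each nonempty piece satisfies $|E_k^\Sigma(G,X)|\sim D_{G,\Sigma}X$ with $D_{G,\Sigma}>0$, the overall average is the weighted average of the per-$\Sigma$ averages with weights $D_{G,\Sigma}/\sum D_{G,\Sigma}$; since every term in this weighted average equals the common value $1+3^{-u}$, so does the average, giving the theorem. The main obstacle is bookkeeping rather than a new idea: one must verify that the $\Sigma$-dependent multiplicities $M_\Sigma$ and the factor $2^{2^{m-1}}$ (from the $2^{\deg H}=2^{2^{m-1}}$ choices of permutation-group isomorphism $\Gal(K/k)\isom G$ compatible with $\Gal(F/k)\isom H$, Lemma~\ref{lem:KFbij}) cancel identically between numerator and denominator so that the answer is genuinely independent of $F$ and of the particular $\Sigma$ above $u$ — this is precisely the ``bias removal'' alluded to in the introduction, and making it airtight is where care is needed; everything else is a direct transcription of the Theorem~\ref{thm:main} argument with $h_3(K/F)$ in place of $h_3(K)$.
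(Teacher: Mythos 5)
Your proposal is correct and takes essentially the same route as the paper's proof: the same truncation with the tail estimate of Theorem~\ref{thm:tail} via $h_3(K/F)\le h_3(K/k)$, the thin-family bound of Theorem~\ref{thm:3-torsion-thin-2-ext} to discard the $K$ with $\Gal(K/k)\not\simeq G$, Lemma~\ref{lem:KFbij}, Datskovsky--Wright (Theorem~\ref{thm:cubic-count}), and the observation that the $F$-dependent weights cancel between numerator and denominator so the $h_3(F)$ bias disappears. The only difference is organizational: you first prove the per-signature average over $E_k^\Sigma(G,X)$ and then aggregate over the finitely many $\Sigma$ with $u(\Sigma)=u$, whereas the paper fixes $u$ directly and carries, for each $F$, the multiplicity $r$ of admissible signatures of $K/F$ --- the same bookkeeping in a different order.
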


\begin{proof}
	The proof uses the main input we have built for the proof of Theorem \ref{thm:main}
	and 
	is nearly identical to that proof, so we will be brief. 
Let $F/k$ be the index $2$ subfield of $K$.	
	 In particular, using the trivial inequality $h_3(K/F) \leq h_3(K/k)$, it follows from  Theorem \ref{thm:tail} that for any $X,Y>0$,
		\[
			\sum_{K \in E_k^u(G,X)} h_3(K/F)
				= \sum_{\substack{ K \in E_k^u(G,X) \\ \Disc(F) \leq Y}} h_3(K/F)
					+ O_{k,m,\epsilon}(X/Y^{1-\epsilon}+X^{1-\delta_m}).
		\]
			Again using the trivial inequality $h_3(K/F) \leq h_3(K/k)$, we see from Lemma~\ref{lem:KFbij}  and Theorem \ref{thm:3-torsion-thin-2-ext} that
		\[
			\sum_{\substack{ K \in E_k^u(G,X) \\ \Disc(F) \leq Y}} h_3(K/F)
				= \sum_{F \in E_k(H,X)} 2^{2^{m-1}} \sum_{\substack{ [K:F] = 2 \\ \Disc(K/F) \leq X / \Disc(F)^2 \\ \mathrm{rk}(\mathcal{O}_K^\times / \mathcal{O}_F^\times) = u}} h_3(K/F) + O_{m,k}(X^{1-\delta}),
		\]
	for some $\delta>0$ depending on $m$ and $k$. 
	Let $r$ be the number of group signatures of quadratic extensions of $F$ such that $K/F$ of that signature have  $\mathrm{rk}(\mathcal{O}_K^\times / \mathcal{O}_F^\times) = u$.
We take $X_0(Y)$ as in the proof of 	Theorem \ref{thm:main}, and have, for $X\geq X_0$
$$
			\sum_{\substack{ K \in E_k^u(G,X) \\ \Disc(F) \leq Y}} h_3(K/F)
				= \sum_{F \in E_k(H,X)} 
 \frac{r2^{
			2^{m-1}-r_1(F)-r_2(F)} \mathrm{Res}_{s=1} \zeta_{F}(s) }{ \zeta_{F}(2)\Disc(F)^2 
			} \cdot \left(1 + 3^{-u}  \right)				
				 + O_{m,k}(X^{1-\delta}+X/Y)
$$
Proceeding as in the proof of Theorem \ref{thm:main}, 
the result follows.
\end{proof}

\section{Comparison to the Cohen--Martinet heuristics}\label{sec:CM}

\subsection{Cohen--Martinet prediction for the average of $h_3(K/F)$}

Theorem \ref{thm:relative-average} verifies new cases of the Cohen--Martinet heuristics, as we now show.

\begin{proposition}\label{P:asCM}
The Cohen--Martinet heuristics \cite[Hypoth\'ese 6.6]{Cohen1990} predict that the average of Theorem~\ref{thm:relative-average} is as proved in the theorem.
\end{proposition}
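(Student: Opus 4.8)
The plan is to unwind exactly what the Cohen--Martinet heuristics predict for the average of $|\Cl_{K/F}[3]|$ over the family in question, and to match it with the value $1 + 3^{-u}$ obtained in Theorem~\ref{thm:relative-average}. First I would observe that the family consists of $G$-extensions $K/k$ with $G = C_2 \wr H$ and fixed relative unit rank $u = \rk \O_K^* - \rk \O_F^*$, where $F/k$ is the index-$2$ subfield. The relative class group $\Cl_{K/F}$ is the kernel of the norm map $\Nm_{K/F}: \Cl_K \to \Cl_F$, and since $3 \nmid [K:F] = 2$, the short exact sequence discussed in Section~\ref{sec:arakelov} gives $\Cl_K[3] \simeq \Cl_{K/F}[3] \times \Cl_F[3]$, so $|\Cl_{K/F}[3]| = |\Cl_K[3]|/|\Cl_F[3]|$. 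The key point is that $\Cl_{K/F}$ is exactly the ``new'' part of the class group at the top level of the tower: it corresponds, under the Cohen--Martinet framework, to the part of the class group of $K$ on which $\Gal(K/F) = C_2$ acts by the sign character, i.e.\ the minus part $\Cl_K^-$. I would make this precise by using the isotypic decomposition of $\Cl_K \otimes \Z_3$ as a module over $\Z_3[\Gal(\tilde K/k)]$ (for $\tilde K$ the Galois closure), and identifying $\Cl_{K/F} \otimes \Z_3$ with the part supported on those $\Q_3$-irreducible characters of $\Gal(\tilde K/k)$ that are nontrivial on $\Gal(\tilde K / \tilde F)$ — equivalently, induced from the sign character of $\Gal(K/F)$.

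Next I would invoke the Cohen--Lenstra--Martinet philosophy in the sharp form: for each $\Q_3$-irreducible representation $\rho$ of $\Gamma = \Gal(\tilde K/k)$ that is ``good'' (here all of $\{3\}$-Sylow considerations are fine since $3 \nmid |G|$), the $\rho$-part of $\Cl_K \otimes \Z_3$ is conjectured to be distributed like the cokernel of a random matrix, with the relevant parameter being $u(\rho) = v(\rho) - z(\rho)$, the difference between the multiplicity of $\rho$ in the unit representation $\bigoplus_v \Ind_{\Gamma_v}^\Gamma \Q$ (the enriched signature) and its multiplicity in the trivial/``trivially occurring'' part. Concretely, \cite[Hypoth\`ese 6.6]{Cohen1990} together with \cite{Wang2021} (cited in the technical-remarks section as the tool to extract averages from fixed enriched signature) gives that the average of $|M_\rho[3]|$, where $M_\rho$ is the $\rho$-isotypic piece, equals $1 + (\#\mathcal{O}_\rho)^{-u(\rho)}$ or a similar closed form, where $\mathcal{O}_\rho$ is the residue field of the ring $\mathrm{End}$ acting and the exponent is the relevant $u$-invariant. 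The heart of the computation is then: (i) show that summing $u(\rho)$ over the $\rho$ appearing in $\Cl_{K/F}$ collapses, for the $3$-torsion average, to a single contribution governed by the single integer $u = \rk\O_K^* - \rk\O_F^*$; and (ii) check that the residue-field cardinality governing the random-matrix model is $3$ (since we are at the prime $p=3$ and the characters in question, being induced from $C_2$, have the property that the relevant endomorphism algebra localizes to give residue field $\mathbb{F}_3$). Putting these together yields the predicted average $1 + 3^{-u}$.

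Carrying this out, the main steps in order are: (1) Set up the $\Z_3[\Gamma]$-module decomposition of $\Cl_K \otimes \Z_3$ and identify $\Cl_{K/F} \otimes \Z_3$ with the sum of isotypic components for characters nontrivial on $\Gal(\tilde K/\tilde F)$; verify this is consistent with $\Cl_{K/F} = \Ker \Nm_{K/F}$ using class field theory as in Lemma~\ref{lem:relative-Coker}. (2) Show that the representation $\Ind_{\Gal(\tilde k / F)}^{\Gal(\tilde k/k)}(\operatorname{sgn})$ — or rather its constituents — is precisely what $\Cl_{K/F}$ realizes, and (as the excerpt notes is shown in the proof) that this induced representation is irreducible, so there is essentially one relevant $\rho$. (3) Compute $u(\rho)$ = (multiplicity of $\rho$ in the enriched signature of $K/k$) minus (its multiplicity in the trivial part): the unit representation of $K$ over $k$ restricted to the sign-isotypic part has multiplicity equal to the number of infinite places of $F$ that split in $K$, which is exactly $u$; and the ``always-present'' part contributes $0$ because $\rho$ is a nontrivial irreducible (it does not contain the trivial character, and the Herbrand-quotient / Cohen--Martinet bookkeeping assigns it the pure $u$-value). (4) Quote the Cohen--Martinet formula in the form of \cite[Hypoth\`ese 6.6]{Cohen1990} (via \cite[Th\'eor\`eme 6.7]{Cohen1990} and \cite{Wang2021}) that the $\{3\}$-average of the $\rho$-part with $u$-invariant $u$ and residue field $\mathbb{F}_3$ is $1 + 3^{-u}$, and conclude. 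I expect the main obstacle to be step (3): correctly reading off the Cohen--Martinet $u$-invariant of the sign-isotypic component from the enriched signature — one must be careful about the normalization conventions in \cite{Cohen1990} (whether the module is the class group or a twist, whether one counts the extra copy coming from the "trivially occurring" characters, and the precise role of roots of unity, which for $p = 3$ and these fields requires checking $\mu_3 \not\subset K$ or handling it) — and in showing that the contributions of distinct archimedean places, which individually can split or ramify, assemble into exactly the exponent $u$ without extra factors. Once the bookkeeping is pinned down, matching $1 + 3^{-u}$ is immediate.
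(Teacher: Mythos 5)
Your proposal is correct and follows essentially the same route as the paper: identify $h_3(K/F)$ with $G$-homomorphisms from the class group of the Galois closure to the irreducible representation $V=\Ind_{S_F}^{G}(\operatorname{sgn})$ over $\F_3$, invoke the translation of \cite[Hypoth\'ese 6.6]{Cohen1990} via \cite{Wang2021} to get the predicted average $1+\prod_v |V^{\sigma_v}|^{-1}$, and compute that $\sum_v \dim_{\F_3} V^{\sigma_v}$ equals the relative unit rank $u$ (your ``multiplicity in the enriched signature'' is the same quantity by Frobenius reciprocity). The worries you flag in step (3) about roots of unity and normalization do not arise, since the paper reads the prediction directly from \cite[Theorems 4.1 and 6.2]{Wang2021}, where the answer is exactly $\prod_v|V^{\sigma_v}|^{-1}$ for the nontrivial irreducible $V$.
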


\begin{proof}
 Let $G$ be a transitive permutation $2$-group with a transposition and $S_K$ the stabilizer of an element $1\in B_G$.  Our $K\in \mathcal{F}^u(G)$ is then the $S_K$ fixed field of a Galois $G$ extension $\tilde{K}/k$ with Galois group $G$.  By Lemma~\ref{lem:2-grp-with-transposition-is-wreath-product}, we have $G=C_2 \wr H$, where $H$ is a permutation group of degree $d$.
  In this notation, we have that $S_K=(1\times C_2^{d-1}) \rtimes \Stab_H(1)$, where $1\in B_H$ is the image of $1\in B_G$.  
 Then we define $S_F=(C_2^d) \rtimes \Stab_H(1)$ (the fixed field of $S_F$ will be the field we call $F$, the index $2$ subfield of $K$).
  Let $U$ be the sign representation of $S_F/S_K$ over $\F_3$.  

We claim 
$h_3(K/F)=|\Hom_{S_F} (\Cl_{\tilde{K}/k},U )|$, where $\Hom_{S_F}$ denotes morphisms of $S_F$ modules.
To see this, let $e\in \F_3[G]$ be the element $e=|S_K|^{-1}{\sum_{g\in S_K} g}$.  We have a map $m_e:\Cl_{\tilde{K}/k}/3 \ra \Cl_{K/k}/3$ given by multiplication by $e$, which is a surjection (as inclusion of ideals gives a section).   Since  $e$ acts as the identity on $U$, we have that any element of 
  $\Hom_{S_F} (\Cl_{\tilde{K}/k},U )$ factors through $m_e$, so is determined by a map $\Hom_{S_F} (\Cl_{K/k}/3,U ).$  Now $\Cl_{K/k}/3$ is a product of $\pm 1$ eigenspaces for $S_F/S_K$, and  $\Cl_{K/F}/3$ is exactly the $-1$ eigenspace, which proves the claim.

Let $V:= \Ind_{{S_F}}^{G} U$. We claim that $V$ is an irreducible representation of $G$.  If $h_i$ are coset representatives of $\Stab_H(1)$ in $H$, then
$1\rtimes h_i$ are coset representatives for $S_F$ in $G$.
Then $V$ has a basis $e_i:= (1\rtimes h_i) e_1$. The action of $G$ on $V$ is $(x\rtimes s) e_i = x_{s(i)} e_{s(i)}$. So on the one hand, it is permuting the $e_i$ from the permutation action of $s$, and on the other hand, it can change the sign for any particular basis independently.
Thus if $\sum c_i e_i$ is a non-zero element of $V$, we can act by some $g\in G$ to obtain $gx=e_1$, and thus we can generate all of $V$ from $G$ actions, which
shows that $V$ is irreducible.  

So, 
$$|\Hom_{S_F} (\Cl_{\tilde{K}/k},U )|=|\Hom_G(\Cl_{\tilde{K}/k} ,V)|=1+|\Sur_G(\Cl_{\tilde{K}/k} ,V)|.$$
From \cite[Theorem 6.2 and Theorem 4.1]{Wang2021}, we have that Cohen and Martinet predict the average of $|\Sur_G(\Cl_{\tilde{K}/k} ,V)|$ to be $\prod_{v}=|V^{\sigma_v}|^{-1}$,
where the product is over infinite places of the base field $k$, and $\sigma_v$ is a decomposition group for $v$.  (Note that we can subdivide our family, which only has a fixed relative unit rank of $K/F$, based on their decomposition groups at each of the infinite places of $k$.  We will see that the prediction for each of these subfamilies is the same, and only depends on the relative unit rank of $K/F$.)  The action of $\sigma_v$ on the elements that $H$ acts on, or equivalently the cosets $h_iS_F$, has fixed points corresponding to the infinite places of 
$F$ (over $v$) that are split over $k$ and $2$-cycles corresponding to infinite places of $F$ (over $v$) that are ramified over $k$.  We can write $\sigma_v$ as generated by $x_v \rtimes r_v$. 
If $h_i$ corresponds to a fixed point of $r_v$, then $\sigma_v \F_3 e_i=\F_3 e_i$, and the corresponding infinite place of $F$ is split in $K$ if and only if the $i$th coordinate of $x_v$ is trivial, which happens if and only if $\F_3 e_i$ is a trivial representation of $\sigma_v$.
If $h_i$ and $h_j$ correspond to a $2$-cycle of $r_v$, then $\F_3 e_i +\F_3 e_j$ is the regular representation of $\sigma_v$ and has one dimension of trivial representation.
Thus $\sum_v \dim_{\F_3} V^{\sigma_v}$ is the number of infinite places of $F$ that are not ramified in $K$, which is exactly the relative unit rank as defined above.
 \end{proof}
 
 \subsection{Cohen--Martinet prediction for the average of $h_3(K/k)$}\label{SS:wrongpred}

Using a similar approach, we can determine the Cohen--Martinet prediction for the average of $h_3(K/k)$ for $G$-extensions $K/k$ of a particular enriched or group signature.
In the case when $G=D_4$, we give below a numerical computation of the proven average for comparison.
  
 Let $G$ be a transitive permutation $2$-group with a transposition and $S_K$ the stabilizer of an element.  We consider the representation $\Ind_{S_K}^G \F_3$
of $G$ over $\F_3$ (which is the permutation representation of $G$).
Define $S_K$, $S_F$,  $U,$ and $V$ as in the proof of Proposition~\ref{P:asCM}.  Then $\Ind_{S_K}^G \F_3=\Ind_{S_F}^{G} \Ind_{S_K}^{S_F}  \F_3=\Ind_{S_F}^{G} \F_3 \times V.$  Let $W=\Ind_{S_F}^{G} \F_3$, which is the permutation representation of $H$.
We see that $W$ contains no copies of the irreducible representation $V$, since the action $G$ on 
$W$ factors through $H$ and the action of $G$ on $V$ does not.

Suppose that $W=\prod_i V_i^{a_i},$ where the $V_i$ are irreducible representations of $G$ (and note they all have $G$-action that factors through $H$).  
We note that by Frobenius reciprocity  $W$ contains exactly one copy of the trivial representation, and we let $W'$ be the quotient of of $W$ by this trivial representation.

We have 
$h_3(K/k)=|\Hom (\Cl_{K/k},\F_3 )|$.
Let $e,m_e$ be 
as in the proof of Proposition~\ref{P:asCM}.
Since any element of $\Hom_{S_K} (\Cl_{\tilde{K}},\F_3 )$ factors through $m_e \Cl_{\tilde{K}/k}=\Cl_{K/k}$, we have
a natural bijection between $\Hom (\Cl_{K/k},\F_3 )$ and $\Hom_{S_K} (\Cl_{\tilde{K}/k},\F_3 )|$.
Also $|\Hom_{S_K} (\Cl_{\tilde{K}/k},\F_3 )|=|\Hom_{G} (\Cl_{\tilde{K}/k},W \times V )|$.
If we let $e'=|G|^{-1}\sum_{g\in G} g$, we have that the map $m_e: \Cl_{\tilde{K}} \ra \Cl_k$ is a map whose kernel is exactly the relative class group
$\Cl_{\tilde{K}/k},$ and it is also the map that gives the maximal trivial representation quotient of any $G$ representation. 
 Thus $\Cl_{\tilde{K}/k}$ has no trivial representation part, and 
$h_3(K/k)=|\Hom_{G} (\Cl_{\tilde{K}/k},W '\times V )|.$

From \cite[Theorem 6.2, Theorem 4.1]{Wang2021}, for a $G$-representation $Z$ with no trivial component, we have that Cohen and Martinet predict the average of $|\Sur_G(\Cl_{\tilde{K}} ,Z)|$ to be $\prod_{v}=|Z^{\sigma_v}|^{-1}$,
where the product is over infinite places $v$ of the base field $k$, in a family of $G$-extensions where $\sigma_v\in G$ is an element of the conjugacy class of complex conjugation over $v$. 
Thus the predicted average of $h_3(K/k)$ is
\begin{align*}
&\left(1+\prod_{v} |V^{\sigma_v}|^{-1}\right) \prod_i \left(
1+\prod_{v} |V_i^{\sigma_v}|^{-1} +\cdots + \prod_{v} |V_i^{\sigma_v}|^{-a_i}
 \right),
\end{align*}
where the left product is the predicted average of $h_3(K/F)$ and the right product is the predicted average of $h_3(F/k)$ (as can be worked out similarly to the above) for families with the corresponding behavior at infinite places.

 Let $F/k$ be the index two subfield of $K$.  
The action of $\sigma_v$ on the permutation basis elements of $W$ has fixed points corresponding to the split places of $F$ (over $v$) and $2$-cycles corresponding to the ramified places of $F$ (over $v$).  Thus $\prod_v |W^{\sigma_v}|=3^{r_1(F)+r_2(F)}$ and 
$\prod_v |(W')^{\sigma_v}|=3^{r_1(F)+r_2(F)-r_1(k)-r_2(k)}$.
If $W'$ is irreducible, this gives a nice formula for the predicted average of $h_3(K/k)$, which is $(1+3^{-u(K/F)} )(1+3^{-u(F/k)} )$,
where  $u(K/F)$ is the difference in unit ranks of $K$ and $F$ in the given family (and similarly for $u(F/k)$) .  However, when $W$ is not irreducible, the formula can be more complicated.

\begin{example}
Let $H=C_2$, so $G=D_4=\langle (1234), (24)\rangle$ and $S_K=\langle (24)\rangle$ and $S_F=\langle (13),(24)\rangle$.
Here $W'$ is irreducible, the sign representation of $H$, and thus the  average conjecture by Cohen and Martinet is    $(1+3^{-u(K/F)} )(1+3^{-u(F/k)} )$.
For example, if $k=\Q$, in the following table we give the averages of $h_3(K)$ predicted by the Cohen--Martinet heuristics, and a numerical computation of the proven averages from Theorem~\ref{thm:main}, in families with the given group signatures. 
 \begin{center}
\begin{tabular}{ c|c|c|c|c|c|c } 
$\sigma_v $& $F$ & $r_1(K)$ & $|V^{\sigma_v}|$ & $|(W')^{\sigma_v}|$  & CM predicted &  Thm~\ref{thm:main} proven\\ 
&  & & &&  average of $h_3(K)$ &  average of $h_3(K)$\\ 
  \hline
()  & real & 4 & 9 & 3&$(1+1/9)(1+1/3)=40/27\approx 1.48$&$\approx 1.12$\\
 (24)  & real & 2 & 3 & 3&$(1+1/3)(1+1/3)=16/9\approx 1.78$&$\approx 1.34$\\ 
 (13)(24)  & real & 0 & 1 &3&$(1+1)(1+1/3)=8/3\approx 2.67$&$\approx 2.01$\\ 
(12)(34)  & imag. & 0 & 3 & 1&$(1+1/3)(1+1)=8/3\approx 2.67$&$\approx 1.41$\\ 
\end{tabular}
\end{center}
\end{example}

Essentially, in the proof of Theorem~\ref{thm:main}, we see that the averages of the $h_3(K/F)$ and $h_3(F/k)$ factors are independent, and the 
$h_3(K/F)$ has average as predicted by Cohen and Martinet.  
When we order fields in a family up to discriminant $X$, and take a uniform average and then let $X\ra\infty$, we will call this a \emph{discriminant-uniform} average.
A consequence of Cohen and Martinet's conjecture is that the discriminant-uniform average \emph{over $G$ extensions $K$} of $h_3(F_K/k)$  is the same
as the discriminant-uniform average \emph{over $H$ extensions $F$}  of $h_3(F/k)$  when the $G$ and $H$ extensions have corresponding behavior at infinite places (see \cite[Theorem 9.2]{Wang2021}).  

In contrast, what we see in the proof of Theorem~\ref{thm:main} is that 
discriminant-uniform average over $G$ extensions $K$ of $h_3(F_K/k)$ is provably a \emph{weighted} average of $h_3(F/k)$ over $F$, against the measure on $H$-extensions
in which a field $F$ has measure proportional to $\Res_{s=1}\zeta_F(s)/(\zeta_F(2)\Disc(F)^2)$.    There is no particular reason to think that this weighted average of $h_3(F/k)$ over $F$ will give the same result as the discriminant-uniform average over $F$.  Indeed, the weighted average is heavily influenced by the $F$ of small discriminant.
When $H=C_2$,  the discriminant uniform average 
over $C_2$-extensions $F$
of $h_3(F/k)$  is known by Datskovsky--Wright, and is as predicted by Cohen--Martinet.

Since the quadratic fields of small discriminant have very little $3$-torsion in their class groups, one expects the weighted average of $h_3(F/k)$ to be smaller than the discriminant-uniform average over $F$, and indeed that is what we see in the chart above.  

This perspective also explains the counterexample to the Cohen-Lenstra-Martinet heuristics of Bartel and Lenstra~\cite{Bartel2020}.  Let $f$ be the indicator function of whether the $3$-torsion in the class group of a quadratic field is trivial.  For a cyclic quartic field $K$, let $F_K$ be the quadratic subfield.  
Bartel and Lenstra prove that the discriminant-uniform average over cyclic quartic $K$ of $f(F_K)$ is a weighted average of $f(F)$ over quadratic $F$.  On the other hand, the Cohen-Lenstra-Martinet heuristics predict that the discriminant-uniform average over cyclic quartic $K$ of $f(F_K)$ is the discriminant-uniform average of $f(F)$ over quadratic $F$.

\section{Averages for other groups}
\label{sec:other-groups}

Our methods apply to more groups than just $2$-groups.  We focus here on results that may be obtained purely from the results of Section \ref{sec:uniform-cubic}, and whose proofs in particular do not rely on arguments from Sections \ref{sec:arakelov} and \ref{sec:zero-density}.  This permits a proof of the following general result.

\begin{theorem}\label{thm:other-groups}
	Let $H \subseteq S_n$ be transitive and set $G = C_2 \wr H$.  Then $G$-extensions $K$ of a number field $k$ have a unique index two subfield $F_K/k$.  Moreover
	$F_K$ is an $H$-extension of $k$.  For $u \in \mathbb{Z}$, let $E_k^u(G,X) \subseteq E_k(G,X)$ be the subset of those $K$ for which $\rk \mathcal{O}_K^* - \rk \mathcal{O}_{F_K}^* = u$.
	
	1) If $E_k^u(G,\infty)$ is non-empty and $|E_k(H,X)| \ll_{k,H,\epsilon} X^{2/3+\epsilon}$ for every $X\geq 1$, then
		\[
			\lim_{X \to \infty} \frac{1}{E_k^u(G,X)} \sum_{K \in E_k^u(G,X)} h_3(K/F_K)
				= 1 + 3^{-u}.
		\]
		
	2) If $E_k(H,\infty)$ is non-empty and
		\[
			\sum_{F \in E_k(H,X)} h_3(F/k)
				\ll_{k,H,\epsilon} X^{2/3+\epsilon}
		\]
	for every $X \geq 1$, then there is an explicit constant $c_{k,G,3}$ such that
		\[
			\lim_{X \to \infty} \frac{1}{E_k(G,X)} \sum_{K \in E_k(G,X)} h_3(K)
				= c_{k,G,3}.
		\]
\end{theorem}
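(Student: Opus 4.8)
The plan is to run the proof of Theorem~\ref{thm:main} (Section~\ref{ssec:proof-main-theorem}) essentially unchanged, with its one unavailable input---the uniform tail estimate of Theorem~\ref{thm:tail}, which rests on Sections~\ref{sec:arakelov} and~\ref{sec:zero-density}---replaced by a tail estimate built purely from Theorem~\ref{thm:cubic-bound} and the hypothesis $\sum_{F\in E_k(H,X)}h_3(F/k)\ll_{k,H,\epsilon}X^{2/3+\epsilon}$. Since $G=C_2\wr H$, each $G$-extension $K/k$ has a unique index two subfield $F=F_K$, an $H$-extension, and by Lemma~\ref{lem:KFbij} I would organize the sum over $K$ as a sum over $F\in E_k(H,\cdot)$ of a sum over quadratic $K/F$, using the factorization $h_3(K)=h_3(K/F)\,h_3(F)$ (valid since $[K:F]=2$). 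Writing $S_F(Z):=\sum_{[K:F]=2,\ \Disc(K/F)\le Z}h_3(K/F)$ for the sum over all quadratic extensions of $F$,
\[
\sum_{K\in E_k(G,X)}h_3(K)
= c_G\!\!\sum_{F\in E_k(H,\cdot)}\!\!h_3(F)\Big(S_F\big(X/\Disc(F)^2\big)-T_F\big(X/\Disc(F)^2\big)\Big)
\]
for a combinatorial constant $c_G$ depending only on $G$, where $T_F$ removes the density-zero subfamily of quadratic $K/F$ for which $\Gal(K/k)$ contains no transposition; this $T_F$-term is handled as in \cite{Klu12} together with the quadratic case of Datskovsky--Wright and poses no difficulty beyond bookkeeping.

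Next I would split the $F$-sum at a parameter $Y$. For $\Disc(F)\le Y$ there are only finitely many $F$, and choosing $X_0=X_0(Y)$ appropriately (using Theorem~\ref{thm:cubic-count}, exactly as in \eqref{eqn:X-hypothesis}) makes $S_F(X/\Disc(F)^2)$ agree, for $X\ge X_0$ and summed over group signatures, with its Datskovsky--Wright main term up to a total error $\ll X/Y$. Summing these main terms and letting $Y\to\infty$ yields $\lim_{X\to\infty}\tfrac1X\sum_{K\in E_k(G,X)}h_3(K)$ as a series over all $F\in E_k(H,\infty)$; convergence follows from $\Res_{s=1}\zeta_F(s)\ll_\epsilon\Disc(F)^\epsilon$, $\zeta_F(2)\asymp 1$, and $\sum_{\Disc(F)\le X}h_3(F)\ll_k X^{2/3+\epsilon}$ (from the hypothesis together with $h_3(F)\le h_3(F/k)\,h_3(k)$) via partial summation. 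Dividing by $|E_k(G,X)|\sim D_GX$---which holds by \cite[Theorem~5.8]{Klu12}, the hypothesis guaranteeing $|E_k(H,X)|$ does not grow too fast, and can also be recovered by the present method using Theorem~\ref{thm:cubic-count} in place of the $3$-torsion average---gives the limit $c_{k,G,3}$, of the same shape as the constant $C_G$ of Theorem~\ref{thm:main}. Part~(1) is the same argument with $h_3(K/F)$ in place of $h_3(K)$, omitting the $h_3(F)$ factor and the average over $F$, exactly as Theorem~\ref{thm:relative-average} relates to Theorem~\ref{thm:main}.

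The main obstacle is the tail: one must show
\[
\sum_{\substack{F\in E_k(H,\cdot)\\ \Disc(F)>Y}}h_3(F)\,S_F\!\big(X/\Disc(F)^2\big)=\varepsilon(Y)\,X+O_{k,H,Y}(X^{1-\delta})
\]
with $\varepsilon(Y)\to 0$. Here I would bound $S_F(Z)$ by Theorem~\ref{thm:cubic-bound} applied with base field $F$ (via \eqref{eq:h3-bijection}), dividing the range of $\Disc(F)$ into: the large regime $\Disc(F)\ll X^{1/5}$, where the Shintani bound of Proposition~\ref{prop:effective-shintani} gives $S_F(Z)\ll\Disc(F)^\epsilon Z$ and the $F$-sum is $\ll XY^{-4/3+\epsilon}$; the small regime $X^{1/3+\delta_0}\ll\Disc(F)\le X^{1/2}$, handled by the class-field-theory bound of Proposition~\ref{prop:trivial-cubic-bound} (and, for the sliver where $\Disc(K/F)=O(1)$, directly by Lemmas~\ref{lem:relative-trivial} and~\ref{lem:quadratic-bound}); and the critical regime $\Disc(F)\approx X^{1/3}$, where $\Disc(K/F)\approx\Disc(F)$ and the order-propagation bound of Proposition~\ref{prop:order-bound-sf} gives roughly $S_F(Z)\ll h_2(F)^{1/3}X^{2/3+\epsilon}$. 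The critical regime is precisely the one that, for $2$-groups, forces the Ellenberg--Venkatesh and zero-density inputs of Lemma~\ref{lem:m-2-critical}; it becomes tractable from Section~\ref{sec:uniform-cubic} alone because the hypothesis makes the $3$-torsion sum grow like $X^{2/3+\epsilon}$ rather than linearly, so that, bounding $h_2(F)^{1/3}\ll(h_2(F)h_3(F))^{1/3}h_3(F)^{-1/3}\ll\Disc(F)^{1/6+\epsilon}h_3(F)^{-1/3}$ (from $h_2(F)h_3(F)\le h(F)\ll\Disc(F)^{1/2+\epsilon}$) and applying H\"older, the critical-range sum stays below $X$ for a suitable $\delta_0$. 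The delicate and most substantial part will be balancing the splitting parameters $Y$ and $\delta_0$ across the three regimes and the boundary ranges between them: the exponents are tight, which is why the hypothesis appears with exponent exactly $2/3$.
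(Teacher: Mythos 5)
Your overall skeleton (reduce to quadratic extensions of $F$, use Datskovsky--Wright for the finitely many $F$ with $\Disc(F)\le Y$, prove a tail estimate, and sieve away the quadratic $K/F$ whose closure group is not $G$) matches the paper, but your tail estimate has a genuine gap at $\Disc(F)\approx X^{3/8}$. Write $\Disc(F)=T=X^{a}$ and $Z=X/T^{2}$, and recall the hypothesis gives $\sum_{\Disc(F)\le T}h_3(F/k)\ll T^{2/3+\epsilon}$ (hence also at most $T^{2/3+\epsilon}$ fields $F$). Your class-field-theory regime (Proposition \ref{prop:trivial-cubic-bound}) contributes per dyadic block at most $X^{3/2}T^{-5/2}\sum_F h_3(F)h_2(F)\ll X^{3/2-4a/3+\epsilon}$, which is $o(X)$ only when $a>3/8$; your propagation-of-orders regime (first term of Proposition \ref{prop:order-bound-sf}) contributes at most $X^{1/2}T^{1/2}\sum_F h_3(F)h_2(F)^{1/3}\ll X^{1/2+4a/3+\epsilon}$, which is $o(X)$ only when $a<3/8$. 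At $a=3/8$ every bound available from Section \ref{sec:uniform-cubic} with the trivial estimate $h_2(F)\ll\Disc(F)^{1/2+\epsilon}$ (including Corollary \ref{cor:general-bound}) gives exactly $X^{1+\epsilon}$, and your H\"older manoeuvre does not help: replacing $h_3h_2^{1/3}$ by $h_3^{2/3}\Disc^{1/6}$ and applying H\"older with the counts above returns $X^{1/2}\bigl(\sum_F h_3\bigr)^{2/3}N^{1/3}T^{2/3}\ll X^{1/2+4a/3+\epsilon}$ again; the extremal configuration $h_3(F)\approx1$, $h_2(F)\approx\Disc(F)^{1/2}$, $N\approx T^{2/3}$ saturates it. So the dyadic blocks with $\Disc(F)$ near $X^{3/8}$ contribute $X^{1+o(1)}$ with no decay in $Y$ and no room from $\delta_0$, and the claimed estimate $\varepsilon(Y)X+O(X^{1-\delta})$ cannot be reached by the tools you restrict yourself to. (The same numerology afflicts your part (1), where the weight $h_3(F)$ and the $h_3$-weighted hypothesis are simultaneously replaced by $1$ and the field count.)

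The paper closes exactly this gap without any regime splitting: it applies the single uniform bound of Corollary \ref{cor:general-bound}, $\sum_{[K:F]=2,\,\Disc(K/F)\le Z}h_3(K/F)\ll\Disc(F)^{1+\epsilon}h_2(F)^{2/3}Z$, and then breaks the borderline case by the power-saving $2$-torsion bound of \cite{BSTTTZ}, $h_2(F)\ll\Disc(F)^{1/2-\delta}$ with $\delta$ depending only on the degree; partial summation against the hypothesis then yields a tail $\ll X/Y^{\frac{1}{3n[k:\Q]}-\epsilon}$. Inserting that same $2$-torsion saving into your regimes near $a=3/8$ would also repair your argument, but some input beyond Section \ref{sec:uniform-cubic} and the hypothesis is genuinely needed; the exponent $2/3$ is tight in the direction that makes the purely ``Section \ref{sec:uniform-cubic}'' computation just fail. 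A secondary, smaller point: your dismissal of the quadratic $K/F$ for which $\Gal(K/k)$ contains no transposition as ``bookkeeping'' is too quick. Since $H$ need not be a $2$-group, Theorems \ref{thm:counting-thin-2-extensions} and \ref{thm:3-torsion-thin-2-ext} are unavailable, and one needs the $h_3$-weighted count of such $K$ to be $o_F(X)$ for each fixed $F$; the paper proves this by imposing, at finitely many primes split completely in $F$, splitting conditions any one of which forces a transposition (hence the full group $G$), and applying Datskovsky--Wright with those local conditions so the density factor can be made arbitrarily small. Your reference to Kl\"uners plus Datskovsky--Wright gestures at this, but it must be carried out as an $h_3$-weighted sieve of this kind rather than a field count.
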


Before we discuss the proof of Theorem \ref{thm:other-groups}, we note that the hypotheses of the first case are satisfied, for example, by any group $H \neq C_2$ in its regular representation that occurs as a Galois group over $k$ \cite[Proposition 1.3]{EV06}, and by any nilpotent group $H$ without a transposition \cite[Corollary 1.8]{Alb}.  The hypotheses of the second case are satisfied for any $p$-group with $p \neq 5$ odd, as follows from \cite[Corollary 7.3]{KlunersMalle} and the trivial bound on $h_3(F/k)$ for $p \geq 7$, and from \cite{KluWan} for $p=3$.

\begin{proof}
	The claims about the subfield $F_K$ follows as in Lemma \ref{lem:2-grp-with-transposition-is-wreath-product}.  
	Suppose now that $E_k^u(G,\infty)$ is non-empty.  We mimic the proof of Theorem \ref{thm:main}, indicating the necessary modifications.  Let $X \geq 1$.  From Corollary \ref{cor:general-bound}, it follows for any $Y\geq 1$ that
		\[
			\sum_{\substack{ K \in E_k^u(G,X) \\ \Disc(F_K) \geq Y}} h_3(K/F_K)
				\ll_{[k:\mathbb{Q}],G,\epsilon} \sum_{F \in E_k(H,X^{1/2}) \setminus E_k(H,Y)} \frac{h_2(F)^{2/3} X}{\Disc(F)^{1-\epsilon}}
				\ll_{[k:\mathbb{Q}],G,\epsilon} \frac{X}{Y^{\frac{1}{3n[k:\mathbb{Q}]}-\epsilon}},
		\]
	where the second inequality is by partial summation and \cite[Theorem 1.1]{BSTTTZ}.  This provides an analogue of Theorem \ref{thm:tail}.
	We now establish a soft analogue of Theorem \ref{thm:3-torsion-thin-2-ext}, with the remainder of the proof then proceeding as in those of Theorems \ref{thm:main} and \ref{thm:relative-average}.
	Note that if $G^\prime \subseteq G$ surjects onto $H$ and contains an element conjugate to $\sigma = (1,0,\dots,0) \rtimes 1_H$ in $G$, then in fact $G^\prime = G$.  

	For any fixed $H$-extension $F$ with discriminant at most $Y$, let $\mathcal{P}$ be a finite set of primes of $k$ that split completely in $F$. 
If some prime of $F$ above a prime $\mathfrak{p} \in\mathcal{P}$ is inert in a quadratic extension $K/F$, while all the other primes of
$F$ above $\mathfrak{p}$ are split in $K$, then by the note above we have that $K/k$ is a $C_2\wr H$ extension.
If this happens, then we say that $\mathfrak{p}$ is bad for $K/F$.
	 It follows from the methods of Datskovsky and Wright \cite{DW86,DW88} (see also \cite[Theorem 2]{BSW15}) that 
		\[
			\sum_{ \substack{ [K:F] = 2 \\ \text{group signature $\Sigma$} \\ \Disc(K/F) \leq X \\ \forall \mathfrak{p} \in \mathcal{P}, \mathfrak{p} \textrm{ not bad} }} h_3(K/F)
				\sim X \prod_{\mathfrak{p} \in \mathcal{P}} \left(1 - \frac{n \Nm_{k/\mathbb{Q}} \mathfrak{p}^n}{2^n(\Nm_{k/\mathbb{Q}} \mathfrak{p} + 1)^n} \right) \cdot \frac{\mathrm{Res}_{s=1}\zeta_F(s)}{2^{r_1(F)+r_2(F)}\zeta_F(2)}(1+3^{-u(\Sigma)}).
		\]
	The expressions in the product above are uniformly (in $\mathfrak{p}$) bounded away from $1$, and in particular the product is $O(\beta_n^{\#\mathcal{P}})$ for some constant $\beta_n <1$ depending at most on $n$.  Since there are infinitely many primes that split completely in $F$, this product may be made arbitrarily small, and thus
		\[
			\sum_{ \substack{ [K:F] = 2 \\ \Disc(K/F) \leq X \\ \mathrm{Gal}(K/k) \not\simeq G } } h_3(K/F)
				= o_F(X).
		\]
	Proceeding now as in the proof of Theorem \ref{thm:main}, the first claim follows.  The second follows analogously.
\end{proof}

\bibliographystyle{alpha}
\bibliography{RS.bib}
\end{document}